\documentclass{article}

\usepackage{bbm}

\usepackage[all]{xy}\usepackage[latin1]{inputenc}  
\usepackage[dvips]{graphics,graphicx}
\usepackage{amsfonts,amssymb,amsmath,color,mathrsfs, amstext,bbm}
\usepackage{amsbsy, amsopn, amscd, amsxtra, amsthm,authblk, enumerate,mathbbol}
\usepackage{upref}
\usepackage{geometry}
\usepackage{bm}
\usepackage{dsfont}
\usepackage{todonotes}
\usepackage{mathtools}
\usepackage{appendix}
\usepackage{float}

\usepackage[all]{xy}\usepackage[latin1]{inputenc}        
\usepackage[dvips]{graphics,graphicx}
\usepackage{amsfonts,amssymb,amsmath,xcolor,mathrsfs, amstext}
\usepackage{amsbsy, amsopn, amscd, amsxtra, amsthm,authblk, enumerate}
\usepackage{upref}
\usepackage{geometry}
\usepackage{float}

\usepackage[colorlinks,
            linkcolor=blue,
            anchorcolor=green,
            citecolor=blue
            ]{hyperref}

\numberwithin{equation}{section}

\def\e{\varepsilon}
\def\epsilon{\varepsilon}
\def\eps{\varepsilon}

\newcommand{\ol}{\overline}
\newcommand{\wt}{\widetilde}
\newcommand{\wh}{\widehat}
\def\alb#1\ale{\begin{align*}#1\end{align*}}
\newcommand{\eqb}{\begin{equation}}
\newcommand{\eqe}{\end{equation}}

\DeclareMathOperator{\dist}{dist}

\newcommand{\inner}[2]{\langle {#1}, {#2}\rangle}
\newcommand{\bbC}{\mathbb{C}}
\newcommand{\bbD}{\mathbb{D}}
\newcommand{\bbE}{\mathbb{E}}
\newcommand{\bbH}{\mathbb{H}}

\newcommand{\bbN}{\mathbb{N}}
\newcommand{\VEL}{\mathrm{VEL}}

\newcommand{\bbR}{\mathbb{R}}
\newcommand{\bbP}{\mathbb{P}}

\newcommand{\bbZ}{\mathbb{Z}}

\newcommand{\cA}{\mathcal{A}}
\newcommand{\cD}{\mathcal{D}}
\newcommand{\cE}{\mathcal{E}}
\newcommand{\cF}{\mathcal{F}}
\newcommand{\cP}{\mathcal{P}}
\newcommand{\cG}{\mathcal{G}}

\newcommand{\cT}{\mathcal{T}}

\newcommand{\cM}{\mathcal{M}}
\newcommand{\cS}{\mathcal{S}}
\newcommand{\cH}{\mathcal{H}}

\newcommand{\cV}{\mathcal{V}}

\newcommand{\cR}{\mathcal{R}}
\newcommand{\cX}{\mathcal{X}}

\newcommand{\fc}{\mathfrak{c}}

\newcommand{\rmT}{\mathrm{T}}

\newcommand{\fre}{\mathfrak{e}}

\newcommand{\diam}{\mathrm{diam}}
\newcommand{\area}{\mathrm{area}}

\newcommand{\nina}[1]{{\color{cyan}{#1}}}

\newtheorem{theorem}{Theorem}[section]
\newtheorem{lemma}[theorem]{Lemma}
\newtheorem{proposition}[theorem]{Proposition}
\newtheorem*{proposition*}{Proposition}
\newtheorem{corollary}[theorem]{Corollary}
\newtheorem*{corollary*}{Corollary}
\newtheorem{remark}[theorem]{Remark}

\theoremstyle{definition}
\newtheorem{definition}[theorem]{Definition}
\newtheorem*{definitions*}{Definitions}

\newtheorem{example}[theorem]{\bf Example}
\newtheorem{problem}[theorem]{\bf Problem}
\newtheorem*{example*}{\bf Example}

\numberwithin{equation}{section}

\title{Circle packing and Riemann uniformization of random planar maps in an ergodic scale-free environment}
\author{Nina Holden\thanks{Courant Institute, New York University, nina.holden@nyu.edu} \qquad   Pu Yu\thanks{Courant Institute, New York University, py628@nyu.edu}}

\begin{document}

\maketitle

\begin{abstract} 
We prove that embedded infinite planar maps in ergodic scale-free environments are close to their circle packing and Riemann uniformization embedding on a large scale, as long as  suitable  {moment and connectivity conditions are} satisfied. 
Ergodic scale-free environments were earlier considered by Gwynne, Miller and Sheffield (2018) in the context of the invariance principles for random walk, and they arise naturally in the study of random planar maps and Liouville quantum gravity.
\end{abstract}

\tableofcontents

\section{Introduction}

A planar map is, roughly speaking, a planar graph where, for each vertex, the cyclic ordering of its edges in a planar embedding is given. Planar maps are studied in many different branches of both mathematics and physics, including combinatorics, probability, geometry, string theory, and conformal field theory.

Given a planar map $\cM$, how  {is it natural to} 
draw it in the plane? In this paper we consider two well-studied ways to embed $\cM$ which give a discrete conformal structure to $\cM$: One is the Koebe-Andreev-Thurston \emph{circle packings}, where the vertices of $\cM$ are represented by a collection of closed disks with disjoint interior, such that two disks are tangent if and only if their corresponding vertices on $\cM$ are adjacent. The second way, which is called the \emph{Riemann uniformization embedding}, is to view each face of $\cM$ as a regular polygon and glue them isometrically along their edges. This defines a Riemann surface $M$, and one   uses the Riemann uniformization theorem to draw the vertices and edges of $\cM$ via a conformal map $\varphi:M\to\bbC$.

In this paper, we work with  random planar maps with  {whole-}plane topology, i.e., one-ended planar maps without boundary, under the two aforementioned  conformal embeddings. Circle packing for random triangulations have been studied in e.g.~\cite{BenjaminiSchramm01,GNrecurrence,ABGN16,GDN19VEL,OV23,bougwynne2024random}, and the Riemann uniformization embeddings for random planar maps have been considered in e.g.~\cite{GRRiemann,Curienglimpse}. Our main result is, for a family of random planar maps with  {whole-}plane topology in ergodic scale-free environments, they are close to their circle packing and Riemann uniformization embedding on a large scale.  Our result holds under a moment condition on the degrees of the triangulation and the regularity of the environment, along with a condition guaranteeing that the  {embedded} triangulation is sufficiently well-connected. 

Ergodic scale-free environments were originally considered by Gwynne, Miller and Sheffield~\cite{GMSinvariance} where they proved the convergence of random walk to  {planar} Brownian motion. {In subsequent work \cite{GMS21tutte} they used this to prove that a particular family of random planar maps known as mated-CRT maps converge under the Tutte embedding to continuum random surfaces arising in the study of Liouville quantum gravity.}  Liouville quantum gravity (LQG) was introduced by Polyakov~\cite{polyakov1981quantum} and is conjectured and proven to describe the scaling limit of several models of random planar maps{; see e.g.\ the book \cite{BP24GFFnotes} for an introduction to the mathematical theory of LQG}. Other recent works which establish convergence results of random planar maps to LQG under conformal embedding include \cite{HS19,GMS20Voronoi,bertacco2025scaling}.

The rest of this section  is organized as follows. In Section~\ref{subsec:intro-1} we introduce some backgrounds on circle packings and the Riemann uniformization embedding. In Section~\ref{subsec:intro-2} we introduce  {ergodic scale-free cell configurations}.  
In Section~\ref{subsec:intro-3} we state our main results, Theorems~\ref{thm:CellSystemPack} and~\ref{thm:invariance-uniformization}. In Section~\ref{subsec:intro-4} we discuss some applications and future directions related to this work. Finally in Section~\ref{subsec:intro-5}, we give a detailed outline of our proof of Theorems~\ref{thm:CellSystemPack} and~\ref{thm:invariance-uniformization} in the rest of the paper.

\subsection{Discrete conformal embeddings}\label{subsec:intro-1}
Let $\cG$ be a {locally finite} planar (multi)graph with vertex set $\cV\cG$ and edge set $\cE\cG$. Multiple edges connecting a pair of vertices are allowed, while loops (i.e., edges starting and ending at the same vertex) are not allowed. We say that a disjoint collection of points $\{z_v:v\in\cV\cG\}\subseteq \wh\bbC:=\bbC\cup\{\infty\}$ and a collection of simple (undirected) curves $\{\gamma_e:e\in\cE\cG\}$ on $\wh\bbC$ form a \emph{planar embedding}   of $\cG$, if the endpoints of curves $\gamma_e$ for $e$ connecting vertices $v,v'$ are at $z_v$ and $z_{v'}$, and different curves $\gamma_e$ and $\gamma_{e'}$ may only intersect each other at starting and ending points.  A planar map $\cM$ is an embedding of a planar graph $\cG$ modulo orientation-preserving homeomorphisms of $\wh\bbC$. 
The face set $\cF\cM$ is the set of the connected components of the complement of the planar map, and we say that $\cM$ has  {whole-}plane topology if all the faces are bounded in $\bbC$, and the union of faces in $\cF\cM$ along with their closures is equal to $\bbC$.
A triangular face is a face with three edges on its boundary.  
A (planar) triangulation is the union of the vertices and edges of a planar map $\cM$, as well as (a subset of) the triangular faces of $\cM$. We will work with \emph{infinite plane triangulations}, where the underlying planar map is one-ended and every face has degree three and belongs to the triangulation, and \emph{disk triangulations}, where the triangulation is finite and topologically homeomorphic to the unit disk. See Section~\ref{pre:triangulation} for more details.

A planar map $\cM$ can be embedded in $\wh{\bbC}$ (as described in the previous paragraph) in a number of different ways. In this paper, we focus on two natural  embeddings: the \emph{circle packing} and the \emph{Riemann uniformization embedding}. These two embeddings are canonical examples of \emph{discrete conformal embeddings}.  {This means in particular that} embeddings of the same planar map in different domains induce (or is believed to induce) a map that approximates the conformal map between the two domains. For circle packing, this was conjectured by Thurston and solved in~\cite{RodinSullivan87} in the case of the hexagonal lattice, 
in~\cite{kS90,kS96} for circle packings with uniformly bounded radii ratio for neighboring disks via probabilistic methods, 
and by~\cite{HeSchramm} in full generality. For Riemann uniformization embedding, this follows from the definition.
 
Let $\cM$ be a simple planar map with vertex set $\cV\cM$. We say that a collection $\cP = (D_v:v\in\cV\cM)$ of closed disks with disjoint interiors is a \textit{circle packing} for $\cM$, if two disks $D_v$ and $D_{v'}$ are tangent if and only if the vertices $v$ and $v'$ are connected by an edge. This gives an embedding of the planar map, with the vertices embedded at the centers of the disks and the edges are straight line segments connecting the centers of tangent disks. By the celebrated Koebe-Andreev-Thurston circle packing theorem,  any finite simple planar map has a circle packing, and for disks triangulations, there exists a unique circle packing in the unit disk $\bbD$ up to M\"{o}bius transforms. For infinite plane triangulations, the existence of a circle packing is a straightforward consequence of the tightness from the Ring Lemma~\cite{RodinSullivan87} (Lemma~\ref{lem:Ring-Lemma}), and the uniqueness up to M\"{o}bius transforms was proved by Schramm~\cite{Schramm91}. In this case, there is a dichotomy: an infinite plane triangulation $\cT$ is \emph{circle packing parabolic} (resp.\ \emph{circle packing hyperbolic}), if there exists a circle packing $\cP$ for $\cT$, such that the union of all the faces and edges (a.k.a.\ the \emph{carrier} of $\cP$) is equal to the whole plane (resp.\ unit disk). In~\cite{HeSchramm-hy-pa}, it was further shown that, if the simple random walk on $\cT$ is recurrent, then $\cT$ is circle packing parabolic, while if $\cT$ has uniformly bounded degree and the simple random walk on $\cT$ is transient, then $\cT$ is circle packing hyperbolic. There are numerous applications of circle packings. For instance, circle packing is closely related to He and Schramm's work on the Koebe conjecture~\cite{HSKoebe}, and has played an important role on the proof of recurrence of random walks on random graphs~\cite{BenjaminiSchramm01,GNrecurrence}. See also~\cite{circlepackingbook,Nachmiaslec} for a comprehensive introduction on circle packing and related topics.

A second natural way to embed random planar maps is through the Riemann uniformization theorem. Roughly speaking, for a planar map $\cM$, one associate each face of degree $p$ with a regular $p$-gon of unit side length and glue them isometrically along their edges to get a Riemann surface $M(\cM)$; see Section~\ref{pre:triangulation} for details. If $\cM$ has whole-plane topology, there are again two possible cases, where the Riemann surface $M(\cM)$ is \emph{parabolic} (resp.\ \emph{hyperbolic}) if it is conformally equivalent to the whole plane (resp.\ the unit disk). As commented in~\cite{GRRiemann,AHNR18unimodular}, similar to the circle packing case, this is equivalent to the recurrence/transience of the Brownian motion on the surface $M(\cM)$. See e.g.~\cite{GRRiemann,Curienglimpse,AHNR18unimodular} for some works in this direction.

There exist a number of discrete conformal embeddings beyond the two discussed above, including the Tutte embedding, the Smith embedding \cite{bsst-smith}, the Cardy-Smirnov embedding \cite{HS19}, and the t-embedding/Coulomb gauge/s-embedding \cite{chelkak-icm,clr-t-emb,klrr-circle-patterns}. 

\subsection{Ergodic scale-free cell configurations}\label{subsec:intro-2}

In~\cite{GMSinvariance}, Gwynne, Miller and Sheffield introduced a certain type of ergodic scale-free cell configurations, and proved the convergence of random walks on such cell configurations to Brownian motion in quenched sense, which is an important step towards proving the convergence of random planar maps to LQG under Tutte embedding. In this subsection, we {introduce} cell configurations~\cite{GMSinvariance}, as well as a number of conditions we need in our main results.

\begin{definition}\label{def:cell-config}
    A \textit{cell configuration} on $\bbC$ consists of the following objects.
    \begin{enumerate}
        \item A locally finite collection $\cH$ of  compact connected subsets of $\bbC$ (``cells") with non-empty interiors whose union is all of $\bbC$ and such that the intersection of any two elements of $\cH$ has zero Lebesgue measure.
        \item A planar map $\cM$, which we call the \emph{associated map} of $\cH$, with vertices $\{v_H:H\in\cH\}$. We require that if $v_H$ and $v_{H'}$ are connected by an edge of $\cM$ then $H\cap H'\neq\emptyset$. 
    \item A function $\fc = \fc_\cH$ (``conductance") on the edges  $\cM$ with values in $(0,\infty)$. 
    \end{enumerate}
\end{definition}

We say $H\sim H'$ if $H$ and $H'$ are connected by  {at least one} edge in $\cM$. Note that we use a different definition of a cell configuration than in~\cite[Definition 1.1]{GMSinvariance}, where a cell configuration is defined to be a tuple $(\cH,\sim,\fc)$. We need to also include $\cM$ in the definition since our goal is to prove closeness between $\cH$ and conformal embeddings of the map $\cM$ and, furthermore, will work with settings where $\cM$ has multiple edges.  
We will typically slightly abuse notation by making  the associated map $\cM$ and the function $\fc$ implicit, so we write $\cH$ instead of $(\cH,\cM,\fc )$. 

For $A\subseteq\bbC$, let $\cH(A) = \{H\in\cH:H\cap A\neq\emptyset\}$, and we view $\cH(A)$ as an edge-weighted planar map with edge set $\cE\cH(A)$ consisting of all of the edges in $\cE\cH$ joining elements in $\cH(A)$ and conductances given by the restriction of $\fc$. We also define a metric on the space of cell configurations by
 \begin{equation}\label{eq:metric-cc}
     \begin{split}
         \mathbb{d}^{\rm CC}(\cH,\cH') := \int_0^\infty e^{-r}\wedge \inf\Big\{\sup_{z\in\bbC} |z-f_r(z)|+\sup_{e\in \cE\cH(B(0;r))}\big|\fc(e)-\fc'(f_r(e)) \big|   \Big\}\,dr,
     \end{split}
 \end{equation}
 where the infimum is over all homeomorphisms $f_r:\bbC\to\bbC$ such that $f_r$ takes each cell in $\cH(B(0;r))$ to $\cH'(B(0;r))$   and  preserves the associated map when restricted to cells intersecting $B(0;r)$, and $f_r^{-1}$ does the same with $\cH$ and $\cH'$ reversed. Here $f_r(\fre)$ is understood as the image of the edge $e$ under map isomorphisms. We will typically be working with a random cell configuration, i.e., a random
variable taking values in the space of cell configurations equipped with the Borel $\sigma$-algebra generated
by the above metric.

\begin{definition}\label{def:translation-modulo-scaling}
    A random cell configuration $\cH$  is \textit{translation  invariant modulo scaling} if it satisfies the following condition. There is a (possibly random and $\cH$-dependent) increasing sequence of open sets $U_j\subseteq\bbC$,  each of which is either a square or a disk, whose union is all of $\bbC$, such that the following is true. Conditional on $\cH$ and $U_j$, let $z_j$ for $j\in\bbN$ be sampled uniformly from Lebesgue measure on $U_j$. Then there exist random numbers $C_j>0$ (possibly depending on $\cH$ and $z_j$) such that $C_j(\cH-z_j)\to\cH$ in law with respect to the metric~\eqref{eq:metric-cc}. 
\end{definition}

In~\cite{GMSinvariance}, the authors introduced four more conditions that are equivalent to the translation invariant invariant modulo scaling condition in Definition~\ref{def:translation-modulo-scaling}; see~\cite[Definition 1.2 and Lemma 2.3]{GMSinvariance} 
along with Lemma~\ref{lem:dyadic-resample} below. 

\begin{definition}\label{def:ergodic-modulo-scaling}
    Let $\cH$ be a random cell configuration that is translation invariant modulo scaling in the sense of Definition~\ref{def:translation-modulo-scaling}. We  say $\cH$ is \textit{ergodic modulo scaling}, if for every real-valued measurable functions $F=F(\cH)$ which is invariant under translation and scaling, i.e., $F(C(\cH-z)) = F(\cH)$ for each $z\in\bbC$ and $C>0$, is a.s.\ equal to a deterministic constant.
\end{definition}

We emphasize that, in the definition of cell configurations, 
it is allowed that two cells $H$ and $H'$ have nonempty intersections while $H$ and $H'$ are not connected by any edge in $\cE\cH$. We also allow that the cells in $\cH$ are not simply connected and may cross each other. See Figure~\ref{fig:cellconfigdef} for an illustration.  
On the other hand, we will need the following property on the connectedness of $\cH$.

\begin{definition}\label{def:connectedness}
    A random cell configuration $\cH$  is \textit{connected along macroscopic lines}, if almost surely,  {for each given $\e>0$, there exists $R>0$ such that for $r>R$ and each horizontal or vertical line segment $\ell\subseteq B(0;r)$ with length at least $\e r$}, the subgraph of $\cH$ induced by the set of cells which intersect $\ell$ is connected. 
\end{definition}

In~\cite{GMSinvariance}, it is shown that for cell configurations satisfying the constraints in Definition~\ref{def:ergodic-modulo-scaling} and Definition~\ref{def:connectedness}, {along with a moment condition}, random walks on $\cH$ converges to the Brownian motion (see Theorem~\ref{thm:GMSinvariance}). This will be applied to the proof of the closeness of $\cH$ and its circle packing, where in Section~\ref{subsec:Covergence-RW-cell-Dubejko} we establish a variant of this result for a certain conductance from circle packing.

\subsection{Main results}\label{subsec:intro-3}

In this subsection, we state our main results on the closeness between cell configurations and their discrete conformal embeddings. We start with the case where the associated map $\cM$ of $\cH$ is an infinite plane triangulation. 

\begin{figure}
    \centering
    \includegraphics[scale=0.9]{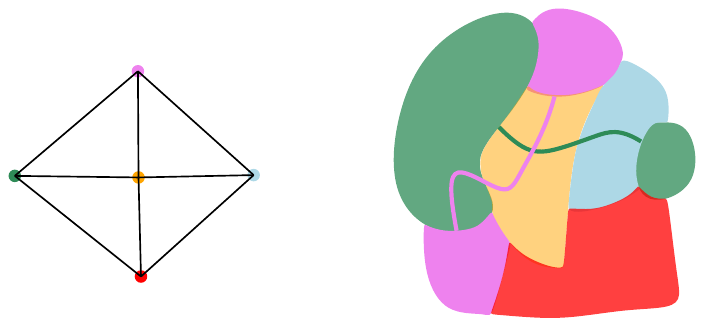}
    \caption{An illustration of five cells in a cell configuration $\cH$. The cells in $\cH$ may not be simply connected and could cross from each other. We allow that the green/blue and red/purple cells to touch each other while not being connected by an edge in $\cE\cH$.}
    \label{fig:cellconfigdef}
\end{figure}


\begin{definition}\label{def:rotation-inv}
      Let $\cH$ be a cell configuration as in Definition~\ref{def:cell-config}. We say $\cH$ is \emph{rotation invariant in law for some $\theta_0\in (0,2\pi)\backslash\{\pi\}$}, if the rotated cell configuration $e^{i\theta_0}\cH$ has the same law as $\cH$.
\end{definition}

The reason we exclude the value $\pi$ here is that, for any 2D Brownian motion $(X,Y)$, the covariance matrix for $(X,Y)$ and $(-X,-Y)$ is the same, and in this case there is no further information. On the other hand, one can check that for $\theta_0\in(0,2\pi)\backslash\{\pi\}$, if a 2D Brownian motion and its rotation over angle $\theta_0$ have the same covariance matrix, then this covariance matrix must be a scalar times the identity.

We write $H_0$ for  the (a.s.\ unique) cell in $\cH$ that contains 0, and $\deg(H_0)$ for the degree of $H_0$ in the associated map.

\begin{theorem}\label{thm:CellSystemPack}
    Let $\cH$ be a random cell configuration such that its associated map is a simple infinite plane triangulation. Suppose $\cH$   satisfies all the conditions in Definitions~\ref{def:ergodic-modulo-scaling} and \ref{def:connectedness}. Further assume $\cH$ satisfies the moment bound
    \begin{equation}\label{eq:thm-inv-circle}
        \bbE\Big[\frac{\diam(H_0)^2}{\area(H_0)}\deg(H_0)^4\Big]<\infty.
    \end{equation}
    Then $\cH$ is a.s.\  circle packing parabolic, and there exists a circle packing $\cP$ for $\cH$ and a deterministic $2\times2$ matrix $\mathbf{A}$ with $\det(\mathbf{A})=1$ such that the following is true. Almost surely, for any $\e>0$, there exists $R_0>0$ such that for every $r>R_0$, the maximal diameter of disks in $\cP$ intersecting $B(0;r)$ is less than $\e r$. Furthermore, for $H\in\cH$, letting $c(H) = \frac{1}{\area(H)}\int_H z\,dz$ be the Euclidean center of $H$, and letting $o(H)$ be the center of the disk for $H$ in $\cP$, almost surely, 
    \begin{equation}\label{eq:thm:CellsystemPack}
        \lim_{r\to\infty}\frac{1}{r}\max_{H\in \cH(B(0;r))}|\mathbf{A}c(H)-o(H)| = 0.
    \end{equation}
    Finally, if $\cH$ satisfies the rotation invariance in Definition~\ref{def:rotation-inv}, then $\mathbf{A}$ can be taken to be the identity matrix.
\end{theorem}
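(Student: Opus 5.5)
The plan is to compare both embeddings to a common discrete-harmonic object: the circle packing should be close to a discrete harmonic map for a suitable ``packing-adapted'' conductance, and by the invariance principle of \cite{GMSinvariance} (Theorem~\ref{thm:GMSinvariance}) discrete harmonic functions on $\cH$ are close to affine functions of the Euclidean positions on large scales.

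\textbf{Parabolicity and the packing.} First I show that $\cH$ is a.s.\ circle packing parabolic. Random walk on $\cH$ with unit conductances converges to a (linearly transformed) Brownian motion by Theorem~\ref{thm:GMSinvariance}; the moment bound~\eqref{eq:thm-inv-circle} controls the required $\deg\cdot\diam^2/\area$ moment. A scale-by-scale resistance estimate should then give that the walk is recurrent, and \cite{HeSchramm-hy-pa} gives parabolicity. I fix the packing $\cP$ with carrier $\bbC$, normalized by translation and scaling so that the disk of $H_0$ is centered at $0$ and the scale is chosen appropriately; the remaining freedom is a rotation, which will be absorbed into $\mathbf{A}$.

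\textbf{Discrete harmonicity of the centers.} Next I use Dubejko-type conductances: for an edge $vv'$, set $\fc(vv')$ equal to the ratio of the length of the segment joining the incenters of the two adjacent triangles in $\cP$ to the distance between the centers. With these conductances, the map $H\mapsto o(H)$ is exactly discrete harmonic. I then need the invariance principle for $\cH$ with these random, packing-dependent conductances, which is the variant announced for Section~\ref{subsec:Covergence-RW-cell-Dubejko}. This requires two things. The conductance configuration must still be translation invariant and ergodic modulo scaling; this holds because the packing is canonical up to Möbius maps, so $(\cH,\fc)$ is a measurable function of $\cH$ that is equivariant under scaling and translation. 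The conductances must also satisfy a moment bound. Here the Ring Lemma (Lemma~\ref{lem:Ring-Lemma}) bounds the conductances above and below by constants depending on degrees, roughly $\deg^{O(1)}$, and the $\deg(H_0)^4$ in~\eqref{eq:thm-inv-circle} should be what makes $\bbE[\fc\,\deg\cdot\diam^2/\area]<\infty$ finite.

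\textbf{Deducing the estimates.} Given this invariance principle, a discrete harmonic function $h=o(\cdot)$ on $\cH$ is close on large scales to a harmonic function for a Brownian motion with covariance $\Sigma$. The argument works on $B(0;r)$, with $h$ determined by its boundary values through the exit distribution of the walk. Since $o$ is sublinear-controlled, Liouville-type reasoning should force $o(H)\approx \mathbf{A}c(H)$ for a linear map $\mathbf{A}$ with $\mathbf{A}\Sigma\mathbf{A}^T$ a scalar multiple of the identity. Recalling that circle packings are also approximately conformal via \cite{HeSchramm} makes this consistent. Determinism of $\mathbf{A}$ modulo rotation and scale follows from ergodicity, and the determinant normalization $\det\mathbf{A}=1$ is obtained by rescaling $\cP$. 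The small-disks statement follows because a disk of diameter $\ge\e r$ near $B(0;r)$ would contradict the approximation for its neighbors together with the connectedness along macroscopic lines in Definition~\ref{def:connectedness}. Under rotation invariance, $\Sigma$ is a scalar times the identity, by the remark after Definition~\ref{def:rotation-inv}, so $\mathbf{A}$ is a rotation, which I absorb into $\cP$.

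\textbf{Main obstacle.} I expect the main obstacle to be the harmonic-to-linear step. The invariance principle gives only quenched convergence of the walk, whereas \eqref{eq:thm:CellsystemPack} needs uniform sublinear control of $o$ over all of $B(0;r)$. I would first prove a priori that $o$ has linear growth, using scale-invariant area and extremal-length bounds for packings: the total area of disks in $B(0;r)$ is compared with $r^2$, and a Rodin--Sullivan-type length–area argument is applied. I would then apply the maximum principle to $o-\mathbf{A}c$ on a sequence of boxes, using the walk's exit-position convergence at every macroscopic starting point simultaneously.
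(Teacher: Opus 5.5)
\textbf{Gap 1: the moment hypothesis for Dubejko weights.} The Ring Lemma does not give $\deg^{O(1)}$ bounds. Lemma~\ref{lem:Ring-Lemma} only gives $r_j/r_0\ge e^{-cd}$, and this is sharp (see the Fibonacci flower described after Lemma~\ref{thm:Descartes}). So by~\eqref{eq:Dubejko} a single conductance $\fc(H,H')\lesssim\sqrt{r_{H'}/r_H}$ can be of order $e^{-c\deg(H)}$.

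You are also checking the wrong half of~\eqref{eq:GMSinvariance-moment}. The quantity you propose to control is essentially $\bbE[\frac{\diam(H_0)^2}{\area(H_0)}\pi(H_0)]$, which is trivial because Dubejko weights are below $1/2$. The half that matters is $\bbE[\frac{\diam(H_0)^2}{\area(H_0)}\pi^*(H_0)]<\infty$ with $\pi^*(H_0)=\sum_{H'\sim H_0}\fc(H_0,H')^{-1}$. Since $\pi^*(H_0)$ can be as large as $e^{c\deg(H_0)}$, \eqref{eq:thm-inv-circle} gives no control over it, and Theorem~\ref{thm:GMSinvariance} cannot be applied with Dubejko weights.

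The missing idea is the paper's three-circle refinement of the Ring Lemma, Lemma~\ref{lem:circle}: in any flower, $r_2\ge 0.01d^{-2}\min\{r_0,r_1\}$. It is used to reroute each low-conductance edge $\{H,H'\}$ through a chain of petals around $D_H$ whose total resistance is at most $2\cdot10^4\deg(H)^2$. This proves a weakened form of \cite[Lemma 2.17]{GMSinvariance}, the only place where $\pi^*$ enters. This is also where the exponent $4$ in~\eqref{eq:thm-inv-circle} is actually used.

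\textbf{Gap 2: the harmonic-to-linear step.} Even granting an invariance principle for the Dubejko walk on $\cH$, this step does not close.

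\begin{itemize}
\item The maximum principle cannot be applied to $o-\mathbf{A}c$. The function $\mathbf{A}c$ is not discrete harmonic, and its boundary discrepancy is exactly what you are trying to bound.
\item Extracting subsequential limits of rescaled $o$ needs a priori two-sided equicontinuity of $c(H)\mapsto o(H)$ on large scales. That includes the absence of macroscopic disks, which you instead deduce at the end from the conclusion.
\item Even then, the invariance principle on $\cH$ only says that limits are $\Sigma$-harmonic. Every linear map is $\Sigma$-harmonic, so this cannot yield $\mathbf{A}\Sigma\mathbf{A}^{T}\propto I$, which you use in the rotation-invariant case. The appeal to \cite{HeSchramm} does not supply it.
\end{itemize}

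The paper identifies the linear map through a second, independent invariance principle:
\begin{enumerate}
\item It first proves there are no macroscopic disks, using the vertex extremal length bound of Lemma~\ref{lem:VEL-packing} together with the ergodic estimate of Proposition~\ref{prop:finite-VEL}.
\item It then applies \cite{bougwynne2024random} to get convergence of the Dubejko walk on $\cP$ to standard Brownian motion.
\item It couples this walk with the walk on $\cH$ as the same walk on the same map, via the almost-planar embedding of Proposition~\ref{prop:almost-planarity}.
\item Disconnection (``wrapping around'') probabilities then give tightness of the induced homeomorphisms.
\item Proposition~\ref{prop:BM-f}, together with an orientation argument, forces every subsequential limit, after normalizing by $\mathbf{A}$ and a similarity, to be the identity.
\end{enumerate}

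Incidentally, recurrence is already part of the conclusion of Theorem~\ref{thm:GMSinvariance}, so no separate resistance estimate is needed.
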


\begin{theorem}\label{thm:invariance-uniformization}
     Let $\cH$ be a random cell configuration such that its associated map is an infinite plane triangulation. Suppose  $\cH$ satisfies the conditions in  Definitions~\ref{def:ergodic-modulo-scaling} and \ref{def:connectedness}. Further assume $\cH$ satisfies the moment bound~\eqref{eq:thm-inv-circle}.
    Then the Riemann surface $M(\cH)$ formed by the associated map of $\cH$ is a.s.\   parabolic, and there exists a deterministic $2\times2$ matrix $\mathbf{A}$ with $\det(\mathbf{A})=1$ and a conformal map $\varphi_0:M(\cH)\to\bbC$ for $\cH$ such that the following is true. For $H\in\cH$, let $c(H) = \int_H z\,dz$ be the Euclidean center of $H$, and $v_H$ be the   vertex for $H$ in $M(\cH)$. Then {almost surely}, 
    \begin{equation}\label{eq:thm:invariance-uniform}
        \lim_{r\to\infty}\frac{1}{r}\max_{H\in \cH(B(0;r))}|\mathbf{A}c(H)-\varphi_0(v_H)| = 0.
    \end{equation}
    Furthermore,  for each $e\in \cE\cH$, we write $\mathfrak{e}_{e}$ for the edge for $e$ on  $M(\cH)$,  
     \begin{equation}\label{eq:thm:invariance-uniform-1}
        \lim_{r\to\infty}\frac{1}{r}\max_{e\in \cE\cH(B(0;r))}\diam(\varphi_0(\mathfrak{e}_{e})) = 0.
    \end{equation}
    Finally, if $\cH$ further satisfies the rotation invariance in Definition~\ref{def:rotation-inv}, then $\mathbf{A}$ can be taken to be the identity matrix.
\end{theorem}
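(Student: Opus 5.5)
The plan is to realize the Riemann uniformization as a harmonic-coordinate embedding and compare it with the quenched invariance principle of \cite{GMSinvariance} (Theorem~\ref{thm:GMSinvariance}), in the same spirit as the circle packing argument for Theorem~\ref{thm:CellSystemPack}. The outline below only covers the case where the associated map is simple. If the map has multiple edges, one first subdivides or uses cell-level conductances. The key point is that a conformal map $\varphi:M(\cH)\to\bbC$ is harmonic for Brownian motion on $M(\cH)$. Its restriction to the vertex set is therefore harmonic for a reversible Markov chain on $\cM$: the chain obtained by watching Brownian motion on $M(\cH)$ at its successive hits of vertex neighborhoods, i.e.\ the trace chain with respect to a suitable family of "star" domains around vertices.

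The first step is to bound the conductances of this chain. Consider the chain obtained by observing Brownian motion on $M(\cH)$ when it enters small balls around vertices and then exits the star of the vertex. Its transition kernel comes from harmonic measure on the star of a vertex of degree $d$. The star is a cone of angle $d\pi/3$ glued from equilateral triangles, so these kernels are explicit up to constants. I expect the effective conductances $\fc(e)$ and the total mass $\pi(v)$ to be bounded above and below by polynomials in $\deg(v)$ and the degrees of its neighbors. This choice of $\fc$ gives a new cell configuration, still translation and ergodic invariant modulo scaling, since $\fc$ is a local function of the map. The moment condition \eqref{eq:thm-inv-circle} is then designed to control the GMS moment $\bbE[\diam(H_0)^2\pi(H_0)/\area(H_0)]$ together with the reverse bounds needed in their argument, so it should imply the hypotheses of Theorem~\ref{thm:GMSinvariance}, with $\deg^4$ absorbing the polynomial factors. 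Parabolicity follows next: the walk on $\cH$ converges to a nondegenerate Brownian motion, hence is recurrent. Recurrence of the embedded chain implies recurrence of Brownian motion on $M(\cH)$, so $M(\cH)$ is parabolic. Compare \cite{GRRiemann,AHNR18unimodular}.

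The second step proves \eqref{eq:thm:invariance-uniform}, with $\mathbf{A}$ the normalized square root of the inverse covariance of the limiting Brownian motion. I fix $\varphi_0$ by normalizing $\varphi_0(v_{H_0})=0$, and fix its scale along a sequence. Since $\varphi_0$ is defined only up to affine maps $z\mapsto az$, one cannot normalize at all scales simultaneously, so an extra argument is needed. The plan is as follows. Let $\Phi(H)=\varphi_0(v_H)$. For a large $r$, consider the discrete harmonic function on $\cH(B(0;r))$ given by $\Phi$. Random walk started at $H$ and stopped on exiting $\cH(B(0;Kr))$ converges, after scaling, to Brownian motion with covariance $\mathbf{A}^{-1}\mathbf{A}^{-T}$. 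Together with optional stopping for the martingale $\Phi(X_n)$, this shows that $\Phi(H)/r$ is approximated by the harmonic extension of the boundary values $\Phi/r$ on the circle, evaluated at $\mathbf{A}c(H)/r$, so it is close to an affine-holomorphic function of $\mathbf{A}c(H)$. The step I expect to be hardest is ruling out non-affine boundary data, i.e.\ showing that $\varphi_0\circ(\text{cells})\circ\mathbf{A}^{-1}$ is asymptotically linear. I would try the following: the limit map at every scale is conformal on $\bbC$ after using uniform equicontinuity. Equicontinuity comes from the invariance principle, via hitting/crossing estimates for the walk that give a modulus of continuity for harmonic functions. The limit is also injective, since $\varphi_0$ is a homeomorphism and degenerations are excluded by the Beurling-type estimate for the walk. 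A conformal injective map of $\bbC$ is affine, and the same limit along all scales then gives a single $a$ such that $\varphi_0\approx a\mathbf{A}c$. After renormalizing $\varphi_0$ by $a^{-1}$, we get \eqref{eq:thm:invariance-uniform}. Interpolating between scales uses the scale-uniformity of the GMS estimate ("for all $r>R_0$").

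The third step proves the edge statement \eqref{eq:thm:invariance-uniform-1}. Each edge $\mathfrak{e}_e$ lies in the union of the two triangles adjacent to it. The image of a triangle under $\varphi_0$ lies within the region bounded by images of its vertices and their star neighborhoods. By the maximum principle for $\varphi_0$ on the star, together with the continuity estimate above, its diameter is $o(r)$ uniformly over $B(0;r)$. Here I would use that $\max_{H\in\cH(B(0;r))}\diam(H)=o(r)$, which follows from ergodicity and the moment bound as in \cite{GMSinvariance}. Finally, under rotation invariance for $\theta_0\ne\pi$, the covariance matrix of the limiting Brownian motion commutes with rotation by $\theta_0$. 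It is therefore scalar, and $\mathbf{A}=I$.
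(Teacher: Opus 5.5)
Your proposal has a genuine gap in Step 1, and everything after it depends on that step. Brownian motion on $M(\cH)$ a.s.\ never hits a vertex. So there is no trace chain on $\cV\cM$ for which $\varphi$ restricted to $\cV\cM$ is \emph{exactly} harmonic.

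Any concrete version of your construction (stop on entering a small neighborhood of a vertex, or on exiting the star of $v$) records points near vertices or on edges of the link, not the vertices themselves. The hitting distribution on the neighborhood of $w$ is not uniform, so the expected value of $\varphi$ at the hitting point is not $\varphi(w)$. The vertex sequence is not even Markov unless you restart artificially, and restarting breaks the martingale property of $\varphi$.

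Consequently, the optional-stopping identity for $\Phi(X_n)$ has nothing to apply to, and neither does the use of Theorem~\ref{thm:GMSinvariance} for a walk under which $\Phi$ is harmonic. The per-step errors are of the order of the oscillation of $\varphi$ near a vertex. Without further input, nothing prevents them from accumulating over the many steps needed to leave a macroscopic ball.

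The conductance claims are also unsupported. A trace chain of Brownian motion on the whole surface is not a local function of the map. Conductances depending on the neighbors' degrees would not have their inverse moments in \eqref{eq:GMSinvariance-moment} controlled by \eqref{eq:thm-inv-circle}, which involves only $\deg(H_0)$. Compare the work needed in Lemma~\ref{lem:circle} and Proposition~\ref{prop:inv-circle} even for the explicit Dubejko weights.

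The missing ingredient is an a priori regularity estimate for $\varphi$. You need that vertex neighborhoods in a macroscopic region map to sets of diameter $o(\text{scale})$, and that small sets of cells correspond to small sets in $\varphi(M(\cH))$ and vice versa. Your Step 3 implicitly assumes this too: the maximum principle on a star only bounds $\varphi$ by its values on the link, which consists of edges, not vertices.

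The paper gets this estimate from Koebe distortion on semi-flowers, where the inradius is at least a constant times $\deg^{-2}$ times the diameter (Lemma~\ref{lem:Koebe-distortion}). It combines this with He--Schramm length--area arguments and the ergodic bound on $\sum \diam(H)^2\deg(H)^4$ (Lemmas~\ref{lem:average-diamdeg4}, \ref{lem:area/length-1}, \ref{lem:area/length-2}). This is where the $\deg^4$ in \eqref{eq:thm-inv-circle} actually enters. It also yields parabolicity (Lemma~\ref{lem:parabolic}) and polynomial growth (Lemma~\ref{lem:cell-polynomial-growth}).

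The paper then never builds a walk adapted to $\varphi$. It constructs a continuum harmonic corrector $\phi_\infty$ on $M(\cH)$ from the piecewise-linear a priori embedding $\phi_0$ and proves sublinearity (Proposition~\ref{prop:harmonic-sublinear}). It then shows $\phi_\infty\circ\varphi^{-1}$ is a degree-one polynomial, and, via ergodicity of the averaged Dirichlet energy (Lemma~\ref{lem:polynomial-ergodic-A}), that it is a scaling times $\mathbf{A}$ composed with a rotation.

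Even granting Step 1, the subsequential limits in your Step 2 would only be harmonic in each coordinate, not conformal. A univalent harmonic map of $\bbC$ has the form $z\mapsto az+b\bar z+c$. So you would still need a separate argument that the limit is a similarity, for example that the $\Phi$-embedded walk also converges to standard Brownian motion, as used with Proposition~\ref{prop:BM-f} in the circle packing case. You would also need growth control that ties the normalizations at different scales to a single $\varphi_0$.
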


For infinite plane triangulations, the circle packing is unique. For general planar maps, the circle packing is not necessarily unique; however, one can get uniqueness if one imposes further conditions; see e.g.~\cite{BS93, Sch92, Thur97}. There are several kinds of conditions that one can impose. One of the simplest ways to define a unique circle packing for general simple planar maps is as follows. Let $\cM$  be a 2-connected simple planar map with whole-plane topology and face set $\cF\cM$. For each face $f\in\cM$, we add a vertex $v_f$ and add edges to connect $v_f$ to the vertices incident to $f$. This turns $\cM$ into a simple infinite plane triangulation $\cT_\cM$. We can consider the circle packing for $\cT_\cM$ and remove the disks for all the vertices $\{v_f:f\in\cF\cM\}$. This defines a circle packing for $\cM$ and it is the unique, up to M\"{o}bius transform, circle packing constructed in this way.

Let $\cH$ be a cell configuration such that the associated map $\cM$ is 2-connected and has  {whole-}plane topology. Let $\cT_\cM$ be the triangulation generated from $\cM$ as above. We write $v_H\sim f$ if the vertex $v_H$ for $H$ in the associated map $\cM$ is incident to the face $f\in\cF\cM$. Let 
  \eqb\label{eq:def-olH-0}
  \ol H_0 = \bigcup_{f\in\cF\cM:v_{H_0}\sim f}\bigcup_{H\in\cH:v_H\sim f}H
  \eqe 
  be the union of cells whose vertex is on the same face as $v_{H_0}$ and let $d_0$ be the maximal degree of the faces incident to $v_{H_0}$.
  \begin{theorem}\label{thm:CellSystemPack-extension}
     Let $\cH$ be a cell configuration with the associated map $\cM$ being 2-connected and having {whole-}plane topology. Assume $\cH$ satisfies all the conditions in Definitions~\ref{def:ergodic-modulo-scaling} and \ref{def:connectedness}. 
     Further assume 
      \begin{equation}\label{eq:thm:CellSystemPack-extension}
     \bbE\Big[ \frac{\diam (\ol H_0)^2}{\area(H_0)}\big(\deg(H_0)+ d_0 \big)^4\deg(H_0)  \Big]<\infty,
   \end{equation}
   Then the conclusions in Theorem~\ref{thm:CellSystemPack} hold for the circle packing of $\cM$ defined as above.
 \end{theorem}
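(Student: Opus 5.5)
The plan is to reduce Theorem~\ref{thm:CellSystemPack-extension} to Theorem~\ref{thm:CellSystemPack} by building a new cell configuration $\wt\cH$ whose associated map is the triangulation $\cT_\cM$. Its cells will be the cells of $\cH$ together with one extra cell $H_f$ for each face $f\in\cF\cM$. The circle packing of $\cM$ defined above is, by definition, the restriction of the circle packing of $\cT_\cM$. So once Theorem~\ref{thm:CellSystemPack} holds for $\wt\cH$, the conclusions for $\cM$ follow by forgetting the face-disks. The map $\mathbf A$ is the same, because centers of the original cells are unchanged. Disk diameters only decrease when we pass to a subcollection. The only care needed is that cells of $\cH$ intersecting $B(0;r)$ are among cells of $\wt\cH$ intersecting $B(0;r)$.

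\textbf{Step 1: construction of $\wt\cH$.} A face cell must meet every cell incident to $f$, yet cells must cover $\bbC$ with measure-zero overlaps. So I would carve the face cells out of existing cells rather than add new area. Because $\cM$ is 2-connected with whole-plane topology, each face $f$ has a well-defined set of incident vertices. I would take $H_f$ to be a connected compact set with nonempty interior that meets each $H$ with $v_H\sim f$ and is contained in $\bigcup_{H:v_H\sim f}H$. This is possible since adjacent cells intersect, so that union is connected.

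For example, choose a small piece $K_f$ of one designated incident cell $H^*_f$, chosen by a rule measurable and equivariant under translation and scaling. Then add thin tubes along the boundary cycle of $f$ through the other incident cells, and remove all of this from the cells it was taken from. The removal should keep each original cell connected with interior, for instance by taking tubes of area a fixed fraction of each cell and leaving cells connected. It should also change each original cell's area by at most a constant factor, say at least half its area remains.

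The Euclidean centers then move. I would avoid this either by choosing the removed pieces symmetric about the center, or more simply by noting that the new center of $H$ lies within $\diam(\ol H_0)$ of the old one. Lemma-type ergodic bounds then show $\max_{H\in\cH(B(0;r))}\diam(\ol H)=o(r)$, because a finite first moment of $\diam(\ol H_0)^2/\area(H_0)$ makes the maximum sublinear. So \eqref{eq:thm:CellsystemPack} transfers.

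\textbf{Step 2: verify the hypotheses for $\wt\cH$.} Translation invariance and ergodicity modulo scaling hold because $\wt\cH$ is a measurable, translation- and scaling-equivariant function of $\cH$.

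Connectivity along macroscopic lines needs care. Let $\ell$ be a segment. Any face cell hit by $\ell$ is adjacent in $\cT_\cM$ to all cells on its face. Since $H_f\subseteq \bigcup_{v_H\sim f}H$, the segment $\ell$ also hits some original cell incident to $f$. Connectivity of the original cells along $\ell$, in $\cM\subseteq\cT_\cM$, then gives connectivity for $\wt\cH$. I need to check that carving does not disconnect the original cells hit by $\ell$; if it does, the face cells bridge the gap through $\cT_\cM$.

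\textbf{Step 3: the moment bound, which I expect to be the main obstacle.} One must show \eqref{eq:thm-inv-circle} for $\wt\cH$, i.e.\ $\bbE[\diam(\wt H_0)^2\deg(\wt H_0)^4/\area(\wt H_0)]<\infty$, where $\wt H_0$ is the cell of $\wt\cH$ containing $0$.

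If $\wt H_0$ is an original cell, its degree is at most $2\deg(H_0)$ and its area is at least $\area(H_0)/2$. If it is a face cell $H_f$, its degree is $\deg f\le d_0$, its diameter is at most $\diam(\ol H_0)$, but its area could be tiny. To handle this I would use the mass-transport principle in the translation-invariant-modulo-scaling sense of Lemma~\ref{lem:dyadic-resample}. The face-cell contribution is an area-weighted sum over face cells. It can be transported to the original cells incident to $f$, each of which hosts at most $\deg(H)$ faces. This converts the sum into $\bbE[\sum_{f\sim v_{H_0}}(\cdots)/\area(H_0)]$, which is bounded by $\bbE[\diam(\ol H_0)^2(\deg H_0+d_0)^4\deg(H_0)/\area(H_0)]$. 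That is exactly \eqref{eq:thm:CellSystemPack-extension}, and its extra factor $\deg(H_0)$ is explained by this step.

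Making this transport rigorous requires the face cell's area to be comparable to a fixed fraction of $\area(H^*_f)$. So the construction in Step 1 must be calibrated so that $\area(H_f)$ is at least a constant times $\area(H^*_f)$. Once this holds, Theorem~\ref{thm:CellSystemPack} applies to $\wt\cH$, and the remark about rotation invariance passes through because the construction is rotation-equivariant.
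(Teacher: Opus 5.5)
Your overall reduction is the paper's: build a cell configuration $\wt\cH$ with associated map $\cT_\cM$, carve the face cells out of the incident cells, verify the hypotheses of Theorem~\ref{thm:CellSystemPack}, and restrict the packing. The step that fails as written is \emph{connectivity along macroscopic lines}.

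Your fallback, that the face cells ``bridge the gap through $\cT_\cM$'', is not valid. In $\cT_\cM$ the vertex $v_f$ is adjacent only to the vertices on $f$, and no two face vertices are adjacent. So if your tubes reach $\partial H$, a segment $\ell$ can meet $H$ only inside carved material. Take square cells, and let the tubes for the two faces at the right-hand corners of $H$ together cover a strip along the right side of $H$. A vertical segment inside that strip meets $H$ only in $H_f\cup H_{f'}$. Then $H_f$ is attached to the cell above and $H_{f'}$ to the cell below, but nothing in $\wt\cH(\ell)$ joins them. Definition~\ref{def:connectedness} fails, and in a translation-invariant configuration it fails at every scale.

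The missing idea is to carve only from the interior and keep $\partial H\subseteq \wt H$. Since $\diam(H)=o(r)$, a segment of length at least $\e r$ that meets $H$ must cross $\partial H$, so it meets $\wt H$. Hence $\{\wt H:H\in\cH(\ell)\}$ is connected through the edges of $\cM$. Any face cell meeting $\ell$ meets some $H$ on $f$, so it is adjacent to the corresponding $\wt H$. The paper achieves this by making every piece $\wt H_{H,k}$ of $H$ contain $\partial H$. This also keeps the pieces connected and keeps originally adjacent cells intersecting.

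On the moment bound your route differs from the paper's and is sound, but drop the calibration. The paper splits $H$ into $\deg(H)+1$ equal-area pieces, so $\wt H_0$ contains a piece of $H_0$ of area $\area(H_0)/(\deg(H_0)+1)$. Together with $\diam(\wt H_0)\le\diam(\ol H_0)$ and $\deg(\wt H_0)\le 2\deg(H_0)+d_0$, this gives \eqref{eq:thm-inv-circle} for $\wt\cH$ pointwise; that is where the extra factor $\deg(H_0)$ comes from.

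In your density formulation the area of a face cell simply cancels. In $|S_k|^{-2}\int_{S_k}F(\wt\cH-z)\,dz$ each face cell contributes at most $\diam(H_f)^2\deg(f)^4$, whatever its area. Charging this to the incident cells, each of which hosts at most $\deg(H)$ faces, and applying the ergodic theorem to $\cH$ gives the bound directly. The calibration you ask for is also incompatible with every original cell keeping half its area when one cell is designated by many faces.

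Finally, a deterministic equivariant designation rule cannot be rotation-equivariant on symmetric configurations, for example four congruent squares around a face. For the rotation claim you should therefore randomize as the paper does (uniform $\theta_H$, random assignment). That randomization is exactly why the paper needs the separate ergodicity argument of Lemma~\ref{lem-extension-1.3} rather than your factor-map argument.
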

 In the setting of the Riemann uniformization, let $\cM$ be a planar map with  {whole-}plane topology and face set $\cF\cM$. For each face $f$ enclosed by a single loop, we simply remove this loop. For each face $f\in\cF\cM$ with 2 edges,  we contract the two edges to turn it into a single edge, such that each face of the resulting map has at least 3 edges. Let $M(\cM)$ be the Riemann surface obtained by  associating each remaining face  {of degree $p\geq 3$} with  {regular $p$-gon of unit side length} and glue them isometrically along their edges.
 \begin{theorem}\label{thm:invariance-uniformization-extension}
     Let $\cH$ be a cell configuration with the associated map $\cM$ having  {whole-}plane topology.
     Assume $\cH$   satisfies all the conditions in Definitions~\ref{def:ergodic-modulo-scaling} and \ref{def:connectedness}. 
     Further assume that~\eqref{eq:thm:CellSystemPack-extension} holds.
      Then the conclusions in Theorem~\ref{thm:invariance-uniformization} hold for the Riemann surface  $M(\cM)$ defined as above.
 \end{theorem}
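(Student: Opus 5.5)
The plan is to reduce Theorem~\ref{thm:invariance-uniformization-extension} to Theorem~\ref{thm:invariance-uniformization}. We build from $\cH$ an auxiliary cell configuration whose associated map is an infinite plane triangulation, and whose Riemann surface is conformally equivalent to $M(\cM)$ in a way that respects the vertices of $\cM$.

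\textbf{Step 1: the triangulation.} First perform the simplifications in the definition of $M(\cM)$: remove loop faces and contract digons. Neither operation changes the Riemann surface, and neither moves the cells; it only identifies parallel edges. Then subdivide each face $f$ of degree $p\ge3$ by adding a central vertex $v_f$ joined to its $p$ boundary vertices. Geometrically, a regular unit $p$-gon is exactly the union of $p$ isosceles triangles meeting at its center. Give each such triangle the Euclidean metric of that isosceles triangle, not of an equilateral one. The resulting glued surface is then isometric to $M(\cM)$.

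This is not literally the surface $M(\cT_\cM)$ built from equilateral triangles. I would handle this in one of two ways.
\begin{itemize}
\item Re-run the proof of Theorem~\ref{thm:invariance-uniformization} for triangulated surfaces whose triangles are isosceles with apex angle $2\pi/p$. The only input there that depends on the triangle shape should be a bounded-distortion (quasiconformal) comparison. Its constant depends on $p$, and that dependence is presumably what the factor $(\deg(H_0)+d_0)^4$ pays for.
\item Alternatively, note that each such triangle is $K_p$-quasiconformally equivalent to an equilateral one, with $K_p\lesssim p$. Then compare via the Dirichlet-energy and extremal-length estimates used in the proof of Theorem~\ref{thm:invariance-uniformization}.
\end{itemize}
I would choose the first route and identify precisely where the moment bound enters.

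\textbf{Step 2: the cell configuration for $\cT_\cM$.} Assign to $v_f$ the cell $H_f:=\bigcup_{H:v_H\sim f}H$, or a sub-piece of it chosen so that areas remain comparable. Keep the original cells for the original vertices.
\begin{itemize}
\item \emph{Ergodicity and translation invariance modulo scaling.} These are preserved because the construction is a deterministic, local, scale- and translation-covariant function of $\cH$.
\item \emph{Connectedness along macroscopic lines.} Every cell meeting a line segment $\ell$ either is an original cell or is a face-cell containing an original cell that meets $\ell$. Moreover, each $v_f$ is adjacent to all of its boundary vertices. So connectivity transfers from $\cH$.
\end{itemize}

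\textbf{Step 3: the moment condition.} This is the main obstacle. Cells may now overlap, and the triangulation's environment is no longer sampled with origin cell $H_0$; it is a mixture of original cells and face-cells. I would express the moment quantity for the new configuration, at its cell containing $0$, by summing over which cell covers the origin. Then use the usual mass-transport/Palm comparison from translation invariance modulo scaling (as in the proofs of \cite{GMSinvariance}) to bound it by an expectation at $H_0$.

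The cell $H_f$ has diameter at most $\diam(\ol H_0)$ and area at least $\area(H_0)$ for any $H_0$ on $f$. Its degree is at most $d_0$, and original degrees become at most $\deg(H_0)+\deg(H_0)\le 2\deg(H_0)$. Redistributing the mass of $H_f$ over its boundary vertices costs one factor of $\deg(H_0)$ from the count of faces at $H_0$. The result is a bound by a constant times~\eqref{eq:thm:CellSystemPack-extension}.

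\textbf{Step 4: transfer the conclusions.} Apply Theorem~\ref{thm:invariance-uniformization} (in the modified form from Step 1) to the new configuration. This gives parabolicity, a matrix $\mathbf{A}$, and a map $\varphi_0$. Restricting~\eqref{eq:thm:invariance-uniform} to the original vertices gives the vertex statement for $M(\cM)$.

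For~\eqref{eq:thm:invariance-uniform-1}, each edge of $\cM$ is an edge of $\cT_\cM$, so its bound is inherited directly. Contracted digons are covered by the bound on the surviving edge. Rotation invariance is preserved by the construction, so under it $\mathbf{A}=I$ as before.
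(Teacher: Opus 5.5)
Your overall reduction is the paper's: add a vertex in each face, keep working on the surface $M(\cM)$ with its polygons subdivided into triangles, and re-run Section~\ref{sec:uniformization}. You are right to discard route 2: $K_p$ is unbounded, and a non-uniformly quasiconformal comparison with $M(\cT_\cM)$ cannot transport a statement about the conformal embedding of $M(\cM)$.

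\textbf{Gap in Step 2.} If you keep every original cell and set $H_f=\bigcup_{H:v_H\sim f}H$ (or any sub-piece of this union), the new cells overlap in positive area. The object is then not a cell configuration in the sense of Definition~\ref{def:cell-config}, so neither Theorem~\ref{thm:invariance-uniformization} nor its proof can be invoked. A mass-transport comparison only repairs the moment bound. The a.s.\ uniqueness of the origin cell, Lemma~\ref{lem:dyadic-resample}, the cell-sum identity behind Lemma~\ref{lem:average-diamdeg4}, and the facts about $\phi_0$ in Lemmas~\ref{lem:topology-cell-square} and~\ref{lem:dirichlet-A} all use that the cells tile the plane with null overlaps.

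The paper instead cuts each cell $H$ into $d_H+1$ pieces of equal area, along lines through $c(H)$ with a uniformly random initial angle. Here $d_H$ counts the faces and edges at $v_H$. One piece becomes the new cell of $v_H$, the others are assigned at random to the incident faces and edges, and each added vertex receives the union of the pieces assigned to it. Each piece is enlarged by $\partial H$ so that line connectivity survives. The loss of area by the factor $d_H+1\le 3\deg(H)$ is exactly where the extra factor $\deg(H_0)$ in~\eqref{eq:thm:CellSystemPack-extension} comes from. Because this splitting uses auxiliary randomness (the random angle also preserves rotation invariance in law), ergodicity modulo scaling no longer follows from determinism as in your Step 2. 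It needs the separate argument of Lemma~\ref{lem-extension-1.3}.

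\textbf{Step 1 mislocates what has to be redone.} The proof of Theorem~\ref{thm:invariance-uniformization} contains no quasiconformal comparison; the triangle shape enters in exactly two places.
\begin{itemize}
\item The semi-flower estimate Lemma~\ref{lem:Koebe-distortion}, which drives every length--area argument in Section~\ref{subsec:regularity}. On $M(\cM)$ you need new semi-flowers, for the original and the added vertices, that tile $M(\cM)$. They must fit together so that chains of adjacent vertices give connected unions, and their conformal images must have outradius at most $C\deg^2$ times inradius. This is why the paper also adds a vertex $v_e$ on every edge: its semi-flowers (corner sectors, small rectangles at edge midpoints, and the central part of each polygon) then sit compactly inside a cone, rhombus, or polygon chart (Lemma~\ref{lem:Koebe-distortion-1}).
\item The energy bound for $\phi_0$ (Lemmas~\ref{lem:Dirichlet-linear} and~\ref{lem:energy-phi0}). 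A linear map on a triangle whose angle at $v_f$ is of order $1/p$ can have Dirichlet energy of order $p$ times the squared image side lengths. You therefore need an ergodic bound on $\diam(H)^2$ weighted by the degrees of the incident faces. The paper gets this by giving the new edges conductances $\deg(f)+2$ and $\deg(f)+1.5$ and arguing as in \cite[Lemma 2.10]{GMSinvariance}. These conductances also make $\cM$, hence $M(\cM)$, a measurable function of the new configuration, which the ergodic arguments require.
\end{itemize}

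Once the cells are split, the degree counting in your Step 3 and the transfer in Step 4 go through.
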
 
 As discussed in~\cite[Section 6]{AHNR18unimodular}, there are {several} other natural ways to define Riemann surfaces associated with planar maps; for instance, one can assign each face of $\cM$ of degree $p$ a disk of circumference $p$,  with boundary split into $p$ arcs  length one corresponding to the edges, and glue adjacent faces according to arc length along their shared edges. We expect that similar conclusions still hold by minor modifications of our proof of Theorem~\ref{thm:invariance-uniformization-extension}.


We comment that the convergence results of random planar maps under discrete conformal embeddings in~\cite{GMS20Voronoi,GMS21tutte,HS19,bertacco2025scaling} are all stated for finite planar maps, whereas Theorems~\ref{thm:CellSystemPack}-\ref{thm:invariance-uniformization-extension} are stated for infinite plane triangulations. In our subsequent work, we will apply Theorems~\ref{thm:CellSystemPack}-\ref{thm:invariance-uniformization-extension} to work on the convergence of random planar maps under circle packing and Riemann uniformization embedding in \emph{both} finite volume and infinite volume setting. More precisely, we will use a version of Theorems~\ref{thm:CellSystemPack}-\ref{thm:invariance-uniformization-extension}  (Theorem~\ref{thm:large}) with a weaker version of the line connectivity in Definition~\ref{def:connectedness}, which will be proved in Section~\ref{subsec:large}.

\subsection{Applications and future directions}\label{subsec:intro-4}

In this section, we discuss some applications and future directions for this work. We start with the following two simple examples, which provide applications of Theorems~\ref{thm:CellSystemPack} and~\ref{thm:invariance-uniformization}.
 We believe the results in these examples are new, except that a finite-volume version of Example \ref{example:Voronoi} for circle packings was proven in \cite{OV23}. 

\begin{example}[Voronoi tessellation of a homogeneous Poisson point process]\label{example:Voronoi}
  Let $\lambda>0$, and consider a Poisson point process $\Lambda$ on $\bbC$ with intensity measure equal to $\lambda$ times the Lebesgue measure on $\bbC$. Let $\cH$ be the cell configuration induced by the Voronoi tessellation associated with $\Lambda$, where the cells are the Voronoi cells, and two cells are connected by an edge in $\cE\cH$ if and only if they have nonempty intersection. Almost surely, there does not exist four points in $\Lambda$ that are jointly on a   circle, so the associated map of $\cH$ is a.s.\ a simple infinite plane triangulation. Since $\Lambda$ is translation invariant in law, one can verify that the conditions in Theorem~\ref{thm:CellSystemPack} and Theorem~\ref{thm:invariance-uniformization} holds. To see that the moment bound~\eqref{eq:thm-inv-circle} holds, 
  one can first prove that $\bbP[\area(H_0)\leq s]\leq cs^{4/3}$ for some constant $c>0$. On the event $$E:=\{\Lambda\cap B(M^{1/4}(m+ni);2M^{1/4})\neq\emptyset, \text{ for all } m,n\in [-10,10]\cap\bbZ \}\cap \{\#\Lambda\cap B(0;100M^{1/4})\leq M^{3/4}\},$$
  we have $\diam(H_0)\leq M$ and $\deg(H_0)\leq M$. On the other hand, one can check that for some constants $c_1,c_2,c_3>0$, we have   $\bbP[E]\geq 1-c_1e^{-c_2M^{c_3}}$, and thus $$\bbE\big[\frac{1}{\area(H_0)^p}\big]<\infty, \ \ \bbE\big[\diam(H_0)^q\big]<\infty, \ \ \bbE\big[\deg(H_0)^q\big]<\infty,  \ \ \text{ for $1\leq p<4/3$ and every $q>0$},$$
  which further implies~\eqref{eq:thm-inv-circle}. Therefore by Theorem~\ref{thm:CellSystemPack} and Theorem~\ref{thm:invariance-uniformization}, the circle packing of $\cH$ and the Riemann uniformization embedding of $\cH$ must be close to $\cH$ on a large scale.
  \end{example}


  \begin{example}[Percolation clusters]
      Consider a hexagon lattice. Uniformly pick some point $w$ in some hexagon and shift the lattice by $w$. Consider an independent Bernoulli percolation  with probability $p$ on the edges of the dual lattice. Let $\cH_0$ be the cell configuration where the cells are given by the unions of the (closed) hexagons in each cluster. Assume $p$ is subcritical such that there is a.s.\ no infinite cluster. For each cell $H$ in $\cH_0$, we join $H$ with the hexagons that are separated from infinity by $H$ and let $\cH$ be the cell configuration we obtain. Here for each connected component of $H_1\cap H_2$ with $H_1\neq H_2$ and $H_1\cap H_2\neq\emptyset$, we connect $H_1$ and $H_2$ by an edge crossing this component of $H_1\cap H_2 $,  defining an associated planar map   with possibly multiple edges. Then the associated map of $\cH$ is an infinite plane triangulation.  Using the exponential decay over $n$ for the probability that 0 is connected to hexagons intersecting $\partial B(0;n)$, translation invariance, and a union bound, one can check that the moment bound~\eqref{eq:thm-inv-circle} holds, and $\cH$  satisfies all the conditions in Theorem~\ref{thm:invariance-uniformization}, as $\cH$ is invariant in law under a rotation of $2\pi/3$. Therefore by Theorem~\ref{thm:invariance-uniformization}, the Riemann uniformization embedding of   $\cH$ is close to $\cH$ on a large scale. If we further collapse the double edges, in the sense that for every cell $H\in \cH$ that is separated from infinity by different cells $H_1$ and $H_2$, we independently join $H$ into $H_1$ or $H_2$ with equal probability, then we get a cell configuration $\wh\cH$ whose associated map is a simple infinite plane triangulation. In this case, by Theorem~\ref{thm:CellSystemPack} and Theorem~\ref{thm:invariance-uniformization}, $\wh\cH$ is close to its circle packing and Riemann uniformization embedding on a large scale. 
  \end{example}

An important motivation this work is that we, in future work, will apply the results of this paper to planar maps arising in the study of Liouville quantum gravity.

\subsection{Outline}\label{subsec:intro-5}

In this section we provide a moderately detailed overview of the content of the rest of the paper,  as well as the outline of the proof of Theorems~\ref{thm:CellSystemPack} and~\ref{thm:invariance-uniformization}. 

\medskip

\noindent
In \textbf{Section}~\ref{sec:pre}, we 
 {present various preliminaries}.
In Section~\ref{pre:triangulation}, we review various properties of circle packings and the Riemann surface associated with planar maps. 
In Section~\ref{subsec:ergodic-cell-system}, we gather some results on the ergodic averages of the ergodic modulo scaling cell configuration $\cH$ in the framework of~\cite[Section 2.2]{GMSinvariance}. We will repeatedly use the following two facts:  the maximum diameter of the cells which intersect a large origin-containing square is typically of smaller order than its side length (Lemma~\ref{lem:no-macroscopic-cell}), and the sum of $\diam(H)^2\deg(H)^4$ over all the cells $H$ intersecting a large square $S$ which is not too far from origin is bounded from above by a deterministic constant times the area of $S$ (Lemma~\ref{lem:average-diamdeg4}). As a consequence, similarly as in ~\cite[Section 2.3]{GMSinvariance}, we start from a function $\phi_0:M(\cH)\to\bbC$ which serves as the  {(non-injective)} a priori embedding of $M(\cH)$ and prove some of its topological properties.
 {Much care needs to be taken with the exact definition of $\phi_0$ and we choose to define it by mapping each vertex of $M(\cH)$ to a uniform point in the corresponding cell of $\cH$,}
and then extend to every equilateral triangle in $M(\cH)$ via linear interpolation.    In Section~\ref{subsec:pre-invprinciple}, we recap the convergence of random walk on $\cH$ to Brownian motion from~\cite{GMSinvariance}, and use it to prove that for each large square $S$, there is \emph{some} planar  embedding of $\cH(S)$ that is close to $\cH(S)$ (Proposition~\ref{prop:almost-planarity}).

\medskip

\noindent In \textbf{Section}~\ref{sec:circle-packing}, we prove Theorem~\ref{thm:CellSystemPack}. 
 {In Section~\ref{subsec:pf-Cell-system-Pack} we} prove that if the random walk on two different embeddings of the same  {infinite} planar map both converge to  {planar} Brownian motion, then the two embeddings must be close. In order to see this, we consider the wrapping around events for the random walk, and use this to prove that under appropriate scaling, the natural homeomorphisms induced by the embeddings are tight on compact sets. Then under the image of the limiting homeomorphism, the trace of a 2D Brownian motion still has the law of the trace of a 2D Brownian motion. In Proposition~\ref{prop:BM-f} we prove that  {any} limiting homeomorphism must be a linear transform. 
 By the almost planarity  {result}
 from Proposition~\ref{prop:almost-planarity} it follows that in order to conclude the proof, it is sufficient to prove that the random walk with Dubejko weights on the cell configuration $\cH$ and on the circle packing $\cP$ of $\cH$ both converge to planar Brownian motion.  

To prove that the random walk on $\cH$ with Dubejko weights converges to Brownian motion, which is handled in Section~\ref{subsec:Covergence-RW-cell-Dubejko},  {a natural first attempt is to try} 
to directly use the convergence of random walks on $\cH$ with generic conductance from~\cite{GMSinvariance} (Theorem~\ref{thm:GMSinvariance}). To use this result, one needs to check that the conductance satisfies the translation invariance modulo scaling property as in Definition~\ref{def:translation-modulo-scaling}, and the corresponding moment bounds~\eqref{eq:GMSinvariance-moment}. For the first condition, we use Rodin and Sullivan's Ring Lemma~\cite{RodinSullivan87} along with the uniqueness of circle packing for triangulations to prove that if triangulations $\cT_n$ converges to an infinite plane triangulation $\cT$ in Benjamini-Schramm topology, then the radii of the disks in the circle packing of $\cT_n$ also converges to the radii of the disks in the circle packing of $\cT$ (Proposition~\ref{Prop:circle-conv}).  For the moment bound condition, the first half of~\eqref{eq:GMSinvariance-moment} is trivial since the Dubejko conductance is always bounded from above by 1. For the second half of~\eqref{eq:GMSinvariance-moment} involving the inverse conductance, if we directly use the Ring Lemma, which bounds the ratio of the radii of tangent disks in a circle packing in terms of degree, then the inverse conductance is bounded from above by  {$\exp(C\deg(H_0))$ for some $C>0$.}  {It is not possible to obtain the second half of~\eqref{eq:GMSinvariance-moment} using this bound; in order to do this we would have needed a stronger version of \eqref{eq:thm-inv-circle} which would have been hard to verify in the relevant applications.} 

To circumvent the aforementioned issue, in Section~\ref{subsec:ringlemma} we establish a variant of the Ring Lemma (Lemma~\ref{lem:circle}), where for three mutually tangent disks in a circle packing, the ratio of between the radii of the second smallest and the smallest disks is bounded above by 100 times the square of the degree. The proof of Lemma~\ref{lem:circle} is based on a 
 {geometric argument and} 
Descartes' theorem (Lemma~\ref{thm:Descartes}). With Lemma~\ref{lem:circle}, we modify the proof of Theorem~\ref{thm:GMSinvariance} from ~\cite{GMSinvariance}, where  every edge $\{H,H'\}$ with large inverse Dubejko conductance can be replaced by a path with total inverse conductance controlled by $\deg(H)^2$. This is carried out in Proposition~\ref{prop:inv-circle}, where we are able to reduce the exponential degree bound from the Ring Lemma to {a polynomial bound which allows us to apply~\eqref{eq:thm-inv-circle}.} 
 {Proposition~\ref{prop:inv-circle} is a variant of Theorem~\ref{thm:GMSinvariance} from~\cite{GMSinvariance} for the case where the conductances are given by the Dubejko weights and where the moment bound~\eqref{eq:GMSinvariance-moment} has been replaced by~\eqref{eq:thm-inv-circle}.}

It remains to show that the random walk with Dubejko weights  on the circle packing $\cP$  converges to Brownian motion, which is carried out in Section~\ref{subsec:Covergence-RW-CP-Dubejko}. Following work by Bou-Rabee and Gwynne~\cite{bougwynne2024random} it is sufficient to check that up to scaling, there are no macroscopic disks in $\cP$. In~\cite{GDN19VEL}, Gurel-Gurevich, Jerison and Nachmias proved that the maximal diameter of the disks in a circle packing can be controlled by the inverse of Cannon's vertex extremal length~\cite{CannonVEL}. In Proposition~\ref{prop:finite-VEL}, using this and the averaging over $\diam(H)^2$ from Lemma~\ref{lem:average-diamdeg4}, we prove that for any number $q_0$, there are at most finitely many cells with associated vertex extremal length at most $q_0$, which eventually leads to the convergence of the random walk on $\cP$.

\medskip

\noindent In \textbf{Section}~\ref{sec:uniformization} we prove Theorem~\ref{thm:invariance-uniformization}.  {In~\cite{GMSinvariance}, Theorem~\ref{thm:GMSinvariance} is proven by studying discrete harmonic functions on the graph defined by $\cH$. Our proof is partially following their strategy, with the difference that we will instead be working with (continuum) harmonic functions defined intrinsically on the manifold $M(\cH)$.}

 {In Section~\ref{subsec:regularity} we establish a number of regularity estimates on the Riemann surface $M(\cH)$ under a conformal map $\varphi$.}
In particular, in Lemma~\ref{lem:area/length-1} and Lemma~\ref{lem:area/length-2}, we prove exact estimates on the continuity of  the mapping between $\cH$ and $\varphi(M(\cH))$. In order to prove this, we assign each vertex $H$ a semi-flower $P_H \subseteq M(\cH)$ (see Figure~\ref{fig:Koebe-distortion}) where the union of the semi-flowers is equal to the whole surface. In Lemma~\ref{lem:Koebe-distortion}, we use the Koebe's distortion theorem to prove that, the ratio between the outradius and inradius of the conformal image   $\varphi(P_H)$ is bounded by a constant times $\deg(H)^2$.  Another ingredient for the proof of Lemma~\ref{lem:area/length-1} and Lemma~\ref{lem:area/length-2} is a length-area argument, which was introduced by He and Schramm~\cite{HeSchramm} in the context of circle packings and originated from the Length-Area Lemma in~\cite{RodinSullivan87}. The terms in our length-area argument will be controlled by the average of $\diam(H)^2\deg(H)^4$ as in Lemma~\ref{lem:average-diamdeg4}. Given Lemma~\ref{lem:area/length-1} and Lemma~\ref{lem:area/length-2}, in Lemma~\ref{lem:parabolic} we prove that the surface $M(\cH)$ is a.s.\ parabolic, and in Lemma~\ref{lem:cell-polynomial-growth} we prove that the mapping between $\cH$ and $\varphi(M(\cH))$ has polynomial growth.

 {In Section~\ref{subsec:GMS2.3}} we construct a harmonic function $\phi_\infty:M(\cH)\to\bbC$ using the function $\phi_0$ defined in Section~\ref{subsec:ergodic-cell-system}.
For each $m\in\bbN$ we let $\phi_m$ be equal to $\phi_0$ on the boundary of the $\phi_0^{-1}$ image of certain large squares,  and harmonic elsewhere. 
The Dirichlet energy of the piecewise linear function $\phi_0$ can be controlled via the average in Lemma~\ref{lem:average-diamdeg4}, and the  Dirichlet energy of $\phi_m-\phi_{m'}$ is small when $m,m'$ is large (Lemma~\ref{lem:dirichlet-B}). 
By standard properties of harmonic functions, this implies that the $\phi_m$'s are Cauchy in probability in the local uniform topology, and the limit $\phi_\infty$ of $\phi_m$ exists.

In Section~\ref{subsec:GMS2.4}, we prove that the harmonic function $\phi_\infty$ is close to $\phi_0$ on a large scale (Proposition~\ref{prop:harmonic-sublinear}). We first follow the same outline as~\cite[Section 2.4]{GMSinvariance} to prove that $\phi_\infty$ is close to $\phi_0$ on a   set of line segments{, where in our setting the line segments are contained in the image of a suitable subset of $M(\cH)$ under a conformal map}. To prove the closeness of $\phi_\infty$ to $\phi_0$ on the complementary set, the   oscillation of $\phi_\infty$  in each connected component  {of the complement} is equal to the oscillation of $\phi_\infty$ on the boundary of that component thanks to the maximum principle for harmonic functions, whereas the oscillation of $\phi_0$ in each connected component can be bounded by the continuity from Lemma~\ref{lem:area/length-2}. This shows that $|\phi_0-\phi_\infty|$ is also small in every connected component of the complementary set, which concludes the proof of Proposition~\ref{prop:harmonic-sublinear}.

 In Section~\ref{subsec:pf-thm:invariance-uniformization}, we conclude our proof of Theorem~\ref{thm:invariance-uniformization} by showing that $\phi_\infty$ has to be the composition of   a deterministic linear transform, a random rotation  {and a scaling}.   
Using the polynomial growth from Lemma~\ref{lem:cell-polynomial-growth},  the harmonic function  $\phi_\infty\circ\varphi^{-1}$ on $\bbC$ has polynomial growth and must be a polynomial (Lemma~\ref{lem:harmonic-polynomial}). Furthermore, in Lemma~\ref{lem:polynomial-C}, we prove that this polynomial must have degree 1 because if this is not the case, then one can find two cells that are close in $\cH$ and have  macroscopic distance in $\varphi(M(\cH))$, which contradicts with the continuity as in Lemma~\ref{lem:area/length-1} and Lemma~\ref{lem:area/length-4}. In Lemmas~\ref{lem:polynomial-ergodic-A},  ~\ref{lem:polynomial-ergodic-B} and  ~\ref{lem:polynomial-ergodic-C},  we further use the ergodic theorem for the cell configuration (Lemma~\ref{lem:ergodic-cell-system}) to show that, this linear map $\phi_\infty\circ\varphi^{-1}$ can be written as a composition of a deterministic linear transform $\mathbf{A}$, a random rotation {and a scaling}, and in Lemma~\ref{lem:polynomial-ergodic-D}, we prove that if we further assume the rotation invariance in law of $\cH$ as in Definition~\ref{def:rotation-inv}, then $\mathbf{A}$ must be a rotation. This finishes the proof of Theorem~\ref{thm:invariance-uniformization}.  Our proof in Section~\ref{subsec:pf-thm:invariance-uniformization} works under quite mild assumptions and gives a general method for proving that a random harmonic function is of a particular form (namely, the composition of a deterministic linear transformation and a random rotation and scaling) as long as it has polynomial growth, is almost injective, and the averages of the Dirichlet energy over large regions are approximately a deterministic constant.

\medskip

\noindent In \textbf{Section}~\ref{subsec:extension}, we extend our results to general planar maps by proving Theorem~\ref{thm:CellSystemPack-extension} and Theorem~\ref{thm:invariance-uniformization-extension}.
In Section~\ref{subsec:extension-circ}, we prove  Theorem~\ref{thm:CellSystemPack-extension}, where we construct a cell configuration $\wt\cH$ in the framework of Theorem~\ref{thm:CellSystemPack} from  $\cH$. In Section~\ref{subsec:extension-unif} we prove Theorem~\ref{thm:invariance-uniformization-extension}. We first add a vertex on each edge and face of the planar map $\cM$, and further add edges to turn $\cM$ into an infinite plane triangulation.  We  define a different version of semi-flowers for the vertices and the added vertices in Lemma~\ref{lem:Koebe-distortion} and prove that under conformal maps, the ratio of the outradius and inradius of the semi-flowers can be bounded by the square of the degree, as in Lemma~\ref{lem:Koebe-distortion-1}. We also split the cell configuration $\cH$ to get a new cell configuration whose associated map is the new infinite triangulation. This together with the same arguments in Section~\ref{sec:uniformization} gives the proof of Theorem~\ref{thm:invariance-uniformization-extension}. Finally, in Section~\ref{subsec:large}, we prove   Theorem~\ref{thm:large}, a version of Theorems~\ref{thm:CellSystemPack}-\ref{thm:invariance-uniformization-extension} under a weaker version of the line connectivity property.  

\medskip
\noindent\textbf{Acknowledgements.}  {We thank Ewain Gwynne for mentioning to us at an earlier stage of the project that Proposition \ref{prop:max-diam-0-cone} hold under the assumptions of \cite{GMSinvariance}, which weakened our initial assumptions.} 
N.H.\ was supported by grant
DMS-2246820 of the National Science Foundation and the Simons Collaboration Grant \emph{Probabilistic Paths to Quantum Field Theory}.

\section{Preliminaries}\label{sec:pre}

\subsection{Planar maps, circle packings and Riemann surfaces}\label{pre:triangulation}

We follow the papers~\cite{BenjaminiSchramm01, ASUIPT, GRRiemann} for the definition of planar maps and triangulations. 
A planar map $\cM$  is a proper ({in particular,} without edge-crossings) embedding of a connected planar (multi)graph $\cG$ in the two-dimensional sphere $\wh{\bbC}=\bbC\cup\{\infty \}$,  where we identify two planar maps if one can be obtained from the other via an orientation-preserving homeomorphism of the sphere which maps vertices (resp.\ edges, faces) to vertices (resp.\ edges, faces). For convenience we  abbreviate ``equivalence class of embedded planar maps" to ``planar map". We will always assume that $\cM$ is locally finite, i.e., each vertex and each face has finite degree.  The face set $\cF\cM$ of $\cM$ is  the set of connected components of $\wh\bbC\backslash \cG$, and the support $S(\cM)\subseteq \wh\bbC$ is defined to be the union of $\cM$ and a subset of the faces in $\cF\cM$. In this paper we assume that all the faces in $S(\cM)$ are bounded in $\bbC$. We call a vertex $v\in\cV\cM$  an interior (resp.\ boundary) vertex of $\cM$ if $v$ is in the interior (resp.\ on the boundary) of the support $S(\cM)$. We say $\cM$ has \emph{{whole-}plane topology} if the underlying graph is one-ended and $\cM$ has no boundary vertices, and we say $\cM$ \emph{has disk topology}, if $\cM$ is finite and the support $S(\cM)$ is homeomorphic to a closed disk.  A \emph{rooted planar map} $(\cM, e)$ is a planar map $\cM$ together with an oriented edge $e = (x,y)$ of $\cM$, called the root edge. The vertex $x$ is the \emph{root vertex}. 

For a planar map $\cM$ on the sphere $\wh\bbC = \bbC\cup\{\infty\}$, we call a connected component of $\wh\bbC\backslash \cM$ a triangular face if its boundary is consisted of precisely three edges of $\cM$. An  \emph{embedded (planar) triangulation} $\cT$ is a finite or infinite connected planar map $\cM$ in the sphere, such that all the faces in the support $S(\cM)$ are triangular faces.
 Following the terminology in~\cite[Section 1.2]{ASUIPT}, we call $\cT$ a \emph{type II triangulation} if $\cT$ has no loops but possibly multiple edges, and call $\cT$ a \emph{type III triangulation} if $\cT$ has no loops and no multiple edges.  
We call a type II (resp.\ type III) triangulation $\cT$ an \emph{infinite plane triangulation} (resp.\ \emph{simple infinite plane triangulation}), if $\cT$ is a planar map with  {whole-}plane topology. {We call a type II (resp.\ type III) triangulation $\cT$ a (finite) \emph{disk triangulation} (resp.\ \emph{simple disk  triangulation}), if $\cT$ is a planar map with disk topology. 

Now we recall some fundamental results on circle packing; see~\cite{circlepackingbook} for further information on this topic. For a planar map $\cM$ with vertex set $\cV\cM$, we call a collection $\cP = (D_v:v\in\cV\cM)$ of closed disks a circle packing of $\cM$ if (i) each vertex in $\cM$ is assigned a unique disk $D_v\in\cP$, 
(ii) all the disks in $\cP$ has disjoint interiors, and 
(iii) two disks $D_v$ and $D_{v'}$ are tangent if and only if the two vertices $v$ and $v'$ are connected by an edge. The Koebe-Andreev-Thurston Circle Packing Theorem asserts that, for any finite simple {(i.e., with no multiple edges or loops)} planar map $\cM$, there exists a circle packing of $\cM$. Moreover, if $\cM$ is a simple disk triangulation, then up to M\"{o}bius transforms, there exists a unique circle packing of $\cM$ in $\bbD$ where the disks for the boundary vertices are   tangent to $\partial\bbD$. As explained in~\cite[Section 8]{circlepackingbook}, the existence and uniqueness of circle packing also hold for simple infinite triangulations. In this setting there are two regimes, where there is a circle packing $\cP$ such that the union of the disks in $\cP$ and the interstices (bounded connected components of $\bbC\backslash \cup_{v}D_v$)   is equal to the unit disk $\bbD$ or the whole plane $\bbC$. The planar map $\cM$ is called circle packing hyperbolic in the first case and circle packing parabolic in the second case. By~\cite[Theorem 1.1]{HeSchramm-hy-pa}, to determine whether $\cM$ is circle packing hyperbolic or circle packing parabolic, one can study the recurrence of the simple walk on $\cM$ as below. 

\begin{theorem}[\cite{HeSchramm-hy-pa}]\label{thm:circle-packing-KAT}
    Let $\cT$ be a simple infinite plane triangulation. If the simple random walk on $\cT$ is recurrent, then $\cT$ is circle packing parabolic. Moreover, if $\cT$ has bounded degree and the simple random walk on $\cT$ is transient, then $\cT$ is circle packing hyperbolic.
\end{theorem}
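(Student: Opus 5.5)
We would prove Theorem~\ref{thm:circle-packing-KAT} by passing through Cannon's vertex extremal length. For a set $\Gamma$ of paths in $\cT$ and a vertex metric $\rho:\cV\cT\to[0,\infty)$, let $L_\rho(\Gamma)=\inf_{\gamma\in\Gamma}\sum_{v\in\gamma}\rho(v)$ and set $\VEL(\Gamma)=\sup_\rho L_\rho(\Gamma)^2/\sum_v\rho(v)^2$. The edge extremal length $\mathrm{EEL}(\Gamma)$ is defined in the same way with edge metrics. Fix a vertex $v_0$ and let $\Gamma_\infty$ be the set of one-sided infinite paths starting at $v_0$. We call $\cT$ \emph{VEL-parabolic} if $\VEL(\Gamma_\infty)=\infty$.

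The theorem reduces to two equivalences: simple random walk recurrence corresponds to $\mathrm{EEL}(\Gamma_\infty)=\infty$, and circle packing parabolicity corresponds to $\VEL(\Gamma_\infty)=\infty$.

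\textbf{Step 1 (edge extremal length and recurrence).} By the Nash-Williams/Royden duality between effective resistance and extremal length, the effective resistance from $v_0$ to $\infty$ equals $\mathrm{EEL}(\Gamma_\infty)$. So the walk is recurrent if and only if $\mathrm{EEL}(\Gamma_\infty)=\infty$.

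\textbf{Step 2 (comparing the two extremal lengths).} Given an edge metric $m$, define $\rho(v)=\max_{e\ni v}m(e)$. Then
\[
\sum_v\rho(v)^2\le\sum_v\sum_{e\ni v}m(e)^2=2\sum_e m(e)^2 .
\]
Along any path, each edge is charged to its starting vertex, so $\sum_{v\in\gamma}\rho(v)\ge\sum_{e\in\gamma}m(e)$. Hence $\VEL\ge\mathrm{EEL}/2$ with no degree assumption, and recurrence implies VEL-parabolicity.

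For the converse, assume degrees are bounded by $\Delta$. Put $m(e)=\rho(u)+\rho(w)$ for $e=\{u,w\}$. Then $\sum_e m(e)^2\le 2\Delta\sum_v\rho(v)^2$, and path lengths do not decrease. Therefore $\mathrm{EEL}\ge\VEL/(2\Delta)$, and transience together with bounded degree implies $\VEL(\Gamma_\infty)<\infty$.

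\textbf{Step 3 (geometric equivalence).} This is the main step. We would use the existence of a packing $\cP$ whose carrier is either $\bbC$ or $\bbD$; these two cases are exclusive, by Schramm's uniqueness and the fact that $\bbC$ and $\bbD$ are not conformally equivalent.

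If the carrier is $\bbD$, take $\rho(v)=\diam(D_v)$. Then $\sum_v\rho^2\le 4/\pi\cdot\pi=4$. Every path in $\Gamma_\infty$ leaves each compact subset of $\bbD$, since $\cT$ is locally finite and the packing is locally finite in $\bbD$. So the union of the disks along such a path is a connected set joining $D_{v_0}$ to $\partial\bbD$, and its $\rho$-length is at least $\dist(D_{v_0},\partial\bbD)>0$. Thus $\VEL(\Gamma_\infty)<\infty$.

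If the carrier is $\bbC$, we run the logarithmic annulus argument. Normalize $D_{v_0}\subseteq B(0;1)$ and take
\[
\rho(v)=\int_{D_v}\frac{|dz|}{|z|}\,\mathbbm 1_{1\le|z|\le R}
\]
in the sense of the radial-projection length; a simpler variant is $\rho(v)=\diam(D_v)/\dist(0,D_v)$ truncated to the annulus $\{1\le|z|\le R\}$. We expect to show $L_\rho\ge\log R$ and $\sum_v\rho^2\lesssim\log R$. The delicate point is that disks meeting the annulus may be large. We would handle this using the fact that $\sum\diam^2$ over disks intersecting $\{2^k\le|z|\le2^{k+1}\}$ is $O(4^k)$ apart from $O(1)$ large disks, which can only straddle boundedly many dyadic scales. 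This gives $\VEL\gtrsim\log R\to\infty$. Consequently the carrier is $\bbC$ if and only if $\VEL(\Gamma_\infty)=\infty$.

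\textbf{Conclusion.} Combining the steps: recurrence gives $\mathrm{EEL}(\Gamma_\infty)=\infty$ by Step 1, hence $\VEL(\Gamma_\infty)=\infty$ by Step 2, hence the packing is parabolic by Step 3. In the other direction, bounded degree and transience give $\VEL(\Gamma_\infty)<\infty$, hence the packing is hyperbolic.

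The main obstacle is the parabolic half of Step 3, namely controlling the large disks in the annulus estimate. If that fails, the fallback is He--Schramm's alternative: construct a hyperbolic packing directly from a finite-VEL metric via the discrete uniformization for disk triangulations, with exhaustion and Ring Lemma (Lemma~\ref{lem:Ring-Lemma}) compactness.
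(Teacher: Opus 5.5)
The paper gives no proof of this statement; it quotes it from He--Schramm, and your architecture is essentially theirs: resistance $=\mathrm{EEL}$, $\VEL\ge\mathrm{EEL}/2$ in general, $\mathrm{EEL}\ge\VEL/(2\Delta)$ under bounded degree, then CP type $\Leftrightarrow$ VEL type. Steps 1 and 2 are fine, provided $\Gamma_\infty$ consists of paths that eventually leave every finite set (otherwise a path bouncing on one edge has finite length).

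\textbf{The gap: the disk case of Step 3.} Your metric $\rho(v)=\diam(D_v)$, with $L_\rho(\Gamma_\infty)\ge\dist(D_{v_0},\partial\bbD)$ and $\area(\rho)\le4$, shows $\VEL(\Gamma_\infty)\ge\dist(D_{v_0},\partial\bbD)^2/4$. That is a \emph{lower} bound on a supremum, and a vacuous one, since $\rho=\mathbbm{1}_{v_0}$ already gives $\VEL\ge1$. To prove $\VEL<\infty$ you must show that \emph{every} metric $\rho$ admits a path to infinity of $\rho$-length $\lesssim\area(\rho)^{1/2}$. This is exactly the implication the first half of the theorem needs, and that is the only half used in the paper (Corollary~\ref{cor:parabolic}). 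Recurrence gives $\VEL=\infty$, and only ``carrier $\bbD\Rightarrow\VEL<\infty$'' rules out the disk.

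A fix is to average over geometric paths, as in the proof of Lemma~\ref{lem:VEL-packing}, but with radial segments instead of circles.
\begin{itemize}
\item Center $D_{v_0}$ at $0$ with radius $r_0$, and let $\gamma_\theta$ be the vertices whose disks meet $\{te^{i\theta}:0\le t<1\}$.
\item Consecutive vertices of $\gamma_\theta$ are adjacent, because the segment crosses interstices bounded by three mutually tangent disks. Also $\gamma_\theta\in\Gamma_\infty$ by local finiteness of the packing in $\bbD$.
\item A disk $D_v$ with $v\ne v_0$ is met for a set of angles of measure at most $\pi r_v/|z_v|$, and $\sum_{v\ne v_0}r_v^2/|z_v|^2\le\frac4\pi\int_{r_0\le|z|\le1}|z|^{-2}\,dA=8\log(1/r_0)$.
\item Cauchy--Schwarz then gives $\min_\theta L_\rho(\gamma_\theta)\le(1+\sqrt{2\log(1/r_0)})\area(\rho)^{1/2}$, so $\VEL(\Gamma_\infty)<\infty$.
\end{itemize}

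\textbf{The plane case.} Here you argue in the right direction, but the justification is false. A single disk can straddle arbitrarily many dyadic scales: a disk at distance $2^k$ from $0$ with radius $2^{k+N}$ meets about $N$ dyadic annuli. So the squared contributions of large disks to your metric need not be $O(\log R)$. The missing ingredient is local finiteness in $\bbC$: only finitely many disks meet any compact set. Hence you can choose $a_1<a_2<\cdots$, with $D_{v_0}\subseteq B(0;a_1)$, such that no disk meets two of the annuli $\{a_i\le|z|\le2a_i\}$. On each annulus, the metric $\min\{\diam D_v,a_i\}/a_i$ (on disks meeting it) has crossing length at least $1$. Its area is bounded by an absolute constant, since at most $36$ disks of diameter $\ge a_i$ meet $B(0;2a_i)$. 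Summing $n$ of these metrics gives $\VEL\gtrsim n$.

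\textbf{Existence and exclusivity of the two regimes.} You take as input the existence of a packing with carrier $\bbC$ or $\bbD$. The paper also treats this as background, but for unbounded degree it is proved in the same He--Schramm paper. Exclusivity follows directly from the corrected Step 3, or from Schramm's rigidity up to M\"obius maps. It does not follow simply from $\bbC$ and $\bbD$ being conformally inequivalent.
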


\begin{figure}
    \centering
    \includegraphics[scale=0.55]{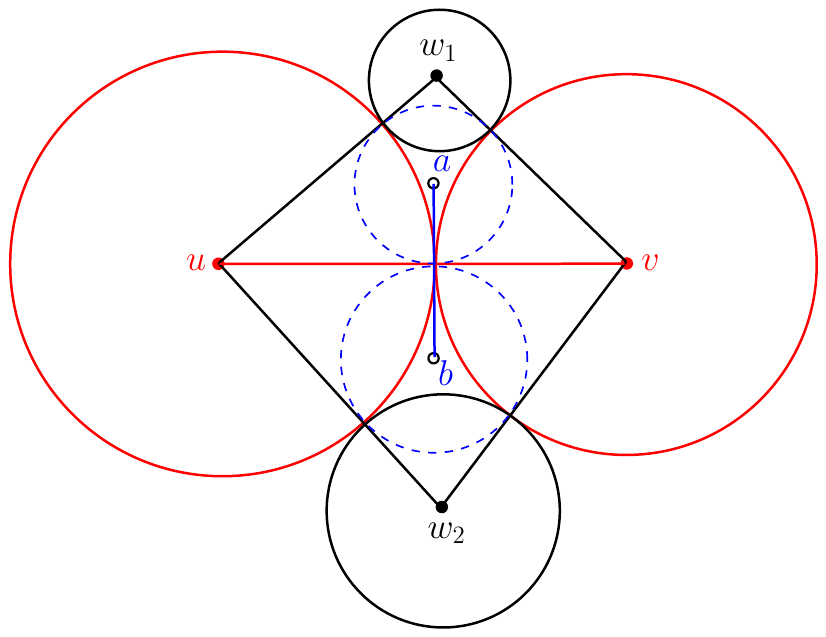}
    \caption{Two adjacent vertices $u,v$. The Dubejko conductance of the edge $uv$ is defined to be $\fc(u,v) = \frac{|o_{a}-o_{b}|}{|o_u-o_v|}=\frac{\sqrt{r_ur_v}}{r_u+r_v}\cdot\big(\sqrt{\frac{r_{w_1}}{r_{w_1}+r_u+r_v}}+\sqrt{\frac{r_{w_2}}{r_{w_2}+r_u+r_v}}\big)$.}
    \label{fig:Dubejko}
\end{figure}

One can also consider random walks on circle packings. Consider the circle packing $\cP$ of some simple infinite triangulation $\cT$. Let $u,v$ be adjacent vertices in $\cT$, and $w_1,w_2$ be the vertices  on the two faces of $\cT$ containing the edge $uv$  other than $u$ and $v$. Let $o_u,o_v,o_{w_1},o_{w_2}$ be the centers of the circles for $u,v,w_1,w_2$, and $o_a,o_b$ be the centers of circles which are inscribed to the triangles $o_uo_vo_{w_1}$ and $o_uo_vo_{w_2}$, respectively. The Dubejko conductance of the edge $uv$ is defined to be $\fc(u,v) = \frac{|o_{a}-o_{b}|}{|o_u-o_v|}$; see Figure~\ref{fig:Dubejko} for an illustration. If we write $r_u,r_v,r_{w_1},r_{w_2}$ for the radii of the disks $D_u,D_v,D_{w_1},D_{w_2}$, then 
\begin{equation}\label{eq:Dubejko}
    \fc(u,v) = \frac{\sqrt{r_ur_v}}{r_u+r_v}\cdot\bigg(\sqrt{\frac{r_{w_1}}{r_{w_1}+r_u+r_v}}+\sqrt{\frac{r_{w_2}}{r_{w_2}+r_u+r_v}}\bigg).
\end{equation}
One can infer from~\eqref{eq:Dubejko} that $\fc(u,v)<\frac{1}{2}$ for any edge $uv$ using $2\sqrt{r_ur_v}\leq r_u+r_v$. Furthermore, by Dubejko's theorem (see e.g.~\cite[Theorem 18.3]{circlepackingbook}), for a circle packing $\cP$ of a simple infinite triangulation, if we consider the random walk $(Z_n)_{n\geq0}$ moving between the centers of the circles in $\cP$, then $(Z_n)_{n\geq0}$ is a martingale.

Next we define the Riemann surfaces associated with general planar maps, following the presentation in~\cite[Section 6]{AHNR18unimodular}. Let $\cM$ be a planar map with  {whole-}plane topology. For  each face $f$ of $\cM$ of degree 1, i.e., $f$ is enclosed by a single edge of $\cM$, we remove this edge and the face from $\cM$. For each face $f$ of $\cM$ of degree 2, i.e., $f$ is formed by a double edge of $\cM$, we collapse this double edge into a single edge. Now we assume that each face of $\cM$ has degree at least 3.  We associate to $\cM$ a metric space $(M(\cM), d_M)$, by regarding each face of $\cM$ of degree $p$ as regular $p$-gon of side length 1 and defining the distance between two points as the length of the shortest path joining them. Equivalently, $M(\cM)$ is obtained by gluing regular $p$-gons of unit side length according to their adjacency pattern in $\cM$. The metric space $M(\cM)$ can be equipped with a compatible Riemann surface structure by defining coordinate charts as follows.    For a face $f$, we use the regular polygon associated with $f$, viewed as a subset of the plane,  as the chart, and use the identity map as the projection.  For each edge $e$ of $\cM$, we define an open neighborhood of the interior of $e$ in $M(\cM)$ by adding to $e$ the two triangles formed by the endpoints of $e$ and the centers of the two faces adjacent to $e$. To define a coordinate chart on this neighborhood, we simply place the two triangles next to each other in the plane. The reason to take only a {subset of the faces} 
 and not the entire face{s} is that this chart is well defined even if both sides of $e$ are incident to the same face. For each vertex $v$ of $\cM$, we define an open neighborhood of $v$ in $M(\cM)$ similarly by
intersecting the corners of the faces adjacent to $v$ with open discs of radius 1/2 centered at $v$. We define a chart on this neighborhood by first laying the corners
out sequentially around the origin (with possible overlapping), and then applying the function $z\mapsto z^{2\pi/\theta(v)}$,  suitably interpreted to get an injective map into the plane. The radius is chosen so that this definition remains valid when multiple corners of the same face are located at the same vertex.
We will also work with the setting where $\cM$ is a disk triangulation;  in this case, the boundary vertices of degree $n$ can be treated similarly using the map $z\mapsto z^{3/(n-1)}$.


Following the uniformization theorem, for a   planar map $\cM$ with whole-plane topology, the surface $M(\cM)$ is either conformally equivalent to the complex plane and thus \emph{parabolic}, or conformally equivalent to the unit disk and thus \emph{hyperbolic}. As pointed out by~\cite{GRRiemann,AHNR18unimodular}, an equivalent characterization is that the Brownian motion on the surface $M(\cM)$ is either recurrent (in the parabolic case) or transient (in the hyperbolic case). A function $f:M(\cM)\to\bbC$ is harmonic if and only if $f\circ\varphi^{-1}$ is a harmonic function in usual sense, where $\varphi:M(\cM)\to\bbC$ is some conformal map.

\subsection{Ergodic averages of cell configurations using dyadic systems}\label{subsec:ergodic-cell-system}

In this subsection we gather some results on the ergodic averages of the cell configuration $\cH$ in the framework of~\cite[Section 2.2]{GMSinvariance}. We start with  the definition of dyadic systems and uniform dyadic systems as in~\cite{GMSinvariance}.  For a square $S$, we write $|S|$ for the side length of $S$. A \emph{dyadic child} of $S$ is one of the 4 squares with half side length of $S$ whose corners include one corner of $S$ and the center of $S$. A \emph{dyadic parent} of $S$ is one of the four squares with $S$ as a dyadic
child. Then each square has 4 dyadic children and 4 dyadic parents. A dyadic descendant (resp.\ dyadic ancestor) of $S$ is a square which can be obtained from $S$ by iteratively choosing dyadic children (resp.\ parents) finitely many times. A \emph{dyadic system} is a collection of closed squares with their sides parallel to the $x$-axis or $y$-axis with the following properties:
\begin{enumerate} 
    \item If $S\in\cD$, then each of the four dyadic children of $S$ is in $\cD$.
    \item If $S\in\cD$, then exactly one of the dyadic parents of $S$ is in $\cD$.
    \item Any two squares in $\cD$ have a common dyadic ancestor.
\end{enumerate}
As explained in~\cite[Section 2.1]{GMSinvariance}, one can define a topology on the space of dyadic systems by looking at the local Hausdorff distance on the union of the origin-containing squares.
\begin{definition}\label{def:dyadicsystem}
Let $s$ be uniformly sampled from $[0,1]$, and conditioned on $s$, we uniformly randomly sample $w$ in $[0,2^s]\times[0,2^s]$. Let $S_0 = [0,2^s]\times[0,2^s]-w$. For $k\in\bbN$, inductively let $S_k$ be sampled uniformly from the four dyadic parents of $S_{k-1}$.  The \emph{uniform dyadic system} $\cD$ is defined by the set of all dyadic descendants of $S_k$ for each $k\in\bbN$. For $z\in\bbC$ and $m>0$, we define
\begin{equation}\label{eq:def-S_m^z}
    \wh S_m^z:=\{\text{largest square }S\in\mathcal{D}: z\in S \text{ and } \sum_{H\in\cH(S)}\frac{\area(H\cap S)}{\area(H)} \leq m\}
\end{equation}
which is well defined on the probability one event that $z$ is not on the boundary of any square in $\cD$, and we abbreviate $\wh S_m:=\wh S_m^0$. Finally, for $C>0$ and $z\in\bbC$, we write $C(\cD-z)$ for  the collection of the images the squares in $\cD$ under the map $w\mapsto C(w-z)$.
\end{definition}
In other words, $\wh S_m^z$ is the largest square containing $z$ which contains at most $m$ cells, with fractional parts of cells which intersect the boundary of the square  also counted.

The following is the dyadic system resampling property for random cell configurations. 
\begin{lemma}[Lemma 2.3 of~\cite{GMSinvariance}]\label{lem:dyadic-resample}
   The condition in Definition~\ref{def:translation-modulo-scaling} on random cell configurations $\cH$ is equivalent to the following.  Let $\cD$ be a uniform dyadic system independent from $\cH$ and define $\wh S_m = \wh S_m^0$ for $m>0$ as in~\eqref{eq:def-S_m^z}. For each $m>0$, conditional on $\cH$ and $\cD$, let $z$ be sampled uniformly from the Lebesgue measure on $\wh S_m$. Then there exists a random $C>0$ possibly depending on $z$, such that
   $$(C(\cH-z),C(\cD-z))\overset{d}{=} (\cH,\cD).$$
\end{lemma}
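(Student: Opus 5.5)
This is cited from GMS (Lemma 2.3 there), so I will prove the two directions directly using the structure of the uniform dyadic system. The key tool is that $\cD$ itself is invariant in law modulo scaling. Concretely, if $\cD$ is a uniform dyadic system and $z$ is sampled uniformly from Lebesgue measure on a square $S\in\cD$ chosen by a rule that depends only on $(\cH,\cD)$ restricted to $S$ and its ancestors, then $|S|^{-1}(\cD-z)$ again has the law of a uniform dyadic system, up to the $2^s$ scale randomization. This follows from the construction in Definition~\ref{def:dyadicsystem}. The position of $0$ inside $S_0$ is uniform, the ancestors are uniform parents, and rescaling by a power of $2$ preserves the law because $s$ is uniform on $[0,1]$ modulo $1$.

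\textbf{The resampling property implies Definition~\ref{def:translation-modulo-scaling}.} I take $U_j$ to be the interior of $\wh S_{m_j}$ for $m_j\to\infty$. Since $\cH$ is locally finite and each cell has positive area, $\sum_{H\in\cH(S)}\area(H\cap S)/\area(H)$ is finite for every square and increases to $\infty$ along the ancestors. Hence $\wh S_m$ exists for every $m$, and the squares increase to $\bbC$ as $m\to\infty$. Sampling $z_j$ uniformly on $U_j$, the resampling property gives $C_j(\cH-z_j)\overset{d}{=}\cH$ exactly for each $j$, which is stronger than convergence in law.

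\textbf{Definition~\ref{def:translation-modulo-scaling} implies the resampling property.} This is the harder direction. I would first pass from the given sets $U_j$ to an averaging over uniform $z$ in large dyadic squares. For a uniform point in a large square or disk $U_j$, the law of $(C_j(\cH-z_j))$ is a mixture over points in $U_j$, and the limit is $\cH$. Next I consider the joint law of $(\cH,\cD)$ under the map $\Phi_m$, which samples $z$ uniformly in $\wh S_m^z$ and rescales by $|\wh S_m^z|^{-1}$ (times a dyadic normalization). I want to show this map preserves the law. The main step is a mass-transport / Palm-type identity. For a uniform point $z$ in $U_j$ with $U_j$ huge, the squares $\wh S_m^z$ partition most of $U_j$ up to a boundary layer of negligible proportion; here I would use that the non-maximal squares are nested dyadically. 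Moreover, sampling $z$ uniformly in $U_j$ is the same as picking a square of the partition with probability proportional to area and then a uniform point in it. After rescaling, the configuration seen from $z$ (normalized by $\wh S_m^z$) therefore has the same law as the one obtained by first taking $z_j$ uniform, then resampling within its square. Passing $j\to\infty$ and using that $C_j(\cH-z_j)\to\cH$ in law, while $\cD$ is independent and invariant in law modulo dyadic scaling, gives that $(\cH,\cD)$ is a fixed point of the resampling operation. Continuity of the objects $\wh S_m$ in the metric~\eqref{eq:metric-cc} holds away from a null set of boundary cases, and this justifies the limit.

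\textbf{Main obstacle.} The hard part is this last limit. The partition argument requires that the fraction of $U_j$ covered by squares $\wh S_m^z$ not contained in $U_j$ tends to zero. It also requires that the random normalizing constants $C_j$ interact correctly with the intrinsic scale $|\wh S_m^z|$. To handle the scale issue, I would replace $C_j$ by the intrinsic normalization $|\wh S_m^{z_j}|^{-1}$ and use tightness of the ratio. To handle the boundary-layer issue, I would use the local finiteness of $\cH$ to bound the size of $\wh S_m^z$ near $\partial U_j$ relative to $|U_j|$.
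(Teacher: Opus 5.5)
The paper only quotes this lemma from \cite{GMSinvariance}, so I assess your argument on its own. The implication from the resampling property to Definition~\ref{def:translation-modulo-scaling} is fine. For the converse your skeleton is the right one. Off a boundary layer, a uniform point $z_j$ of $U_j$ is a uniform point of its tile $\wh S_m^{z_j}$. Resampling inside the tile does not change the normalization $|\wh S_m^{z_j}|$, so it moves the law of $|\wh S_m^{z_j}|^{-1}(\cH-z_j,\cD-z_j)$ in total variation only by the mass of the boundary layer, and one then passes to the limit.

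\textbf{The boundary-layer step fails as proposed.} You say local finiteness will bound $|\wh S_m^z|$ relative to $U_j$ near $\partial U_j$, but it gives no such bound. Take a locally finite configuration whose cells at distance $R$ from the origin have diameter of order $R$, for instance cut each annulus $\{2^k\le |w|\le 2^{k+1}\}$ into eight pieces. Then the tiles $\wh S_m^z$ near $\partial B(0;R)$ have side length of order $R$. So for $U_j=B(0;R_j)$, a fraction of $U_j$ bounded away from zero is covered by tiles not contained in $U_j$, for every $j$.

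What rules this out is translation invariance modulo scaling itself, used through the scale invariance of the bad event:
$\{\wh S_m^{z_j}\not\subseteq U_j\}\subseteq\{C_j\dist(z_j,\partial U_j)\le \sqrt2\, C_j|\wh S_m^{z_j}|\}$.
\begin{itemize}
\item The quantity $C_j|\wh S_m^{z_j}|=|\wh S_m(C_j(\cH-z_j),C_j(\cD-z_j))|$ is tight. This is the same tightness you invoke for the scale issue.
\item $C_j\dist(z_j,\partial U_j)\to\infty$ in probability. First, $\dist(z_j,\partial U_j)\ge \epsilon\,\diam(U_j)$ with probability $1-O(\epsilon)$. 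Second, $C_j\diam(U_j)\to\infty$ in probability: $C_j(U_j-z_j)$ contains the rescaled copies of the $N_j\to\infty$ cells of $\cH$ lying in $U_j$, while convergence in the metric~\eqref{eq:metric-cc} makes the number of cells of $C_j(\cH-z_j)$ meeting a fixed ball tight.
\end{itemize}

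\textbf{Your ``key tool'' is justified incorrectly.} Once $S$ is chosen by looking at its ancestors, those ancestors are no longer uniform parents. Take the rule `the smallest origin-containing square of side at least $1$ that is a lower-left child of its parent'. Then the origin-containing square of side in $[1,2)$ of $2^{s'}|S|^{-1}(\cD-z)$ is always a lower-left child, an event of probability $1/4$ for a uniform dyadic system. Likewise, for $S=\wh S_m$ the position of $S$ in its parent can be biased by $\cH$. So such a statement can at best hold modulo scaling, which needs its own argument.

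You do not need it. Your limit step only uses that the law of $\cD$ is invariant under every translation and every dilation $w\mapsto Cw$ with $C>0$, not only powers of $2$. Apply this with $U_j,z_j,C_j$ independent of $\cD$ to get $(C_j(\cH-z_j),C_j(\cD-z_j))\to(\cH,\cD)$ in law.

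\textbf{Two further points you should supply.}
\begin{itemize}
\item The a.e.\ continuity of $\wh S_m$ rests on two facts. One is $\area(\partial H)=0$ for every cell: by local finiteness each boundary point of $H$ lies in another cell, and $H\cap H'$ is null. This makes $\area(H)$ and $\area(H\cap S)$ continuous under the near-identity homeomorphisms of~\eqref{eq:metric-cc}. The other is that a.s.\ no origin-containing square of $\cD$ has mass exactly $m$, because the scale of $\cD$ is uniform and independent of $\cH$.
\item The limit argument only gives equality in law of the normalized pairs $|\wh S_m|^{-1}(\cH-z,\cD-z)$ and $|\wh S_m|^{-1}(\cH,\cD)$. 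To produce the random $C$ of the statement, sample $\lambda'$ from the conditional law of $|\wh S_m|$ given the normalized pair, and set $C=\lambda'/|\wh S_m|$.
\end{itemize}
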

In the rest of this section, we will restrict to the ergodic modulo scaling cell configurations as in Definition~\ref{def:ergodic-modulo-scaling}.  
The following ergodicity statement will be a key tool in what follows.
\begin{lemma}[Lemma 2.7 of~\cite{GMSinvariance}]\label{lem:ergodic-cell-system}
    Let $F = F(\cH,\cD)$ be a measurable function on the space of cell configuration/dyadic system pairs which is scale invariant (i.e., $F(C\cH,C\cD) = F(\cH,\cD)$ for every $C>0$). If either $\bbE[|F|]<\infty$  or $F\geq 0$ a.s., then a.s., 
    \begin{equation}
        \lim_{k\to\infty}\frac{1}{|S_k|^2}\int_{S_k} F(\cH-z,\cD-z) \,dz = \bbE[F]. 
    \end{equation}
\end{lemma}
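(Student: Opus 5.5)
This ergodic statement is quoted from \cite{GMSinvariance}, so the simplest route is to invoke their Lemma 2.7. To make the argument self-contained, I would follow their proof, which combines the resampling property of Lemma~\ref{lem:dyadic-resample} with Birkhoff's ergodic theorem for a suitable measure-preserving dynamical system.

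\textbf{Step 1: a shift map.} Set $\Omega$ to be the space of pairs $(\cH,\cD)$ modulo scaling; this is legitimate because $F$ is scale invariant. Define a map $T$ on $\Omega$ by ``going up one dyadic generation'': replace the origin-containing square $S_0$ by its parent $S_1$ in $\cD$, and recenter by a uniform point. Concretely, I would work with the Markov chain whose state is $(\cH-z,\cD-z)$ modulo scaling, with $z$ uniform on the current origin square. Lemma~\ref{lem:dyadic-resample}, together with the uniform choice of dyadic parents in Definition~\ref{def:dyadicsystem}, should show that the law of $(\cH,\cD)$ is stationary for this chain. The key observation is that $\frac{1}{|S_k|^2}\int_{S_k}F(\cH-z,\cD-z)\,dz$ is exactly the conditional expectation of $F$ evaluated at a point resampled uniformly in $S_k$.

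\textbf{Step 2: reverse martingale and ergodicity.} The averages over $S_k$ form a backward-martingale-type sequence with respect to the decreasing $\sigma$-algebras generated by ``$(\cH,\cD)$ seen from a uniform point of $S_k$, modulo scaling.'' For $F\in L^1$, the reverse martingale convergence theorem (equivalently, the ergodic theorem for the stationary chain) gives a.s.\ and $L^1$ convergence to $\bbE[F\mid\mathcal{I}]$. Here $\mathcal{I}$ is the tail/invariant $\sigma$-algebra, consisting of events invariant under translation and scaling.

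\textbf{Step 3: identify the limit.} By Definition~\ref{def:ergodic-modulo-scaling}, $\mathcal{I}$ is trivial. The one point requiring care is that $\mathcal{I}$ also involves $\cD$. Since $\cD$ is independent of $\cH$ and the uniform dyadic system is itself ergodic, the joint invariant $\sigma$-algebra is still trivial; this is the step I expect to be the main obstacle. Granting it, the limit equals $\bbE[F]$. For $F\ge0$ without integrability, I would apply the $L^1$ case to the truncations $F\wedge N$ and let $N\to\infty$, which gives a liminf bound of $\bbE[F]=\infty$ when $\bbE[F]=\infty$.
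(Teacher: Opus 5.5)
Invoking \cite[Lemma 2.7]{GMSinvariance} is exactly what the paper does. It gives no argument of its own beyond remarking that the GMS proof carries over with the associated map in place of $\sim$. Your self-contained reconstruction, however, has a genuine gap in Step~3, and that gap cannot be closed from the stated hypotheses.

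Independence of $\cD$ plus ergodicity of each factor does not make the joint invariant $\sigma$-algebra trivial: a product of ergodic systems need not be ergodic without something like weak mixing of one factor. The uniform dyadic system fails this. The quantity $s(\cD):=\log_2(\text{side length of any square of }\cD)\bmod 1$, which is the $s$ of Definition~\ref{def:dyadicsystem}, shifts by $\log_2 C$ under $\cD\mapsto C\cD$, and any $\cH$ with a deterministic scale supplies the compensating quantity.

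Concretely, let $\cH$ be the tiling by closed regular hexagons of unit side length with a uniformly random offset; its associated map is the triangular lattice. It is translation invariant modulo scaling (take $C_j\equiv 1$) and ergodic modulo scaling, since all realizations are translates of one another. The bounded function $F(\cH,\cD):=(\log_2\diam(H_0)-s(\cD))\bmod 1$ is scale invariant. Since every cell has diameter $2$, it satisfies $F(\cH-z,\cD-z)=F(\cH,\cD)=1-s$ for every $z$. So every average in the statement equals $1-s$, which is uniform on $(0,1)$, not $\bbE[F]=1/2$.

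Hence the step you ``grant'' is false under the stated hypotheses, and the statement itself, read for arbitrary scale-invariant $F(\cH,\cD)$, fails for this $\cH$. What a reverse-martingale argument actually yields is convergence to $\bbE[F\mid\mathcal{G}_\infty]$. Replacing this by $\bbE[F]$ requires an extra input: joint ergodicity of $(\cH,\cD)$ modulo scaling, or a restriction on the class of $F$. Neither follows from Definition~\ref{def:ergodic-modulo-scaling}.

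Step~1 also needs repair. Lemma~\ref{lem:dyadic-resample} gives the resampling identity only for the intrinsic squares $\wh S_m$, which are defined through cell counts and hence scale invariant. It does not give it for $S_0$ or a fixed generation $S_k$, whose size is an absolute Euclidean scale unrelated to $\cH$. Moreover, ``the current origin square'' is not defined on pairs modulo scaling.

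The workable setup is as follows.
\begin{itemize}
\item Let $\mathcal{G}_m$ be generated by $|\wh S_m|^{-1}(\cH-c_m,\cD-c_m)$, where $c_m$ is the lower-left corner of $\wh S_m$.
\item Since $\wh S_m^z=\wh S_m$ for $z$ in the interior of $\wh S_m$, Lemma~\ref{lem:dyadic-resample} shows that the normalized position of $0$ in $\wh S_m$ is uniform and independent of $\mathcal{G}_m$. Hence $\bbE[F\mid\mathcal{G}_m]=|\wh S_m|^{-2}\int_{\wh S_m}F(\cH-z,\cD-z)\,dz$.
\item The $\mathcal{G}_m$ decrease in $m$.
\item The cell counts of the nested squares $S_k$ strictly increase, so every $S_k$ equals $\wh S_m$ for $m$ equal to its count.
\end{itemize}
Reverse martingale convergence then gives the a.s.\ limit $\bbE[F\mid\mathcal{G}_\infty]$ along the $S_k$, with $\mathcal{G}_\infty$ contained in the joint invariant $\sigma$-algebra, which is where the issue above enters. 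Your truncation step for $F\ge 0$ is fine.
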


We comment that our version of Lemma~\ref{lem:dyadic-resample} and Lemma~\ref{lem:ergodic-cell-system} are slightly different from the ones in~\cite{GMSinvariance} since there they only have the relation $\sim$ instead of our associated map $\cM$ in the definition of cell configurations, but the same proof carries to our setting.

For $k>\ell\geq 1$, we write $\mathcal{S}_{k,\ell}$ for the collection of squares in $\cD$ consisting of the dyadic ancestors $S$ of $S_k$ of side length at most $2^\ell|S_k|$, as well as each of   dyadic descendant of such $S$ of side length at least $2^{-\ell}|S_k|$. In particular, the 8 squares in $\cD$ with length $|S_k|$ and non-empty intersection with $S_k$ are in $\cS_{k,\ell}$. The results below are stated in terms of $S_k$, but they are true for $\wh S_k$ as well since each $\wh S_k$ is equal to $S_{m(k)}$ for some $m(k)$ and $m(k)\to\infty$ as $k\to\infty$.
\begin{lemma}\label{lem:dyadic-shift}
    Let $E(S) = E(S,
    \cH)$ be an event depending on a square $S\subseteq\bbC$ and our cell configuration $\cH$. Suppose $E$ is translation invariant, in the sense that for any $z\in\bbC$,   $E(S,\cH)$ happens if and only if $E(S-z,\cH-z)$ happens. Suppose that a.s.\ $E(S_k)$ occurs for each large enough $k\in\bbN$. Then for each $\ell\in \bbN$, it is a.s.\ the case that for large enough $k\in\bbN$, $E(S)$ occurs for each square $S$ in $\mathcal{S}_{k,\ell}$. 
\end{lemma}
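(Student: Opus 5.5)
The key point is that each square in $\mathcal{S}_{k,\ell}$ can be written as $S_{k'}$ for a different (but equally distributed) dyadic system, obtained by re-rooting $\cD$ at a nearby square. The plan is to run the argument over the finitely many possible ``relative positions'' of a square of $\mathcal{S}_{k,\ell}$ with respect to $S_k$, and to use the resampling property in Lemma~\ref{lem:dyadic-resample} together with translation invariance of $E$.

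Fix $\ell$. A square $S\in\mathcal{S}_{k,\ell}$ is determined by a finite combinatorial datum $\tau$: the sequence of parent choices going from $S_k$ up to an ancestor $S_{k+j}$ with $j\le\ell$, followed by a sequence of at most $2\ell$ child choices going down. There are finitely many such $\tau$, and for fixed $\tau$ the square is a deterministic function $S^\tau(\cD,k)$. By a union over $\tau$, it suffices to show that for each fixed $\tau$, a.s.\ $E(S^\tau(\cD,k))$ holds for all large $k$.

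Fix $\tau$. Consider the square $S^\tau_0:=S^\tau(\cD,0)$, which is at scale $2^{m}|S_0|$ for some $m\in[-\ell,\ell]$. Let $\cD'$ be the dyadic system re-rooted so that its origin-containing chain passes through translates corresponding to $S^\tau_0$. Concretely, pick $w$ uniform in $S^\tau_0$ and consider $(\cH-w,\cD-w)$. The idea is the following. For $k\ge \ell$ large, the ancestor $S_{k+\ell}$ of $S_0$ is also an ancestor of $S^\tau(\cD,k)$ at a fixed relative level. Moreover, $S^\tau(\cD,k)$ is exactly the origin-containing square of the appropriate scale in $\cD-w$ shifted back by $w$, for a point $w$ distributed uniformly in it. By Lemma~\ref{lem:dyadic-resample} (applied with $\wh S_m$ replaced by $S_k$ via the equivalent formulation in~\cite[Lemma 2.3]{GMSinvariance}), the pair $(\cH-w,\cD-w)$, suitably rescaled, has the law of $(\cH,\cD)$ modulo scaling, up to absolute continuity with bounded density. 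The density bound comes from the fact that choosing a uniform point in one of finitely many children at depth $\le 2\ell$ costs a factor of at most $4^{2\ell}$.

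Since the event ``$E(S_k)$ holds for all large $k$'' is scale invariant (by the translation invariance of $E$ and the fact that scaling maps the chain $S_k$ to itself up to index shift) and has probability one, it also holds a.s.\ for $(\cH-w,\cD-w)$. By translation invariance of $E$, $E(S_k(\cD-w),\cH-w)$ is equivalent to $E(S_k(\cD-w)+w,\cH)$. The final step is to check that $S_k(\cD-w)+w=S^\tau(\cD,k')$ for $k'$ differing from $k$ by a fixed shift, for all $k$ large. This holds because $w$ and $0$ lie in the common ancestor $S_{\ell}$, so their origin chains coincide above level $\ell$ and differ below by exactly the path $\tau$.

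The main obstacle is this last identification combined with the absolute continuity step. Going from an ancestor down to a child changes the origin chain at every scale, not just one, so I would instead take $w$ uniform in $S^\tau(\cD,k)$ for each $k$ separately. I would then handle the ``for all large $k$'' quantifier through a Borel--Cantelli-free argument: the statement ``$E$ fails infinitely often along the chain of a uniform point of $S_0$'' has probability zero by Lemma~\ref{lem:dyadic-resample}. Since a.s.\ positive-measure many $w\in S_0$ share the downward path structure of $\tau$ relative to the chain of $w$, this transfers the result to $S^\tau$. If this proves delicate, a fallback is to apply Lemma~\ref{lem:ergodic-cell-system} to $F=\mathbbm{1}\{E \text{ fails for some } S_k(\cD-z),\ k\ge K\}$ and let $K\to\infty$.
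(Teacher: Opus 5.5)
There is a genuine gap: nothing in your argument produces the \emph{simultaneous} statement ``for all large $k$'' for squares off the chain $(S_k)$. As you notice, the main route fails. Since $S^\tau_0\subseteq S_\ell$, a point $w\in S^\tau_0$ has the same chain as $0$ at every level $\geq\ell$, so $S_k(\cD-w)+w=S_k$ and you only recover the hypothesis. The obstruction is structural, not technical: for a fixed nontrivial $\tau$, the squares $S^\tau(\cD,k)$, $k\in\bbN$, a.s.\ do not all lie on the chain of any single point. For example, if $\tau$ picks a fixed child $c$ of $S_{k+1}$, then $S^\tau(\cD,k)\subseteq S^\tau(\cD,k+1)$ only when $S_{k+1}=c(S_{k+2})$, and this a.s.\ fails for infinitely many $k$. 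So no single resampled point can carry an a.s.\ eventual statement over to these squares. Your repair (a separate uniform $w_k\in S^\tau(\cD,k)$ for each $k$) only gives $\bbP[E(S^\tau(\cD,k))\text{ fails}]\to0$, i.e.\ convergence in probability, and the ``Borel--Cantelli-free'' sentence does not supply the missing step. Separately, translation invariance of $E$ does not make $\{E(S_k)\text{ for all large }k\}$ scale invariant. Nothing in the hypotheses relates $E(CS,C\cH)$ to $E(S,\cH)$, whereas Lemma~\ref{lem:dyadic-resample} only gives equality in law after a random rescaling $C$.

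Your fallback is closer to something that works, but as written $F$ is not scale invariant, because the cutoff $K$ is an absolute scale. So Lemma~\ref{lem:ergodic-cell-system} does not apply. To repair it you would need three things:
\begin{enumerate}
\item a scale-invariant cutoff, e.g.\ $F_m$ equal to the indicator that $E(S)$ fails for some $S\in\cD$ with $\wh S_m\subsetneq S$;
\item scale invariance of $E$;
\item a density argument: if some $S\in\cS_{k,\ell}$ fails, then $S\subseteq S_{k+\ell}$ has area at least $4^{-2\ell}|S_{k+\ell}|^2$, and (once $S$ contains more than $m$ cells, which holds for large $k$ because cells meeting $S_{k+\ell}$ are $o(|S_{k+\ell}|)$) every $z\in S$ sees a failure on its own chain. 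If this happened for infinitely many $k$, the ergodic averages over $S_{k+\ell}$ would stay $\geq 4^{-2\ell}$, contradicting Lemma~\ref{lem:ergodic-cell-system} once $m$ is chosen with $\bbE[F_m]<4^{-2\ell}$.
\end{enumerate}
The paper re-derives none of this. Its key observation is that every square of $\cS_{k,\ell}$ is a dyadic descendant of the chain square $S_{k+\ell}$ with side length at least $2^{-2\ell}|S_{k+\ell}|$, so the upward part of your $\tau$ costs nothing. The only content is then the statement for descendants at bounded relative depth. The paper imports this from \cite[Lemma 2.8]{GMSinvariance}, applied to the auxiliary event $E_1(S)$ that $E$ holds at every descendant of $S$ of side length at least $2^{-2\ell}|S|$.
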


\begin{proof}
  Consider the event $E_1(S)$ where $E(S')$ occurs for each  dyadic descendant  $S'$ of $S$ with side length at least $2^{-2\ell}|S|$. Then by~\cite[Lemma 2.8]{GMSinvariance}, a.s.\ $E_1(S_k)$  occurs for each large enough $k\in\bbN$. The claim is then straightforward by applying ~\cite[Lemma 2.8]{GMSinvariance} once more to the event $E_1(S)$.
\end{proof}

\begin{lemma}\label{lem:no-macroscopic-cell}
    Almost surely, for each $\e\in(0,1)$ and $\ell>0$, it holds that for large enough $k\in\bbN$ that 
    \begin{equation}
        \diam(H)\leq \e|S| \ \ \text{for all } H\in \cH(S) \text{ and $S\in\cS_{k,\ell}$}. 
    \end{equation}
\end{lemma}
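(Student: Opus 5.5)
The plan is to reduce the statement to the single square $S_k$ and then upgrade to all of $\cS_{k,\ell}$ with Lemma~\ref{lem:dyadic-shift}. Fix $\e\in(0,1)$ and let $E_\e(S)$ be the event that $\diam(H)\le \e|S|$ for every $H\in\cH(S)$. This event is translation invariant in the sense of Lemma~\ref{lem:dyadic-shift}. It is also scale invariant, since both sides of the inequality scale linearly. So it suffices to show that a.s.\ $E_\e(S_k)$ holds for all large $k$. Lemma~\ref{lem:dyadic-shift} then gives the claim for each fixed $\ell$. Finally we take a countable intersection over rational $\e$ and integer $\ell$; the statement for general $\e$ follows from the one for a smaller rational value.

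To show that $E_\e(S_k)$ eventually holds, I would argue by ergodic averaging, as in \cite[Section 2.2]{GMSinvariance}. For $\delta>0$, let $F_\delta(\cH,\cD)$ be the indicator of the event that the cell $H_0$ containing $0$ satisfies $\diam(H_0)\ge \delta|S(0)|$. Here $S(0)$ is a scale-covariant reference square, for instance the origin-containing square of $\cD$ at the appropriate level, chosen so that $F_\delta$ is scale invariant. Then $\bbE[F_\delta]\to 0$ as $\delta\to0$, because $\diam(H_0)<\infty$ a.s. By Lemma~\ref{lem:ergodic-cell-system}, the fraction of $z\in S_k$ whose cell is large relative to the local square is therefore at most $\bbE[F_\delta]+o(1)$.

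The obstacle is that an averaged statement only controls cells covering a positive fraction of the area. A single cell of diameter $\ge\e|S_k|$ could still have small area. The remedy I would try first is to work instead with the scale-invariant functional of cells intersecting $S_k$ but not contained in a small neighbourhood. Concretely, one considers the ergodic average of $\diam(H_z)^2/\area(H_z)$ weighted appropriately, or uses the finiteness of the number of cells intersecting $S_k$ together with translation invariance modulo scaling.

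A cleaner route is the one in \cite[Lemma 2.9]{GMSinvariance}. The event that some cell intersecting $S_k$ has diameter $\ge \e|S_k|$ has probability tending to $0$, by the resampling property Lemma~\ref{lem:dyadic-resample}: $0$ is a uniform point in $\wh S_m$, and a macroscopic cell near a typical point would contradict the local finiteness of the limit law. This convergence in probability is then upgraded to an almost sure statement. Since the $S_k$ are nested with $|S_{k+1}|=2|S_k|$, a large cell meeting $S_{k+1}$ with diameter $\ge\e|S_{k+1}|$ also witnesses the failure of $E_{\e}$ at a bounded number of neighbouring squares of the same scale. A Borel--Cantelli style ergodic argument over scales then gives eventual occurrence; one applies Lemma~\ref{lem:ergodic-cell-system} to the indicator of $E_\e$ failing at the scale of the origin's square, which gives that failures have asymptotic density zero among scales. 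I expect this upgrade from density zero (or probability to zero) to ``for all large $k$'' to be the main step. I would handle it by exploiting monotonicity: a bad cell at scale $k$ of diameter $\ge \e|S_k|$ remains bad at scales $k+1,\dots,k+j$ with threshold $2^{-j}\e$. So infinitely many failures for $\e$ force positive-density failures for $\e/2^{j}$, contradicting the density-zero statement applied with the smaller parameter.
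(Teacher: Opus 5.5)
Your outer reduction is correct and matches the paper. If a.s.\ $\diam(H)\le\e|S_k|$ for all $H\in\cH(S_k)$ and all large $k$, then Lemma~\ref{lem:dyadic-shift} applied to your event $E_\e(S)$, plus a countable intersection over $\e,\ell$, gives the lemma. The gap is that this single-square input is never proved. The paper gets it from \cite[Lemma 2.9]{GMSinvariance} with unit conductances, after checking that \eqref{eq:thm-inv-circle} implies the moment bound there, namely $\bbE[\diam(H_0)^2\deg(H_0)/\area(H_0)]<\infty$. None of your arguments uses this hypothesis in a working step, and it cannot be dispensed with. Your ``cleaner route'' claims that $\bbP[\exists H\in\cH(S_k):\diam(H)\ge\e|S_k|]\to0$ follows from Lemma~\ref{lem:dyadic-resample} and local finiteness alone, and that is false in this generality. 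For instance, shift the strips $\bbR\times[n,n+1]$ by a uniform vertical offset and cut each into rectangles by independent stationary renewal processes with gap tail $\asymp L^{-1}(\log L)^{-2}$. This configuration is stationary, ergodic, locally finite and connected along lines. Yet each row meeting $S_k$ has probability $\gtrsim 1/\log|S_k|$ that its rectangle meeting the vertical line through $0$ has length $\ge|S_k|$. There are about $|S_k|$ independent such rows, so a.s.\ every large $S_k$ meets such a cell; correspondingly $\bbE[\diam(H_0)^2/\area(H_0)]=\infty$. Your first attempt has the same defect: as you note, it only controls area fraction (and as written $\bbE[F_\delta]$ tends to $1$, not $0$, as $\delta\to0$). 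The weighted $\diam(H_z)^2/\area(H_z)$ average, which is where the moment bound would enter, is only mentioned and not carried out.

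The upgrade to ``for all large $k$'' also fails as written. Lemma~\ref{lem:ergodic-cell-system} averages $F(\cH-z,\cD-z)$ over $z\in S_k$ at a single scale. It gives no frequency statement over the scale index $k$. Moreover, an indicator of failure ``at the scale of the origin's square'' is not scale invariant unless that level is chosen scale-covariantly. The monotonicity trick only shows that each failure at threshold $\e$ produces a run of $j+1$ consecutive failures at threshold $2^{-j}\e$. Infinitely many such runs can still have density zero, so no contradiction arises. To close the gap, invoke \cite[Lemma 2.9]{GMSinvariance} directly, checking its moment hypothesis from \eqref{eq:thm-inv-circle} via $\deg(H_0)\le\deg(H_0)^4$. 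It gives the almost sure statement for $S_k$, and you can then finish with Lemma~\ref{lem:dyadic-shift} as planned.
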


\begin{proof}
    By~\cite[Lemma 2.9]{GMSinvariance}, where we set the conductance here to be equal to 1 and~\eqref{eq:thm-inv-circle} implies the moment bound in~\cite[Equation (1.5)]{GMSinvariance}, almost surely it holds that for large enough $k\in\bbN$ such that 
    \begin{equation*}
        \diam(H)\leq \e|S_k|/2, \ \ \text{for all   } H\in \cH(S_k). 
    \end{equation*}
    The claim then follows from Lemma~\ref{lem:dyadic-shift}.
\end{proof}

\begin{lemma}\label{lem:average-diamdeg4}
    Suppose for some $p\geq 0$, the cell configuration $\cH$ satisfies the moment bound
    \begin{equation}\label{eq:lem:moment-bound-p}
        \bbE\Big[\frac{\diam(H_0)^2}{\area(H_0)}\deg(H_0)^p \Big]<\infty.
    \end{equation}
    There is a deterministic constant $C>0$ depending only on the law of $\cH$ such that the following is true. For each $\ell\in\bbN$, it is a.s.\ the case that for large enough $k\in\bbN$ and each $S\in\mathcal{S}_{k,\ell}$, 
    \begin{equation}\label{eq:average-diamdeg4}
       \frac{1}{|S|^2} \sum_{H\in \cH(S)} \diam(H)^2\deg(H)^p <C.
    \end{equation}
    Moreover, almost surely, for each  large enough $k$ and each square $S\subseteq [-2^k,2^k]\times [-2^k,2^k]$ of side length at least $2^{k-\ell}$, ~\eqref{eq:average-diamdeg4} holds. 
\end{lemma}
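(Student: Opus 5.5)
We convert the sum over cells into an integral over points, so that the ergodic theorem (Lemma~\ref{lem:ergodic-cell-system}) applies. Define the scale-invariant functional
\[
F(\cH,\cD) := \frac{\diam(H_0)^2}{\area(H_0)}\deg(H_0)^p ,
\]
where $H_0$ is the cell containing $0$. This is invariant under $\cH\mapsto C\cH$, since $\diam^2/\area$ is dimensionless and the degree does not change. It is nonnegative, and by \eqref{eq:lem:moment-bound-p} it has finite expectation. For a square $S$ we write
\[
\sum_{H\in\cH(S)}\diam(H)^2\deg(H)^p=\sum_{H\in\cH(S)}\int_H \frac{\diam(H)^2}{\area(H)}\deg(H)^p\,dz .
\]
Every cell meeting $S$ has diameter at most $\max_{H\in\cH(S)}\diam(H)$. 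By Lemma~\ref{lem:no-macroscopic-cell}, for large $k$ and every $S\in\cS_{k,\ell}$ this maximum is at most $|S|$. Hence all cells in $\cH(S)$ lie in the concentric square $3S$ of side $3|S|$. Cells overlap only on null sets, so the right-hand side is at most $\int_{3S}F(\cH-z,\cD-z)\,dz$.

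Next we compare $3S$ with a dyadic square to which Lemma~\ref{lem:ergodic-cell-system} applies. Every $S\in\cS_{k,\ell}$ has side length at least $2^{-\ell}|S_k|$ and lies within distance $O(2^{\ell}|S_k|)$ of $S_k$. So $3S\subseteq S_{k+\ell+c}$ for a universal constant $c$; it suffices that $S$ be contained in a dyadic ancestor of $S_k$ of side $2^{\ell}|S_k|$, and then go up a couple more generations. This gives
\[
\frac{1}{|S|^2}\sum_{H\in\cH(S)}\diam(H)^2\deg(H)^p\le \frac{|S_{k+\ell+c}|^2}{|S|^2}\cdot\frac{1}{|S_{k+\ell+c}|^2}\int_{S_{k+\ell+c}}F(\cH-z,\cD-z)\,dz .
\]
The first factor is at most $2^{2(2\ell+c)}$. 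By Lemma~\ref{lem:ergodic-cell-system}, the second factor converges a.s. to $\bbE[F]$.

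This already yields a bound, but the constant depends on $\ell$. The statement asks for a constant depending only on the law of $\cH$, and this is the main obstacle. To remove the $\ell$-dependence, I would apply Lemma~\ref{lem:dyadic-shift} to the translation-invariant event
\[
E(S)=\Big\{\int_{3S}F(\cH-z,\cD-z)\,dz\le 10\,\bbE[F]\,|3S|^2\Big\}
\]
(with $\bbE[F]$ replaced by $1$ if $\bbE[F]=0$). Its occurrence for $S=S_k$ and all large $k$ follows from the ergodic averages over $S_{k+1}\supseteq 3S_k$, or, more cleanly, from \cite[Lemma 2.8]{GMSinvariance}-type arguments applied to the event that the average over $S_k$ and its neighbours is controlled. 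Lemma~\ref{lem:dyadic-shift} then gives $E(S)$ simultaneously for all $S\in\cS_{k,\ell}$ once $k$ is large. Combined with the containment from the first paragraph, this yields \eqref{eq:average-diamdeg4} with $C=90\,\bbE[F]$, which is independent of $\ell$. One subtlety needs checking: the event $E(S)$ must depend only on $(S,\cH)$, not on $\cD$. This holds because $F$ depends only on $\cH$.

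For the last assertion, let $S\subseteq[-2^k,2^k]^2$ have side at least $2^{k-\ell}$. Since $|S_k|$ grows geometrically, with $\log_2|S_k|-k$ converging in law as in the uniform dyadic system, $S$ is covered by boundedly many squares of $\cS_{k',\ell'}$, for an index $k'$ comparable to $k$ and $\ell'=\ell+O(1)$, each of side comparable to $|S|$. Applying the first part to these squares and summing gives the bound with a constant multiple of $C$, which we rename $C$. The cells meeting $S$ meet one of the covering squares, so the sum over $\cH(S)$ is dominated by the sum of the sums over the covering squares.
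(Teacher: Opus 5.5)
Your argument is correct and is essentially the paper's proof: the ergodic theorem (Lemma~\ref{lem:ergodic-cell-system}) for $F=\frac{\diam(H_0)^2}{\area(H_0)}\deg(H_0)^p$, combined with Lemma~\ref{lem:no-macroscopic-cell}, controls the average at $S_k$; Lemma~\ref{lem:dyadic-shift} then transfers this bound to every $S\in\cS_{k,\ell}$ with a constant independent of $\ell$; and a covering argument gives the second assertion. Applying the shift lemma to your integral over $3S$ instead of directly to the event $\{|S|^{-2}\sum_{H\in\cH(S)}\diam(H)^2\deg(H)^p<C/16\}$, as the paper does, is an inessential difference. One slip needs fixing: $S_{k+1}$ has side $2|S_k|<3|S_k|$, so $S_{k+1}\not\supseteq 3S_k$, and the same objection applies to your discarded claim $3S\subseteq S_{k+\ell+c}$ for a universal $c$, since $S$ can touch $\partial S_{k+\ell+c}$. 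So $E(S_k)$ must be verified via your second option: the ergodic bound $\int_S F\le 2\bbE[F]|S|^2$ at $S=S_k$, transferred by Lemma~\ref{lem:dyadic-shift} to the eight same-size neighbours of $S_k$, which together with $S_k$ tile $3S_k$.
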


\begin{proof}
    Following the same argument as in~\cite[Lemma 2.10]{GMSinvariance}, applying Lemma~\ref{lem:ergodic-cell-system} with $$F(\cH,\cD) = \frac{\diam(H_0)^2}{\area(H_0)}\deg(H_0)^p$$
    and Lemma~\ref{lem:no-macroscopic-cell}, it follows that for some deterministic $C>0$, almost surely for each large enough $k\in\bbN$,
    \eqb\label{eq:pf-average-diamdeg4}  \frac{1}{|S_k|^2}\sum_{H\in \cH(S_k)}\diam(H)^2\deg(H)^p<C/16.\eqe
    By Lemma~\ref{lem:dyadic-shift}, almost surely for large enough $k$, for each $S\in\mathcal{S}_{k,\ell+2}$,  ~\eqref{eq:pf-average-diamdeg4} holds for $S$ instead of $S_k$. This verifies the first part of the claim. The second part of the claim follows since each square $S\subseteq [-2^k,2^k]\times [-2^k,2^k]$ of side length at least $2^{k-\ell}$ can be covered using squares in $\mathcal{S}_{k,\ell+2}$ with side length $2^{-\ell-2}|S_k|$ such that their total area is less than $16|S|$.
\end{proof}

\begin{figure}
    \centering
    \includegraphics[scale=0.45]{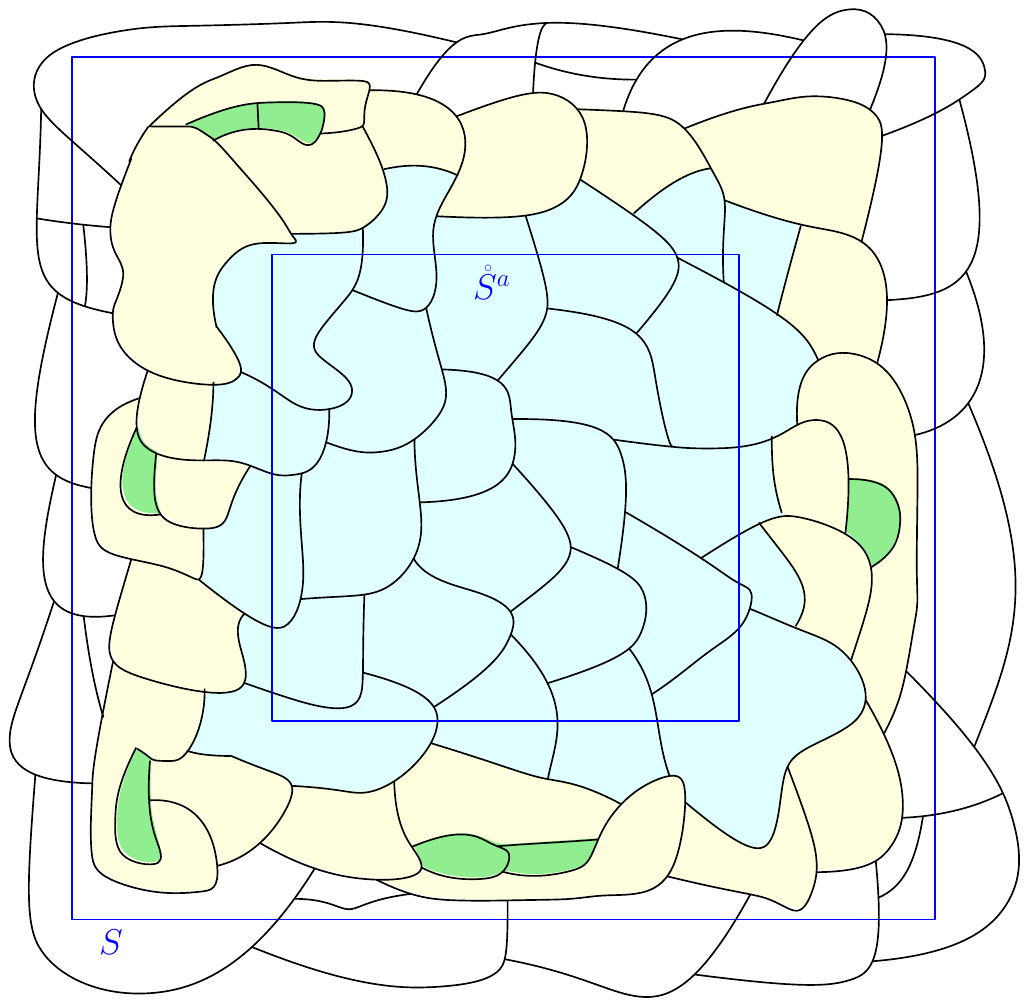}
    \caption{An illustration of the definition of $M(S;a)$ and $M(S)$. For simplicity here we assume that two cells are adjacent if they have nonempty intersection. The cells in  cyan are those intersecting $\mathring{S}^a$, and their neighbors are in  yellow. We also include the cells that are separated from infinity from the  yellow and  cyan cells, which are colored in green.  The surface $M(S;a)\subseteq M(\cH)$ are formed by vertices for these cells on $M(\cH)$ as well as the edges connecting these vertices, and the surface $M(S)$ is the largest surface of the form $M(S;a)$ such that for each vertex on $M(S;a)$, its cell is contained within $S$.}
    \label{fig:M(S)}
\end{figure}

For a square $S$, we define a disk triangulation $\cM(S)$ using cells in $\cH(S)$ as follows.
Let $\cM$ be the associated map of the cell configuration $\cH$. 
For every $a>0$, let $\mathring{S}^a$ be the square concentric to $S$ with side length $a$. We work on the event that  $\cH(\mathring{S}^a)$ is connected; otherwise we set $\cM(S;a)=\emptyset$. Let $\cM_0(S;a)\subseteq \cM$ be the submap formed by the union of faces of $\cM$ with at least one of the vertices in $\cH(\mathring{S}^a)$. 
Note that when viewed as a graph, $\cM_0(S;a)$ is 2-connected since $\cH(\mathring{S}^a)$ is connected. In fact, one can verify that, for any infinite plane triangulation $\cT$ and a connected subgraph $\cG_1\subseteq\cG$, if we let $\cG_2$ be the subgraph $\cG$ formed by all the vertices in $\cG_1$ and their neighbors, as well as all the edges connecting these vertices, then $\cG_2$ is 2-connected. We further let $\cM(S;a)$ be the smallest simply connected submap of $\cM$ containing $\cM_0(S;a)$ as a submap. Then $\cM(S;a)$  {is} a disk triangulation. 
Let $\cM(S)$ be the largest submap of the form $\cM(S;a)$ for some $a\in(0,|S|)$ such that for each vertex in $\cM(S;a)$, its cell is contained in the interior of $S$. Note that these objects are well defined since $\cH$ is locally finite. We further write $M(S)$ for the Riemann surface associated with $\cM(S)$. See Figure~\ref{fig:M(S)} for an illustration.  

\begin{lemma}\label{lem:M(S)}
     Almost surely, for each given $\e>0$ and $a\in(0,1)$, for large enough $k$, for each square $S\subseteq [-2^k,2^k]^2$ of side length at least $a2^k$, $\cM(S;(1-\e)|S|)\subseteq \cM(S)$.
\end{lemma}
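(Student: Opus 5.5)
By the definition of $\cM(S)$ as the largest submap of the form $\cM(S;a)$, $a\in(0,|S|)$, all of whose cells lie in the interior of $S$, the monotonicity of $a\mapsto\cM(S;a)$ reduces the claim to two facts about $a=(1-\e)|S|$. We need (i) that $\cH(\mathring{S}^{a})$ is connected, so that $\cM(S;a)\neq\emptyset$ is well defined; and (ii) that every cell whose vertex lies in $\cM(S;a)$ is contained in the interior of $S$. I will prove both simultaneously for all squares $S\subseteq[-2^k,2^k]^2$ with $|S|\ge a2^k$ (with $a\in(0,1)$ the side-length parameter of the statement), once $k$ is large.

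\textbf{Step 1: no macroscopic cells.} Take a large dyadic square $S_{k'}$ with $|S_{k'}|\asymp 2^k$ containing $[-2^{k+1},2^{k+1}]^2$; such a $k'$ exists since $S_{k'}$ contains $0$ and $|S_{k'+1}|=2|S_{k'}|$. Lemma~\ref{lem:no-macroscopic-cell}, together with Lemma~\ref{lem:dyadic-shift} to cover the region by squares in $\cS_{k',\ell}$ for a fixed $\ell$, gives the following. For any $\delta>0$, a.s.\ for large $k$, every cell intersecting $[-2^{k+1},2^{k+1}]^2$ has diameter at most $\delta 2^k$. Choose $\delta\ll \e a$.

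\textbf{Step 2: connectivity of $\cH(\mathring S^{a})$.} Cover $\partial\mathring S^{a}$ and a grid of horizontal and vertical segments filling $\mathring S^{a}$ by segments of length at least $\e' 2^{k}$. Apply Definition~\ref{def:connectedness} with $r=2^{k+1}$ and $\e'$ comparable to $\e a$: each such segment has a connected induced subgraph. Intersecting segments share cells, and every cell meeting $\mathring S^{a}$ meets a grid line if the grid spacing is finer than the cells' minimal scale. Since cells can be tiny, this last point needs care, so instead I will argue as follows. Take an arbitrary cell $H$ meeting $\mathring S^{a}$ and a point $z\in H$. Use the horizontal line through $z$, which lies inside $B(0;r)$ and, by shortening if necessary, can be taken to have length at least $\e a 2^k$ within $\mathring S^{a}$. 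That line meets $H$ and a vertical boundary side of $\mathring S^{a}$, so $H$ is connected to the boundary cluster. The cells meeting the four boundary sides form a connected set because adjacent sides share corner cells. Hence $\cH(\mathring S^{a})$ is connected.

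\textbf{Step 3: containment.} Vertices of $\cM_0(S;a)$ are cells meeting $\mathring S^{a}$ or their neighbors. Neighbors intersect, so by Step 1 each such cell lies within distance $2\delta 2^k<\e|S|/2$ of $\mathring S^{a}$, hence in the interior of $S$. The remaining vertices of $\cM(S;a)$ are cells enclosed by $\cM_0(S;a)$, i.e.\ separated from $\infty$ by the union of its cells. Because the associated map and cell adjacencies are consistent, the filled region of the cells of $\cM_0(S;a)$ also lies within the $2\delta2^k$-neighborhood of $\mathring S^{a}$, which is contained in the interior of $S$, so enclosed cells lie there too.

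\textbf{Main obstacle.} I expect this last topological point to be the hardest: the separation is in the map $\cM$, whereas the cells live in $\bbC$, and cells may cross each other. I would handle it by noting that the union $U$ of cells in $\cM_0(S;a)$ contains a closed loop in $\bbC$ whose complement's bounded components contain every cell whose vertex is enclosed in $\cM$. Such a loop can be built by tracing the outer face cycle of $\cM_0(S;a)$ through successive intersecting cells. Any cell not meeting this loop but enclosed must then lie inside it, hence inside $S$; any cell meeting the loop is within $\delta 2^k$ of $U$, which is also fine.
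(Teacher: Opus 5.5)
\textbf{The gap is in Step 3, for vertices enclosed by $\cM_0(S;(1-\e)|S|)$.} Steps~1 and~2 are fine; in Step~2 take $z\in H\cap\mathring S^{(1-\e)|S|}$, and no shortening is needed. The first half of Step~3 is also fine: every vertex of $\cM_0(S;(1-\e)|S|)$ is a cell meeting $\mathring S^{(1-\e)|S|}$ or adjacent to one, so it lies in the interior of $S$. This matches case (i) of the paper's proof.

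For the enclosed vertices you assume that enclosure in $\cM$ forces the cell to lie inside a loop in $\bbC$ traced through the boundary cycle. The only link between $\cM$ and the geometry of the cells is that adjacent cells intersect. Cells may intersect without being adjacent, may cross, and need not be simply connected. So nothing in the axioms stops a chain of enclosed cells, each meeting the next, from leaving $S$. Such a chain can pass geometrically around your loop while meeting exterior cells it is not adjacent to in $\cM$. The claim that an enclosed cell not meeting the loop lies inside it is exactly what needs proof. It is not a consequence of local consistency, and the convexity observation about the filled region of $U$ does not address it.

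\textbf{The missing ingredient is a global use of line connectivity outside the square.} Your Step~2 only uses segments inside $\mathring S^{(1-\e)|S|}$. The paper argues purely combinatorially in $\cM$:
\begin{itemize}
\item By Definition~\ref{def:connectedness}, $\cH(\overline{\bbC\setminus S})$ is a connected subgraph of $\cM$, and it is infinite.
\item By the no-macroscopic-cell bound, none of the cells $H_1,\dots,H_n$ on a cycle of $\cM$ surrounding an enclosed vertex $v_H$ meets $\overline{\bbC\setminus S}$. Each such $H_j$ is adjacent to a cell of $\cH(\mathring S^{(1-\e)|S|})$.
\item Since $\cM$ is a one-ended plane map, a connected infinite subgraph avoiding this cycle lies entirely on its unbounded side.
\item Hence $H\notin\cH(\overline{\bbC\setminus S})$, i.e.\ $H$ lies in the interior of $S$.
\end{itemize}
No planar loop in $\bbC$ is needed. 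Without connectivity of $\cH(\overline{\bbC\setminus S})$ or an equivalent global input, Step~3 cannot be completed.
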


\begin{proof}
    By the connectivity in Definition~\ref{def:connectedness}, for each $b\in(0,1)$, almost surely for large enough $k$, for each square $S\subseteq [-2^k,2^k]^2$ of side length at least $a2^k$, $\cH(\mathring{S}^{b|S|})$ and $\cH(\ol{\bbC\backslash{S}})$ are connected subgraphs of $\cH$. Let $H\in\cH$ be such that its vertex $v_H$ on $\cM $ is in $\cM(S;(1-\e)|S|)$. Then there are two cases: (i) $H$ is adjacent to some vertex $H'\in \cH(\mathring{S}^{(1-\e)|S|})$ or (ii) there exists a loop consisting of vertices for $H_1,...,H_n$ on $\cM $ surrounding $v_H$, such that each $H_j$ are adjacent to some cell in $\cH(\mathring{S}^{(1-\e)|S|})$. In the setting (i), by Lemma~\ref{lem:no-macroscopic-cell}, if we assume that $k$ is large, then $H\subseteq S$. For the case (ii), by Lemma~\ref{lem:no-macroscopic-cell}, for large $k$, $H_j\cap \ol{\bbC\backslash{S}}=\emptyset$ for each $j=1,...,n$. Since  $\cH(\ol{\bbC\backslash{S}})$ is infinite and connected, we see that $H\notin \cH(\ol{\bbC\backslash{S}})$ and therefore $H\subseteq S$. 
    This finishes the proof. 
\end{proof}

Let $\cH$ be the cell configuration in the setting of Theorem~\ref{thm:CellSystemPack} or Theorem~\ref{thm:invariance-uniformization}. We introduce a function $\phi_0:M(\cH)\to\bbC$ below that will later be used in Section~\ref{subsec:pre-invprinciple} and Section~\ref{sec:uniformization}. {The function $\phi_0$ sends each vertex on $M(\cH)$ to a point in the associated cell of $\cH$ and is otherwise piecewise linear. More precisely, the function $\phi_0:M(\cH)\to\bbC$ is defined as follows.} 
Given a realization of $\cH$, for each cell $H\in\cH$, we independently sample a point $\wt c(H)$ according to the probability measure proportional to the Lebsegue measure on $H$. For each triangular face $\Delta$ in $M(\cH)$ formed by the cells $H_1,H_2,H_3\in\cH$, consider the triangle $\Delta'$ in $\bbC$ formed by connecting $\wt c(H_1),\wt c(H_2),\wt c(H_3)$ using straight line segments, and define $\phi_0|_{\Delta}:\Delta\to\Delta'$ to be the unique linear function sending the vertices in $\Delta$ to their counterpart in $\Delta'$. This defines a continuous and piecewise linear function $\phi_0:M(\cH)\to\bbC$. See Figure~\ref{fig:phi0} for an illustration. 

\begin{figure}
    \centering
    \includegraphics[scale=0.65]{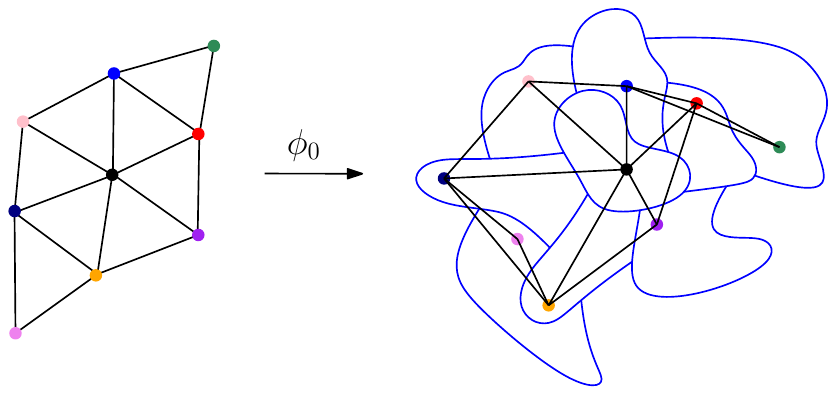}
    \caption{An illustration for the definition of the map $\phi_0:M(\cH)\to\bbC$, which linearly send each triangle on the left panel to the corresponding triangle on the right panel. Note that the $\phi_0$-images of different equilateral triangles of $M(\cH)$ might overlap with each other.}
    \label{fig:phi0}
\end{figure}

We end this subsection with the following topological properties on $\phi_0$.
\begin{lemma}\label{lem:phi_0-bbC}
   We have $\phi_0(M(\cH))=\bbC$. 
\end{lemma}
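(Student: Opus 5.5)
We want to show $\phi_0$ is surjective. The plan is a degree/winding-number argument on large disk triangulations, using the line connectivity of Definition~\ref{def:connectedness} and the absence of macroscopic cells from Lemma~\ref{lem:no-macroscopic-cell}.

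Fix $w\in\bbC$. We aim to find a large square $S$ containing $w$ deep in its interior and a closed loop $\gamma$ in $M(\cH)$ bounding a topological disk $D\subseteq M(\cH)$ such that $\phi_0(\gamma)$ stays far from $w$ and winds once around $w$. Since $\phi_0|_D$ is continuous and $D$ is a closed disk, a standard degree argument then gives $w\in\phi_0(D)$: if $w\notin\phi_0(D)$, then $\phi_0(\gamma)$ would be null-homotopic in $\bbC\setminus\{w\}$, because $\gamma$ contracts in $D$, contradicting winding number one.

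\textbf{Construction of $\gamma$.} Take $k$ large and let $\partial S$ be the boundary of $S=[-2^k,2^k]^2$, consisting of four segments of length $2^{k+1}$. By Definition~\ref{def:connectedness}, for large $k$ the cells meeting each side form a connected subgraph. Consecutive sides share a corner, so the cells meeting them overlap. Concatenating paths therefore gives a closed path of adjacent cells $H_1,\dots,H_n,H_1$, each meeting $\partial S$, that traverses the four sides in order. In $M(\cH)$ this gives a closed edge path $\gamma$ through $v_{H_1},\dots,v_{H_n}$. By Lemma~\ref{lem:no-macroscopic-cell} with $\ell$ fixed, every cell meeting $S$ has diameter at most $\e 2^k$. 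Hence each $\wt c(H_j)$ lies within $\e 2^k$ of $\partial S$, and each linear edge image $\phi_0(\text{edge})$ is a segment joining points of adjacent cells, so it has length at most $2\e 2^k$ and lies within $2\e2^k$ of $\partial S$. Thus $\phi_0(\gamma)$ stays in the $2\e2^k$-neighborhood of $\partial S$. Its winding number around $w$ equals that of $\partial S$, namely $1$, since $\phi_0(\gamma)$ follows the four sides in cyclic order with uniformly small deviation. Concretely, $\phi_0(\gamma)$ is homotopic to $\partial S$ in the annular neighborhood by a straight-line homotopy after reparametrizing, as long as $|w|\le 2^{k-1}$.

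\textbf{Disk-bounding.} Since $\cM$ has whole-plane topology and $M(\cH)$ is homeomorphic to $\bbC$, any closed curve $\gamma$ in $M(\cH)$ is null-homotopic, and its image lies in a compact region. Take $D$ to be the union of the faces of a finite simply connected submap containing $\gamma$. One choice is the smallest simply connected submap containing the faces incident to $\gamma$, as in the construction of $\cM(S;a)$, which is a disk triangulation. Then $\gamma$ is contractible within $D$, and the homotopy argument above applies. Note that $\gamma$ need not be simple; we do not need this, only contractibility inside a set on which $\phi_0$ is defined.

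\textbf{Main obstacle.} The delicate point is the winding-number claim, specifically making sure the cell path genuinely goes around the square in order rather than backtracking in a way that changes the winding. I would handle this by choosing the path on each side so that it stays in cells meeting that side. Any such path has image within $O(\e 2^k)$ of that side, so it is homotopic, relative to endpoints near the corners, to the side itself within the thin neighborhood, which avoids $w$. Gluing the four pieces at the corner-overlap cells then gives a curve homotopic to $\partial S$ in $\bbC\setminus\{w\}$.
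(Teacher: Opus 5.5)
Your proposal is correct and follows essentially the same route as the paper. Line connectivity and Lemma~\ref{lem:no-macroscopic-cell} give a closed chain of adjacent cells along the boundary of a large square whose $\phi_0$-image winds once around every point well inside it, and contracting the lifted loop in the simply connected surface $M(\cH)$ then rules out an omitted point; your side-by-side homotopy just spells out the winding-number step more explicitly than the paper, and contracting inside a finite disk $D$ instead of all of $M(\cH)$ is an inessential variation.
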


\begin{proof}
    Pick $k$ large enough such that by  Definition~\ref{def:connectedness}, the 4 subgraphs of $\cH$ formed by the cells intersecting the 4 sides of $S_k$ are connected.  {Since a cell intersecting a vertex of $S_k$ is contained in (at least) two of these subgraphs, we get that the set of cells intersecting a side of $S_k$ forms a connected graph.} By further applying Lemma~\ref{lem:no-macroscopic-cell}, we can find a chain of cells $H_1\sim H_2\sim\cdots\sim H_n\sim H_1$  {in this connected graph} such that if we connect $\wt c(H_j)$ and $\wt c(H_{j+1})$ using straight line segments for $j=1,...,n$ with $n+1$ identified with 1, then this creates a loop $\gamma_n$ surrounding the square $\wt S_k$  {concentric} to $S_k$ with side length $|S_k|/2$, and the winding number of $ \gamma_n$ 
    around each point in $\wt S_k$ is equal to 1. If we consider the vertices for $H_1,...,H_n$ on the surface $M(\cH)$, then one can find a loop $\wt\gamma_n$ on $M(\cH)$  {connecting these vertices with straight line segments} such that $\phi_0(\wt\gamma_n)=\gamma_n$. Since $M(\cH)$ is simply connected, we can continuously shrink $\wt\gamma_n$ to a single point $x$. If there exists a point  $w\in\wt S_k\backslash \phi_0(M(\cH))$, then under $\phi_0$ image,  {all of these shrinking loops will} have winding number 1 around $w$. On the other hand, for loops sufficiently close to $x$, its image under $\phi_0$ has winding number 0 around $w$ since $w\notin \phi_0(M(\cH))$. This is a contradiction, which implies that almost surely for large enough $k$, $\wt S_k\subseteq \phi_0(M(\cH))$, which completes the proof.
\end{proof}

\begin{lemma}\label{lem:phi_0-bbC-3}
    For every compact set $K\subseteq\bbC$, $\phi_0^{-1}(K)$ has compact closure in $M(\cH)$.
\end{lemma}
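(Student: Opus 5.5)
Fix a compact $K\subseteq\bbC$. I will show that only finitely many triangular faces of $M(\cH)$ have $\phi_0$-image meeting $K$. Their union is then a finite union of compact triangles, hence compact, and it contains $\phi_0^{-1}(K)$, which gives the claim. Each face $\Delta$ of $M(\cH)$ with vertices $v_{H_1},v_{H_2},v_{H_3}$ is mapped onto the Euclidean triangle with vertices $\wt c(H_1),\wt c(H_2),\wt c(H_3)$. So $\phi_0(\Delta)\cap K\neq\emptyset$ forces the convex hull of these three points to meet $K$. It therefore suffices to show that, outside a large ball, no edge $H\sim H'$ of the associated map produces a segment $[\wt c(H),\wt c(H')]$, or a triangle, that comes back to $K$.

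The key input is Lemma~\ref{lem:no-macroscopic-cell}, upgraded to a "sublinear cell size" statement.

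\textbf{Step 1.} Fix a small $\e$, say $\e=1/100$, and $\ell=2$. By Lemma~\ref{lem:no-macroscopic-cell}, almost surely there is $k_0$ such that for all $k\ge k_0$, every $H\in\cH(S)$ with $S\in\cS_{k,\ell}$ satisfies $\diam(H)\le\e|S|$. For $k\ge k_0$, the squares in $\cS_{k,\ell}$ of side $|S_k|$ cover a neighbourhood of $S_k$. The annulus $A_k:=S_{k+1}\setminus S_k$ is covered by squares of $\cS_{k+1,\ell}$ of side comparable to $|S_{k+1}|$. Hence every cell meeting $S_{k+1}\setminus S_k$ has diameter at most $2\e|S_{k+1}|$.

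\textbf{Step 2.} Since $0\in S_k$ and $|S_k|=2^k|S_0|$, the union of the $S_k$ is $\bbC$. For a cell $H$ meeting $S_{k+1}\setminus S_k$ with $k\ge k_0$, we have $\dist(0,H)$ comparable to $|S_{k+1}|$, but only up to the position of $0$ inside $S_k$. This is the delicate point. To handle it, I instead use the concentric squares $\mathring S$. Apply the first part of Lemma~\ref{lem:average-diamdeg4}'s companion statement, namely the second part of Lemma~\ref{lem:no-macroscopic-cell} via Lemma~\ref{lem:dyadic-shift}, to squares $S\subseteq[-2^k,2^k]^2$ of side $\ge 2^{k-\ell}$. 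Covering $[-2^{k+1},2^{k+1}]^2\setminus[-2^{k},2^{k}]^2$ by such squares gives, for large $k$, that every cell meeting this annulus has diameter at most $\e 2^{k}$. In particular, every such cell lies outside $[-2^{k-1},2^{k-1}]^2$.

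\textbf{Step 3.} Let $H\sim H'$ be adjacent. Then $H\cap H'\neq\emptyset$, so $H\cup H'$ is connected with diameter at most $\diam H+\diam H'$. If $H$ meets the annulus at level $k\ge k_1$, then $H'$ meets $H$. Hence $H'$ lies in the annulus at level $k$ or an adjacent level, and $\diam(H')\le \e2^{k+1}$. Consequently the segment $[\wt c(H),\wt c(H')]\subseteq H\cup H'$'s $4\e 2^k$-neighbourhood avoids $[-2^{k-2},2^{k-2}]^2$. The same holds for the triangle spanned by three mutually adjacent cells.

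Choose $k_1$ with $K\subseteq[-2^{k_1-2},2^{k_1-2}]^2$ and $k_1$ larger than all thresholds. Any face whose image meets $K$ then has all three cells meeting $[-2^{k_1},2^{k_1}]^2$. By local finiteness of $\cH$ there are finitely many such cells, and since the associated map is locally finite, finitely many such faces. This concludes the argument.

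The main obstacle is Step 2: getting the diameter bound uniformly on annuli centred at $0$ rather than on the dyadic squares $S_k$. I expect to handle this by exactly the covering argument already used in the second part of Lemma~\ref{lem:average-diamdeg4}, combined with Lemma~\ref{lem:dyadic-shift}.
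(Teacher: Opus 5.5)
Your argument is correct and is essentially the paper's proof. The paper also uses Lemma~\ref{lem:no-macroscopic-cell} to get that, for all large $k$, every cell meeting $[-2^k,2^k]^2$ has diameter $<2^{k-10}$. Hence the convex hull of any three mutually adjacent cells meeting $[-2^k,2^k]^2\setminus[-2^{k-2},2^{k-2}]^2$ misses $K$, and the claim follows by local finiteness, exactly as in your Steps~2--3.

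Your Step~1 detour through $S_{k+1}\setminus S_k$ is unnecessary. Its claim that $\bigcup_k S_k=\bbC$ does not follow merely from $0\in S_k$ and $|S_k|\to\infty$ (nested squares can exhaust only a quadrant), but your final argument never uses it. Also, Lemma~\ref{lem:no-macroscopic-cell} has no ``second part''; the origin-centred diameter bound you need is the one the paper asserts directly in its own proof.
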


\begin{proof}
    By Lemma~\ref{lem:no-macroscopic-cell}, a.s.\ for all large enough $k$, any cells in $\cH([-2^{k},2^k]^2)$ has diameter less than $2^{k-10}$, and the convex hull of the union of any three cells $H_1\sim H_2\sim H_3$ in $\cH([-2^{k},2^k]^2\backslash[-2^{k-2},2^{k-2}]^2)$ does not intersect $K$. This gives the claim.
\end{proof}

\begin{lemma}\label{lem:phi_0-bbC-1}
    Almost surely for large enough $k$, the following is true. For any cells $H_1,...,H_n\in\cH(\partial[-2^k,2^k]^2)$ such that $H_1\sim H_2\sim...\sim H_n\sim H_1$ form a simple loop $\gamma_n$ on $M(\cH)$ {by drawing a straight line between the vertices representing adjacent cells}, if $\phi_0(\gamma_n)$ has winding number 1 around 0, then the loop $\gamma_n$ encloses $M([-2^k,2^k]^2)$.
\end{lemma}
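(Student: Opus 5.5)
The idea is a topological argument on the simply connected surface $M(\cH)$, combined with the fact that $\phi_0$ maps the part of $M(\cH)$ outside $\gamma_n$ far from the origin. Fix $k$ large enough that three things hold. First, by Lemma~\ref{lem:no-macroscopic-cell}, every cell in $\cH([-2^{k+1},2^{k+1}]^2)$ has diameter less than $2^{k-10}$. Second, by Lemma~\ref{lem:M(S)} applied to $S=[-2^k,2^k]^2$, the disk triangulation $\cM(S)$ contains $\cM(S;(1-\e)|S|)$ for a small fixed $\e$. Third, by Definition~\ref{def:connectedness}, the cells meeting $\ol{\bbC\backslash S}$ form a connected, infinite subgraph, and the same holds for the cells meeting the outer annulus $[-2^{k+1},2^{k+1}]^2\backslash(-2^k,2^k)^2$.

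Since $M(\cH)$ is homeomorphic to the plane and $\gamma_n$ is a simple loop, $\gamma_n$ splits $M(\cH)$ into a bounded component $I$ and an unbounded component $O$. The plan is to show that every vertex $v_H$ of $M(S)$ lies in $\ol I$. Suppose instead that some vertex $v_{H}$ of $M(S)$ lies in $O$. The cell $H$ is contained in $S$, and $\cM(S)$ is a disk triangulation built from a connected cluster containing $\cH(\mathring S^{a})$ for some $a$. So from $v_H$ there is a path inside $M(S)$ to a vertex $v_{H'}$ with $H'$ meeting $\mathring S^{(1-\e)|S|}$. Since all cells are small, $H'$ does not meet $\partial S$, so $v_{H'}$ is not on $\gamma_n$. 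Both endpoints of this path then lie in $O$, or the path crosses $\gamma_n$.

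I would run the argument in the opposite direction. The key claim is that $O$ contains no vertex of a cell meeting $\mathring S^{(1-\e)|S|}$. Assume $O$ contains such a vertex $v_{H'}$. Then $O$ also contains vertices of cells lying far outside $S$, because $O$ is unbounded and, by Lemma~\ref{lem:phi_0-bbC-3}, only finitely many vertices of $M(\cH)$ map near any compact set. Take a path in $M(\cH)\setminus\gamma_n$ inside $O$ from $v_{H'}$ to infinity. Its $\phi_0$-image starts near the point $\wt c(H')$, which lies deep inside $S$, and ends far outside $S$. Consider the homotopy that shrinks $\gamma_n$ inside $\ol I$ to a point, which is possible because $\ol I$ is a closed disk. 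The $\phi_0$-images of these shrinking loops have winding number $1$ around $0$ at the start and $0$ at the end. Hence $\phi_0(\ol I)$ covers a neighbourhood of $0$, and in fact covers every point $w$ around which $\phi_0(\gamma_n)$ winds once. This is the same argument as in Lemma~\ref{lem:phi_0-bbC}.

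Since the cells of $\gamma_n$ meet $\partial S$ and are small, $\phi_0(\gamma_n)$ lies within $2^{k-9}$ of $\partial S$. Therefore its winding number is constant, equal to $1$, on the square $\mathring S^{(1-\e)|S|}$ for small $\e$.

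The remaining, and main, step is to upgrade "$\phi_0(\ol I)$ covers the inner square" to "$\ol I$ contains every vertex $v_{H'}$ with $H'$ meeting the inner square". Plan:
\begin{enumerate}
\item Use connectivity along macroscopic lines to take the cells meeting a horizontal segment through $\mathring S^{(1-\e)|S|}$ and extending beyond $S$ to distance $2^{k+1}$.
\item This yields a graph path $P$ in $\cM$ from $v_{H'}$ to a vertex $v_{H''}$ with $H''$ far outside $S$. Every cell on $P$ meets that segment, so only the cells near $\partial S$ can lie on $\gamma_n$.
\item Combine $P$ with a vertical segment in the same way, or use loops of cells meeting the sides of the inner squares $\mathring S^{b|S|}$, which are connected by Definition~\ref{def:connectedness}.
\end{enumerate}
From this I would construct a simple loop $\eta$ in $M(\cH)$ through cells meeting $\partial\mathring S^{(1-\e)|S|}$, disjoint from $\gamma_n$. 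Its image $\phi_0(\eta)$ winds once around $0$, while the image of $O$ can be pushed to infinity. If $\eta$ lay in $O$, then shrinking $\eta$ within its own interior disk $J$ would force $\phi_0(J)\ni 0$, with $J$ either inside $O$ or containing $\gamma_n$. The first option is compatible with the hypotheses, so it has to be excluded. I would do this using the injectivity-type control available at large scale: the loop $\gamma_n$ and the outer connected cluster $\cH(\ol{\bbC\backslash S})$ lie on the same side, which is exactly the argument in case (ii) of Lemma~\ref{lem:M(S)}. I expect this exclusion to be the delicate point, and the argument should be modeled on the proof of Lemma~\ref{lem:M(S)}.

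Once every vertex of $\cM(S;(1-\e)|S|)$, and hence of $M(S)$, lies in $\ol I$, I would check that no vertex of $M(S)$ lies on $\gamma_n$. This holds because the cells of $M(S)$ lie in the interior of $S$ while the cells of $\gamma_n$ meet $\partial S$, after adjusting $\e$ if needed. Since $M(S)$ is a disk triangulation whose vertices all lie in $I$, its faces do too, and so $\gamma_n$ encloses $M([-2^k,2^k]^2)$.
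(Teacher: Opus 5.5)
Your proposal has a genuine gap at exactly the step you flag as delicate. Knowing that $\phi_0(\bar I)$ covers the inner square says nothing about which preimages lie in $\bar I$, because $\phi_0$ is not injective. Your exclusion of the case where the auxiliary loop $\eta$ bounds a disk $J\subseteq O$ with $0\in\phi_0(J)$ is left open. The ``injectivity-type control at large scale'' you want to invoke is not available here. The almost-planarity results (Lemma~\ref{lem:almost-planarity}(ii), and hence Proposition~\ref{prop:almost-planarity}) are themselves deduced from this lemma, so using them would be circular. The graph-connectivity argument from the proof of Lemma~\ref{lem:M(S)} gives no information about where $\phi_0$ sends points. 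So as written, the main step is unproved.

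The missing idea is to localize one fixed preimage set, not all vertices of cells meeting the inner square. By Lemma~\ref{lem:phi_0-bbC-3} with $K=\{0\}$, the set $\phi_0^{-1}(0)$ has compact closure in $M(\cH)$, and this set does not depend on $k$. So for all large $k$ it lies in $M([-2^{k-1},2^{k-1}]^2)\subseteq M([-2^k,2^k]^2)$.

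You already check, at the end, that $M([-2^k,2^k]^2)$ is a connected disk region disjoint from $\gamma_n$. Hence it lies entirely in $I$ or entirely in $O$. Suppose it lies in $O$. Then $\bar I\cap\phi_0^{-1}(0)=\emptyset$. Shrink $\gamma_n$ to a point inside the closed disk $\bar I$; the $\phi_0$-images of these loops never pass through $0$. Their winding number around $0$ must therefore stay equal to $1$ throughout, yet it is $0$ for the final constant loop, a contradiction. This argument needs neither the loop $\eta$, nor the covering of the inner square, nor the line-connectivity inputs.
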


\begin{proof}
    From the definition, $M([-2^k,2^k]^2)$ has disk topology and is disjoint from $\gamma_n$. Suppose $k$ is large enough such that by Lemma~\ref{lem:phi_0-bbC-3}, $\phi_0^{-1}(0)\subseteq M([-2^{k-1},2^{k-1}]^2)$. If $\gamma_n$ does not enclose $M([-2^k,2^k]^2)$, then one can continuously shrink $\gamma_n$ to a single point $x$ on $M(\cH)$ such that all the loops are disjoint from $M([-2^k,2^k]^2)$. In particular, under the image of $\phi_0$,  {all of these shrinking loops will} have winding number 1 around 0. On the other hand, $x\notin M([-2^k,2^k]^2)$ and $\phi_0(x)\neq 0$. This is a contradiction.
\end{proof}

\subsection{Convergence of random walks on cell configurations}\label{subsec:pre-invprinciple}
In order to prove Theorem~\ref{thm:CellSystemPack} and Theorem~\ref{thm:invariance-uniformization}, we will need that for each large square $S$, $\cH(S)$ is close to \emph{some} planar embedding of the map $\cM(S)$. To achieve this, in Proposition~\ref{prop:almost-planarity} below,  we will use the the convergence of random walks on cell configurations in~\cite{GMSinvariance} to prove that for large squares $S$, $\cH(S)$ is close to the Tutte embedding of $\cM(S)$.

Let $\beta_1:[0,T_{\beta_1}]\to\bbC$ and $\beta_2:[0,T_{\beta_2}]\to\bbC$ be continuous curves defined on possibly different time intervals. We define 
\begin{equation}\label{eq:curve-metric-0}
    \mathbb{d}^{\rm CMP}(\beta_1,\beta_2):=\inf_\phi \sup_{t\in[0,T_{\beta_1}]}|\beta_1(t)-\beta_2(\phi(t))|,
\end{equation}
where the infimum is over all increasing homeomorphisms $\phi:[0,T_{\beta_1}]\to [0,T_{\beta_2}]$ and ``CMP'' stands for ``curves modulo parameterization". For curves  $\beta_1:[0,\infty)\to\bbC$ and $\beta_2:[0,\infty)\to\bbC$ defined for infinite time and $r>0$, we let $T_{1,r}$ (resp.\ $T_{2,r}$) be the  first exit time of $\beta_1$ (resp.\ $\beta_2$) from the ball $B(0;r)$  (or 0 if the curve starts outside $B(0;r)$). We define
\begin{equation}\label{eq:curve-metric}
    \mathbb{d}^{\rm CMP}_{\rm loc}(\beta_1,\beta_2):= \int_1^\infty e^{-r}\min\{1, \mathbb{d}^{\rm CMP}(\beta_1|_{[0,T_{1,r}]},\beta_2|_{[0,T_{2,r}]}) \}\,dr, 
\end{equation}
so that $\mathbb{d}^{\rm CMP}_{\rm loc}(\beta_n,\beta)\to0$ if  and only if for Lebesgue a.e.\ $r>1$, $\beta_n$ stopped at its first exit time from $B(0;r)$ converges to $\beta$  stopped at its first exit time from $B(0;r)$ with respect to the metric~\eqref{eq:curve-metric-0}. 
\begin{theorem}[Theorem 3.10 of~\cite{GMSinvariance}]\label{thm:GMSinvariance}
    Let $\cH$ be a random cell configuration satisfying the conditions in Definition~\ref{def:ergodic-modulo-scaling} and Definition~\ref{def:connectedness}. Further assume $\cH$ satisfies the moment bound
    \begin{equation}\label{eq:GMSinvariance-moment}
        \bbE\bigg[\frac{\diam(H_0)^2}{\area(H_0)}\pi(H_0)\big)\bigg]<\infty, \ \  \bbE\bigg[\frac{\diam(H_0)^2}{\area(H_0)}\pi^*(H_0)\big)\bigg]<\infty
    \end{equation} 
    where $\pi(H) = \sum_{H'\sim H}\fc(H,H')$ and $\pi^*(H) = \sum_{H'\sim H}\fc(H,H')^{-1}$. Then the simple random walk on $\cH$ is a.s.\ recurrent. Moreover, for $z\in\bbC$, let $X^z$ denote the random walk on $\cH$ started from $H_z$ with conductance $\fc$. For $j\in\bbN_0$, let $\wh X_j^z$ be an arbitrarily chosen point of the cell $X_j^z$ and extend $\wh X_j^z$ from $\bbN_0$ to $[0,\infty)$ by piecewise linear interpolation. There is a deterministic covariance matrix $\Sigma$ with $\det \Sigma\neq0$ such that the following is true.  For each fixed compact set $A\subseteq\bbC$,  it is a.s.\ the case that as $\e\to0$, the maximum over all $z\in A$ of the Prokhorov distance between the conditional law of $\e\wh X^{z/\e}$ given $\cH$ and the law of Brownian motion started from $z$ with covariance matrix $\Sigma$, with respect to the topology on curves induced in~\eqref{eq:curve-metric}, tends to 0.
\end{theorem}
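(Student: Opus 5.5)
This result is Theorem 3.10 of~\cite{GMSinvariance}, and we only need to invoke it. To prove it directly, I would use the harmonic-coordinates (corrector) method, adapted to the scale-free setting through dyadic systems. The plan has four steps: recurrence, construction of a harmonic map, sublinearity of the corrector, and a martingale central limit theorem.

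\textbf{Step 1: recurrence.} I would bound the effective conductance from $H_0$ to $\cH(\bbC\setminus S_k)$ by the Dirichlet energy of a logarithmic test function of $|z|$, evaluated cell by cell. The energy contributed by a cell $H$ is at most a constant times $\diam(H)^2\pi(H)$ divided by the squared distance to the origin. On each dyadic annulus, the sum of $\diam(H)^2\pi(H)$ is $O(|S|^2)$: this follows from Lemma~\ref{lem:ergodic-cell-system} applied to $F=\diam(H_0)^2\pi(H_0)/\area(H_0)$, exactly as in Lemma~\ref{lem:average-diamdeg4}. Summing over annuli makes the effective conductance tend to zero, which gives recurrence.

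\textbf{Step 2: harmonic coordinates.} Let $\phi_0(H)$ be a point in $H$. For each $m$, let $\phi_m$ be the $\fc$-harmonic function on $\cH(S_m)$ with boundary values $\phi_0$. The Dirichlet energy of $\phi_0$ on $\cH(S)$ is at most $\sum_{H\in\cH(S)}\diam(H)^2\pi(H)\le C|S|^2$, again by the ergodic averages. I would then use the dyadic resampling property (Lemma~\ref{lem:dyadic-resample}) and ergodicity. These should show that the normalized energy $|S_m|^{-2}\,\mathrm{Energy}(\phi_m-\phi_{m'})$ on a fixed fraction of $S_m$ tends to $0$; the reason is that the energy per unit area is a monotone, scale-invariant quantity whose expectation converges. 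This yields a limit $\phi_\infty$ that is $\fc$-harmonic on all of $\cH$, and whose gradient field is stationary modulo scaling.

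\textbf{Step 3: sublinearity of the corrector (main obstacle).} The key claim is
\[
\lim_{r\to\infty}\frac1r\max_{H\in\cH(B(0;r))}|\phi_\infty(H)-\phi_0(H)|=0 .
\]
I would first prove it along horizontal and vertical segments, where the connectivity in Definition~\ref{def:connectedness} supplies connected chains of cells. Along such a chain, the oscillation of $\phi_\infty-\phi_0$ is bounded by a Cauchy--Schwarz estimate: the square root of the energy of $\phi_\infty-\phi_0$ near the segment times the square root of $\sum \diam^2\pi^*$ along the segment. The second factor is controlled by the second moment in~\eqref{eq:GMSinvariance-moment}. The energy of the difference is small on average by Step 2, and averaging over a random choice of segments (Fubini) makes its contribution near a typical segment small.

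Next I would take a grid of such segments at spacing $\e r$. In each complementary cell of the grid, the discrete maximum principle bounds the oscillation of $\phi_\infty$ by its oscillation on the boundary, and $\phi_0$ varies by $O(\e r)$ there. Letting $\e\to 0$ gives the claim. I expect this step to be the most delicate, because it requires uniformity over all segments and the energy bound must be localized to the segments.

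\textbf{Step 4: invariance principle.} The process $\phi_\infty(X^z_j)$ is a martingale. By Lemma~\ref{lem:ergodic-cell-system}, the time-averaged quadratic variation converges to a deterministic matrix $\Sigma$. The martingale functional CLT then gives convergence of the rescaled process to Brownian motion with covariance $\Sigma$, and Step 3 transfers this convergence from $\phi_\infty(X)$ to $\wh X$.

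It remains to check three points:
\begin{enumerate}
\item $\det\Sigma\neq0$, because $\phi_\infty$ is within $o(r)$ of the identity and so cannot collapse a direction;
\item the statement holds with the curve topology~\eqref{eq:curve-metric}, since time changes are quotiented out there;
\item the convergence is uniform over $z\in A$, which follows from the uniform sublinearity in Step 3 and equicontinuity of the laws in $z$.
\end{enumerate}
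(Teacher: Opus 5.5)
Citing \cite[Theorem 3.10]{GMSinvariance} is exactly what the paper does. It gives no proof of its own, so your first sentence is all that is needed. The self-contained argument you sketch, however, has genuine gaps.

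\textbf{Step 4.} This is the most serious gap. Lemma~\ref{lem:ergodic-cell-system} averages a scale-invariant functional of $(\cH-z,\cD-z)$ against \emph{Lebesgue measure} in $z$. The quadratic variation of $\phi_\infty(X)$ after $n$ steps is something else: it is the sum of the local energy matrices $\pi(H)^{-1}\sum_{H'\sim H}\fc(H,H')(\phi_\infty(H')-\phi_\infty(H))(\phi_\infty(H')-\phi_\infty(H))^{\mathrm T}$ against the walk's occupation measure, which at $H$ is $\pi(H)$ times a symmetric Green's function.

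In a scale-free environment this measure is not comparable to Lebesgue measure. The number of cells per unit area fluctuates from scale to scale; in the mated-CRT application the cell-counting measure approximates an LQG area measure, which is singular with respect to Lebesgue measure. The hypotheses also give no stationary law for the environment seen from the walker, so no Kipnis--Varadhan-type ergodic theorem along the path is available.

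In fact $n^{-1}\langle\phi_\infty(X)\rangle_n$ does not in general converge to a nondegenerate deterministic matrix; this is why the theorem is only stated modulo time parametrization. A martingale argument actually needs $\langle\phi_\infty(X)\rangle$ to be asymptotically a scalar multiple of $\Sigma$ on macroscopic scales along the path, and nothing in your plan produces this. The limit has to be identified through harmonic functions rather than through the clock. For example, one could:
\begin{itemize}
\item show that discrete $\fc$-harmonic functions on $\e\cH$ with smooth boundary data converge to $\Sigma$-harmonic functions, using an energy argument in which only Lebesgue-weighted averages enter;
\item then use exit distributions, or the fact that $Z^{\mathrm T}BZ$ is a martingale for every $B$ with $\mathrm{tr}(B\Sigma)=0$ in any subsequential limit $Z$, to conclude that $d\langle Z\rangle$ is a multiple of $\Sigma$.
\end{itemize}
Going from the corrector to the walk without any control on time is what \cite[Section 3]{GMSinvariance} has to supply, and your Step 4 skips it.

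\textbf{Steps 2--3.} These also fail as formulated. A square of prescribed Euclidean size such as $S_m$ is not preserved by the random rescalings in Lemma~\ref{lem:dyadic-resample}. Hence energies of single-box extensions are not scale-invariant functionals, and Lemmas~\ref{lem:dyadic-resample}--\ref{lem:ergodic-cell-system} say nothing about them. Moreover, the orthogonality you do have only gives $\sum_{j\le M}\mathrm{Energy}(\phi_j-\phi_{j-1})\le \mathrm{Energy}(\phi_0;\cH(S_M))\le C|S_M|^2$. This is compatible with every increment having normalized energy of order one; local convergence of single-box solutions is itself a homogenization statement.

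The construction that works (\cite[Section 2.3]{GMSinvariance}, in continuum form in Section~\ref{subsec:GMS2.3}) makes $\phi_m$ harmonic in every tile of the nested tiling $\mathfrak{S}_m=\{\wh S_m^z\}$. Its tiles are defined by the \emph{number of cells} they contain, so $\mathrm{SE}_{m_1,m_2}$ is scale invariant and Lemma~\ref{lem:dirichlet-A} applies. Nestedness then gives $\bbE[\mathrm{SE}_{m',m}]=\sum_{j=m'+1}^m\bbE[\mathrm{SE}_{j-1,j}]$ with a bound independent of $m$, which forces the Cauchy property (Lemma~\ref{lem:dirichlet-B}).

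The tiling is also what makes Step 3 work. The energy of $\phi_\infty-\phi_0$ on a square $S$ is in general comparable to $|S|^2$, since the corrector has order-one gradient. So your line-by-line Cauchy--Schwarz applied to $\phi_\infty-\phi_0$ only yields oscillation $O(|S|)$, not $o(|S|)$. One must instead:
\begin{itemize}
\item apply the Cauchy--Schwarz bound to $\phi_\infty-\phi_m$, whose specific energy is small for large $m$ (Lemmas~\ref{lem:dirichlet-C} and~\ref{lem:dirichlet-D});
\item separately use $\sup|\phi_m-\phi_0|=o(|S_k|)$, which holds because $\phi_m=\phi_0$ on the boundaries of the microscopic tiles (Lemma~\ref{lem:sublinear-A}).
\end{itemize}
For a single-box $\phi_m$ this last estimate is the very sublinearity you are trying to prove.

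Your Step 1 (the logarithmic test function) and the lines-plus-maximum-principle skeleton of Step 3 are fine.
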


\begin{corollary}\label{cor:GMS-rotation-inv}
    In the setting of Theorem~\ref{thm:GMSinvariance}, if $\cH$ is rotation invariant in law for some $\theta_0\in (0,2\pi)\backslash\{\pi\}$ in the sense of Definition~\ref{def:rotation-inv}, then $\Sigma$ is a scalar times the scalar matrix.
\end{corollary}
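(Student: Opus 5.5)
Since $\cH$ is rotation invariant in law, I will rotate the whole setup and apply Theorem~\ref{thm:GMSinvariance} to the rotated configuration $\cH' := e^{i\theta_0}\cH$. The hypotheses transfer directly. Ergodicity modulo scaling and translation invariance modulo scaling depend only on the law, and $\cH'$ has the same law as $\cH$. The moment bound \eqref{eq:GMSinvariance-moment} is a statement about the law, so it also transfers. The walk on $\cH'$ with the transported conductance is the image under $z\mapsto e^{i\theta_0}z$ of the walk on $\cH$. Write $R_{\theta_0}$ for the rotation matrix. For the walk on $\cH$, the rescaled walk $\e\wh X^{z/\e}$ converges quenched to Brownian motion with covariance $\Sigma$. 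Rotating the paths, which is continuous for the metric \eqref{eq:curve-metric} up to relabelling the balls (these are rotation invariant), shows that the walk on $\cH'$ converges to Brownian motion with covariance $R_{\theta_0}\Sigma R_{\theta_0}^{T}$.

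The same theorem applied to $\cH'$ on its own gives a limiting covariance $\Sigma'$. This $\Sigma'$ is deterministic and determined by the law of $\cH'$ (as the a.s.\ quenched limit), and that law equals the law of $\cH$. Hence $\Sigma'=\Sigma$. One subtlety is that the choice of points $\wh X^z_j$ within cells is arbitrary. This is harmless, because the theorem holds for any choice and cells are macroscopically small. Uniqueness of limits in law then gives
\[
R_{\theta_0}\Sigma R_{\theta_0}^{T}=\Sigma .
\]

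What remains is linear algebra. $\Sigma$ is symmetric, so write it as $aI+B$, where $a=\operatorname{tr}\Sigma/2$ and $B$ is symmetric and traceless, i.e.\ $B=\begin{pmatrix}b&c\\c&-b\end{pmatrix}$. Conjugating $B$ by $R_{\theta}$ multiplies $b+ic$ by $e^{2i\theta}$. Invariance under $R_{\theta_0}$ therefore forces $(e^{2i\theta_0}-1)(b+ic)=0$. Since $\theta_0\in(0,2\pi)\setminus\{\pi\}$, we have $e^{2i\theta_0}\neq 1$, so $b=c=0$ and $\Sigma=aI$. Finally $\det\Sigma\neq0$ gives $a>0$.

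I expect the main obstacle to be the justification that $\Sigma$ depends only on the law of $\cH$. That is, two configurations with the same law yield the same deterministic $\Sigma$, and the convergence for the rotated walk is genuinely convergence for the walk on $\cH'$ in the sense of the theorem. I would handle this by noting that the quenched convergence event is measurable in $\cH$ (with the natural conductance, which is preserved by the rotation). That event holds a.s.\ with a specific $\Sigma$, so it also holds a.s.\ for $\cH'$ with that same $\Sigma$. Comparing it with the rotated statement then gives the identity $R_{\theta_0}\Sigma R_{\theta_0}^{T}=\Sigma$ directly.
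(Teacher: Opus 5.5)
Your proposal is correct and follows the paper's argument: rotation invariance in law forces $R_{\theta_0}\Sigma R_{\theta_0}^{T}=\Sigma$, and linear algebra then shows $\Sigma$ is a multiple of the identity (your split into $aI$ plus a traceless part, on which conjugation by $R_{\theta}$ acts by $e^{2i\theta}$, is a clean way to do the paper's ``direct computation''). One detail to add is that the convergence is modulo time parameterization, so uniqueness of the limit only gives $R_{\theta_0}\Sigma R_{\theta_0}^{T}=\lambda\Sigma$ for some $\lambda>0$; taking traces, with $\mathrm{tr}\,\Sigma>0$, then gives $\lambda=1$.
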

\begin{proof}
   Write  $\mathbf{R}_{\theta_0}$ for the  matrix of rotating by angle $\theta_0$.  One can directly check by computation that for a 2D Brownian motion $(X,Y)^\rmT$ with covariance matrix $\Sigma$, if  $\mathbf{R}_{\theta_0}(X,Y)^\rmT$ is also a 2D Brownian motion with the same covariance matrix $\Sigma$, then $\Sigma$ must be a scalar times the identity matrix. This immediately implies Corollary~\ref{cor:GMS-rotation-inv}. 
\end{proof}

As studied in~\cite{GMS21tutte}, one main application of Theorem~\ref{thm:GMSinvariance} is the convergence of random planar maps under the Tutte embedding. Let $\cM_0$ be a disk triangulation and $v\in\cV \cM_0$ be an interior vertex. The Tutte embedding of $\cM_0$ in $\bbD$ centered at $v$ is defined as follows. Enumerate the boundary vertices of $\cM_0$ in counterclockwise order as $\{y_1,...,y_n\}$ with $y_1$ chosen arbitrarily. For $j=1,...,n$ we let $\mathfrak{p}(y_j)$ be the  probability   that a simple random walk started from $v$ first hits the boundary at a vertex in the boundary arc $\{y_1,...,y_j\}$.  The boundary vertices $y_1,...,y_n$ are then mapped in counterclockwise order to the unit circle $\partial\bbD$ such that $\psi(y_j):=e^{2\pi i \mathfrak{p}(y_j)}$. Then the hitting probability of the random walk started from $v$ approximates the uniform measure on the unit circle. We map the interior vertices of $\cM_0$ into the unit disk via the discrete harmonic extension of the boundary values, i.e., for any interior vertex $v$,
\begin{equation}
    \psi(v) = \frac{1}{\#\{ w\in \cV\cM_0:w\sim v\}} \sum_{w\sim v}\psi(w).
\end{equation}
Let $\psi:\cV\cM_0\to\bbD$ be the discrete harmonic function with boundary data as prescribed above. For each edge $e$ of $\cM_0$ connecting $v_1$ and $v_2$, we also define $\psi(e)$ to be the straight line segment joining $\psi(v_1)$ and $\psi(v_2)$. This defines the Tutte 
embedding\footnote{ {Since $\cM_0$ potentially has multiple edges, the Tutte embedding might not be a planar embedding as defined at the beginning of Section \ref{subsec:intro-1} since $\psi$ might be non-injective. On the other hand, 
the embedding in~\eqref{eq:almost-planarity} below will be an actual planar embedding.
}} 
of the map $\cM_0$ within the unit disk $\ol\bbD$. Note that under this drawing, if $e_1,...,e_k$ are edges with the same endpoints  {(i.e., these edges form a multiple edge)}, then $\psi(e_1)=...=\psi(e_k)$, and  vertices  {enclosed by} 
the edges $e_1,...,e_k$ on the map $\cM_0$ are mapped to  points on the line segment $\psi(e_1)$. Furthermore, the drawing $\psi$ is planar in the following sense. Let  $e_1$ and $e_2$ be edges that are not contained within multiple edges of $\cM_0$, i.e., they are not part of a multiple edge or are enclosed by one. If $e_1$ and $e_2$ have different endpoints, $\psi(e_1)\cap \psi(e_2)=\emptyset$,  and if $e_1$ and $e_2$ has one common endpoint $v$, then $\psi(e_1)\cap \psi(e_2)=\psi(v)$. 

Next we prove that for large $k$, the embedding of $\cM([-2^k,2^k]^2)$ via $\cH$ is close to \emph{some} planar embedding of  $\cM([-2^k,2^k]^2)$. We will typically use the Tutte embedding here as the planar embedding. However, the embedded boundary of  $\cM([-2^k,2^k]^2)$ via $\cH$ only has close Hausdorff distance, not curve distance~\eqref{eq:curve-metric-0},  to $\partial [-2^k,2^k]^2$, which is not sufficient. Below we will construct a map $\cM_{k,\ell}$ which contains and is slightly larger than $\cM([-2^k,2^k]^2)$ and work on that map instead. Recall the function $\phi_0$ defined after Lemma~\ref{lem:M(S)}.

 \begin{lemma}\label{lem:almost-planarity}
     Let $\ell>0$. Almost surely, for all large enough $k$, there exists a submap $\cM_{k,\ell}\subseteq \cM$ with the following properties:
     \begin{enumerate}[(i)]
         \item $\cM_{k,\ell}$ has disk topology, and for each vertex $v$ on its boundary, its cell in $\cH$ intersects $\partial[-2^k,2^k]^2$;
         \item $\cM_{k,\ell}$ contains $\cM([-2^k,2^k]^2)$ as a submap;
         \item Let $M_{k,\ell}\subseteq M(\cH)$ be the Riemann surface generated from $\cM_{k,\ell}$. Then the curve distance~\eqref{eq:curve-metric-0} between $\phi_0(\partial M_{k,\ell})$ and $\partial [-2^k,2^k]^2$ is less than $2^{k-\ell}$.
     \end{enumerate}
 \end{lemma}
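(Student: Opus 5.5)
The plan is to construct $\cM_{k,\ell}$ as the region enclosed by a suitable loop of cells hugging $\partial[-2^k,2^k]^2$. Write $Q_k=[-2^k,2^k]^2$ and fix $\ell$. We work on the almost sure event where, for all large $k$, three things hold. First, by Definition~\ref{def:connectedness} applied to the four sides of $Q_k$ and to short subsegments, the cells meeting any side, and any horizontal or vertical segment of length at least $2^{k-\ell-10}$ contained in $\partial Q_k$, induce a connected subgraph. Second, by Lemma~\ref{lem:no-macroscopic-cell} (in the second-part form used in Lemma~\ref{lem:average-diamdeg4}, covering $Q_k$ by squares of $\cS_{k,\ell'}$), every cell in $\cH(Q_{k+1})$ has diameter at most $2^{k-\ell-20}$. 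Third, Lemma~\ref{lem:phi_0-bbC-1} applies at scale $k$.

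\textbf{Building the boundary loop.} Split $\partial Q_k$ into consecutive arcs $I_1,\dots,I_N$, with $N\asymp 2^{\ell+10}$, each a horizontal or vertical segment of length about $2^{k-\ell-10}$. The cells meeting $I_j$ form a connected subgraph, and consecutive families share the cell containing the common endpoint. So we can pick a closed path of adjacent cells $H_1\sim\cdots\sim H_n\sim H_1$, all in $\cH(\partial Q_k)$, which visits the families in order. Every cell along this path is tiny and meets $I_j$ while the path traverses block $j$. Hence the polygonal curve through $\wt c(H_i)$, which is $\phi_0$ of the corresponding path on $M(\cH)$, stays within $2^{k-\ell-5}$ of the arc $I_j$ during block $j$. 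Parameterizing block by block gives curve distance less than $2^{k-\ell}$ to $\partial Q_k$, and shows the winding number around $0$ is $1$.

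\textbf{Making the loop simple and concluding.} The path may repeat vertices. We then erase loops chronologically on $M(\cH)$ to obtain a simple loop $\gamma$. Loop erasure only deletes excursions, so every remaining vertex still lies in some block in order. The removed portions have $\phi_0$-images inside $2^{k-\ell-5}$-neighbourhoods of consecutive arcs. It follows that the curve distance stays below $2^{k-\ell}$ and the winding number around $0$ is still $1$.

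By Lemma~\ref{lem:phi_0-bbC-1}, $\gamma$ encloses $M(Q_k)$. Define $\cM_{k,\ell}$ as the submap consisting of $\gamma$ and everything it encloses. This is a finite triangulated disk because $\gamma$ is a simple loop on a plane triangulation and the map is locally finite, which gives (i) and (ii). Property (iii) is the curve estimate above, since $\partial M_{k,\ell}=\gamma$.

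\textbf{Main obstacle.} The delicate step is making the loop simple while keeping the monotone parameterization. A repeated vertex can link far-apart blocks only if one cell meets two distant arcs, which the smallness of cells forbids; otherwise we would need care near the corners of $Q_k$. A further subtlety is that $\gamma$ must also be simple in the disk-topology sense (no pinching via multiple edges). If multiple edges cause trouble, I would instead take the outer boundary of the union of faces enclosed.
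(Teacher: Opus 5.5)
Your construction follows the paper's route: a chain of tiny cells hugging $\partial[-2^k,2^k]^2$ obtained from line connectivity and Lemma~\ref{lem:no-macroscopic-cell}, made simple by loop erasure, with (ii) coming from Lemma~\ref{lem:phi_0-bbC-1}. However, the loop-erasure step does not work as you justify it.

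\textbf{The gap.} Your locality argument shows that two visits to the same cell occur in \emph{cyclically} adjacent blocks. Chronological erasure, however, follows the \emph{linear} time order, in which blocks $1$ and $N$ are spatially adjacent but temporally as far apart as possible. Nothing in your construction prevents a cell near the point $I_N\cap I_1$ from lying on both the block-$1$ path and the block-$N$ path. For instance, suppose the walk starts at the cell $C_0$ containing $I_N\cap I_1$, does not return to $C_0$ until it closes up, and its second cell $X$ is visited again during block $N$. Then chronological erasure goes from $C_0$ directly to the last visit of $X$ and discards blocks $1$ through $N-1$. What remains is a loop confined near the arc $I_N$. Its $\phi_0$-image has winding number $0$ about the origin, so Lemma~\ref{lem:phi_0-bbC-1} is unavailable and (iii) fails. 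Moving the base point into the middle of an arc does not cure this by itself, since a wandering block path can pass through the same cell both before and after the base point.

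\textbf{A short repair.} Let $s^*$ be the last time in block $1$ at which the walk sits at a cell that is also visited in block $N$. Such a cell meets $I_1$ and $I_N$, so it lies next to $I_N\cap I_1$, and $s^*$ exists because block $N$ ends at $C_0$. Let $t^*$ be the first visit to that cell in block $N$, and loop-erase the closed sub-walk on $[s^*,t^*]$ based at that cell. By the choice of $s^*$ and $t^*$ and the smallness of cells:
\begin{itemize}
\item the base cell is visited only at the two ends;
\item no cell is shared between the block-$1$ and block-$N$ portions of the sub-walk.
\end{itemize}
Hence every erased loop lies within two time-consecutive blocks. Since the retained edges are original edges in increasing time order, your monotone parameterization and winding number $1$ then go through.

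\textbf{How the paper avoids the seam.} It anchors at cells $H_j\in\cH(\mathring{L}_j)$ in the middle of each arc and joins consecutive anchors by simple paths inside the overlapping segments $L_{j,j+1}$. It then erases loops only locally around each $\mathring{L}_{j+1}$, so the wrap-around junction is treated like every other junction.

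\textbf{Your other worries.} These are not real issues. A cell meets at most two arcs, and these are consecutive even across a corner. A vertex-simple cycle of length at least $3$ is already a simple loop on $M(\cH)$, regardless of multiple edges.
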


\begin{proof}
    Following Definition~\ref{def:connectedness} and Lemma~\ref{lem:no-macroscopic-cell},  let  $k$ be large enough such that for any line segment $L\subseteq [-2^{k+1},2^{k+1}]$ of length at least $2^{k-\ell-4}$, $\cH(L)$ is connected, and the maximal diameter of cells in $\cH([-2^{k+1},2^{k+1}]^2)$ is less than $2^{k-2\ell-10}$.  We partition $\partial [-2^{k},2^k]^2$ into line segments $L_1,...,L_{2^{\ell+2}}$ of length $2^{k-\ell}$, such that $L_1 = [-2^{k},-2^{k}+2^{k-\ell}]\times \{-2^k\}$, and $L_1,...,L_{2^{\ell+2}}$ are aligned counterclockwise. For $j=1,...,2^{\ell+2}$ with $2^{\ell+2}+1$ identified with 1, let $\mathring{L_j}$ be the line segments concentric to $L_j$ with length $|L_j|/2$, and let $L_{j,j+1}$ be the union of $\mathring{L_j},\mathring{L}_{j+1}$ and the middle component of $(L_j\cup L_{j+1})\backslash(\mathring{L_j}\cup\mathring{L}_{j+1})$.  We pick a cell $H_j\in \cH(\mathring{L_j})$. By the line connectivity property above, we can find a simple path  $\wt\gamma_{k,j}$ on $\cH(L_{j,j+1})$ connecting $H_j$ and $H_{j+1}$. The intersection between $\wt\gamma_{k,j}$ and $\wt\gamma_{k,j+1}$ are in $\cH(\mathring{L}_{j+1})$, and therefore we can erase all the loops in $\cup_j \wt\gamma_{k,j}$ that are made by vertices in $\cH(\mathring{L}_{j+1})$ for each $j$ to obtain a simple loop $\gamma_k$. Let $\cM_{k,\ell}\subseteq \cM$ be the planar map formed by $\gamma_k$ and the vertices enclosed by $\gamma_k$. Then properties (i) and (iii) are immediate from construction, while property (ii) is a consequence of Lemma~\ref{lem:phi_0-bbC-1}.
\end{proof}

We are now ready to prove  {that the embedding of $\cM([-2^k,2^k]^2)$ via $\cH$ is close to planar}.

\begin{proposition}\label{prop:almost-planarity}
 Let $\cH$ be a cell configuration in the context of Theorem~\ref{thm:CellSystemPack} or Theorem~\ref{thm:invariance-uniformization}.   
   There exists a collection  of  points $\{z_H^k:H\in \cV\cM([-2^k,2^k]^2)\}$  and simple curves   $\{\gamma_e^k:e\in \cE\cM([-2^k,2^k]^2)\}$ forming a planar embedding of $\cM([-2^k,2^k])$, such that almost surely,
   \begin{equation}\label{eq:almost-planarity}
       \frac{1}{2^k}\max_{H\in \cV\cM([-2^k,2^k]^2)}\max_e \dist(\gamma_e,H)\to0,
   \end{equation}
   where the second maximum is over edges with $H$ being an endpoint and the distance is the Hausdorff distance.
\end{proposition}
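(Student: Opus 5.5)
We use the Tutte embedding of the slightly enlarged map $\cM_{k,\ell}$ from Lemma~\ref{lem:almost-planarity}, with boundary values chosen to follow $\phi_0(\partial M_{k,\ell})$, and restrict it to $\cM([-2^k,2^k]^2)$. The aim is to show that, after rescaling by $2^{-k}$, the discrete harmonic extension on $\cM_{k,\ell}$ is uniformly close to the positions of the cells, using the invariance principle, Theorem~\ref{thm:GMSinvariance} (applied with unit conductances; the moment bound \eqref{eq:GMSinvariance-moment} follows from \eqref{eq:thm-inv-circle}).

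\textbf{Step 1: boundary data.} Fix $\ell$ and take $k$ large, so that Lemma~\ref{lem:almost-planarity} provides $\cM_{k,\ell}$. We use a Tutte-type embedding with prescribed boundary values. Each boundary vertex of $\cM_{k,\ell}$ (a cell $H$ on the loop $\gamma_k$) is sent to a point $\psi(v_H)\in\partial[-2^k,2^k]^2$ within distance $2^{k-\ell}$ of $\wt c(H)$, in cyclic order. Property (iii) of Lemma~\ref{lem:almost-planarity} makes this possible: parametrize $\partial[-2^k,2^k]^2$ by the near-optimal reparametrization in the curve metric \eqref{eq:curve-metric-0}. We then extend harmonically (simple random walk) to the interior vertices.

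By the Tutte spring theorem for a 3-connected disk triangulation with convex boundary polygon, this is a planar straight-line embedding. With multiple edges, collapsed edges and the enclosed vertices are mapped to segments. We perturb slightly to obtain genuine simple curves $\gamma_e^k$ and distinct points $z^k_H$, with arbitrarily small displacement. The perturbation is done inductively inside each multiple-edge region, which is itself a disk triangulation and can be Tutte-embedded inside a thin lens.

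\textbf{Step 2: closeness via random walk.} For an interior $H$, $\psi(v_H)=\bbE[\psi(X_\tau)]$, where $X$ is the walk on $\cH$ from $H$ and $\tau$ is its hitting time of the boundary loop $\gamma_k$. By property (i), $\tau$ is essentially the exit time from the region enclosed by $\phi_0(\gamma_k)$, which is Hausdorff-close to $[-2^k,2^k]^2$. By Theorem~\ref{thm:GMSinvariance}, uniformly over starting points in the rescaled square, the rescaled walk $2^{-k}\wh X$ is close to a $\Sigma$-Brownian motion. Two facts then hold uniformly over $H$ with $\dist(H,\partial[-2^k,2^k]^2)\ge \delta 2^k$:
\begin{itemize}
\item[(a)] the walk's exit point is close in law to the Brownian exit point, so $2^{-k}\psi(v_H)$ is close to the harmonic extension $h$ of the identity boundary data, which is the identity;
\item[(b)] $|\wt c(H)-\psi(v_H)|=o(2^k)$.
\end{itemize}
For $H$ within $\delta2^k$ of the boundary, we use a Beurling-type estimate. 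The Brownian motion started near the boundary exits nearby with high probability, and the invariance principle transfers this to the walk. The walk also cannot cross the loop $\gamma_k$ without hitting it, because line-connectivity makes it a barrier. Hence $\psi(v_H)$ is within $O(\delta^{1/2}2^k)+o(2^k)$ of $\wt c(H)$.

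\textbf{Step 3: conclusion.} By Lemma~\ref{lem:no-macroscopic-cell}, cell diameters are $o(2^k)$, so $\dist(\psi(v_H),H)=o(2^k)$ for all $H$. Each edge curve lies within $o(2^k)$ of the segment between its endpoints' images, and the endpoints correspond to adjacent, hence intersecting, cells of small diameter. Therefore $\max_e \dist(\gamma_e,H)=o(2^k)$, after letting $\ell\to\infty$ and $\delta\to0$ along a diagonal. The fact that $\cM([-2^k,2^k]^2)\subseteq\cM_{k,\ell}$, from property (ii), lets us restrict the embedding.

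\textbf{Main obstacle.} The hardest part is the boundary layer in Step 2: showing that hitting $\gamma_k$ by the walk matches Brownian exit from the square uniformly in the starting point. The difficulty is that the convergence in Theorem~\ref{thm:GMSinvariance} is in the curve topology \eqref{eq:curve-metric}, and exit times are not continuous in that topology. My first attempt would handle this by comparing hitting of slightly shrunken and slightly enlarged squares, using the fact that Brownian motion a.s.\ exits a square immediately after hitting its boundary. A secondary technical issue is the non-simplicity (multiple edges) in the planarity claim.
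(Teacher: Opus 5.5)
Your argument is correct, up to one small repair to the planarity step noted below, but it builds the embedding differently from the paper.

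\textbf{The paper's route.} The paper uses the canonical Tutte embedding of $\cM_{k,\ell}$ in $\bbD$ centered at $H_0$, with boundary vertices placed at $e^{2\pi i\mathfrak p(y_j)}$. It first normalizes $\Sigma$ to the identity by a linear change of coordinates. It then uses the invariance principle to show that the walk's exit law through the arcs $\cH(L_1\cup\cdots\cup L_j)$ is close to Brownian harmonic measure on $\partial[-2^k,2^k]^2$. By conformal invariance, the Tutte positions are then close, up to rotation, to the images of the cells under a conformal map $[-1,1]^2\to\bbD$. Finally it pulls everything back by the inverse of that map.

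\textbf{What your route buys.} You prescribe near-identity boundary values on the square itself via property (iii). All you then need is that a $\Sigma$-Brownian motion's expected exit position equals its starting point. This removes the conformal map and the identification of the hitting probabilities $\mathfrak p$ with that map's boundary values. It also removes the need to normalize $\Sigma$, since linear functions are harmonic for every constant covariance. Your shrunken/enlarged-square sandwich and your boundary-layer estimate also make explicit what the paper leaves implicit for cells near $\partial[-2^k,2^k]^2$.

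\textbf{What the paper's route buys, and the repair you need.} The paper's boundary lies on a strictly convex curve, so Tutte's theorem applies directly once multiple-edge regions are collapsed. In your version this step needs a fix. First, $\cM_{k,\ell}$ is generally not 3-connected: chords between boundary vertices and separating 2-cycles produce 2-vertex cuts. Second, with boundary points on the sides of a square, a chord joining two boundary vertices on the same side is flattened onto that side, together with the whole region it cuts off. This is the dividing-edge obstruction in Floater's version of Tutte's theorem for weakly convex boundaries. Placing the boundary points on a strictly convex curve within $o(2^k)$ of the square repairs this and does not affect any of your estimates.

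\textbf{A minor overstatement.} Re-embedding a multiple-edge region inside a thin lens moves its vertices by up to the length of the corresponding segment, not arbitrarily little. That length is $o(2^k)$, because the segment joins the images of two adjacent small cells, so your conclusion still holds.
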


\begin{proof}
    By setting the conductance  to be equal to 1 for all edges, the moment bound~\eqref{eq:average-diamdeg4} implies the moment bound~\eqref{eq:GMSinvariance-moment}, so the conclusion in Theorem~\ref{thm:GMSinvariance} holds. Without loss of generality assume that the matrix $\Sigma$ there is the identity matrix; otherwise we choose a deterministic matrix $\mathbf{A}$ such that the random walk on $\mathbf{A}\cH$   converges to the standard planar Brownian motion in the sense of Theorem~\ref{thm:GMSinvariance}. 
    

Let $k,\ell>0$, the map $\cM_{k,\ell}$ be as in Lemma~\ref{lem:almost-planarity}, and $L_1,...,L_{2^{\ell+2}}$ be as in the proof of Lemma~\ref{lem:almost-planarity}. Let $X^z$ and $\wh X^z$ be the random walks in Theorem~\ref{thm:GMSinvariance} started from a cell in the interior of $\cM_{k,\ell}$, and let $\wh H$ be the cell where the walk $X^z$ first hits $\partial \cM_{k,\ell}$. From our construction, if $\wh H$ is on $\cH(L_1\cup...\cup L_j)$, then the walk  $\wh X^z$ hits the $2^{k-\ell+1}$-neighborhood of $L_1\cup...\cup L_j$ before hitting $L_{j+2}\cup...\cup L_{2^{\ell+2}-1}$. Likewise, if $\wh H$ is on $\cH(L_{j+1}\cup...\cup L_{2^{\ell+2}})$, then    the walk  $\wh X^z$ hits the $2^{k-\ell+1}$-neighborhood of $L_{j+1}\cup...\cup L_{2^{\ell+2}}$ before hitting $L_{1}\cup...\cup L_{j-1}$. Further applying Theorem~\ref{thm:GMSinvariance}, a.s.\ we send $\ell\to\infty$ sufficiently slower than $k\to\infty$, for each $a\in (0,1)$ and  $z\in [-a2^{k},a2^k]^2$, the conditional probability given $\cH$ of the walk  $  X^z$ leaves  $\cM_{k,\ell}$ at a vertex on $\cH(L_1\cup...\cup L_j)$ is  close to the harmonic measure on $\partial [-2^k,2^k]^2$ as seen from $z$ of  $L_1\cup...\cup L_j$. Therefore when embedded into $\bbD$, by the conformal invariance of Brownian motion, if we take a conformal map $\varphi_0:[-1,1]^2\to\bbD$ fixing 0, then up to a rotation,  $2^{-k}\varphi_0(H)$ is close to the  vertex for $H$ under the Tutte embedding of $\cM_{k,\ell}$ centered at $H_0$. We can further consider the image of the Tutte embedding for $\cM_{k,\ell}$ under $2^k\varphi_0^{-1}$ to generate the points and curves  $\{\wt z_H^k\}$ and  curves $\{\wt\gamma_e\}$ where~\eqref{eq:almost-planarity} holds, and further separate the curves  $\wt\gamma_e$ a little for multiple edges  and  embed the components   contained within these multiple edges in an arbitrary way to get an actual planar embedding where~\eqref{eq:almost-planarity} holds.

 \end{proof}

\section{Convergence of cell configurations under the circle packing}\label{sec:circle-packing}

In this section we prove Theorem~\ref{thm:CellSystemPack}. Throughout this section, we assume that $\cH$ is a random cell configuration that satisfies the constraints in Theorem~\ref{thm:CellSystemPack}. In Section~\ref{subsec:ringlemma}, we recap the Ring Lemma from~\cite{RodinSullivan87} and prove a variant (Lemma~\ref{lem:circle}), which roughly states that in circle packings, a disk cannot simultaneously be tangent to two large disks  {unless one of the two large disks have high degree}. In Section~\ref{subsec:Covergence-RW-cell-Dubejko}, we use Lemma~\ref{lem:circle} and the invariance principle from~\cite{GMSinvariance} to prove that, the random walk on $\cH$ with Dubejko weights converges to Brownian motion. In Section~\ref{subsec:Covergence-RW-CP-Dubejko}, we prove that the random walk on the circle packing $\cP$ for $\cH$ with Dubejko weights converges to Brownian motion. Both of the convergence are in quenched sense, which allow us to conclude that $\cH$ is close to $\cP$ in Section~\ref{subsec:pf-Cell-system-Pack}. 

\subsection{The Ring Lemma and a three-circle variant}\label{subsec:ringlemma}

One fundamental result in circle packing is the Ring Lemma~\cite{RodinSullivan87}. Let $(D_0;D_1,...,D_d)$ be disks with disjoint interiors. We say $(D_0;D_1,...,D_d)$ form a flower with $d$ petals if $D_1,...,D_d$ are all tangent to $D_0$ and separate $D_0$ from infinity, and for $j=1,...,d$, $D_j$ and $D_{j+1(\mathrm{mod}\ d)}$ are tangent. The following is the well known Ring Lemma by Rodin and Sullivan~\cite{RodinSullivan87}.
\begin{lemma}[Ring Lemma]\label{lem:Ring-Lemma}
    Suppose $d\geq 3$ and $(D_0;D_1,...,D_d)$ form a flower with $d$ petals. Let $r_0,r_1,...,r_d$ be the radii of $D_0,D_1,...,D_d$. Then there exists some absolute constant $c>0$ such that for each $j=1,...,d$,  $r_j/r_0\geq \exp(-cd)$. 
\end{lemma}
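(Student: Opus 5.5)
The plan is to bound the smallest petal from below by a propagation argument around the ring. After scaling we may assume $r_0=1$. The argument has two ingredients: first, at least one petal is comparable to $D_0$; second, radii of consecutive petals cannot shrink too quickly relative to the ones already controlled.

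The first ingredient comes from an angle count. Each petal $D_j$ subtends an angle $\theta_j$ at the center $o_0$ of $D_0$. The tangent lines from $o_0$ to $D_j$ give $\sin(\theta_j/2)=r_j/(1+r_j)$. The consecutive petals are tangent and surround $D_0$, so these angular sectors cover the full circle, and hence $\sum_j\theta_j\ge 2\pi$. Some petal therefore has $\theta_j\ge 2\pi/d$, which gives $r_j\ge \sin(\pi/d)/(1-\sin(\pi/d))\gtrsim 1/d$. Thus there is a petal of radius at least $c_1/d$.

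The second ingredient is a local step that I would prove by elementary geometry or by Descartes' theorem. The claim is that if $D_0$ and $D_j$ have radii at least $\rho$, then any disk $D$ tangent to both of them satisfies one of two alternatives. Either $D$ has radius at least $c\rho$, or $D$ lies deep in the cusp between $D_0$ and $D_j$. In the second case, the sequence of petals continuing past $D$ must crawl along the cusp, and by Descartes' theorem the radii in such a chain of mutually tangent disks shrink at most geometrically, by a constant factor per step. The corresponding bound in the other direction is the key point: to exit the cusp and close the ring, the chain has to grow back, and every step of growth also costs at most a constant factor.

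I would organise the proof as an induction around the ring starting from the large petal $D_{j_0}$. Let $\rho_m=\min(r_{j_0},\dots,r_{j_0+m})$. Applying Descartes' theorem to the triple $(D_0,D_{j},D_{j+1})$ gives curvatures satisfying $k_{j+1}\le (1+\sqrt{k_j}+1)^2$ roughly, that is $\sqrt{k_{j+1}}\le\sqrt{k_j}+\sqrt{k_0}+\dots$. The reason is that two disks tangent to each other and to $D_0$ have $\sqrt{k_{j+1}}\le \sqrt{k_j}+\sqrt{k_0}+\text{(interstice term)}$, and in the worst case this bound is multiplicative, $k_{j+1}\le C k_j$. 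Iterating around at most $d$ petals in both directions from $j_0$ then yields $r_j\ge (c_1/d)C^{-d}\ge e^{-cd}$.

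The main obstacle is justifying the multiplicative bound $k_{j+1}\le Ck_j$. A petal tangent to $D_0$ and $D_j$ can a priori be arbitrarily small, since it could sit in the cusp. I would rule this out using the petals on the far side. The next petal $D_{j+2}$ must be tangent to $D_{j+1}$ and must continue around $D_0$ outside $D_j$. If $D_{j+1}$ were tiny and deep in the cusp between $D_0$ and $D_j$, then the remaining petals would be trapped in the cusp region bounded by $D_0$, $D_j$ and $D_{j+1}$, and they could not return to $D_{j_0}$ without crossing $D_j$. This contradicts the fact that the petals form a ring separating $D_0$ from infinity. Making this quantitative gives a constant-factor bound between consecutive radii, except for a bounded number of exceptional steps. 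If this clean statement turns out to be false, I would instead fall back on the standard Rodin–Sullivan bound, which follows from the same trapping argument with curvatures growing at most like $C^m$ after $m$ steps.
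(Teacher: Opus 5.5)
Your angle count is correct: every ray from the center of $D_0$ meets some petal, so $\sum_j\theta_j\ge 2\pi$ and some petal has $r_{j_0}\ge c_1 r_0/d$. The propagation step, however, fails.

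Descartes' theorem constrains four mutually tangent circles. Three mutually tangent disks $D_0,D_j,D_{j+1}$ can have arbitrary radii, so there is no inequality of the form $\sqrt{k_{j+1}}\le\sqrt{k_j}+\sqrt{k_0}+\cdots$. In fact the bound $k_{j+1}\le Ck_j$ with an absolute $C$ is false.

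Your trapping argument does not repair this. The bounded interstice of $D_0,D_j,D_{j+1}$ lies on the side of $D_{j+1}$ facing the tangency point of $D_0$ and $D_j$. The later petals $D_{j+2},D_{j+3},\dots$ lie on the far side and can climb back out of the cusp; the only obstruction is that climbing costs petals.

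Here is a concrete counterexample.
\begin{itemize}
\item Apply a M\"obius map sending the tangency point of $D_0,D_1$ to $\infty$, so that these disks become $\{\mathrm{Im}\,z\le 0\}$ and $\{\mathrm{Im}\,z\ge 1\}$.
\item Let $D_2,\dots,D_d$ be a row of consecutively tangent disks of diameter $1$ in the strip.
\item Let the new preimage of $\infty$ be the point at height $3/4$ in the gap between the two middle disks of the row and the line $\mathrm{Im}\,z=1$.
\end{itemize}
Mapping back gives a flower with $r_1=3r_0$ but $r_2\asymp d^{-2}r_0$.

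Your fallback of allowing boundedly many exceptional steps also fails. Place $\asymp d/n$ blocks around $D_0$, each of the form ``petal of radius $\rho$, drop to radius $\rho/n^2$ in its cusp, climb back along about $n$ petals''. Then the number of steps whose ratio exceeds any fixed constant grows with $d$.

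The correct quantitative form of your cusp step is the paper's Lemma~\ref{lem:circle}. After a M\"obius reduction to the extremal case where the subsequent petals are also tangent to $D_j$, Descartes' theorem for four mutually tangent disks gives $r_{i+1}^{-1/2}\ge r_i^{-1/2}-2(r_0^{-1}+r_j^{-1})^{1/2}$ while the petals stay small. This yields $r_{j+1}\ge 0.01d^{-2}\min\{r_0,r_j\}$. The example above shows this is sharp up to constants, so iterating any per-step bound from $D_{j_0}$ around the ring only gives $r_j/r_0\ge e^{-Cd\log d}$.

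To reach $e^{-cd}$ you need one of two things:
\begin{itemize}
\item An amortized count. A drop by a factor $K$ forces $\gtrsim\sqrt K$ subsequent petals to stay in the cusp. These costs must be charged to disjoint sets of petals, so that $\sum_i\sqrt{K_i}\lesssim d$ and hence $\prod_i K_i\le e^{O(d)}$.
\item Hansen's extremal (Fibonacci) analysis \cite{Hansen88}.
\end{itemize}
Falling back on ``the standard Rodin--Sullivan bound'' is circular, since that is exactly the statement to be proved. For reference, the paper does not prove this lemma; it quotes it from \cite{RodinSullivan87}.
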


We will also use the following result known as Descartes' theorem.
\begin{lemma}\label{thm:Descartes}
    Let $D_1,D_2,D_3$ be mutually tangent disks with disjoint interiors and radii $r_1,r_2,r_3$. There exists exactly two circles that are tangent to all of $D_1,D_2,D_3$, and the radius $r$ {of these two circles} is determined by
    \begin{equation}\label{eq:Descartes}
        \frac{1}{r} = \frac{1}{r_1}+\frac{1}{r_2}+\frac{1}{r_3}\pm2\sqrt{\frac{1}{r_1}+\frac{1}{r_2}+\frac{1}{r_3}}.
    \end{equation}
\end{lemma}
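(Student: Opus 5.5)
We prove Descartes' theorem by inverting the configuration so that one of the sought circles becomes a line, which turns the problem into elementary geometry in the plane. The statement has two parts: existence and uniqueness of exactly two circles tangent to $D_1,D_2,D_3$, and the formula \eqref{eq:Descartes} for their radii. Throughout, the fourth circle is taken disjoint in interior from $D_1,D_2,D_3$ (externally tangent). This is the reading used later for interstices in a flower, and it yields both signs. The $+$ sign gives the small circle in the curvilinear triangular interstice. The $-$ sign gives the outer circle: either it encloses all three disks (formally negative curvature, with $|r|$ as radius), or it is a line, or it is a large circle on the outside.

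\textbf{Existence and uniqueness.} Let $p$ be the tangency point of $D_1$ and $D_2$, and apply an inversion centered at $p$. The circles $\partial D_1$ and $\partial D_2$ become two parallel lines, say $\{\Im z=0\}$ and $\{\Im z=1\}$. The circle $\partial D_3$, which is tangent to both, becomes a circle of diameter $1$ lying in the strip. A circle tangent to both lines must also have diameter $1$ and lie in the strip. If it is also tangent to the image of $\partial D_3$, it must be one of its two horizontal translates by $\pm1$. Inverting back gives exactly two circles, since inversion preserves tangency.

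\textbf{The formula.} I will derive \eqref{eq:Descartes} from the classical Descartes circle theorem in curvature form,
$$(k_1+k_2+k_3+k_4)^2=2(k_1^2+k_2^2+k_3^2+k_4^2), \qquad k_i=1/r_i.$$
Solving this quadratic in $k_4$ gives
$$k_4=k_1+k_2+k_3\pm2\sqrt{k_1k_2+k_2k_3+k_3k_1}.$$
To prove the quadratic relation directly, place the centers $o_1,o_2,o_3$ so that they form a triangle with side lengths $r_1+r_2$, $r_2+r_3$, $r_1+r_3$, using mutual tangency. The fourth center $o$ satisfies $|o-o_i|=r+r_i$ for $i=1,2,3$. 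The triangle $o_1o_2o_3$ splits into the three triangles $oo_io_j$, all of whose side lengths are known. Heron's formula then gives
$$\sqrt{(r+r_i+r_j)\,r\,r_ir_j}$$
as the area of each piece, and $\sqrt{(r_1+r_2+r_3)r_1r_2r_3}$ as the area of the whole. For the inner circle, the area of the big triangle is the sum of the three pieces. For the outer circle, one piece is subtracted instead. Squaring twice and dividing by $(r\,r_1r_2r_3)^2$ should reduce to the quadratic relation in the curvatures.

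\textbf{Main obstacle.} The step requiring the most care is this algebraic elimination, together with the sign bookkeeping for the outer circle. An alternative would be to carry the inversion picture through quantitatively, using the explicit formula for the radius under inversion, but I expect the Heron route to be cleaner. Since the paper uses this as a classical fact, the proof can also simply cite the standard statement.

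One final check concerns the square root. The standard derivation gives $\sqrt{k_1k_2+k_2k_3+k_3k_1}$ inside the square root, whereas the displayed \eqref{eq:Descartes} has $\sqrt{k_1+k_2+k_3}$. The classical form should be used, and the display as printed is presumably a typo that would need correcting in the proof.
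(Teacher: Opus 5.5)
The paper gives no proof of this lemma. It quotes it as the classical Descartes circle theorem, so your option of citing the curvature form $(k_1+k_2+k_3+k_4)^2=2(k_1^2+k_2^2+k_3^2+k_4^2)$ and solving for $k_4$ is exactly the paper's route.

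Your remark about the display is correct, and it is the most important point in your write-up. As printed, \eqref{eq:Descartes} is false: it is not scale invariant, and it agrees with the true formula only when $k_1+k_2+k_3=k_1k_2+k_2k_3+k_3k_1$ (e.g.\ for three unit disks). The correct radicand $\frac{1}{r_1r_2}+\frac{1}{r_2r_3}+\frac{1}{r_3r_1}$ is the one the paper actually uses when it applies the lemma in \eqref{eq:lm-descartes}.

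Your inversion argument for ``exactly two'' is the standard one and is fine. Add the following:
\begin{itemize}
\item Generalized circles through $p$ invert to lines.
\item The only lines tangent to both boundary lines (i.e.\ parallel to them) and to the image of $\partial D_3$ are the images of $\partial D_1,\partial D_2$ themselves.
\item One of the two solutions may invert back to a line.
\end{itemize}

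If you want the Heron route to be self-contained, the sketch does not work as stated, for two reasons.
\begin{itemize}
\item The identity involves four radicals. Two squarings leave the cross term $A\,A_{12}A_{13}A_{23}$, where $A_{ij}$ is the area of $oo_io_j$. A third squaring gives a high-degree polynomial in $r$, from which you would still have to extract the Descartes factor and discard extraneous roots.
\item ``One piece is subtracted'' is false in general. Take unit disks centered at $\pm1$ and a small disk tangent to both, centered at $iy$. The outer Soddy disk sits in the cusp above the small disk, and its center lies in the vertical angle at $o_3$. Then $\mathrm{area}(o_1o_2o_3)=\mathrm{area}(oo_1o_2)-\mathrm{area}(oo_1o_3)-\mathrm{area}(oo_2o_3)$, so two pieces are subtracted. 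The enclosing case has $|o-o_i|=|r|-r_i$, and the line case lies outside Heron's formula altogether.
\end{itemize}

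Here is a clean way to close the inner case. Set $w=1/r$ and $\sigma=k_1k_2+k_2k_3+k_3k_1$. Dividing by $r r_1r_2r_3$, the Heron identity becomes $w\sqrt{\sigma}=k_3\sqrt{k_1k_2+(k_1+k_2)w}+k_1\sqrt{k_2k_3+(k_2+k_3)w}+k_2\sqrt{k_1k_3+(k_1+k_3)w}$. For $w_+=k_1+k_2+k_3+2\sqrt{\sigma}$, each radicand is a perfect square, e.g.\ $k_1k_2+(k_1+k_2)w_+=(k_1+k_2+\sqrt{\sigma})^2$, and both sides equal $2\sigma+(k_1+k_2+k_3)\sqrt{\sigma}$. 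The right side minus the left side is concave in $w$, positive at $w=0$, and tends to $-\infty$. Hence $w_+$ is its only positive root, so the inner circle has curvature $w_+$.

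For the minus sign, the circle can be an externally tangent disk, a line, or an enclosing circle. There it is simplest to cite the classical theorem, as the paper does.
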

If we take the plus  {(resp., minus)} sign, then the circle is contained within the bounded (resp., unbounded) component of $\bbC\backslash (D_1\cup D_2\cup D_3)$.

The bound in Lemma~\ref{lem:Ring-Lemma} is sharp. Let $D_0 = \bbD$, $D_1 = \{\mathrm{Im}\ z=1\}$, $D_2 = \{\mathrm{Im}\ z=-1\}$ (where we view parallel lines as circles tangent at infinity), $D_3 = B(2;1)$, and we recursively define $D_{n+2}$ by the disk which is (externally) tangent to both $D_0,D_n$ and $D_{n+1}$. Then one can check by the Descartes' theorem (see~\eqref{eq:Descartes} above) that the ratio $r_d/r_0$ is equal to $\frac{1}{F_{2d-3}-1}$, where  $F_1=1$, $F_2=1$, and $F_n$ is the $n$-th Fibonacci number. It is shown in~\cite{Hansen88} that the bound described as above is optimal.

We will use the following variant of the Ring Lemma in Section~\ref{subsec:Covergence-RW-cell-Dubejko}, which states that in the flower $(D_0;D_1,...,D_d)$, $D_2$ cannot have too small radii compared to \emph{both} $D_0$ and $D_1$. This will eventually allow us to verify only polynomial bounds of degree (rather than exponential bounds) when applying  {(our variant of)} the invariance principle in~\cite{GMSinvariance}. 

\begin{lemma}\label{lem:circle}
    For any flower $(D_0;D_1,...,D_d)$ with $d$ petals and radii $(r_0;r_1,...,r_d)$, \eqb\label{eq:lem-circle}r_2\geq 0.01d^{-2}\min\{r_0,r_1\}.\eqe 
\end{lemma}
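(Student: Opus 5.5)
The plan is to apply an inversion centered at the tangency point $p$ of $D_0$ and $D_1$. This turns the two "large" disks into parallel half-planes, and the remaining petals $D_2,\dots,D_d$ become a chain of disks in the strip between them. Then $D_2$ being tiny in the original picture corresponds to its image being far from the origin. The chain of only $d-2$ further petals cannot travel that far, because each step moves a bounded horizontal distance.

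\emph{Normalization.} Place $p$ at the origin with the common tangent line of $\partial D_0,\partial D_1$ horizontal, and apply $\iota(z)=1/\bar z$. Since $\partial D_0$ and $\partial D_1$ pass through $0$, they map to the horizontal lines $\{y=1/(2r_0)\}$ and $\{y=-1/(2r_1)\}$ (up to which sign goes with which). $D_0$ and $D_1$ map to the two complementary closed half-planes, and the open strip between them has width
\[
w=\frac{1}{2r_0}+\frac{1}{2r_1}\le \frac{1}{\min\{r_0,r_1\}}.
\]
None of $D_2,\dots,D_d$ contains $0$, since $0=p\in D_0\cap D_1$ and the disks have disjoint interiors. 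Hence their images are genuine disks in the closed strip, each tangent to the line $\iota(\partial D_0)$. Consecutive images are tangent, and $\iota(D_2)$ and $\iota(D_d)$ are also tangent to $\iota(\partial D_1)$, so they have diameter exactly $w$.

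\emph{Separation forces the chain to cross $x=0$.} The petals separate $D_0$ from $\infty$, and $\iota(\infty)=0$ lies on the midline region of the strip. So in the image the union $\iota(D_2)\cup\dots\cup\iota(D_d)$ separates the half-plane $\iota(D_0)$ from the origin. Inside the strip this can only happen if the chain, together with the two boundary half-planes, blocks every path from the upper line to $0$. In particular the horizontal coordinates of the centers of $\iota(D_2)$ and $\iota(D_d)$ must lie on opposite sides of $0$ (or one is within $w$ of it). I expect this topological bookkeeping to be the main obstacle, namely making "separates" translate cleanly into "the chain of centers crosses the vertical line through $0$". I would argue it via a vertical segment from $0$ up to $\iota(\partial D_0)$, which must hit some petal image.

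\emph{Step bound.} Two tangent disks of radii $a,b$ tangent to the same line have horizontal center distance $2\sqrt{ab}\le a+b\le w$. Let $x_2$ be the horizontal coordinate of the center of $\iota(D_2)$. Then $|x_2|\le (d-2)w+w/2\le dw$, and the center is $(x_2,c)$ with $|c|\le w$.

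\emph{Transfer back.} A disk with center $q$ and radius $\rho$, not containing $0$, is mapped by $\iota$ to a disk of radius $\rho/(|q|^2-\rho^2)$. Apply this with $\rho=w/2$ and $|q|^2\le d^2w^2+w^2$. This gives
\[
r_2\ \ge\ \frac{w/2}{(d^2+1)w^2}\ \ge\ \frac{1}{2(d^2+1)}\cdot\frac{1}{w}\ \ge\ \frac{\min\{r_0,r_1\}}{2(d^2+1)} .
\]
Since $d\ge 3$, this exceeds $0.01\,d^{-2}\min\{r_0,r_1\}$, which is \eqref{eq:lem-circle}. There is a lot of slack in the constants, so the estimates on $|c|$ and the step count can be taken crudely.
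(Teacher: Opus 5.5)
Your proof is correct, and it follows a genuinely different route from the paper's. The paper argues by contradiction in two stages. First, in the special case where every $D_j$ with $j\ge 2$ is also tangent to $D_1$, it iterates Descartes' theorem to show that smallness of $r_2$ propagates along the chain ($r_j\le \min\{r_0,r_1\}/(36d^2)$ for all $j\ge 2$). The petals then cover too short an arc of $\partial D_0$ to surround it. For the general case it applies the same M\"obius map you use, sending the tangency point of $D_0$ and $D_1$ to $\infty$. It uses that map only to assert, briefly, that the extent of the chain along $\ell_0$ is maximized when all petals also touch $\ell_1$, which reduces to the special case.

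You instead work in the strip picture throughout. The width bound $w$ on diameters gives the per-step horizontal displacement bound $2\sqrt{ab}\le w$; this is exactly the quantitative content of the paper's extremality claim. The separation hypothesis forces some petal to meet the segment from $\iota(\infty)=0$ to the upper line. The explicit inversion-radius formula then converts a horizontal distance of order $dw$ into $r_2\ge \min\{r_0,r_1\}/(2(d^2+1))$.

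What your route buys:
\begin{itemize}
\item it avoids Descartes' theorem and the case split;
\item it replaces the paper's comparison claim with a one-line computation;
\item it gives a much better constant;
\item it makes transparent why $d^{-2}$ is the right order, since a chain of full-width disks in the strip attains it.
\end{itemize}

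Two points to tighten. First, derive $|x_2|\le (d-2)w+w/2$ explicitly by walking from the petal $D_m$ that meets the vertical segment back to $D_2$, through $m-2\le d-2$ tangencies. Your ``opposite sides'' claim about $\iota(D_2)$ and $\iota(D_d)$ is true but not needed. Second, the fact that no petal contains $p$ needs slightly more than disjointness of interiors. A closed disk with $p$ on its boundary would overlap the interior of $D_0$ or $D_1$, because near $p$ the complement of $D_0\cup D_1$ consists only of two cusps tangent to the common tangent line.
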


\begin{figure}
    \centering
    \begin{tabular}{cc}
      \includegraphics[scale=0.45]{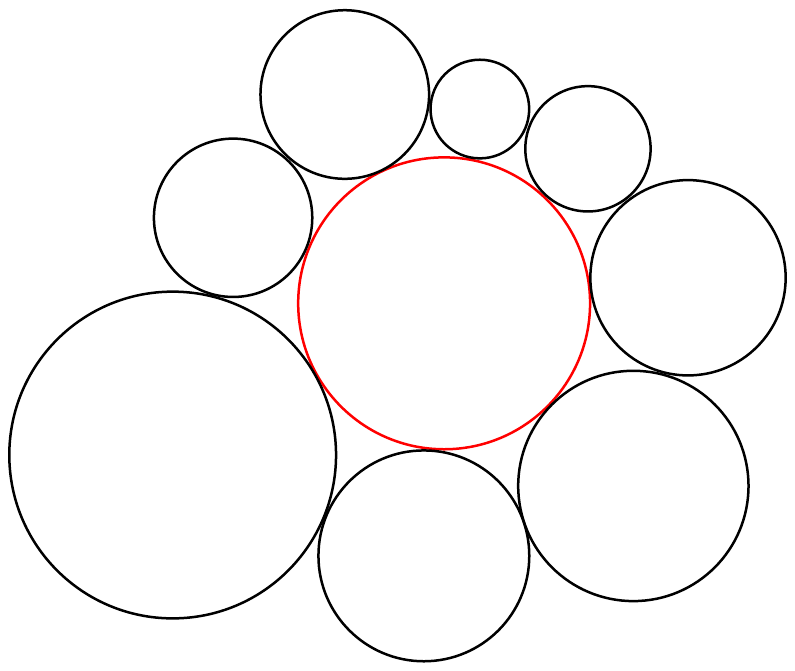}  &  \includegraphics[scale=0.45]{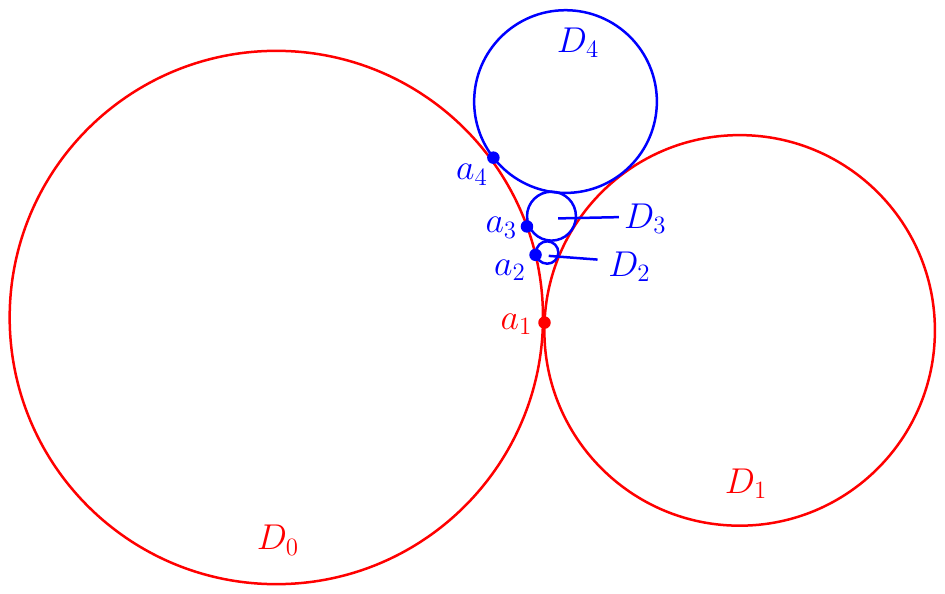}
    \end{tabular}
    \caption{\textbf{Left}: A flower with 8 petals. \textbf{Right}: An illustration for the setup in the proof of Lemma~\ref{lem:circle}  {for $d=4$}.}
    \label{fig:ringA}
\end{figure}


\begin{proof}
 Assume on the contrary that $r_2 \leq 0.01d^{-2}\min\{r_0,r_1\}$. 
     We start with the case where $D_2,...,D_d$ are all tangent to $D_1$. Suppose $D_j$ and $D_0$ are tangent at the point $a_j$, and $D_1,...,D_d$ are ordered counterclockwise. See Figure~\ref{fig:ringA} for an illustration. Using $r_2 \leq 0.01d^{-2}\min\{r_0,r_1\}$, one can easily check that the counterclockwise arc from $a_1$ to $a_2$ has length at most $\pi r_0/6$.  Then for $j=2,...,d$, since $D_{j+1}$ is tangent to $D_{j}, D_0, D_1$ while  $D_{j}, D_0, D_1$ are tangent to each other, and $D_{j+1}$ is not in the bounded component of   $\bbC\backslash(D_{j+1}\cup D_0\cup D_1)$, using  Descartes' Theorem, it follows by~\eqref{eq:Descartes} 
    \begin{equation}\label{eq:lm-descartes}
        \frac{1}{r_{j+1}} = \frac{1}{r_{j}} + \frac{1}{r_0} +\frac{1}{r_1} - 2\sqrt{(\frac{1}{r_0}+\frac{1}{r_1})\frac{1}{r_{j}}+\frac{1}{r_0r_1}}.
    \end{equation}
    Let $j_0$ be the smallest $j\geq 2$ such that $r_{j}>\min\{r_0,r_1\}/(100d)$. If no such $j_0$ exists, then we set $j_0=d$. Since  $r_2 \leq 0.01d^{-2}\min\{r_0,r_1\}$, $3\leq j_0\leq d$. 
    On the other hand, for $j=2,...,j_0-1$, from~\eqref{eq:lm-descartes}, 
    \begin{equation}
        \begin{split}
             \frac{1}{r_{j+1}} \geq \frac{1}{r_{j}} + \frac{1}{r_0} +\frac{1}{r_1} - 2\sqrt{2(\frac{1}{r_0}+\frac{1}{r_1})\frac{1}{r_{j}}}\geq  \frac{1}{r_{j}} + 4\big(\frac{1}{r_0} +\frac{1}{r_1}\big) - 4\sqrt{(\frac{1}{r_0}+\frac{1}{r_1})\frac{1}{r_{j}}} = \bigg(\sqrt{\frac{1}{r_j}}-2\sqrt{\frac{1}{r_0}+\frac{1}{r_1}}\bigg)^2,
        \end{split}
    \end{equation}
    where we have used $r_{j}\leq\min\{r_0,r_1\}/(100d)$. In particular, \eqb\label{eq:pf-circle-lemma-4}\sqrt{\frac{1}{r_{j+1}}}\geq \sqrt{\frac{1}{r_{j}}} - 2\sqrt{\frac{1}{r_0}+\frac{1}{r_1}},\ \ \text{and } \sqrt{\frac{1}{r_{j_0}}}\geq \sqrt{\frac{1}{r_2}} - 2d\sqrt{\frac{1}{r_0}+\frac{1}{r_1}}\geq 6d\min\{r_0,r_1\}^{-1/2},\eqe
    If $j_0<d$, then this implies $r_{j_0}\leq \min\{r_0,r_1\}/(36d^2)$, which contradicts with the definition of $j_0$ since $d\geq 3$. Therefore $j_0=d$ and for each $j=2,...,d$, $r_j\leq \min\{r_0,r_1\}/(36d^2)$,  and $\dist(a_2,a_d)\leq 2(r_2+...+r_d)\leq r_0/(18d)$. In particular, the counterclockwise arc from $a_2$ to $a_d$ has length at most $\pi r_0/3$, and the counterclockwise arc from $a_1$ to $a_d$ is at most $\pi r_0/2$,  $D_1\cup D_2\cup ...\cup D_d$ cannot separate $D_0$ from infinity and $(D_0;D_1,...,D_d)$ is not a flower.

    For the general case, for $j=1,...,d$, suppose $D_0$ and $D_j$ is tangent at the point $a_j$.  We perform a M\"{o}bius transform $\varphi$ on $\bbC$ sending $a_1$ to infinity. Then $\partial D_0, \partial D_1$ are mapped to parallel straight lines $\ell_0,\ell_1$, where the images  $\varphi(D_2),...,\varphi(D_d)$ are all disks tangent to $\ell_0$. Since $\varphi(D_2),...,\varphi(D_d)$ all have at most one intersection point with $\ell_1$, the length of the line segment from $\varphi(a_2)$ to $\varphi(a_d)$ is maximized only when  $\varphi(D_2),...,\varphi(D_d)$ are all tangent to $\ell_1$ as well. By transforming back and comparing with the previous computation, we see that  $D_1\cup D_2\cup ...\cup D_d$ cannot separate $D_0$ from infinity.  This is again a contradiction 
    and we conclude the proof.
\end{proof}

We end this section with the following corollary of the Ring Lemma. For a simple infinite triangulation $\cT$ with root vertex  $v_0$, let $B_m^{\cT}(v_0)$ be the ball of radius $m$ centered at $v_0$ in the sense of graph distance. Let $\cP$ be some circle packing of  $\cT$, where the disk corresponding to the root vertex $v_0$ is the unit disk. Also let $\cR^m_\cT$ and $\cX^m_\cT$ be the vectors of radii and centers{, respectively,} of the disks in $\cP$ corresponding to the vertices in $B_m^{\cT}(v_0)$.
\begin{proposition}\label{Prop:circle-conv}
    For simple  infinite triangulations $\cT$ and $(\cT^n)_{n\geq 1}$ with a common root vertex $v_0$, suppose that for any fixed $m\geq1$, there exists $N\geq1$ such that for any $n\geq N$, $B_m^{\cT^n}(v_0)$  agrees with $B_m^{\cT}(v_0)$ as planar maps rooted at $v_0$. Then for any fixed $k\geq1$, one has the limit $$\lim_{n\to\infty} \cR^k_{\cT^n} = \cR^k_\cT.$$
\end{proposition}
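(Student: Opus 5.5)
We argue by compactness plus uniqueness. The circle packings here are normalized so that $D_{v_0}$ is the unit disk; the statement concerns radii only, which are invariant under the remaining Euclidean isometries. We also fix a neighbor $v_1$ of $v_0$ and assume $D_{v_1}$ is tangent to $\bbD$ at a fixed point, say $1$. This normalizes centers too, and the radius vector is unaffected. The plan has three steps: show that the radii in $B_k^{\cT^n}(v_0)$ are bounded above and below uniformly in $n$; extract a subsequential limit packing; and identify that limit as the packing of $\cT$ using Schramm's uniqueness for infinite plane triangulations.

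\emph{Step 1 (tightness).} Fix $k$. For $n\ge N(k+1)$, the balls $B_{k+1}^{\cT^n}(v_0)$ and $B_{k+1}^{\cT}(v_0)$ agree, so every vertex in $B_k$ has the same (finite) degree in $\cT^n$ as in $\cT$. Let $d_k$ be the maximal degree over $B_k^{\cT}(v_0)$. Every vertex of a simple infinite plane triangulation is interior, so its petals form a flower. Lemma~\ref{lem:Ring-Lemma} therefore gives $e^{-cd_k}\le r_w/r_u\le e^{cd_k}$ for adjacent $u,w\in B_k$. Chaining along a path of length at most $k$ from $v_0$ (with $r_{v_0}=1$) yields $e^{-ckd_k}\le r_v\le e^{ckd_k}$ for all $v\in B_k^{\cT^n}(v_0)$, uniformly in $n$. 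Since tangency determines distances between centers, the centers are also bounded.

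\emph{Step 2 (subsequential limits).} By a diagonal argument, from any subsequence we can extract a further subsequence along which $(\cR^m_{\cT^n},\cX^m_{\cT^n})$ converges for every $m$, to a limit with all radii in $(0,\infty)$. Tangency and disjointness of interiors are closed conditions, and positive limiting radii keep tangent pairs tangent. Hence the limits define a collection of disks $\{D_v\}$ indexed by $\cV\cT$ in which adjacent vertices have tangent disks and all interiors are disjoint. We must also check that non-adjacent disks are not tangent in the limit. Around each vertex $v$, its petals in the limit still form a closed chain of tangent disks surrounding $D_v$, with the combinatorics of $\cT$, and the interstices between consecutive triples are nondegenerate because all radii are positive. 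Any other disk must lie outside the flower, so it cannot touch $D_v$. Thus the limit is a circle packing of $\cT$, normalized as above.

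\emph{Step 3 (identification).} By Schramm's uniqueness~\cite{Schramm91}, any two circle packings of the simple infinite plane triangulation $\cT$ differ by a M\"obius transformation; the carrier is either $\bbC$ or $\bbD$. The subtle point, and the main obstacle, is that in the hyperbolic case uniqueness holds only among packings filling $\bbD$, and a limit packing need not have full carrier a priori. I would handle this with the rigidity statement for locally finite packings as formulated in \cite[Section 8]{circlepackingbook}, namely that the radii of $\cT$'s packing are determined up to a common scaling once the carrier type is fixed; alternatively I would use that packings of $\cT$ are parametrized by the maximal packing via the discrete Schwarz--Pick lemma. Here I would take $\cR_\cT$ to mean the radii of the maximal (canonical) packing. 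If this step turns out to be too delicate in general, I would restrict to the parabolic case, which is the case used later, where uniqueness up to similarity is clean. Once the subsequential limit equals $\cR_\cT$ after normalization, every subsequence has a further subsequence converging to $\cR_\cT^k$, and the full sequence therefore converges.
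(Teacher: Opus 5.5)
Steps 1--2 are the paper's argument: Ring Lemma tightness of $(\cR^m_{\cT^n},\cX^m_{\cT^n})$, diagonal extraction, and the limit being a circle packing of $\cT$. You fill in details the paper leaves implicit, such as the two-sided radius bounds and ruling out tangency of non-adjacent disks in the limit. The paper then concludes as you do, by contradiction with ``the uniqueness of the circle packing for $\cT$.''

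In Step 3, discard the hyperbolic remedies. For CP hyperbolic $\cT$ the normalization $D_{v_0}=\bbD$ does not determine the radii. Take the maximal packing, apply a M\"obius map sending its carrier onto a half-plane (the pole lies outside every closed disk), and rescale so that $D_{v_0}=\bbD$. The resulting packing of $\cT$ has a different radius vector: equal radii would make the two packings congruent, and a disk is not congruent to a half-plane. So the paper's $\cR_\cT$ (``some'' packing with $D_{v_0}=\bbD$) is not well defined there. Even if you fix it to be the maximal packing, neither ``rigidity once the carrier type is fixed'' nor the discrete Schwarz--Pick lemma can identify a subsequential limit whose carrier you do not control.

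The right reading of the statement is therefore for CP parabolic $\cT$. The paper's own appeal to uniqueness has the same limitation. This is also the only case used: in Proposition~\ref{prop:inv-circle} the limit is the associated map of $\cH$, which is CP parabolic by Corollary~\ref{cor:parabolic}.

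Even in the parabolic case, your own remark applies: the limit packing is not known to have carrier $\bbC$. Uniqueness among packings with carrier $\bbC$ is therefore not enough. What you need is uniqueness up to similarity among \emph{all} circle packings of $\cT$ in the plane. This is the form of Schramm's rigidity theorem~\cite{Schramm91} that the paper's one-line appeal implicitly uses. With that input stated explicitly, your argument is complete.
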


\begin{proof}
    Following the Ring Lemma, for any fixed $m$, $\{(\cR^m_{\cT^n},\cX^m_{\cT^n})\}_{n\geq1}$ is a tight sequence. Assume on the contrary that there is a subsequence $\{s_n\}$ such that $\lim_{n\to\infty} \cR^k_{\cT^{s_n}} \neq \cR^k_\cT.$ Then using the argument of taking diagonal subsequences, one can find a further subsequence of $\{s_n\}$  {(still denoted by $\{s_n\}$ in order to simplify notation) such} 
    that the limits of  $\{(\cR^m_{\cT^{s_n}},\cX^m_{\cT^{s_n}})\}_{n\geq1}$ exist for every $m\geq1$. Moreover, since for every $m$, $B_m^{\cT^n}(v_0)$  agrees with $B_m^{\cT}(v_0)$ for large enough $n$, it is clear that this limit satisfies the circle packing conditions for $\cT$. In other words, we have constructed a circle packing for $\cT$ with the disk corresponding to $v_0$ being the unit disk and a different vector of radii for vertices in $B_k^{\cT}(v_0)$. This contradicts with the uniqueness of the circle packing for $\cT$ as in Theorem~\ref{thm:circle-packing-KAT}.
\end{proof}

\subsection{Convergence of random walk  with Dubejko weights on cell configurations}\label{subsec:Covergence-RW-cell-Dubejko}

Recall the random walk on circle packing and the Dubejko weights as defined in Section~\ref{pre:triangulation}. In this section, we prove that, for cell configurations $\cH$ in the context of Theorem~\ref{thm:CellSystemPack}, if we assign the conductance to be equal to the Dubejko weights, then the random walk on $\cH$ with the Dubejko weights converges to Brownian motion modulo time parameterization.

We start with the following immediate consequence of Theorem~\ref{thm:GMSinvariance} and Theorem~\ref{thm:circle-packing-KAT}.

\begin{corollary}\label{cor:parabolic}
    Let $\cH$ be the cell configuration as in Theorem~\ref{thm:CellSystemPack}. Then $\cH$ is a.s.\ circle packing parabolic.
\end{corollary}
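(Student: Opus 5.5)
The plan is to apply Theorem~\ref{thm:GMSinvariance} with the trivial conductance $\fc\equiv 1$ on every edge of the associated map, and then combine the resulting recurrence with Theorem~\ref{thm:circle-packing-KAT}.

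With $\fc\equiv 1$ we have $\pi(H_0)=\pi^*(H_0)=\deg(H_0)$. Since $\deg(H_0)\geq 1$, we get $\deg(H_0)\leq \deg(H_0)^4$, so both expectations in \eqref{eq:GMSinvariance-moment} are bounded by the left side of \eqref{eq:thm-inv-circle}, which is finite by assumption. The constant conductance is a deterministic function of the associated map. Hence translation invariance modulo scaling (Definition~\ref{def:translation-modulo-scaling}), ergodicity modulo scaling (Definition~\ref{def:ergodic-modulo-scaling}) and connectivity along macroscopic lines (Definition~\ref{def:connectedness}) carry over verbatim to the weighted configuration $(\cH,\cM,1)$. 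This is the same observation already used in the proofs of Lemma~\ref{lem:no-macroscopic-cell} and Proposition~\ref{prop:almost-planarity}.

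Theorem~\ref{thm:GMSinvariance} therefore shows that the walk with unit conductances on $\cH$ is a.s.\ recurrent. With unit conductances this is exactly the simple random walk on the associated map $\cM$. The associated map is, by hypothesis of Theorem~\ref{thm:CellSystemPack}, a simple infinite plane triangulation, so the first part of Theorem~\ref{thm:circle-packing-KAT} (He--Schramm) applies. It gives that $\cM$, and hence $\cH$, is a.s.\ circle packing parabolic; no bounded-degree assumption is needed for this direction.

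There is essentially no obstacle here. The only point to check is that ``simple random walk on $\cH$'' in Theorem~\ref{thm:GMSinvariance} means the walk on the associated map, with multiplicities. Since the map is simple, there are no multiple edges, so this is the simple random walk on the triangulation itself.
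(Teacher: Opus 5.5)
Your proposal is correct and follows the paper's proof exactly: the moment bound \eqref{eq:thm-inv-circle} implies \eqref{eq:GMSinvariance-moment} for unit conductances, so Theorem~\ref{thm:GMSinvariance} gives recurrence of the simple random walk, and the first part of Theorem~\ref{thm:circle-packing-KAT} then gives circle packing parabolicity. Your additional checks (that $\pi(H_0)=\pi^*(H_0)=\deg(H_0)$ on a simple map and that the structural conditions transfer to unit conductance) are details the paper leaves implicit.
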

\begin{proof}
    The condition~\eqref{eq:thm-inv-circle} implies the condition~\eqref{eq:GMSinvariance-moment} for cell configurations with unit conductance, and therefore by Theorem~\ref{thm:GMSinvariance}, the simple random walk on $\cH$ is recurrent. Therefore combined with Theorem~\ref{thm:circle-packing-KAT}, we immediately infer that $\cH$ is a.s.\ circle packing parabolic.
\end{proof}

The rest of the section is devoted to the proof of the following result. 
\begin{proposition}\label{prop:inv-circle}
    Let $\cH$ be a cell configuration as in Theorem~\ref{thm:CellSystemPack}, except that the conductance is given by the Dubejko weights as in~\eqref{eq:Dubejko}. Then the convergence of random walk on $\cH$ with Dubejko weights to Brownian motion described as in Theorem~\ref{thm:GMSinvariance} holds.  If $\cH$ is rotation invariant in law for some $\theta_0\in(0,2\pi)\backslash\{\pi\}$ as in Definition~\ref{def:rotation-inv}, the matrix $\Sigma$ is a scalar times the identity matrix.
\end{proposition}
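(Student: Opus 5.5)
We prove Proposition~\ref{prop:inv-circle} by re-running the proof of Theorem~\ref{thm:GMSinvariance} from~\cite{GMSinvariance} with the Dubejko conductance $\fc$ of the (a.s.\ unique up to M\"obius maps, by Corollary~\ref{cor:parabolic} and~\cite{Schramm91}) circle packing $\cP$ of the associated map. Only two inputs of that proof use properties of $\fc$. The first is that $(\cH,\fc)$ is translation invariant modulo scaling and ergodic modulo scaling. The second is the moment bound~\eqref{eq:GMSinvariance-moment}. We verify the first directly, and replace the second by~\eqref{eq:thm-inv-circle} together with Lemma~\ref{lem:circle}.

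\textbf{Invariance.} The Dubejko weights~\eqref{eq:Dubejko} depend only on radius ratios. The radius ratios are M\"obius-invariant data of the packing of a parabolic triangulation, since the packing is unique up to similarity. So $\fc$ is a deterministic, scale and translation invariant function of the associated map. We need $\fc$ to be measurable and continuous with respect to the map structure, so that the metric~\eqref{eq:metric-cc} sees it. Proposition~\ref{Prop:circle-conv} gives exactly this: local convergence of triangulations implies convergence of radii in every fixed ball. Hence $C_j(\cH-z_j)\to\cH$ in law in the sense of Definition~\ref{def:translation-modulo-scaling} continues to hold once conductances are included. Ergodicity is inherited because every translation- and scale-invariant function of $(\cH,\fc)$ is a function of $\cH$.

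\textbf{Moment bounds.} Since $\fc<1/2$, the bound $\pi(H_0)\le \deg(H_0)$ gives the first half of~\eqref{eq:GMSinvariance-moment}. The inverse conductances are the main obstacle. The Ring Lemma only gives $\fc^{-1}\le e^{C\deg}$, which is too weak. We therefore modify the argument of~\cite{GMSinvariance} where $\pi^*$ enters. There it is used in Dirichlet-energy/Cauchy--Schwarz estimates: a function $f$ on cells satisfies $|f(H)-f(H')|^2\le \fc(H,H')^{-1}\cdot(\text{energy of the edge})$, and these estimates are summed to bound the oscillation along line segments. In such a step, an edge $\{H,H'\}$ with tiny conductance will be replaced by a short path in the flower of $H$ or $H'$ whose edges all have conductance bounded below polynomially.

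To construct the path, fix an edge $uv$ with $r_u\ge r_v$, and let $w_1,w_2$ be the vertices opposite it. If $r_{w_1}\ge \min(r_u,r_v)/(100\deg^2)$, the second square root in~\eqref{eq:Dubejko} is $\gtrsim \deg^{-1}(r_v/r_u)^{1/2}$. The remaining bad case is that $r_v\ll r_u$, i.e.\ $v$ is small relative to $u$. In that case we route through the petals of $v$. By Lemma~\ref{lem:circle}, applied to the flower of $v$ or of $u$ with the appropriate petal labelled $D_1$, whenever three mutually tangent disks have one of them small, the second smallest is within a factor $100\deg^2$ of the smallest. So consecutive petals around the lower-degree centre have comparable radii, with ratio at most $100\deg^2$. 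This yields a path of length at most $\deg(H)$ from $v$ to $u$ around the flower, each edge of which has $\fc^{-1}\lesssim \deg(H)^{2}$ (up to absolute constants). The total resistance of the path is then $\lesssim \deg(H)^3$, and after Cauchy--Schwarz over the path the cost per cell becomes $\deg(H)^4$.

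\textbf{Conclusion.} Summed against the ergodic averages of Lemma~\ref{lem:average-diamdeg4} with $p=4$, these path resistances are controlled exactly by~\eqref{eq:thm-inv-circle}. This replaces every use of $\bbE[\frac{\diam^2}{\area}\pi^*]$ in~\cite{GMSinvariance}, with the rest of their proof (harmonic coordinates, sublinearity, recurrence) unchanged, so we obtain the quenched invariance principle with some deterministic nondegenerate $\Sigma$. Finally, rotation invariance of $\cH$ transfers to $(\cH,\fc)$, because $\fc$ is rotation-equivariant. The argument of Corollary~\ref{cor:GMS-rotation-inv} then shows that $\Sigma$ is a scalar multiple of the identity.
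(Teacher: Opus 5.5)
Your outer structure matches the paper: invariance via Proposition~\ref{Prop:circle-conv}, the first moment from $\fc<1/2$, replacing the single use of the inverse moment in \cite{GMSinvariance} (their Lemma~2.17) by flower paths built from Lemma~\ref{lem:circle}, then Lemma~\ref{lem:average-diamdeg4} with $p=4$ and Corollary~\ref{cor:GMS-rotation-inv}. However, the path construction, which is the only genuinely new step, fails as written.

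\textbf{Wrong flower.} You route through the petals of the smaller disk $v$ (or the ``lower-degree centre''). A walk along consecutive petals of $v$ enters $u$ from $w_1$ or $w_2$. If $r_v<0.01\deg(u)^{-2}r_u$, Lemma~\ref{lem:circle} applied to the flower of $u$ with $D_1=w_i$, $D_2=v$ gives $r_{w_i}\le 100\deg(u)^2 r_v$. So the final edge $w_iu$ is as degenerate as $uv$ itself. You have to go around the flower of the \emph{larger} disk $u$, with $d=\deg(u)$, and you need a target petal comparable to $u$. The polygon through the petal centres surrounds $D_u$ and has length $2\sum_{H''\sim u}r_{H''}\ge 2\pi r_u$, so some petal has radius $\ge \pi r_u/d$. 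Walk from $v$ to the first petal $H_1$ with $r_{H_1}\ge 0.01d^{-1}r_u$. If $r_{H_1}\ge r_u$, skip it and join $u$ directly to the next petal $H_2$, for which Lemma~\ref{lem:circle} gives $r_{H_2}\ge 0.01d^{-2}r_u$. Your claim that consecutive petals have radius ratio at most $100\deg^2$ holds only for petals smaller than the centre, which is why the walk must stop at $H_1$.

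\textbf{Bookkeeping.} The resistance count does not close under \eqref{eq:thm-inv-circle}. A petal--petal edge in the flower of $u$ lies on up to $\deg(u)$ of the paths starting at $u$. So in the Cauchy--Schwarz step the energy side must carry weight $\deg(u)^{-1}$, since otherwise it is not bounded by $\sum_e\fc(e)|\nabla f(e)|^2$, and the geometric side must carry weight $\deg(u)$. Your per-edge bound $\fc^{-1}\lesssim\deg^2$ gives path resistance $\deg^3$. Summing over the $\le\deg(u)$ neighbours then gives $\sum_H\diam(H)^2\deg(H)^5$, not $\deg^4$. With only that per-edge information this is sharp: test an $f$ increasing along a chain of $d$ small petals.

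The missing observation is that petal--petal edges are much better than $\deg^{-2}$. Consecutive petals below $0.01d^{-1}r_u$ have radius ratio at most $100d^2$, so the prefactor in \eqref{eq:Dubejko} is $\gtrsim d^{-1}$. One of their two opposite vertices is $u$ itself, the largest disk, so the bracket is of order one. Hence $\fc\ge 10^{-4}d^{-1}$ on these edges. Only the single edge at $u$ costs $\fc^{-1}\lesssim d^2$, so the path resistance is $\le 2\cdot 10^4 d^2$, and this gives exactly $\diam(H)^2\deg(H)^4$.

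A smaller point: path vertices may leave $S_k$, so the energy side lives on the union $\wt S_k$ of the four dyadic parents, via Lemma~\ref{lem:no-macroscopic-cell}.
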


\begin{proof}
To begin with, we check the conditions in Theorem~\ref{thm:GMSinvariance}. For $\cH$ with the conductance  given by the Dubejko weights, the translation invariance modulo scaling property in Definition~\ref{def:translation-modulo-scaling} still holds by applying the same condition for $\cH$ with unit conductance together with the convergence of Dubejko weights as in Proposition~\ref{Prop:circle-conv}. The ergodicity modulo scaling and line connectivity from Definition~\ref{def:ergodic-modulo-scaling} and Definition~\ref{def:connectedness} indeed carry, and the first equation in the moment bound~\eqref{eq:GMSinvariance-moment} holds since the Dubejko conductance is always less or equal than 1. All we need to show is that, the conclusion of Theorem~\ref{thm:GMSinvariance} is still true with the second equation in the condition~\eqref{eq:GMSinvariance-moment} replaced by~\eqref{eq:thm-inv-circle}, since if we have the rotational invariance in law for some $\theta_0\in(0,2\pi)\backslash\{\pi\}$, by Corollary~\ref{cor:GMS-rotation-inv} the covariance matrix $\Sigma$ must be a scalar times the identity matrix.

    {The only place in~\cite{GMSinvariance} where the second equation in the condition~\eqref{eq:GMSinvariance-moment} is used is~\cite[Lemma 2.17]{GMSinvariance}. We will first prove a slightly weaker version of this lemma (with condition~\eqref{eq:GMSinvariance-moment} replaced by~\eqref{eq:thm-inv-circle}) and then we check that the rest of the proof of~\cite[Theorem 3.10]{GMSinvariance} still carries through with the weaker version of the lemma.} Throughout the proof, $c$ stands for some deterministic constant which may vary line by line.

    Fix $k\in\mathbb{N}$, $r\in[0,1]$. Let $\cD$ be the random dyadic square independent of $\cH$ defined as in Definition~\ref{def:dyadicsystem}, $S_k$ be the square in $\cD$ containing 0 with side length in $[2^{k-1},2^{k})$, and let $P_k(r)$ be the horizontal line segment connecting the left and right boundaries of the square $S_k$ whose distance from the lower boundary of $S_k$ is $r|S_k|$.    We prove that for each large enough $k\in\bbN$ and each function $f:\cH(S_k)\to\bbC$, 
    \begin{equation}\label{eq:pf-inv-circle}
        \int_0^1 \sum_{\{H,H'\}\in\cE\cH(P_k(r))} |f(H)-f(H')|dr\leq c\Big(\sum_{\{H,H'\}\in \cH(\wt S_k)}\fc(H,H')|f(H)-f(H')|^2\Big)^{1/2},
    \end{equation} 
    where $\wt S_k$ is the union of the four dyadic parents of $S_k$. Note that this is identical to the statement of~\cite[Lemma 2.17]{GMSinvariance} except that we have $\wt S_k$ instead of $S_k$ on the right side.

    {Before proving \eqref{eq:pf-inv-circle} let us explain why this estimate is sufficient to conclude the proof of the theorem. The only place in the proof of~\cite[Theorem 1.16]{GMSinvariance} where~\cite[Lemma 2.17]{GMSinvariance} is used is in the proof of~\cite[Equation (2.44)]{GMSinvariance}. The latter equation is proven exactly as before.}

     For $\{H,H'\}\in\cE\cH(S_k)$, let $L_k(H,H')$ be the Lebesgue measure of the set of $r\in[0,1]$ for which $H$ and $H'$ both belong to $\cH(P_k(r))$. Then{, as argued in~\cite[Lemma 2.17]{GMSinvariance},} 
    \begin{equation}\label{eq:pf-inv-circle-1}
        \int_0^1 \sum_{\{H,H'\}\in\cE\cH(P_k(r))} |f(H)-f(H')|dr = \sum_{\{H,H'\}\in \cH(S_k)} L_k(H,H')|f(H)-f(H')|.
    \end{equation}
    and \eqb\label{eq:inv-circle-Lk} L_k(H,H')\leq\frac{2}{|S_k|}\min\{\diam(H),\diam(H')\}.\eqe
    By Lemma~\ref{lem:average-diamdeg4}, there is a deterministic constant $c_0>0$ such that a.s.\ for large enough $k\in\bbN$, 
    \begin{equation}\label{eq:pf-circle-inv-52}
        \frac{1}{|S_k|^2}\sum_{H\in\cH(S_k)} \diam(H)^2\deg(H)^{4}\leq c_0.
    \end{equation}
    \begin{figure}
        \centering
        \includegraphics[scale=0.4]{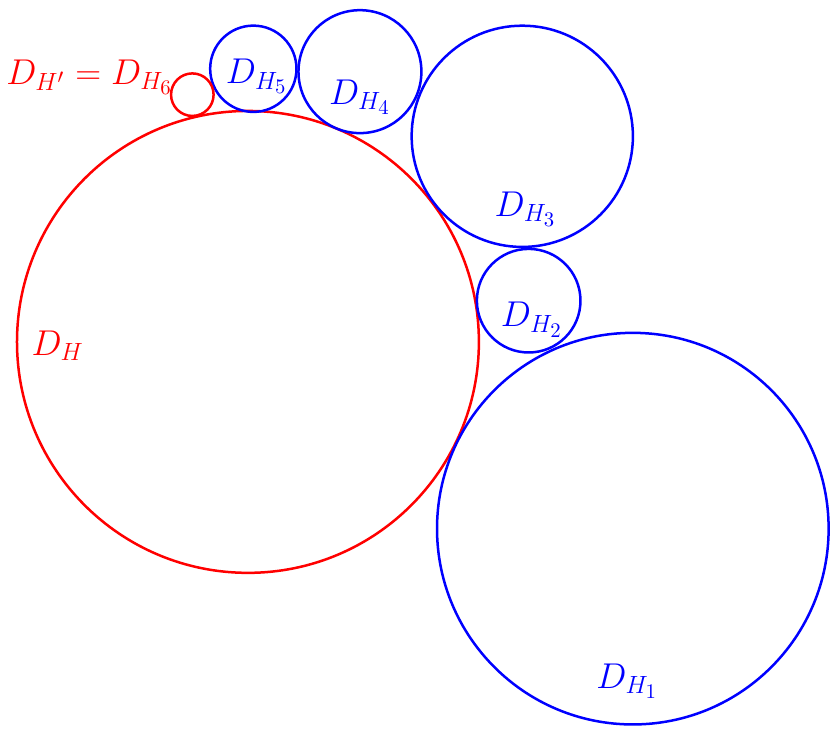}
        \caption{Construction of the path $P_{H,H'}$  {in the proof of Proposition \ref{prop:inv-circle}} when $r_{H'}\leq 0.01d^{-1}r_{H}$. Here for $j=2,...,6$, $r_{H_j}<0.01d^{-1}r_H$ while $0.01d^{-1}r_H\leq r_{H_1}<r_H$.}
        \label{fig:ringB}
    \end{figure}
    Now for two cells $H\sim H'$, we define as follows a set of edges $P_{H,H'}$ forming a path from $H$ to $H'$. We fix a circle packing $\cP$ for $\cH$. For $H\in \cH$, we write $D_H$ for the corresponding circle in the packing and $r_H$ for the radius of $D_H$.  
    We assume that $r_H\geq r_{H'}$; the other case is dealt with by setting $P_{H,H'}=P_{H',H}$. 
    We write $d = \deg(H)$. 
    If $r_{H'}\geq 0.01d^{-1}r_{H}$, then we define $P_{H,H'}$ to be the single edge connecting $H$ and $H'$. Then following Lemma~\ref{lem:circle}, for any $H''$ adjacent to both $H$ and $H'$, one must have $r_{H''}\geq10^{-4}d^{-3}r_H$, and therefore by the Dubejko weight formula~\eqref{eq:Dubejko} $\fc(H,H')\geq10^{-4}d^{-2}$. If $r_{H'}< 0.01d^{-1}r_{H}$, we define the set $P_{H,H'}$ as follows. First observe that if we draw straight lines connecting the centers of disks which are tangent to each other and $D_H$, then we form a  {closed curve} around $D_H$ with length $2\sum_{H''\sim H}r_{H''}\geq 2\pi r_H$, and there must exist some $H''\sim H$ such that $r_{H''}\geq \pi r_H/d> 0.01d^{-1}r_H$. Let $D_{H_1}$ be the first such disk when viewed clockwise from $D_{H'}$, and we let $D_{H_1}$, $D_{H_2}$,..., $D_{H_n}$ be the disks tangent to $D_H$ and ordered counterclockwise where we write $H_n = H'$. If $r_{H_1}\geq r_H$, define $P_{H,H'}=\{ \{H,H_2\},\{H_2,H_3\},...,\{H_{n-1},H_n\}\}$; otherwise set   $P_{H,H'}=\{ \{H,H_1\},\{H_1,H_2\},...,\{H_{n-1},H_n\}\}$.   See also Figure~\ref{fig:ringB} for an illustration. Then by definition, $r_{H_2}, ..., r_{H_n}<0.01d^{-1}r_H$ and $r_{H_1}\geq 0.01d^{-1}r_H$. By Lemma~\ref{lem:circle}, $r_{H_2}> 0.01d^{-2}\min\{r_{H},r_{H_1}\}$. If $r_{H_1}\geq r_H$, then $r_{H_2}\geq 0.01d^{-2}r_H$, and by the Dubejko weight formula~\eqref{eq:Dubejko}, $\fc(H,H_2)\geq 10^{-4}d^{-1}$. If $r_{H_1}<r_H$, then using $r_{H_1}\geq 0.01d^{-1}r_H$, we have $\fc(H,H_1)\geq 10^{-4}d^{-2}$. In this case, $0.01d^{-2}r_{H_1}\leq r_{H_2}\leq r_{H_1}<r_H$, and $\fc(H_1,H_2)\geq 10^{-4}d^{-1}$.  
    For $j=2,...,n-1$,  $r_{H_j}<r_H$, and by Lemma~\ref{lem:circle}, we have $0.01d^{-2} < r_{H_{j+1}}/r_{H_{j}} <100d^2$. Then again using the formula~\eqref{eq:Dubejko}, one has $\fc(H_j,H_{j+1})>10^{-4}d^{-1}$. In other words, for each edge $e\in P_{H,H'}$, if $H$ is an endpoint of $e$, then $\fc(e)\geq 10^{-4}d^{-2}$, and if $H$ is not an endpoint of $e$, then $\fc(e)\geq 10^{-4}d^{-1}$. In particular, for any $\{H,H'\}\in\cE\cH$ with $r_H\geq r_{H'}$, using $n\leq \deg(H) = d$, we have
    \begin{equation}\label{eq:pf-inv-circle-29}
        \sum_{e\in P_{H,H'}}\fc(e)^{-1}\leq 2\cdot10^{4}\deg(H)^{2}.
    \end{equation}
  
   For an edge $e = \{H,H'\}$, we set $|\nabla f(e)| = |f(H')-f(H)|$. Then from the triangle inequality,  $|f(H)-f(H')|\leq \sum_{e\in P_{H,H'}}|\nabla f(e)|$. Now by~\eqref{eq:pf-inv-circle-1} and~\eqref{eq:inv-circle-Lk}, 
    \begin{equation}\label{eq:pf-inv-circle-4}
        \begin{split}
             \int_0^1 &\sum_{\{H,H'\}\in\cE\cH(P_k(r))} |f(H)-f(H')|dr \leq \frac{2}{|S_k|}\sum_{\{H,H'\}\in \cH(S_k)}\min\{\diam(H),\diam(H')\}|f(H)-f(H')|\\
             & \leq \frac{4}{|S_k|}\sum_{\{H,H'\}\in \cH(S_k)}\sum_{e\in P_{H,H'}}\mathds{1}_{r_H\geq r_{H'}}\min\{\diam(H),\diam(H')\}|\nabla f(e)|\\
             & \leq \frac{4}{|S_k|}\Big(\sum_{\{H,H'\}\in \cH(S_k)}\sum_{e\in P_{H,H'}}\mathds{1}_{r_H\geq r_{H'}}|\nabla f(e)|^2\fc(e)\frac{1}{\deg(H)}  \Big)^{1/2}\\&\times\Big(\sum_{\{H,H'\}\in \cH(S_k)}\sum_{e\in P_{H,H'}}\mathds{1}_{r_H\geq r_{H'}} \diam(H)^2\deg(H)\fc(e)^{-1} \Big)^{1/2}. 
        \end{split}
    \end{equation}
    
    {We will now conclude the proof of~\eqref{eq:pf-inv-circle} by bounding the right side of \eqref{eq:pf-inv-circle-4}, starting with the first factor.} For each edge $e$, there exist at most two different $H$ 
    such that $e\in P_{H,H'}$ for some $H'$ with $r_H\geq r_{H'}$ and $H$ is not an endpoint of; indeed, by our definition of $P_{H,H'}$ the disks of two end-points of $e$ and $D_H$ must all pairwise touch.  There are also at most two $H$ such that $e\in P_{H,H'}$ for some $H'$ with $r_H\geq r_{H'}$ and $H$ is an endpoint of $e$. In either case, the corresponding number of $H'$ is at most $\deg(H)$. Moreover, it follows from Lemma~\ref{lem:no-macroscopic-cell} that for large enough $k$,  {if $\{H,H'\}\in \cH(S_k)$ and $H''$ is an end-point of an edge in $P_{H,H'}$ then $H''\in \cH(\wt S_k)$}. 
    Therefore
    \begin{equation}\label{eq:pf-inv-circle-4-1}
        \sum_{\{H,H'\}\in \cH(S_k)}\sum_{e\in P_{H,H'}}\mathds{1}_{R_H\geq R_{H'}}|\nabla f(e)|^2\fc(e)\frac{1}{\deg(H)} \leq 4\sum_{e\in\cE\cH(\wt S_k)}|\nabla f(e)|^2\fc(e).
    \end{equation}
    On the other hand, by~\eqref{eq:pf-inv-circle-29}, we have 
    \begin{equation}\label{eq:pf-inv-circle-4-2}
    \begin{split}
        \sum_{\{H,H'\}\in \cH(S_k)}\sum_{e\in P_{H,H'}}&\mathds{1}_{R_H\geq R_{H'}} \diam(H)^2\deg(H)\fc(e)^{-1}\\& \leq 2\cdot 10^4  \sum_{\{H,H'\}\in \cH(S_k)}  \diam(H)^2\deg(H)^3\leq 2\cdot 10^4\sum_{H\in\cH(S_k)}\diam(H)^2\deg(H)^{4}.
        \end{split}
    \end{equation}
    Thus~\eqref{eq:pf-inv-circle} follows by plugging~\eqref{eq:pf-circle-inv-52}, \eqref{eq:pf-inv-circle-4-1}, \eqref{eq:pf-inv-circle-4-2} into~\eqref{eq:pf-inv-circle-4}. 
\end{proof}

\subsection{Convergence of random walk  with Dubejko weights on circle packing}\label{subsec:Covergence-RW-CP-Dubejko}


Consider a circle packing $\cP$ for $\cH$ such that the disk for the cell $H_0$ is the unit disk, where we recall that $\cH$ is circle packing parabolic by Corollary~\ref{cor:parabolic}. For $z\in\bbC$, consider the random walk $(\wh X_j^{z,\rm circle})_{j\in\bbN_0}$ moving along centers of disks in $\cP$ with Dubejko weights, starting from the disk with closest distance to $z$. We extend $\wh X_j^{z,\rm circle}$ from $\bbN_0$ to $[0,\infty)$ by piecewise linear interpolation. The aim of this section is to prove the following.
\begin{proposition}\label{prop:random-walk-on-packing}
    For each compact set $A\subseteq\bbC$, it is a.s.\ the case that as $\e\to0$, the maximum over all $z\in A$ of the Prokhorov distance between the conditional law of $\e\wh X^{z/\e, \mathrm{circle}}$ given $\cP$ and the law of standard Brownian motion started from $z$, with respect to the topology on curves induced in~\eqref{eq:curve-metric}, tends to 0.
\end{proposition}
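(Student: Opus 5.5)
The plan follows the outline in the introduction. By Dubejko's theorem, the walk $\wh X^{z,\rm circle}$ is a martingale in the plane. The result of Bou-Rabee and Gwynne~\cite{bougwynne2024random} says that a circle packing of a parabolic triangulation whose carrier is $\bbC$ has the following property: if the disks are "microscopic on large scales", i.e.\ a.s.\ for each $\e>0$ and all large $r$ every disk of $\cP$ meeting $B(0;r)$ has diameter at most $\e r$, then the Dubejko walk converges to standard Brownian motion modulo time parameterization, uniformly over starting points in compacts, in the topology~\eqref{eq:curve-metric}. The limit is standard, with no covariance matrix, because of the martingale property and the rotational symmetry of the packing geometry. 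So I will reduce Proposition~\ref{prop:random-walk-on-packing} to proving this "no macroscopic disks" statement. This is also the first conclusion of Theorem~\ref{thm:CellSystemPack}. Parabolicity, hence a packing with carrier $\bbC$ normalized so $D_{H_0}=\bbD$, is given by Corollary~\ref{cor:parabolic}.

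To control disk sizes I use the Gurel-Gurevich--Jerison--Nachmias bound~\cite{GDN19VEL}. In a packing with carrier $\bbC$, the diameter of a disk $D_H$ relative to its distance from $D_{H_0}$ (or relative to $r$) is bounded by a decreasing function of Cannon's vertex extremal length $\VEL$ between $\{H_0\}$ and $\{H\}$, or between $H$ and $\infty$. Concretely, if $\VEL(H_0\leftrightarrow H)$ in the map is large, then $D_H$ is small compared with $|o(H)|$. So I aim to prove the analogue of the paper's Proposition~\ref{prop:finite-VEL}: for every $q_0$, a.s.\ only finitely many cells $H$ have vertex extremal length at most $q_0$ to the root (or for the appropriate annular separation). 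Since $\cH$ is locally finite, this gives for each $\e$ only finitely many disks of relative size $>\e$. Transience of diameters to infinity then follows, and hence the required sublinear maximal disk diameter in $B(0;r)$.

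To lower bound $\VEL$, I take a cell $H$ at Euclidean distance roughly $2^k$ from $0$ in the cell configuration, so $H\in\cH(S)$ for dyadic-scale squares. For each scale $j$ between $1$ and roughly $k$ I use the annuli $A_j=[-2^{j+1},2^{j+1}]^2\setminus[-2^j,2^j]^2$. I define the vertex metric $m(H')=\diam(H')/2^j$ for cells $H'\in\cH(A_j)$, rescaled appropriately. Any path of cells from $H_0$ to far away must cross $A_j$, so its $m$-length is at least about $1$ because cells cover the Euclidean crossing. The total mass is $\sum_{H'}m(H')^2\lesssim 2^{-2j}\sum_{H'\in\cH(A_j)}\diam(H')^2\le C$ by Lemma~\ref{lem:average-diamdeg4} with $p=0$, valid for all $j$ large; cells are small by Lemma~\ref{lem:no-macroscopic-cell}. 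Summing the metrics $m=\sum_j m_j/\sqrt{\cdot}$ over the $\asymp k$ scales in the standard series/harmonic way gives $\VEL\gtrsim k\to\infty$. A second extremal metric is needed so that the disk of $H$ is small relative to the distances at its own scale, i.e.\ $\VEL$ between $H$ and the boundary of a comparably large region is large. For this, $H$ and its neighbours are small compared with $2^k$, and I use the same annuli centered at $H$ on scales $\diam(H)\ll 2^j\ll 2^k$, counting enough scales thanks to Lemma~\ref{lem:no-macroscopic-cell}.

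The main obstacle is this last step: getting uniformity over all cells in $B(0;r)$. The averaging lemma only applies to squares near $S_k$ and at scales down to $2^{-\ell}|S_k|$, not at arbitrarily small scales around every cell. I would handle it by using only scales between $2^{k-\ell}$ and $2^{k}$ with $\ell$ fixed but large. This gives $\VEL\ge c\ell$ uniformly in $H\in\cH(B(0;2^k))$ for large $k$, via the second part of Lemma~\ref{lem:average-diamdeg4}, and then lets $\ell\to\infty$. It is also delicate to translate the GJN diameter bound into a statement relative to Euclidean $r$ rather than the packing's own scale. I expect to need the first metric, which comes from the root, to compare $|o(H)|$ with the packing scale of the ball.
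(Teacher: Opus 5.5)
\textbf{There is a genuine gap: the path family.} Your outer reduction is the paper's: Proposition~\ref{prop:BouGwynne}, plus absence of macroscopic disks proved through vertex extremal length. Your second metric is also right. Taking $\diam(H')$ divided by the scale on annuli centred at $H$, with only scales between $2^{-\ell}|S_k|$ and $|S_k|$ for fixed $\ell$ so that Lemma~\ref{lem:average-diamdeg4} applies uniformly, is essentially the mass function $m^*$ in the proof of Proposition~\ref{prop:finite-VEL}. What fails is the choice of path family, and that choice is exactly what must link the cell side to the packing side.

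\begin{itemize}
\item \emph{Paths from $H_0$ to $H$.} This family gives no control, because its $\VEL$ is at least of order $\log|o(H)|$ even when $r_H\asymp|o(H)|$. Take a fine packing containing $D_{H_0}=\bbD$ and a disk $D_H$ of radius $|o(H)|/2$. Every such path crosses $\{1<|z|<|o(H)|/4\}$, and the metric $\diam(D_v)/|o(v)|$ there gives the bound. So the implication ``$\VEL(H_0\leftrightarrow H)$ large $\Rightarrow$ $D_H$ small compared with $|o(H)|$'' is false, and your metric centred at $0$ does not help.
\item \emph{Paths from $H$ to $\infty$.} This family has infinite $\VEL$ by recurrence.
\item \emph{Paths from $H$ to the boundary of a comparably large region.} If the region is defined through the Euclidean geometry of $\cH$, this cannot be converted into information about $\cP$ without already knowing where the region sits in the packing. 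That is the circular ``delicate translation'' you leave open.
\end{itemize}

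\textbf{The missing idea.} Use the purely combinatorial family $\Gamma_H$ of loops through $H$ that surround $H_0$; it controls both sides.
\begin{itemize}
\item \emph{Packing side.} For $t\in(|o(H)|-r_H,|o(H)|+r_H)$, the disks meeting the circle $\{|z|=t\}$ form a loop in $\Gamma_H$. A disjoint-disk area argument then gives $\VEL(\Gamma_H)\le 4|o(H)|/r_H$ (Lemma~\ref{lem:VEL-packing}).
\item \emph{Cell side.} Your annuli around $H$ give $\VEL(\Gamma_H)\gtrsim\ell$. Equivalently, each loop in $\Gamma_H$ contains one of your exit paths. However, showing that every loop in $\Gamma_H$ crosses every annulus needs more than the fact that adjacent cells intersect. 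Cells may overlap and the associated map need not match the geometry, so a combinatorial loop around $H_0$ need not geometrically surround $0$. The paper obtains the crossing from the almost-planar embedding of Proposition~\ref{prop:almost-planarity} together with Lemma~\ref{lem:no-macroscopic-cell}.
\end{itemize}

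With this family you never need to compare Euclidean scales in $\cH$ and in $\cP$. A disk of $\e\cP$ with diameter at least $\delta$ that meets $\bbD$ has $|o(H)|/r_H\le 1+2/\delta$, a scale-invariant ratio, so $\VEL(\Gamma_H)$ is bounded. Only finitely many $H$ have bounded $\VEL(\Gamma_H)$, while any fixed disk shrinks as $\e\to0$, which finishes the argument.
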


One critical input for the proof of Proposition~\ref{prop:random-walk-on-packing} is the following, which is an immediate consequence of~\cite[Theorem A]{bougwynne2024random}.
\begin{proposition}[\cite{bougwynne2024random}]\label{prop:BouGwynne}
    Let $(\cP^n)_{n\geq 1}$ be a sequence of circle packings for simple infinite triangulations $(\cT^n)_{n\geq1}$. Suppose for each $r>0$, the maximal diameter of disks in $\cP^n$ intersecting $B(0;r)$ converges to 0. Let $\wh X^{z,n,\rm circle}$ be the linearly interpolated random walk on $\cP^n$ defined as above. Then  for each compact set $A\subseteq\bbC$, it is a.s.\ the case that as $n\to\infty$, the maximum over all $z\in A$ of the Prokhorov distance between the  law of $\wh X^{z,n, \mathrm{circle}}$ and the law of standard Brownian motion started from $z$, with respect to the topology on curves induced in~\eqref{eq:curve-metric}, tends to 0.
\end{proposition}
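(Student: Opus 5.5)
The statement is cited as an immediate consequence of \cite[Theorem A]{bougwynne2024random}, so the plan is to reduce it to that theorem rather than to reprove an invariance principle. I would set it up by contradiction via subsequences. Suppose the uniform Prokhorov distance over $z\in A$ between the law of $\wh X^{z,n,\rm circle}$ and Brownian motion from $z$ stays above some $\delta>0$ along a subsequence, attained at points $z_n\in A$. By compactness of $A$, after passing to a further subsequence we may assume $z_n\to z_*$.

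The core input is Theorem A of \cite{bougwynne2024random}. It states that for circle packings of triangulations whose disks near a fixed ball have vanishing maximal diameter, the Dubejko-weighted walk started near a point converges in law to Brownian motion modulo time parametrization. The convergence holds in the local curve topology $\mathbb d^{\rm CMP}_{\rm loc}$. It also holds uniformly over starting points in compacts, or at least for starting points converging to a limit.

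I would check that our hypotheses match theirs. Our hypothesis that $\max\{\diam(D):D\in\cP^n,\ D\cap B(0;r)\neq\emptyset\}\to0$ for every $r$ is exactly their ``no macroscopic disks'' condition. The walk is the same one: it moves between disk centers with the conductances~\eqref{eq:Dubejko}. Linear interpolation is harmless, since consecutive positions are centers of tangent disks, whose distance is at most the sum of their diameters. That sum tends to $0$ locally, so the interpolated path is close in $\mathbb d^{\rm CMP}$ to any other reasonable interpolation.

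The starting point needs one extra step. The walk starts at the disk closest to $z_n$, so its center is within $o(1)$ of $z_n$ and hence of $z_*$. Applying Theorem A with starting points converging to $z_*$ gives convergence to Brownian motion from $z_*$. The law of Brownian motion from $z$ depends continuously on $z$ in this metric, so it is also close to Brownian motion from $z_n$. This contradicts the choice of $\delta$.

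The only real obstacle is bookkeeping. Their theorem may be phrased for a single starting point or with a different curve metric, for example stopped at exit from a domain. In that case I would derive the $\mathbb d^{\rm CMP}_{\rm loc}$ statement by applying their result on $B(0;r)$ for almost every $r$, and then integrating against $e^{-r}$ using dominated convergence.
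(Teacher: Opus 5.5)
Your proposal is correct and takes the same route as the paper, which gives no separate proof and simply cites \cite[Theorem A]{bougwynne2024random}. Your compactness/subsequence argument for uniformity over $z\in A$, the matching of the Dubejko-weighted walk and the no-macroscopic-disks hypothesis, and the remarks on linear interpolation and on the metric \eqref{eq:curve-metric} are routine bookkeeping that the paper leaves implicit. One assumption should be stated explicitly: your step ``the starting disk has center within $o(1)$ of $z_n$'' needs the packings to cover a neighbourhood of $A$ for large $n$, which the statement also assumes implicitly and which holds in the paper's application, where $\cP^n=\e_n\cP$ has carrier $\bbC$.
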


Proposition~\ref{prop:random-walk-on-packing} is now immediate from Proposition~\ref{prop:BouGwynne} and the following.
\begin{proposition}\label{prop:max-diam-0-cone}
    Let $r>0$ be fixed. Then the maximal diameter of the circles in $\e\cP$ intersecting $B(0;r)$ converges to 0 almost surely as $\e\to0$.
\end{proposition}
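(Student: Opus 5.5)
We reduce the statement to a bound on Cannon's vertex extremal length, following the outline in the introduction. The key input is the result of Gurel-Gurevich, Jerison and Nachmias~\cite{GDN19VEL}. For a circle packing of a parabolic triangulation normalized with $D_{H_0}=\bbD$, the radius of a disk $D_H$ at Euclidean distance of order $R$ from the origin is bounded above by $C R/\sqrt{\VEL(H_0\leftrightarrow H)}$, or by a function of $\VEL$ tending to $0$ as $\VEL\to\infty$. So it suffices to show two things. First, the disks meeting $B(0;r/\e)$ correspond to cells that are graph-reachable from $H_0$ within some controlled set. Second, the vertex extremal length from $H_0$ to such cells, divided appropriately, tends to infinity; equivalently, only finitely many cells $H$ have $\VEL(H_0\leftrightarrow \infty\text{-separated set}\ni H)\le q_0$ for each fixed $q_0$.

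The plan is as follows.

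\textbf{Step 1.} Fix $q_0$. We show that a.s.\ only finitely many cells $H$ satisfy $\VEL(\{H_0\},\{H\})\le q_0$, or more usefully $\VEL(\{H\},\infty)\le q_0$ relative to the scale of $H$. To lower bound $\VEL$ we build explicit vertex metrics $m$ from the cell geometry. Take $H$ with cell in the annulus region around $S_k\setminus S_{k-\ell}$, and put
\[
m(H')=\diam(H')\,\mathbf 1_{H'\in\cH(A)},
\]
where $A$ is a dyadic annulus of scale $2^k$ separating $H_0$ from $H$. Any path in $\cM$ from $H_0$ to $H$ gives a continuous curve through the union of its cells. By the line connectivity (Definition~\ref{def:connectedness}) and Lemma~\ref{lem:no-macroscopic-cell}, this curve crosses $A$, so its $m$-length is at least the width of $A$, of order $2^k$. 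The area is $\sum_{H'\in\cH(A)}\diam(H')^2\le C2^{2k}$ by Lemma~\ref{lem:average-diamdeg4}. Summing over disjoint dyadic annuli $j=1,\dots,N$, via the standard series law for extremal length of nested annuli, gives $\VEL\gtrsim N$. Hence $\VEL\to\infty$ for cells at Euclidean distance growing from $0$. Here Lemma~\ref{lem:average-diamdeg4} is applied simultaneously to all scales $2^j\le 2^k$, using the ``moreover'' part.

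\textbf{Step 2.} Convert the $\VEL$ bound into a diameter bound in $\cP$. Using~\cite{GDN19VEL}, we control the radius of $D_H$ relative to the distance in $\cP$ from $D_{H_0}$ to $D_H$. This yields the following: for disks whose centers lie at $\cP$-distance $\asymp R$, the radius is $o(R)$. We also need that the disks intersecting $B(0;r/\e)$ come from cells within Euclidean distance $O(r/\e)$ in $\cH$, or at least from some controlled region. Here we use parabolicity (Corollary~\ref{cor:parabolic}) and a separating-cycle argument. Take the loop of cells along $\partial S_k$ given by Lemma~\ref{lem:phi_0-bbC-1}. Its disks form a closed chain around $D_{H_0}$ with small radii relative to its $\cP$-distance, so it separates and is eventually outside $B(0;r/\e)$. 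The disks inside this chain then all come from cells in $S_k$. We apply the bound to these cells and to the chain.

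\textbf{Main obstacle.} The main obstacle is Step 2's comparison of scales: ensuring that the separating chain is far in $\cP$ rather than collapsing near $D_{H_0}$. We would handle this with the same VEL lower bound from $H_0$ to the chain, which forces the chain's $\cP$-distance to $D_{H_0}$ to be large. If \cite{GDN19VEL} only gives radius bounds for the \emph{largest} disk relative to the carrier, we would instead argue by contradiction. Suppose a disk of radius $\ge\delta R$ meets $B(0;R)$ for arbitrarily large $R$. Then the family of paths from $H_0$ to that disk's neighborhood admits a metric of bounded area with length bounded below, which forces $\VEL$ bounded. This contradicts Step 1 for large $k$.
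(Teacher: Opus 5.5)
There is a genuine gap: the extremal-length family you use cannot detect macroscopic disks. In Step~1 you lower-bound the VEL of paths joining $H_0$ to $H$ by counting dyadic annuli \emph{centered at the origin}. You then rely on a bound like $r_H\lesssim |z_H|/\sqrt{\VEL(H_0\leftrightarrow H)}$, and no such bound holds. Curves joining $\bbD$ to $B(R;R/2)$ have extremal length of order $\log R$. The same annulus count, applied to a packing that is hexagonal near the origin but has one disk of radius $R/2$ centered at distance $R$, gives $\VEL(H_0\leftrightarrow H)\gtrsim\log R$. So $\VEL(H_0\leftrightarrow H)\to\infty$ is compatible with $r_H/|z_H|$ staying bounded below, and $r_H/|z_H|$ is the only quantity that survives the rescaling $\cP\mapsto\e\cP$.

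Your fallback has the inequality reversed. A metric of bounded area with length bounded below proves a \emph{lower} bound on VEL. You need an upper bound, i.e.\ a short curve for every metric. Moreover, for the $H_0$-to-$H$ family the VEL is in fact unbounded in the scenario you describe. Step~2 is also circular: it assumes that the disks of the chain along $\partial S_k$ are small compared with their distance to $D_{H_0}$, which is what is being proved.

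The missing idea is a scale-invariant family: $\Gamma_H$, the loops through $H$ that separate $H_0$ from $\infty$.
\begin{itemize}
\item \emph{Upper bound.} Take the loops $\gamma_t$ formed by the disks meeting $\{|z|=t\}$, for $t\in[|z_H|-r_H,|z_H|+r_H]$. Combined with Cauchy--Schwarz and a disjoint-disk area count, these give $\VEL(\Gamma_H)\le 4|z_H|/r_H$ (Lemma~\ref{lem:VEL-packing}), which is invariant under $\cP\mapsto\e\cP$.
\item \emph{Lower bound.} This must come from annuli centered at $H$, not at $H_0$. For a cell $H$ at distance $\asymp|S_k|$ from the origin, take a fixed number $n\propto q_0$ of nested squares $S^*_1\subset\cdots\subset S^*_n$ around $H$, with tripling side lengths between $2^{-8n}|S_k|$ and $2^{-n}|S_k|$. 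Put $m(H')=\diam(H')/|S^*_j|$ on the $j$th annulus. Because $n$ is fixed, Lemmas~\ref{lem:dyadic-shift}, \ref{lem:no-macroscopic-cell} and \ref{lem:average-diamdeg4} give the area bound and the smallness of cells uniformly over all such $H$.
\item \emph{Crossing.} Line connectivity is not what is needed here. The almost planarity of Proposition~\ref{prop:almost-planarity}, together with $H\cap H'\neq\emptyset$ for adjacent cells, forces a loop of $\cM$ around $H_0$ to have cells crossing every annulus.
\end{itemize}
This yields that only finitely many $H$ have $\VEL(\Gamma_H)<q_0$ (Proposition~\ref{prop:finite-VEL}). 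The statement then follows by contradiction with no localization step. A disk of $\e_n\cP$ meeting $\bbD$ with diameter at least $\delta_0$ has $\VEL(\Gamma_{H_n})$ bounded in terms of $\delta_0$. The $H_n$ must be infinitely many distinct cells, since any fixed disk of $\e\cP$ shrinks as $\e\to0$.
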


The rest of this subsection is devoted to the proof of Proposition~\ref{prop:max-diam-0-cone}. In Section~\ref{subsec:VEL}, we prove Lemma~\ref{lem:VEL-packing}, which relate the maximal diameter of the disks in circle packings with the vertex extremal length of graphs as in~\cite{GDN19VEL}. In Section~\ref{subsec:pf-prop:max-0-diam-cone} we prove Proposition~\ref{prop:max-diam-0-cone} using Lemma~\ref{lem:VEL-packing} and Lemma~\ref{lem:average-diamdeg4}.  We refer to Remark \ref{rmk-max-diam} for an alternative proof strategy for Proposition~\ref{prop:max-diam-0-cone}.

\subsubsection{Vertex extremal length and macroscopic circles}\label{subsec:VEL}

Consider a planar map $\cM$ with vertex set $\cV\cM$. Given a function $m:\mathcal{VG}\to [0,\infty)$ and a path $\ell$, define the length $\mathrm{len}_m(\ell) = \sum_{v\in \ell}m(v)$. For a family $\Gamma$ of paths, define $\mathrm{len}_m(\Gamma) = \inf_{\ell\in\Gamma} \mathrm{len}_m(\ell)$. The vertex extremal length of $\Gamma$, as introduced by Cannon~\cite{CannonVEL}, is defined as $$\VEL(\Gamma) = \sup_{m:\mathcal{VG}\to [0,\infty)}\frac{\mathrm{len}_m(\Gamma)^2}{\area(m)},$$
where $\area(m) = \sum_{v\in \cV\cM}m(v)^2$.

As studied in~\cite{GDN19VEL}, vertex extremal length has close relation with macroscopic circles in the circle packing. 
We will need the following result.
\begin{lemma}\label{lem:VEL-packing}
    Consider a simple infinite triangulation $\cT$. Let $v_0$ be a vertex. Consider a circle packing $\cP$ of $\cT$ with the circle representing the vertex $v_0$ centered at origin. For any vertex $v$ different from $v_0$, let $\Gamma_{v,v_0}$ be the collection of finite paths of $\cT$ starting and ending at $v$ which makes a simple loop around $v_0$. Let $D_v$ be the circle representing $v$, $r_v$ be the radius of $D_v$, and $z_v$ be the center of $D_v$. Then
    $$\VEL(\Gamma_{v,v_0})\leq 4|z_v|r_v^{-1}.$$
\end{lemma}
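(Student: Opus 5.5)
The plan is to bound $\VEL(\Gamma_{v,v_0})$ from above by exhibiting, for an arbitrary metric $m:\cV\cT\to[0,\infty)$, a loop in $\Gamma_{v,v_0}$ that is short in terms of $\area(m)$. The loops come from a one-parameter family of Euclidean circles $C_s=\{|w|=s\}$ centred at the origin, which is the centre of $D_{v_0}$. Write $a=|z_v|-r_v$ and $b=|z_v|+r_v$, and let $A=\{a\le |w|\le b\}$. Since $D_v$ and $D_{v_0}$ have disjoint interiors, $|z_v|\ge r_{v_0}+r_v$, so $a\ge r_{v_0}$. Hence for every $s\in(a,b)$ the circle $C_s$ separates $D_{v_0}$ from infinity and meets $D_v$. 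I will work in the circle packing parabolic setting, where the carrier is $\bbC$; this is the only case in which the lemma is used, by Corollary~\ref{cor:parabolic}.

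\textbf{Step 1 (loops from circles).} Fix $s\in(a,b)$ outside a null set. Traverse $C_s$ and record the disks it meets. Between two consecutive disks the circle crosses either a common tangency point or an interstice. Because $\cT$ is a triangulation, every interstice is bounded by three mutually tangent disks, so any two disks met consecutively correspond to adjacent vertices. The recorded sequence is therefore a closed path in $\cT$ that passes through $v$ and winds once around $v_0$. Erasing loops while keeping $v$ gives a simple loop through $v$ around $v_0$, that is, an element of $\Gamma_{v,v_0}$. Its $m$-length is at most $\sum_{u:\,D_u\cap C_s\neq\emptyset} m(u)$.

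\textbf{Step 2 (averaging and Cauchy--Schwarz).} For each vertex $u$, let $\ell_u$ be the Lebesgue measure of the set of $s\in(a,b)$ with $C_s\cap D_u\neq\emptyset$. Averaging Step 1 over $s\in(a,b)$, an interval of length $2r_v$, gives
\[
\mathrm{len}_m(\Gamma_{v,v_0})\le \frac{1}{2r_v}\sum_u m(u)\,\ell_u\le \frac{1}{2r_v}\,\area(m)^{1/2}\Big(\sum_u \ell_u^2\Big)^{1/2}.
\]
Consequently $\mathrm{len}_m(\Gamma_{v,v_0})^2/\area(m)\le \sum_u\ell_u^2/(4r_v^2)$. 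It therefore suffices to prove
\[
\sum_u \ell_u^2\le 16|z_v|r_v .
\]

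\textbf{Step 3 (area comparison; the main obstacle).} The annulus has area $\pi(b^2-a^2)=4\pi|z_v|r_v$. The disks $D_u$ have disjoint interiors, so $\sum_u \area(D_u\cap A)\le 4\pi|z_v|r_v$. The inequality in Step 2 therefore follows from the per-disk geometric estimate
\[
\area(D_u\cap A)\ge \tfrac{\pi}{4}\ell_u^2 ,
\]
which says that $D_u\cap A$ has at least the area of a disk of diameter $\ell_u$.

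This estimate is where the real work lies. By definition $\ell_u\le 2r_u$, and the radial projection of $D_u\cap A$ onto $[a,b]$ is an interval of length $\ell_u$. The first thing I would try is to show that $D_u\cap A$ contains a disk of diameter $\ell_u$. There are two cases. If $D_u\subseteq A$, then $\ell_u=2r_u$ and the claim is trivial. Otherwise $D_u$ crosses one boundary circle, say $\{|w|=b\}$. Then $\ell_u=b-\max(a,|z_u|-r_u)$, and I would place a disk of diameter $\ell_u$ inside $D_u$, tangent internally to $\partial D_u$ at the point of $D_u$ closest to $0$ and centred on the ray through $z_u$. I would then check that it stays inside $\{|w|\le b\}$, using that $\ell_u\le 2r_u$ and that $C_b$ curves towards the origin. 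The case of crossing $C_a$ is symmetric, and the case of crossing both circles reduces to these two.

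If this inclusion turns out to fail for very large disks, because of the curvature of $C_a$, my fallback is to compare areas directly. I would integrate in polar coordinates: the chord $D_u\cap C_s$ has arc length $\ge$ the corresponding chord of the tangent disk. This gives $\area(D_u\cap A)\ge\int_{s\in I_u}|D_u\cap C_s|\,ds$, where $I_u$ is the set of $s$ counted in $\ell_u$, and this integral dominates the area of the disk of diameter $\ell_u$. Combining Steps 1--3 and taking the supremum over $m$ yields $\VEL(\Gamma_{v,v_0})\le 4|z_v|r_v^{-1}$.
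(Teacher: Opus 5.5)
Your proposal is correct and follows the paper's proof essentially step for step: loops read off from the circles $\{|w|=s\}$ for $s\in(|z_v|-r_v,|z_v|+r_v)$, averaging $\mathrm{len}_m$ over $s$, Cauchy--Schwarz, and bounding $\sum_u \ell_u^2$ by the area $4\pi|z_v|r_v$ of the annulus via disjoint disks of diameter $\ell_u$. Your curvature worry in Step 3 is unfounded, so the fallback is not needed: the disk whose diameter is the radial segment $\{te^{i\arg z_u}:\max(a,|z_u|-r_u)\le t\le \min(b,|z_u|+r_u)\}$ (the paper's $D'_w$) lies in $D_u$ because its diameter is a subsegment of a diameter of $D_u$, and it lies in $A$ because the moduli of the points of a disk with center $c$ and radius $\rho\le|c|$ fill exactly $[|c|-\rho,|c|+\rho]$.
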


\begin{proof}
    The proof is identical to that of~\cite[Theorem 1.1,  equation (1)]{GDN19VEL},  {which proves a similar result for circle packings in a disk.} We include  {the proof} here for completeness. For $|z_v|-r_v\leq t\leq |z_v|+r_v$, let $\gamma_t$ be the loop in $\cT$ formed by the vertices whose disk in $\cP$ intersects $\{|z|=t\}$. Indeed $\gamma_t\in \Gamma_{v,v_0}$. Now for any $m:\mathcal{VG}\to [0,\infty)$, 
    \begin{equation}\label{eq:pf:VEL-3}
    \begin{split}
        \int_{|z_v|-r_v}^{|z_v|+r_v}\mathrm{len}_m(\gamma_t) dt&=\sum_{w\in\cV\cT}\int_{|z_v|-r_v}^{|z_v|+r_v}\mathds{1}_{w\in\gamma_t} m(w)\,dt = \sum_{w\in\cV\cT} m(w)\int_{|z_v|-r_v}^{|z_v|+r_v}\mathds{1}_{w\in\gamma_t}\,dt.
        \end{split}
    \end{equation}
    For each $w\in \cV\cT$, let $z_w$ and $r_w$ be the center and radius of $D_w$, and $\theta_w = \arg z_w$. Let $\underline{a_w} = \max\{|z_w|-r_w, |z_v|-r_v\}$, and $\ol a_w=\min\{|z_w|+r_w, |z_v|+r_v\}$. Then \eqb\label{eq:pf:VEL-4}\int_{|z_v|-r_v}^{|z_v|+r_v}\mathds{1}_{w\in\gamma_t}\,dt = \max\{\ol a_w - \underline{a_w},0\}.\eqe
    For each $w$ with $\ol a_w - \underline{a_2}>0$, we draw the disk $D_w'$ centered at $(\ol a_w + \underline{a_w})e^{i\theta_w}/2$ with radius $(\ol a_w - \underline{a_w})/2$. Then the disks $D_w'$ are contained within $\{|z_v|-r_v\leq |z|\leq |z_v|+r_v\}\cap D_w$ and hence have disjoint interiors. See also~\cite[Figure 3]{GDN19VEL} for an illustration. In particular, 
    \eqb\label{eq:pf:VEL-5}\sum_{w\in\cV\cT}\pi\big(\max\{\ol a_w - \underline{a_w},0\}\big)^2/4\leq \pi\big((|z_v|+r_v)^2-(|z_v|-r_v)^2\big) = 4\pi|z_v|r_v.\eqe
    Now combining~\eqref{eq:pf:VEL-3}, ~\eqref{eq:pf:VEL-4} and~\eqref{eq:pf:VEL-5}, by Cauchy-Schwarz inequality,
    \begin{equation}
        \begin{split}
            \int_{|z_v|-r_v}^{|z_v|+r_v}\mathrm{len}_m(\gamma_t) dt\leq \big(\sum_{w\in\cV\cT}\big(\max\{\ol a_w - \underline{a_w},0\}\big)^2\big)^{1/2}\big(\sum_{w\in\cV\cT} m(w)^2)^{1/2}\leq (16|z_v|r_v)^{1/2}\big(\sum_{w\in\cV\cT} m(w)^2)^{1/2}.
        \end{split}
    \end{equation}
    In particular, there exists $t\in(z_v-r_v,z_v+r_v)$, such that $\mathrm{len}_m(\gamma_t)\leq 2|z_v|^{1/2}r_v^{-1/2}\big(\sum_{w\in\cV\cT} m(w)^2)^{1/2}$ and $\mathrm{len}_m(\gamma_t)^2/\area(m)\leq 4|z_v|r_v^{-1}$. Since $m$ is arbitrary, this verifies the claim.
\end{proof}

\subsubsection{Proof of Proposition~\ref{prop:max-diam-0-cone}}\label{subsec:pf-prop:max-0-diam-cone}

Recall the notion of vertex extremal length introduced in Section~\ref{subsec:VEL}. 
For each $H\in\cH$, let $\Gamma_H$ be the collection of finite paths on $\cM$ starting from $H$ and making a simple loop around ${H_0}$.
\begin{proposition}\label{prop:finite-VEL}
    Let $q_0>0$. Then almost surely, there exists at most finitely many $H\in\cH$, such that $\mathrm{VEL}(\Gamma_H)<q_0$.
\end{proposition}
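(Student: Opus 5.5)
Our approach is to exhibit, for every cell $H$ far from the origin, an explicit test function $m_H$ whose length-squared-to-area ratio exceeds $q_0$. Since $\VEL(\Gamma_H)$ is a supremum over all $m$, this bounds $\VEL(\Gamma_H)$ from below. The natural choice comes from the embedding given by $\cH$ itself. Let $H$ be a cell with $H\subseteq \{|z|\ge R\}$ and write $\rho=\dist(0,H)$. We define
\[
m(H')=\diam(H')\cdot\mathds 1\{H'\cap A\neq\emptyset\},
\]
where $A$ is an annulus around the origin comparable to the scale $\rho$. One option is $A=\{\rho/2\le |z|\le 2\rho\}$; alternatively, to get a divergent ratio, we would use a union of many dyadic annuli $A_j=\{2^{j}\le|z|\le 2^{j+1}\}$, $j$ between a fixed $j_0$ and $\log_2\rho$, with weights $m=\sum_j \diam(H')\,2^{-j}\mathds 1\{H'\cap A_j\ne\emptyset\}$.

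The first step is the length bound. Any path $\ell\in\Gamma_H$ starts at $H$, which lies outside $B(0;\rho)$, and makes a simple loop around $H_0$. We would argue that, once $\rho$ is large and $H_0$ is small relative to the inner radius, the union of the cells of $\ell$ is a connected set. This set either disconnects $H_0\ni 0$ from $\infty$, or it travels from near $\partial B(0;\rho)$ into the inner region. In the first case, since cells are connected and adjacent cells intersect, the union of the cells of $\ell$ contains a connected set separating $0$ from $\infty$. Such a set must cross each annulus $A_j$ radially or wind around it, so $\sum_{H'\in\ell,\ H'\cap A_j\neq\emptyset}\diam(H')\ge 2^{j}$. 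Hence $\mathrm{len}_m(\ell)\ge \#\{j\}\asymp \log\rho$. We must exclude the degenerate case where a single huge cell meets several annuli; this is handled by Lemma~\ref{lem:no-macroscopic-cell}.

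The second step is the area bound:
\[
\area(m)\le \sum_j 2^{-2j}\sum_{H'\in\cH(A_j)}\diam(H')^2\cdot O(1).
\]
By Lemma~\ref{lem:average-diamdeg4} with $p=0$ (the second part, covering $A_j$ by squares of side comparable to $2^j$ inside $[-2^{j+2},2^{j+2}]^2$), each inner sum is at most $C2^{2j}$ for all $j\ge j_0$, with $j_0$ random but a.s.\ finite. Therefore $\area(m)\lesssim \#\{j\}\asymp\log\rho$. Combining the two steps gives $\VEL(\Gamma_H)\gtrsim (\log\rho)^2/\log\rho=\log\rho\to\infty$. So only cells with $\rho$ below some finite random threshold can have $\VEL<q_0$, and by local finiteness there are finitely many such cells.

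The main obstacle is the first step. We have to make rigorous that a combinatorial loop in $\cM$ around $v_{H_0}$, based at a far vertex, forces the cells along it to cross every intermediate annulus, or at least to contribute diameter at least $2^j$ there. The issue is that cells may be non-simply connected and may overlap, and the combinatorial winding must be translated into Euclidean winding. We would try to do this using $\phi_0$: $\phi_0$ maps the loop to a polygonal curve through points of the cells, and a winding argument in the spirit of Lemmas~\ref{lem:phi_0-bbC-1} and~\ref{lem:phi_0-bbC-3} shows that it winds around $0$ or reaches $\phi_0$-neighborhoods of $0$. Each segment of this curve lies within $\diam(H_1)+\diam(H_2)$ of the two cells involved. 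Any curve from radius $\rho$ that separates or approaches $0$ has length at least about $2^j$ inside $A_j$, and this yields the required lower bound on the sum of diameters.
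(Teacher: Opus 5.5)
There is a genuine gap in your first step, and it comes from centering the annuli at the origin. A loop in $\Gamma_H$ starts and ends at $H$ and surrounds $H_0$, but nothing forces it to come near $0$. Separating $0$ from $\infty$ only requires meeting every ray from $0$, not every annulus $A_j$. Take, for instance, a chain of cells running once around the origin at distance comparable to $\rho$, joined to $H$ by a short chain. This is a loop in $\Gamma_H$, and it meets only the one or two outermost annuli. With your weights $\diam(H')2^{-j}$, its $m$-length is $O(1)$, since in a typical configuration the cells along it have total diameter $O(\rho)$ and $2^{j}\asymp\rho$ there. On the other hand $\area(m)\asymp\log\rho$, and this holds from below as well, because the cells cover each $A_j$ and $\area(H')\le\frac{\pi}{4}\diam(H')^2$. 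So for your $m$ the ratio $\mathrm{len}_m(\Gamma_H)^2/\area(m)$ tends to $0$, not $\infty$. Your claim that any curve from radius $\rho$ that separates $0$ from $\infty$ has length about $2^j$ inside $A_j$ is false. The single-annulus variant only yields $\VEL(\Gamma_H)\ge c$ for a fixed $c$, which cannot handle arbitrary $q_0$.

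The fix is to center the nested annuli at $H$ instead of at $0$. A closed curve contained in a square cannot separate from $\infty$ a point outside that square. Hence a loop through $H$ that surrounds $0$ must exit every square around $H$ of size smaller than $\dist(0,H)$. It therefore crosses every annulus around $H$ at scales between the mesh size and $\dist(0,H)$, and collects $m$-length $\gtrsim 1$ in each.

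This is exactly what the paper does. For $H^*$ meeting $[-2|S_k|,2|S_k|]^2\setminus[-|S_k|,|S_k|]^2$, it takes $n$ concentric squares $S_j^*$ around a small square meeting $H^*$, with $n$ chosen large depending on $q_0$ and $c_0$. Each square is three times larger than the previous one, all have side length between $2^{-8n}|S_k|$ and $2^{-n}|S_k|$, and the paper sets $m^*(H)=|S_j^*|^{-1}\diam(H)$ on the $j$-th shell.

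Keeping $n$ fixed and all scales comparable to $|S_k|$ means only two inputs are needed. One is the ergodic bound on $\sum\diam(H)^2$ for squares of side in $(2^{-8n}|S_k|,|S_k|]$, from Lemma~\ref{lem:average-diamdeg4}. The other is the no-macroscopic-cell bound with $\e=2^{-32n}$. Together they give $\area(m^*)\lesssim c_0 n$ and $\mathrm{len}_{m^*}(\ell)\gtrsim n$, so $\VEL(\Gamma_{H^*})\gtrsim n/c_0$. The crossing itself is justified via Proposition~\ref{prop:almost-planarity}.

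Your area estimate and your idea of using an embedding to turn combinatorial winding into Euclidean winding both carry over once the annuli are recentered at $H$.
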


\begin{proof} 
   Without loss of generality assume $q_0>300$ and  set $q = q_0/72$. We will adapt the proof of~\cite[Theorem 1.4]{GDN19VEL} together with the input from Section~\ref{subsec:ergodic-cell-system}.   
    For $c>0$ and any square $S$, define the event $E_1(c,S)$ where \eqb\label{eq:pf-VELGMS-1}  \sum_{H\in\cH(S)} \diam(H)^2\leq c |S|^2.\eqe 
Let $a\in(0,1)$, and set $E_{k,1}(c,a)$ be the event where $E_1(c,S_k)$ holds  for each square $S$ in $\cD$ with side length in $(a|S_k|,|S_k|]$ which intersects $[-3|S_k|,3|S_k|]^2$.
For $\e\in(0,1)$ and any square $S$, define the event $E_{2,k}(\e):=\{\diam(H)\leq \e|S_k|, \ \forall H\in \cH([-3|S_k|,3|S_k|]^2)\}$. Then by Lemmas~\ref{lem:dyadic-shift}, ~\ref{lem:no-macroscopic-cell} and~\ref{lem:average-diamdeg4}, for some deterministic constant $c_0>0$ and every $a,\e\in(0,1)$, almost surely, $E_{k,1}(c_0,a)\cap E_{k,2}(\e)$ happens for all large enough $k$.

    Let $n$ be the smallest integer greater than $c_0q$.  We set $a = 2^{-8n}$ and $\e = 2^{-32n}$.
   We assume $k$ is large enough and  work on the event $E_{k,1}(c_0,a)\cap E_{k,2}(\e)$. Consider a cell $H^*$ intersecting $[-2|S_k|,2|S_k|]^2\backslash [-|S_k|,|S_k|]^2$. Let $S^*_0$ be a square in $\cD$ with side length $2^{-8n}|S_k|$ intersecting $H^*$. For $j=1,2,...$, we recursively define $S_j^*$ to be the square concentric to $S_{j-1}^*$, and having side length $3|S_{j-1}^*|$. Then it is clear that $|S_n^*|<2^{-n}|S_k|$ and $\dist(0,S_n^*)\geq |S_k|/2$. We define a mass function $m^*$ as follows. For each $j=2,...,n$ and $H\subseteq S_{j}^*\backslash S_{j-1}^*$, we set $m^*(H) = |S_j^*|^{-1}\diam(H)$, and we set $m^*(H)=0$ for all other $H$. Then for each $j$,  $S_j^*$ is contained within the union of 4 squares in $\cD$ with side length at most $2|S_j^*|$, and therefore by the definition of the event $E_{k,1}(c_0,a)$, we have
    \begin{equation}\label{eq:pf-VELGMS-2}
        \sum_{H\subseteq S_{j}^*\backslash S_{j-1}^*} m^*(H)^2 = |S_j^*|^{-2}\sum_{H\subseteq S_{j}^*\backslash S_{j-1}^*} \diam(H)^2  \leq 16c_0,
    \end{equation}
and therefore $\area( {m^*})\leq 16c_0(n-1)$. On the other hand, using the definition of $E_{k,2}(\e)$, $H^*\subseteq S_1^*$ and $H_0\cap S_n^*=\emptyset$. For each path $\ell\in\Gamma_{H^*}$, using $H\cap H'\neq\emptyset$ for $H\sim H'$  for all large enough $k$, together with the almost planarity condition for $\cM([-2^{k+3},2^{k+3}]^2)$ from Proposition~\ref{prop:almost-planarity}, 
$\cup_{H\in\ell}H$ must cross all the regions $S_{j}^*\backslash S_{j-1}^*$,  {which implies} 
\begin{equation}\label{eq:pf-VELGMS-3}
        \sum_{H\subseteq\ell\cap\cH({ S_{j}^*\backslash S_{j-1}^*})} m^*(H) =  |S_j^*|^{-1}\sum_{H\subseteq\ell\cap\cH({ S_{j}^*\backslash S_{j-1}^*})} \diam(H)  \geq  |S_j^*|^{-1}(\frac{1}{3}|S_j^*|-2\cdot2^{-32n}|S_k|)\geq \frac{1}{6},
    \end{equation}
and  $\mathrm{len}_{m^*}(\ell)\geq \frac{1}{6}(n-1)$. Therefore  by~\eqref{eq:pf-VELGMS-2} and ~\eqref{eq:pf-VELGMS-3}, $$\VEL(\Gamma_{H^*})\geq \frac{(n-1)}{36c_0}\geq\frac{q}{72} = q_0.$$
    Since $H^*$ is any arbitrary cell intersecting $[-2|S_k|,2|S_k|]^2\backslash [-|S_k|,|S_k|]^2$ and almost surely $E_{k,1}(c_0,a)\cap E_{k,2}(\e)$ occurs for all large enough $k$, we conclude our proof.
\end{proof}

\begin{proof}[Proof of Proposition~\ref{prop:max-diam-0-cone}]
 Without loss of generality assume $r=1$.   Assume on the contrary that there exists $\delta_0>0$ such that, with probability  {at least $\delta_0$}, there exists $\e_n\to0$ 
  and disks $D_{H_n}$ in the packing $\e_n\cP$ intersecting $\bbD$ with diameter at least $\delta_0$. Let $z_{H_n}$ and $r_{H_n}>\delta_0$ be the center and radius{, respectively,} of $D_{H_n}$. Since $D_{H_n}$ intersects $\bbD$, $|z_{H_n}|-r_{H_n}<1$, and by Lemma~\ref{lem:VEL-packing}, 
  $$\VEL(\Gamma_{H_n})\leq 4|z_{H_n}|r_{H_n}^{-1}\leq 4(r_{H_n}+1)r_{H_n}^{-1}\leq 4(1+\delta_0^{-1}).$$
  Furthermore, since for any given disk in $\cP$, its diameter converges to 0 in the packing $\e\cP$ as $\e\to0$, we see that the vertices in  $\{v_1,v_2,...\}$ are infinite. In other words, with probability at least $\delta_0$, there exists infinitely many cells $\{H_n\}$ in $\cH$ such that $\VEL(\Gamma_{H_n})\leq 4(1+\delta_0^{-1})$. This  contradicts with Proposition~\ref{prop:finite-VEL}.
\end{proof}

\subsection{Proof of Theorem~\ref{thm:CellSystemPack}}\label{subsec:pf-Cell-system-Pack}

Our proof of Theorem~\ref{thm:CellSystemPack} is mostly based on the convergence of the same random walk on planar maps with different embeddings to the Brownian motion as in Proposition~\ref{prop:inv-circle} and Proposition~\ref{prop:random-walk-on-packing}, as well as the consideration of wrapping around events for 2D Brownian motion. In the rest of this section, we use the following notation. For a continuous-time walk $(X_t)_{t\geq0}$ on a probability space $(\Omega,\cF,\bbP)$ and a bounded set $A\subseteq\bbC$ where $z\notin A$, we write $\bbP^z$ for the law of $(X_t)_{t\geq0}$ started from $z$, and $\tau_{A}$ for the first time $T$ when the trace of $(X_t)_{0\leq t\leq T}$ separates $A$ from infinity. We write $\tau_{w}$ instead of $\tau_{\{w\}}$ if $A=\{w\}$. For $\ell>10$, we write $\sigma_{\ell}$ for the first time when the walk exits $[-2^{\ell-1},2^{\ell-1}]^2$.
\begin{proposition}\label{prop:BM-f}
   Let $f:\bbC\to\bbC$ be a homeomorphism and $(W_t)_{t\geq0}$ be the standard Brownian motion in $\bbC$ starting from 0. Suppose $f(0)=0,f(1)=1$, and for every $z\in\bbC$, $\big(f(z+W_t)\big)_{t\geq0}$ has the law of a standard Brownian motion starting from $f(z)$ modulo time change, then either $f$ is the identity function,  or $f(z)=\ol z$ for every $z\in\bbC$.
\end{proposition}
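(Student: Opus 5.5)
We prove that $f$ is conformal or anticonformal and then use that it is a homeomorphism of the whole plane. The natural tool is harmonic measure. The hypothesis says that Brownian motion started at $z$ is mapped by $f$ to a time-changed Brownian motion started at $f(z)$. So for any bounded domain $U\ni z$ with $f(U)=V$, the exit distribution of $W$ from $U$ is pushed forward by $f$ to the exit distribution from $V$. Exit locations and hitting orders are preserved under time change and under homeomorphisms, so this transfer is legitimate.

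\textbf{Step 1: $f$ preserves harmonic functions.} Let $h$ be a bounded continuous function on $\partial V$, and let $u$ be its harmonic extension to $V$. Then $u\circ f(z)=\bbE^{z}[h(f(W_{\tau_U}))]$, which is the harmonic extension of $h\circ f$ to $U$. Taking Jordan domains $V$, where the Dirichlet problem is solvable, we get that $u\circ f$ is harmonic on $U$ whenever $u$ is harmonic on $V$. Applying this to $u=\mathrm{Re}\,w$ and $u=\mathrm{Im}\,w$ shows that $f=(f_1,f_2)$ is harmonic, hence smooth. Applying it to $u=|w|^2-\cdot$ is not allowed, since that function is not harmonic. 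Instead we use $u=\mathrm{Re}(w^2)$ and $u=\mathrm{Im}(w^2)$, which are harmonic. This gives that $f_1^2-f_2^2$ and $f_1f_2$ are harmonic on $\bbC$. Since $f_1,f_2$ are already harmonic, expanding $\Delta(f_1^2-f_2^2)=2|\nabla f_1|^2-2|\nabla f_2|^2=0$ and $\Delta(f_1f_2)=2\nabla f_1\cdot\nabla f_2=0$ yields $|\nabla f_1|=|\nabla f_2|$ and $\nabla f_1\perp\nabla f_2$. These are exactly the pointwise Cauchy--Riemann or anti-Cauchy--Riemann equations.

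\textbf{Step 2: $f$ is holomorphic or antiholomorphic globally.} The functions $\partial_{\bar z}f$ and $\partial_z f$ are real-analytic, since $f$ is harmonic. Step 1 says $\partial_{\bar z}f\cdot\partial_z f=0$ everywhere. This product is $\frac14\big((\partial_x f_1)^2-\ldots\big)$, the quantity that Step 1 forces to vanish. By real-analyticity, one of the two factors vanishes identically on $\bbC$. Hence $f$ is entire or $\bar f$ is entire.

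\textbf{Step 3: conclude.} An injective entire function is affine, $f(z)=az+b$ (the standard Casorati--Weierstrass/Picard argument: injectivity rules out an essential singularity at $\infty$, so $f$ is a polynomial, and injectivity forces degree $1$). The normalizations $f(0)=0$ and $f(1)=1$ then give $f(z)=z$. In the antiholomorphic case the same reasoning applied to $\bar f$ gives $f(z)=\bar z$.

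\textbf{Main obstacle.} The delicate point is Step 1: justifying that pushing forward exit laws along $f$ identifies harmonic extensions. This needs the hypothesis uniformly in the starting point $z$, which the statement provides. It also needs regularity of $\partial U=f^{-1}(\partial V)$, which is a Jordan curve but possibly irregular. Since we only evaluate $u\circ f$ in the interior via the strong Markov property, no boundary regularity on the $U$ side is needed: a function whose value is the expected boundary value at exit satisfies the mean-value property on small disks. That is enough for harmonicity, so I expect this step to go through.
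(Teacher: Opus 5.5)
Your argument is correct, and it takes a genuinely different route from the paper's.

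The paper works only with topological events. It uses the ``wrapping-around'' times $\tau_A$ (the first time the trace separates $A$ from infinity), which are invariant under homeomorphisms and time changes. Combining these with symmetry and monotonicity properties of planar Brownian motion, it shows in turn that $f$ preserves the line $\{\mathrm{Re}\, z=\frac12\}$, acts on it as $\frac12+xi\mapsto\frac12\pm xi$, fixes $\bbR$ pointwise, and finally preserves every horizontal and vertical line.

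You instead use exit distributions from bounded domains, which are equally invariant under homeomorphisms and time changes. From them you get that $u\circ f$ is harmonic for $u=\mathrm{Re}\,w,\ \mathrm{Im}\,w,\ \mathrm{Re}\,w^2,\ \mathrm{Im}\,w^2$. This gives the pointwise Cauchy--Riemann or anti-Cauchy--Riemann equations, i.e.\ $\partial_z f\cdot\partial_{\bar z} f\equiv 0$. Since $\partial_z f$ is holomorphic and $\partial_{\bar z} f$ antiholomorphic, the identity theorem forces one of them to vanish globally. Injectivity of an entire function then makes it affine.

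Your route is shorter and each step is a standard fact. It also proves the more general classical statement that a homeomorphism preserving Brownian paths modulo time change is conformal or anticonformal; the normalization $f(0)=0$, $f(1)=1$ only enters at the very end. The paper's route avoids analytic machinery entirely, but it relies on several case analyses that are only sketched.

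Two small remarks on your Step 1. You do not need to solve any Dirichlet problem. For an entire harmonic $u$ and a bounded domain $V$ with $U=f^{-1}(V)$, optional stopping gives directly
\begin{equation*}
u(f(z))=\bbE^{f(z)}[u(B_{\tau_V})]=\bbE^z[u(f(W_{\tau_U}))], \qquad z\in U,
\end{equation*}
and the right side is harmonic in $z\in U$. Your concern about boundary regularity is also unnecessary. If $V$ is a Jordan domain, then so is $U=f^{-1}(V)$, and every boundary point of a Jordan domain is regular.
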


\begin{proof}
    \textbf{Step 1. $f$ maps the line $\{\mathrm Re\, z = \frac{1}{2}\}$ to itself.}  
       By symmetry,  we have $\bbP^z[\tau_{0}<\tau_{1}] = \bbP^z[\tau_{0}>\tau_{1}]$ for $\{\mathrm Re\, z = \frac{1}{2}\}$.
       Moreover, if $\mathrm{Re}\, z_0<\frac{1}{2}$, then with positive probability the standard Brownian motion started from $z_0$ separates 0 from infinity before hitting $\{\mathrm Re\, z = \frac{1}{2}\}$, and by strong Markov property, conditioned on the previous event not to happen, $\{\tau_{0}<\tau_{1}\}$ and $\{\tau_{0}>\tau_{1}\}$ have equal probability, which implies that
       $\bbP^z[\tau_{0}<\tau_{1}] > \bbP^z[\tau_{0}>\tau_{1}]$ (resp.\ $\bbP^z[\tau_{0}<\tau_{1}] < \bbP^z[\tau_{0}>\tau_{1}]$) if the starting point $z$ is on the left (resp.\ right) side of $\{\mathrm Re\, z = \frac{1}{2}\}$. 
       Thus $f$ maps $\{\mathrm Re\, z = \frac{1}{2}\}$ to itself.
       
        \textbf{Step 2. For every $x>0$, $|f(\frac{1}{2}+xi)| = |f(\frac{1}{2}-xi)|$.} This follows from the same argument in the previous case by looking at the stopping times $\tau_{\frac{1}{2}+xi}$ and $\tau_{\frac{1}{2}-xi}$ for the Brownian motion started from 0.
        
         \textbf{Step 3. For $x\in\bbR$, $f(\frac{1}{2}+xi) = \frac{1}{2}+ xi$, or for all $x\in\bbR$, $f(\frac{1}{2}+xi) = \frac{1}{2}- xi$.} For $x,y\in\bbR$, we work on the event $E_{x,y}':=\{\tau_{[\frac{1}{2}+xi,\frac{1}{2}+yi]}\leq\tau_{1}\}$, where $[\frac{1}{2}+xi,\frac{1}{2}+yi]$ is the line segment joining $\frac{1}{2}+xi$ with $\frac{1}{2}+yi$. 
        Suppose for $x\in\bbR$, $f(\frac{1}{2}+xi) = \frac{1}{2}+g(x)i$. Then  {$g$ sends the line segment between $\frac 12+xi$ and $\frac 12+yi$ to the line segment between $\frac 12+g(x)i$ and $\frac 12+g(y)i$. By using this, $f(0)=0$, $f(1)=1$, and that $f$ sends a Brownian motion to a Brownian motion modulo time change, we have} $\bbP^0[E'_{x,y}] = \bbP^0[E'_{g(x),g(y)}]$. Moreover, $\bbP^0[E'_{-x,x}]$ is strictly decreasing as $x>0$. By comparing the $\bbP^0[E'_{-x,x}]$ for $(W_t)_{t\geq0}$ with $\bbP^0[E'_{-g(x),g(x)}]$ for $(f(W_t))_{t\geq0}$, we conclude that $f(\frac{1}{2}+xi) = \frac{1}{2}\pm xi$. Furthermore, if there exists $x,y>0$ such that $f(x) = \frac{1}{2}+ix$ and $f(y) = \frac{1}{2}-iy$, then one would have $\bbP^0[E'_{x,y}] < \bbP^0[E'_{g(x),g(y)}]$, which is a contradiction. Using a similar argument for other cases we see that the conclusion holds.

         \textbf{Step 4. Conclusion.}  Without loss of generality, assume that $f(\frac{1}{2}+xi) = \frac{1}{2}+ xi$ for all $x\in\bbR$. Applying the same argument in the previous steps with $i,-i$ replacing $0,1$, we see that $f(x) = x$ when $x\in\bbR$ or $f(x)=-x$ for all $x\in\bbR$. Since $f(1)=1$, we must have $f(x)=x$ for every $x\in\bbR$. By another application of the argument in Step 1, where we consider the wrapping around $\frac{1}{2}+(y-1)i$ and $\frac{1}{2}+(y+1)i$ for Brownian motions started on the line $\{{\mathrm{Im}}\, z = y\}$, we see that $f(x+iy)$ must be on the line $\{{\rm{Im}}\, z = y\}$. Similarly $f(x+iy)$ must be on the line $\{{\mathrm{Re}}\, z = x\}$. Thus $f(x+iy) = x+iy$ for each $x,y\in\bbR$ and we conclude the proof.
\end{proof}

\begin{proof}[Proof of Theorem~\ref{thm:CellSystemPack}]
The first claim is an immediate consequence of Proposition~\ref{prop:max-diam-0-cone}. Now
  let $\Sigma$ be the matrix in Proposition~\ref{prop:inv-circle}. We pick a deterministic $2\times 2$ matrix $\mathbf{A}$, such that the trace of a 2D Brownian motion with covariance $\Sigma$ under the linear transform $\mathbf{A}$ has the same law as that of a standard 2D Brownian motion. If $\cH$ is rotation invariant in law for some $\theta_0\in(0,2\pi)\backslash\{\theta_0\}$, then we set $\mathbf{A}$ to be the identity matrix. We fix a circle packing $\cP$ for $\cH$ such that the disk for the cell $H_0$ is the unit disk. Let $H_1^\e$ be the cell in $\e\mathbf{A}\cH$ containing 1. Then we pick $\wt\e>0$ and apply a rotation $z\mapsto e^{i\theta_{\wt \e}}z$, such that in the packing $ e^{i\theta_{\wt \e}}\wt\e\cP$, the disk for $H_1^\e$ is centered at 1. For notational simplicity, we assume $\theta_{\wt \e}=0$. Clearly $\wt\e\to0$ as $\e\to0$ and vice versa. We write $\wt\e_k$ for the value of $\wt\e$ for $\e_k=2^{-k}$. Let $k,\ell>10$. Consider the collection of points $\{z_H^{k+\ell}:H\in\cV\cM([-2^{k+\ell},2^{k+\ell}]^2)\}$ and simple curves $\{\gamma_{e}^{k+\ell}: e\in\cE\cM([-2^{k+\ell},2^{k+\ell}]^2)\}$ as   in Proposition~\ref{prop:almost-planarity}, and let $f^{k,\ell}$ be a homeomorphism of $\bbC$ sending each point $2^{-k} \mathbf{A}z_H^{k+\ell}$ to the center of the disk for the cell $H$ in $\wt\e_k\cP$, and sending each curve $2^{-k}\mathbf{A}\gamma_{\{H,H'\}}^{k+\ell}$ to the line segment joining the centers of the disks for the cells $H$ and $H'$ in $\wt\e_k\cP$. Below we will prove that almost surely, for each fixed $\ell>10$,  as $k\to\infty$, $f^{k,\ell}$  converges to the identity function uniformly on $[-2^{\ell-5},2^{\ell-5}]$, which implies~\eqref{eq:thm:CellsystemPack} by further sending $\ell\to\infty$.  

Let $\wh X^{z,\mathrm{circle},k,\ell}$ be the linearly interpolated random walk on $\wt\e_k\cP$ described in 
Proposition~\ref{prop:random-walk-on-packing}, and let $\wh X^{z,k,\ell}$ be interpolated random walk on $2^{-k}\mathbf{A}\cH$ as in Theorem~\ref{thm:GMSinvariance},  
except that for $H\in \cV\cM([-2^{k+\ell},2^{k+\ell}]^2)$ we use the points $\{\mathbf{A}z_H^{k+\ell}\}$ and for $e\in\cE\cM([-2^{k+\ell},2^{k+\ell}]^2)$ we interpolate along the edges $\{\mathbf{A}\gamma_{e}^{k+\ell}:e\in \cE\cM([-2^{k+\ell},2^{k+\ell}]^2)\}$.
      Note that by Lemma~\ref{lem:M(S)}, almost surely for large enough $k$, the traces of $f^k(\widehat X^{z,k,\ell})$ and $\widehat X^{z,\mathrm{circle},k,\ell}$ within $f^{k,\ell}([-2^{\ell-1},2^{\ell-1}]^2)$ can be coupled to be the same. Indeed, this coupling exists since $\e\cH$ and $\wt\e\cP$ can be viewed as 
   two embeddings of the associated map for $\cH$ and have identical conductance, and we work on this coupling. Then by Proposition~\ref{prop:inv-circle} and Proposition~\ref{prop:random-walk-on-packing}, as $k\to\infty$ the conditional law of $\widehat{X}^{z,k,\ell}$ and $\widehat X^{z,\mathrm{circle},k,\ell}$ given $\e_k\cH$ and $\wt\e_k\cP$ converges to a constant multiple of planar Brownian motion   with respect to the topology on curves induced in~\eqref{eq:curve-metric}.  Throughout the proof, we write $\bbP_{\rm BM}$ for the law of standard Brownian motion, $\bbP_{k,\ell}$ for the conditional law of $\widehat X^{k,\ell}$  given  $\e\cH$, and $\wt\bbP_{k,\ell}$  for the conditional law of $\widehat X^{\mathrm{circle},k,\ell}$ given $\wt\e_k\cP$. Fix $\wt\ell>10$, let $K = [-2^{\wt\ell-5},2^{\wt\ell-5}]$ and $d_K = 2^{\wt\ell-4}$. We will verify that $\{f^{k,\ell}{\,:\,k\in\bbN, \ell>\wt\ell+1} \}$ is uniformly bounded and equicontinuous on $K$ {for any fixed $\ell$}, and we will later send $\wt\ell\to\infty$.
   
    \textbf{Step 1: Uniform boundedness on $K$.} For $z\neq 0,1$, we work on the event $\{\tau_z\leq\tau_1<\sigma_\ell \}$, i.e., when the trace separates 1 from infinity, it separates $z$ from infinity and has not exited $B(0;2^{\ell-1}$). Let $\gamma_K$ be a deterministic curve formed by the union of the line segment from 0 to $(2d_K+3)i$ and a curve from $(2d_K+3)i$ to {$2+(d_K+2)i$}, such that it is disjoint from $B(0;d_K+1)$ and separates $B(0;{d_K+1})$ from infinity. Then for any $\delta>0$, with positive probability, the planar Brownian motion started from 0 stays within the $\delta/2$-neighborhood of $\gamma_K$ and form a loop around $K\cup\{1\}\subset B(0;d_K+1)$  during time $[0,2]$. In particular, a.s.\ for fixed $\ell$ and large enough $k$, $\bbP_{k,\ell}^0\big[\{\tau_z\leq\tau_1\}\cap\{\sup_{0\leq t\leq \tau_1} |\wh X_t|\leq 2^{\ell-1}\}\big]$ is uniformly positive for $z\in K$. Since $f^\e(\widehat X^{k,\ell})$ and $\widehat X^{\mathrm {circle},k,\ell}$ are coupled to have the same trace until leaving $f^{k,\ell}([-2^{\ell-1},2^{\ell-1}]^2)$ 
   for each $z\in K$, there exists $\e'>0$ such that $\wt\bbP_{k,\ell}^0[\tau_{f^{k,\ell}(z)}\leq\tau_1]>3\e'$ for $k$ sufficiently large. On the other hand, for each $\e'>0$, there exists $M,\delta_1>0$, such that the probability that a Brownian motion from 0 separates $B(1;\delta_1)$ from infinity before leaving $B(0;M)$ is at least $1-\e'$. Using the convergence in law of $\widehat{X}^{\mathrm{circle}, k,\ell}$ to standard Brownian motion, for $k$ sufficiently large, with probability at least $1-2\e'$, the trace of $\widehat{X}^{0,\mathrm{circle}, k,\ell}$ separates 1 from infinity before leaving $B(0;2M)$. In other words, for $k$ sufficiently large,  $\wt\bbP_{k,\ell}^0[\tau_{z}\leq\tau_1]\leq2\e'$ for $|z|>2M$. 
   Thus $|f^{k,\ell}(z)|\leq 2M$ for all $z\in K$, large enough $k$, and $\ell>\wt\ell+1$. 
   
   By a similar argument, for each compact set $K'\subseteq\bbC$, there exists $\e''>0$ such that for every $\ell$ and large enough $k$, $\inf_{z'\in K'}\wt\bbP_{k,\ell}^0[\tau_{z'}\leq\tau_1]> \e''$, and for $\ell$ large enough and every $k$ large enough, $\bbP_{k,\ell}^0[\tau_1<\sigma_{\ell-8}]>1-\e''$. The latter inequality implies that for large enough $\ell$ and   $k$, $\inf_{z'\notin  f^{k,\ell}(K)}\wt\bbP_{k,\ell}^0[\tau_{z'}\leq\tau_1]> 1-\e''$, which further implies that almost surely,  there exists some $\ell_0$, such that for all $\ell>\ell_0$, there is a  $k_\ell>0$ such that $K'\subseteq f^{k,\ell}(K)$ for all $k>k_\ell$.

   \textbf{Step 2: Equicontinuity near 0.}  
   Let $\delta>0$. For $z\neq 0,1$, we work on the event $\{\tau_z=\tau_1\}$, i.e.,   when the trace separates 1 from infinity, it separates $z$ from infinity at the same time.   Using the same $\gamma_K$ and Brownian motion argument as in the previous step, one can see that  there exists $\delta'>0$ such that   for $k$ large enough $\bbP_{k,\ell}^0[\tau_z=\tau_1<\sigma_\ell]>\delta'$ and $\wt\bbP_{k,\ell}^0[\tau_z=\tau_1]>\delta'$ when $z\in K$ and $|z|>\delta$. On the other hand, there exists $\delta''>0$ such that the probability that a planar Brownian motion separates $B(0;2\delta'')$ from infinity before leaving $B(0;1/2)$ is at least $1-\delta'/2$, and thus using the convergence in law of $\widehat X^\e$ to Brownian motion in local uniform topology, for $|z|<\delta''$ and all sufficiently large $k$, $\bbP_{k,\ell}^0[\tau_z<\tau_1;\tau_z<\sigma_\ell]>1-\delta'$ and $\wt\bbP_{k,\ell}^0[\tau_z<\tau_1]>1-\delta'$. Since $f^{k,\ell}(\widehat X^{k,\ell})$ and $\widehat X^{\mathrm {circle},k,\ell}$ are coupled to have the same trace until leaving $f^{k,\ell}(K)$, we have a.s.\ for large $k$, $\wt\bbP_{k,\ell}^0[\tau_{f^{k,\ell}(z)}=\tau_1]>\delta'$ for $|z|>\delta$ and $\wt\bbP_{k,\ell}^0[\tau_{f^{k,\ell}(z)}<\tau_1]>1-\delta'$ for $|z|<\delta''$. We conclude  that there a.s.\ exists $k_0$, such that for any $k>k_0$, if $|z|<\delta''$ then $|f^{k,\ell}(z)|<\delta$, and if $|z|\geq \delta$ then $|f^{k,\ell}(z)|\geq\delta''$. This verifies the equicontinuity near 0. 

  \textbf{Step 3: Equicontinuity at other points in $K$.} This step is almost identical to the argument in step 2 by considering walks started from a fixed $z\in K$ with $z\neq 0$. 
  From the previous cases, for fixed $\ell$, uniformly in $k$, $\{f^{k,\ell}(z)\}$ is bounded away from 0 and $\infty$. Pick $0<\delta_z<\min\{|z|/2,1\}$,  fix $0<\delta<\frac{1}{2}\min \{|f^{k,\ell}(w)|:k\geq 10, |w-z|<\delta_z \} $, $M\geq \sup_{k>0}|f^{k,\ell}(K)|$ and $z''\in B(0;2M)\backslash B(0;2\delta)$ with $|z''-f^{k,\ell}(z)|>\delta$. Let $\gamma_{f^{k,\ell}(z),z''}$ be a finite path starting from $f^{k,\ell}(z)$ that first separates $B(0;\delta/4)$ and   then $B(z'';\delta/4)$ from infinity, but does not separate any points in $\bbC\backslash( B(z'';{\delta/2})\cup B(0;\delta/2))$ from infinity.  Then using a similar argument in previous cases, where the Brownian motion has positive probability to stay sufficiently close to $\gamma_{f^{k,\ell}(z),z''}$, and the random walks converges in law to Brownian motion in local uniform topology, we see that for each $\delta>0$ there exists $\delta'>0$   such that a.s.\  for sufficiently large $k$, $\wt\bbP_{k,\ell}^{f^{k,\ell}(z)}[ \tau_0<\tau_{z''}]>\delta'$ if $|z''-f^{k,\ell}(z)|>\delta$ and $2\delta<|z''|< 2M$. On the other hand, for some $\delta''>0$, for all $k$ large enough, $\bbP_{k,\ell}^z[\tau_{z'}<\tau_0;\tau_{z'}< \sigma_\ell]>1-\delta'$ for all $|z'-z|<\delta''$,  which further implies that $\wt\bbP_{k,\ell}^{f^{k,\ell}(z)}[ \tau_{f^{k,\ell}(z')}<\tau_{0}]>1-\delta'$. This implies a.s.\ for large enough $k$, $|f^{k,\ell}(z')-f^{k,\ell}(z)|<\delta$ whenever $|z-z'|\leq \min\{\delta'',\delta_z\}$ and $z'\in K$. 
  
  This finishes the proof for the tightness of $\{\{f^{k,\ell}\}|_{z\in K}:k\in\bbN,\ell>\wt\ell+1\}$ for each $\wt\ell>10$ and $K=[-2^{\wt\ell-5},2^{\wt\ell-5}]^2$.  By the Arzel\`a-Ascoli theorem, for any subsequence of $k$, there exists a further subsequence $k_n,\ell_n\to\infty,$ such that for every $\ell>10$, $f^{k_n,\ell_n}$ converges uniformly on $[-2^{\ell-5},2^{\ell-5}]^2$  to 
    a continuous function $f$ on $\bbC$ such that $f(0)=0,f(1)=1$. By the last sentence of Step 1, we have $f(\bbC)=\bbC$. Using an identical argument as Step 2 and Step 3, for each compact set $K'$, for each large enough $\ell$, $\{(f^{k,\ell})^{-1}|_{K'}\}$ is also tight. 
    By taking a further subsequence we may assume that $f$ is a homeomorphism. Then if $(W_t)_{t\geq0}$ is a Brownian motion from $z$,  $f(W_t)_{t\geq0}$ has the law of 
   a planar Brownian motion from $f(z)$ modulo time change. Then by Proposition~\ref{prop:BM-f}, $f$
must be either the identity function or the reflection $f(z)=\ol z$. To rule out the $f(z)=\ol z$ case, consider cells $H_1,...,H_m$ in $\cH{(\partial [-2^{k},2^k]^2)}$ such that the edges $\gamma_e^{k+1}$ connecting the points $z_{H_j}^{k+1}$ form a clockwise loop which is close to $\partial [-2^{k},2^k]^2$. Then the corresponding loop formed by the disks for $H_1,...,H_m$ is also clockwise, which implies that we cannot have $f(z)=\ol z$.
\end{proof}

\section{Convergence of cell configurations under the uniformization map}\label{sec:uniformization}

In this section we prove Theorem~\ref{thm:invariance-uniformization}. Throughout this section, we assume that $\cH$ is a random cell configuration that satisfies the constraints in Theorem~\ref{thm:invariance-uniformization}. 
In Section~\ref{subsec:regularity}, we collect several regularity estimates for the Riemann surface $M(\cH)$ under a uniformization map $\varphi:M(\cH)\to\bbC$, and prove that $M(\cH)$ is a.s.\ parabolic. 
In Sections~\ref{subsec:GMS2.3} and~\ref{subsec:GMS2.4}, we use a  {continuum} variant of the  {discrete} argument from~\cite[Section 2.3, Section 2.4]{GMSinvariance}{, along with results from Section~\ref{subsec:regularity},} to prove that there is a harmonic function $\phi_\infty:M(\cH)\to\bbC$ such that {on} a large scale, it is close to the function mapping vertices on $M(\cH)$ to their corresponding cells in $\cH$. In Section~\ref{subsec:pf-thm:invariance-uniformization}, we conclude the proof of Theorem~\ref{thm:invariance-uniformization} by showing that the function $\phi_\infty\circ\varphi^{-1}$ must be a scalar times a deterministic linear transform $\mathbf{A}$ and a rotation.  This argument is different from \cite{GMSinvariance} and gives a general  {and elementary} method for proving that a particular class of random harmonic functions are of this form.

\subsection{Regularity estimates for the Riemann surface}\label{subsec:regularity}

In this section, we prove several regularity estimates for the Riemann surface $M(\cH)$ under a uniformization map $\varphi:M(\cH)\to\bbC$ that will be applied in the proof of Theorem~\ref{thm:invariance-uniformization}. In Lemmas~\ref{lem:area/length-1},~\ref{lem:area/length-2}, and~\ref{lem:area/length-3}, 
we prove that if we have a microscopic set in $\cH$ (resp.\ $\varphi(M(\cH))$), then the corresponding set in $\varphi(M(\cH))$ (resp.\ $\cH$) will also be microscopic. Let $D_k$ be the union of    $\varphi(P_H)$ of the cells $H\in\cH$ intersecting $[-2^k,2^k]^2$.  In Lemma~\ref{lem:area/length-4}, we prove that the diameter of $D_k$ has a growth rate faster than $2^{k/l}$ for some deterministic $l$. In Lemma~\ref{lem:area/length-5}, we prove that for large $k$, $D_k$ contains a disk with diameter at least $c\diam(D_k)$. These lead to Lemma~\ref{lem:parabolic}, where we prove that $M(\cH)$ is a.s.\ parabolic. Finally in Lemma~\ref{lem:cell-polynomial-growth}, we prove that the function mapping points in $\varphi(M(\cH))$ to their corresponding cells in $\cH$ has a polynomial growth, which will be applied in Section~\ref{subsec:pf-thm:invariance-uniformization} in the proof of Theorem~\ref{thm:invariance-uniformization}. 

The key idea in these proofs is as follows. For each cell $H\in\cH$ and its vertex $v_H$ on $M(\cH)$, we associate them with a semi-flower $P_H\subseteq M(H)$ as in Figure~\ref{fig:Koebe-distortion}. By Lemma~\ref{lem:Koebe-distortion} below, the geometry of a semi-flower under a uniformization map $\varphi$ can be bounded in terms of its degree. In~\cite[Equation (2.8)]{HeSchramm}, He and Schramm induced a length-area argument in the context of circle packing, which originates from the Length-Area Lemma in~\cite{RodinSullivan87}. Roughly speaking, one can control  the diameter of a union of circles by estimating the area. Using a variant of this argument  in the setting of the cell configuration $\cH$ and semi-flowers, we can prove various bounds using ergodic averages of the degree of the cell configurations as in Lemma~\ref{lem:average-diamdeg4}, which leads to the regularity results in this subsection.

Let  $\cT$ be a disk triangulation or an infinite plane triangulation. Let $v$ be a vertex of $\cT$ with degree $n$. The \emph{flower} $F_v\subseteq M(\cT)$ is the union of the vertex $v$, the interior of the edges with an endpoint being $v$, and the interior of the equilateral triangles in $M(\cT)$ that meet at $v$, so $F_v$ has open disk topology. Define the \emph{semi-flower} 
$P_v$ be the collection of points on $M(\cT)$  {for which $v$ is the closest vertex,} 
i.e., $P_v = \{x\in M(\cT):d_M(x,v')\geq d_M(x,v), \forall v'\in\cV\cT\}$. Then $P_v\subseteq F_v$ is an $n$-gon (resp.\ $(n+2)$-gon) if $v$ is an interior (resp.\ boundary) vertex of degree $n$, and $\cup_{v\in\cV\cT} P_v = M(\cT)$. 

In~\cite{GRRiemann}, the authors used the term \emph{half-flower} to describe the shape $\frac{1}{2}F_v$, which is slightly different from our semi-flower. They worked on the interstices between the half-flowers, which always have bounded degree and bounded geometry (i.e., bounded ratio between outradius and inradius), to prove the parabolicity of Riemann surfaces associated with  {random triangulations obtained as the Benjamini-Schramm limit of disk triangulations.} 
In Lemma~\ref{lem:Koebe-distortion} below, we prove that the geometry of semi-flowers can be controlled in terms of the square of the degrees, and we will combine this with the averaging over degrees as in Lemma~\ref{lem:average-diamdeg4} to prove the other results in this subsection. 

\begin{figure}
    \centering
    \includegraphics[scale=0.6]{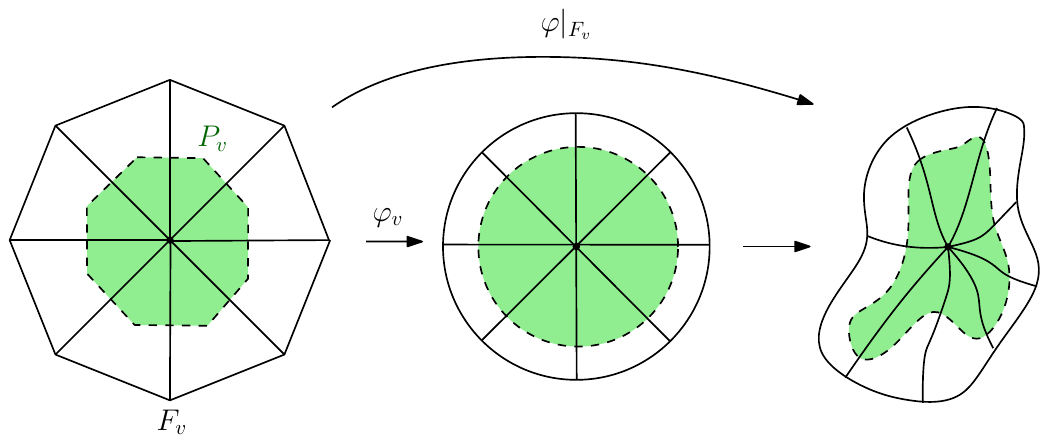}
    \caption{An illustration of the semi-flower $P_v$ (green), its image under the conformal map $\varphi_v:F_v\to \bbD$ and a conformal map $\varphi:M(\cT)\to D\subseteq\bbC$.}
    \label{fig:Koebe-distortion}
\end{figure}

\begin{lemma}\label{lem:Koebe-distortion}
    There exists a universal constant $c_0>0$ such that the following is true. Let  $\cT$ be a disk triangulation or infinite plane triangulation and $\varphi$ be a conformal map from $M(\cT)$ to some domain $D\subseteq\bbC$. Let $v$ be an interior vertex of $\cT$. Then \eqb\label{eq:lem:Koebe-distortion-1} B\Big(\varphi(v);c_0\deg(v)^{-2}\diam(\varphi(P_v))\Big)\subseteq \varphi(P_v).\eqe
    If $\cT$ is a disk triangulation, $D=\bbD$, $v$ is a boundary vertex of $\cT$, then \eqb\label{eq:lem:Koebe-distortion-2} B\Big(\varphi(v);c_0\deg(v)^{-2}\diam(\varphi(P_v))\Big)\cap\bbD\subseteq \varphi(P_v).\eqe
\end{lemma}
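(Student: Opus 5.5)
We exploit the explicit geometry of the flower $F_v$. For an interior vertex $v$ of degree $n$, the flower $F_v$ is $n$ unit equilateral triangles glued cyclically around $v$, so its total angle at $v$ is $n\pi/3$. The chart $\psi_v(z)=z^{6/n}$ (applied after laying the triangles out around the origin) maps $F_v$ conformally onto a domain $\Omega_v$ containing $v\mapsto 0$. The plan is to compare $\varphi$ with a uniformizing map $\varphi_v:F_v\to\bbD$ sending $v$ to $0$, and then apply Koebe distortion to $\varphi\circ\varphi_v^{-1}:\bbD\to\bbC$, which is univalent.

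\textbf{Step 1: geometry of the semi-flower in the disk.} We need two facts about $\varphi_v(P_v)\subseteq\bbD$. First, $\varphi_v(P_v)$ contains a disk $B(0;\rho_1)$ with $\rho_1\gtrsim n^{-2}$. Second, $\varphi_v(P_v)\subseteq B(0;\rho_2)$ with $\rho_2\le 1-c$ for a universal $c>0$.

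In the flat picture, $P_v$ is the set of points within distance about $1/2$ of $v$, namely the polygon whose vertices are the centers of the triangles and the midpoints of the edges, so $P_v\supseteq\{|x|<1/2\}$. The map $z\mapsto z^{6/n}$ sends the cone sector of radius $r$ to the disk of radius $r^{6/n}$. Hence $\psi_v(P_v)\supseteq B(0;2^{-6/n})$, while $\psi_v(F_v)\subseteq B(0;1)$. The domain $\Omega_v$ is the image of $n$ triangles, i.e.\ a polygon-like region whose boundary lies between radii $(\sqrt3/2)^{6/n}$ and $1$. Both $2^{-6/n}$ and $(\sqrt3/2)^{6/n}$ equal $1-\Theta(1/n)$, so the gaps are of order $1/n$.

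\textbf{Step 2: comparing with the unit disk.} We compare $\Omega_v$ with $\bbD$ via a conformal map $g:\Omega_v\to\bbD$ with $g(0)=0$. Since $B(0;(\sqrt3/2)^{6/n})\subseteq\Omega_v\subseteq\bbD$, Schwarz-lemma comparisons give $|g'(0)|\in[1,(2/\sqrt3)^{6/n}]$. By Koebe distortion on the inner disk, $g(\psi_v(P_v))$ contains a disk around $0$ of radius at least a constant times $n^{-1}$; more carefully, the ratio of the outer to inner radius picks up at most a factor of order $n$. On the other side, $g(\psi_v(P_v))\subseteq B(0;1-c/n)$ for some universal $c$.

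\textbf{Step 3: applying Koebe to $h=\varphi\circ\varphi_v^{-1}$.} On $B(0;1-c/n)$, the Koebe distortion theorem applied to $h$ restricted to $\bbD$ gives
\[
\diam h(B(0;1-c/n))\lesssim |h'(0)|\,(c/n)^{-2}.
\]
Koebe's $1/4$-theorem on $B(0;\rho_1)$ gives $h(B(0;\rho_1))\supseteq B(\varphi(v);\rho_1|h'(0)|/4)$. Combining the two, the inradius of $\varphi(P_v)$ around $\varphi(v)$ is at least $c\,\rho_1 n^{-2}\diam(\varphi(P_v))$.

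\textbf{Main obstacle.} This is the step I expect to be hardest, because to obtain exactly $\deg(v)^{-2}$ rather than $n^{-3}$ we must keep $\rho_1$ of order $1$, not $1/n$. To do this, I would avoid the crude Step 2 and instead prove directly that $\varphi_v(P_v)$ contains $B(0;1/2)$, say. This follows from harmonic measure: points of $P_v$ lie at flat distance at least $1/2\cdot\sqrt3/2$ from $\partial F_v$, and in the $\psi_v$-coordinates $\Omega_v$ is nearly a disk at scale $1/n$. So the only loss is the factor $(1-\rho_2)^{-2}\sim n^2$ from the growth estimate near the boundary, which gives $c_0\deg(v)^{-2}$.

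\textbf{Boundary vertex.} For a boundary vertex with $D=\bbD$, we use the chart $z^{3/(n-1)}$ onto a half-disk-like region. We then reflect across $\partial\bbD$ (Schwarz reflection of $\varphi$ along the boundary edges of $F_v$, which map into $\partial\bbD$) to reduce to the interior case, obtaining the intersected inclusion \eqref{eq:lem:Koebe-distortion-2}.
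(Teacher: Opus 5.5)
Your architecture is the paper's: map $F_v$ conformally onto $\bbD$ with $v\mapsto 0$, trap $\varphi_v(P_v)$ between a disk of radius of order one and $B(0;1-\Theta(1/n))$, and apply the Koebe growth and $1/4$ estimates to $h=\varphi\circ\varphi_v^{-1}$. Your Step 3 is fine. The gap is that these two trapping bounds, which are the entire content of the lemma, are never correctly established.

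\begin{itemize}
\item Step 1 asserts $\rho_2\le 1-c$ with $c$ universal, and this is false. $P_v$ contains the centroids of the triangles at $v$, at flat distance $1/\sqrt3$, so $|\psi_v(\cdot)|=3^{-3/n}$ there. Since $|g(z)|\ge |z|$ (Schwarz applied to $g^{-1}:\bbD\to\Omega_v\subseteq\bbD$), their images under $\varphi_v=g\circ\psi_v$ lie within $O(1/n)$ of $\partial\bbD$.
\item Step 2's inner bound of order $n^{-1}$ only yields $n^{-3}$, as you notice.
\item The outer bound $B(0;1-c/n)$ is asserted without argument.
\item The harmonic-measure repair rests on a false claim: the centroids are at flat distance $1/(2\sqrt3)<\sqrt3/4$ from the link edges. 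It also concerns the wrong inclusion. What you need is $\varphi_v^{-1}(B(0;\rho_1))\subseteq P_v$, not that $P_v$ stays away from $\partial F_v$.
\end{itemize}

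Both bounds follow from the Schwarz lemma in your own coordinates. The cone ball of flat radius $\sqrt3/2$ lies in $F_v$, so $B(0;(3/4)^{3/n})\subseteq\Omega_v\subseteq\bbD$.
\begin{itemize}
\item Inner bound: from $|g^{-1}(w)|\le|w|$ and $\psi_v(P_v)\supseteq B(0;2^{-6/n})$ you get $\varphi_v(P_v)\supseteq B(0;2^{-6/n})\supseteq B(0;1/4)$.
\item Outer bound: Schwarz for $g$ on $B(0;(3/4)^{3/n})$ gives $|g(z)|\le(4/3)^{3/n}|z|$. Together with $\psi_v(P_v)\subseteq\overline{B(0;3^{-3/n})}$, this gives $\varphi_v(P_v)\subseteq\overline{B(0;(4/9)^{3/n})}$, and $1-(4/9)^{3/n}\asymp 1/n$.
\end{itemize}
Feeding these into your Step 3 gives $c_0\deg(v)^{-2}$. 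The paper avoids this comparison altogether. It uses the explicit conformal map from the unit equilateral triangle onto the $\pi/3$-sector fixing the three vertices, which after reflection and $z\mapsto z^{6/n}$ gives $B(0;a_1^{6/n})\subseteq\varphi_v(P_v)\subseteq B(0;a_2^{6/n})$ directly.

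For boundary vertices, "reflect $\varphi$ across $\partial\bbD$" needs an extra step. The reflected map can have a pole, since $0$ may lie in $\varphi(F_v)$, and reflection in a circle distorts distances when $\varphi(P_v)$ is not small. The paper handles this as follows:
\begin{itemize}
\item it first sends $\bbD$ to $\bbH$ by a M\"{o}bius map $\xi$ with $\varphi(v)\mapsto 0$;
\item it then reflects across $\bbR$ and applies Koebe;
\item it treats separately the case where the resulting image is small, where $\xi^{-1}$ has bounded distortion near $0$, and the case where it is large, where $\diam\varphi(P_v)\le 2$ suffices.
\end{itemize}
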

\begin{proof}
    Let $n=\deg(v)$. We start with~\eqref{eq:lem:Koebe-distortion-1}. Consider the conformal map $\varphi_0$ from the equilateral triangle $\Delta$ with vertices $0,1,e^{\pi i/3}$ to $\wt\Delta:=\{re^{i\theta}:0<r<1,0<\theta<\pi/3\}$ fixing $0,1,e^{\pi i/3}$. Let $\Delta_0 = \{z\in\Delta:\dist(z,0)\leq\min\{\dist(z,1),\dist(z,e^{i\pi/3}\}\}$. Then there exists constants $a_1,a_2\in (0,1)$ such that $B(0,a_1)\cap\wt\Delta\subseteq \varphi_0(\Delta_0)\subseteq B(0,a_2)\cap \wt\Delta$. Let $\varphi_v:F_v\to\bbD$ be a conformal map sending $v$ to 0. Then using the map $z\mapsto z^{6/n}$ and symmetry, we have $B(0,a_1^{6/n})\subseteq \varphi_v(P_v)\subseteq B(0,a_2^{6/n})$. Now $\varphi\circ\varphi_v^{-1}$ is conformal on $\bbD$, and by the Koebe distortion theorem, we have \eqb\label{eq:pf-lem:Koebe-distortion-1} B\Big(\varphi(v),\frac{a_1^{6/n}}{(1+a_1^{6/n})^2}|(\varphi\circ\varphi_v^{-1})'(0)|\Big)\subseteq \varphi(P_v)\subseteq B\Big(\varphi(v),\frac{a_2^{6/n}}{(1-a_2^{6/n})^2}|(\varphi\circ\varphi_v^{-1})'(0)|\Big).\eqe
    One can check that $\frac{a_2^{6/n}}{(1-a_2^{6/n})^2}\big/ \frac{a_1^{6/n}}{(1+a_1^{6/n})^2}\leq cn^2$ for some constant $c$ depending only on $a_1,a_2$, and this verifies~\eqref{eq:lem:Koebe-distortion-1}. 
    
    For~\eqref{eq:lem:Koebe-distortion-2}, since $M(\cT)$ is simply connected, there are precisely two edges $e$ and $e'$ on $M(\cT)$ with $v$ being one of endpoints which are adjacent to only one equilateral triangle of $M(\cT)$. 
    Consider the conformal map from $\xi:\bbD\to\bbH$ sending $(0,\varphi(v))$ to $(i,0)$, and a conformal map $\varphi_v:F_v\to\bbD\cap\bbH$ sending the edges $e$ and $e'$ to $[-1,0]$ and $[0,1]${, respectively.}   
    Then by symmetry, $B(0,a_1^{3/(n-1)})\cap\bbH\subseteq \varphi_v(P_v)\subseteq B(0,a_2^{3/(n-1)})$, and $\xi\circ\varphi\circ\varphi_v^{-1}$ is conformal on $\bbD \cap\bbH$.  {We can extend $\xi\circ\varphi\circ\varphi_v^{-1}$ to be conformal on $\bbD$ by Schwarz reflection.} Thus by the Koebe distortion theorem, 
    \begin{equation}\label{eq:lem:Koebe-distortion-10}
      B\Big({0},\frac{a_1^{3/(n-1)}}{(1+a_1^{3/(n-1)})^2}|(\xi\circ\varphi\circ\varphi_v^{-1})'(0)|\Big)\cap\bbH\subseteq (\xi\circ\varphi)(P_v)\subseteq    B\Big(0,\frac{a_2^{3/(n-1)}}{(1-a_2^{3/(n-1)})^2}|(\xi\circ\varphi\circ\varphi_v^{-1})'(0)|\Big).
    \end{equation}
    The claim then follows by applying the map $\xi^{-1}$ to~\eqref{eq:lem:Koebe-distortion-10} and $\frac{a_2^{3/(n-1)}}{(1-a_2^{3/(n-1)})^2}\big/\frac{a_1^{3/(n-1)}}{(1+a_1^{3/(n-1)})^2}\leq cn^2$ for some constant $c>0$; if $\frac{a_2^{3/(n-1)}}{(1-a_2^{3/(n-1)})^2}|(\xi\circ\varphi\circ\varphi_v^{-1})'(0)|\leq 0.01$, then the claim follows since $(\xi^{-1})'$ is analytic near 0, and if $\frac{a_2^{3/(n-1)}}{(1-a_2^{3/(n-1)})^2}|(\xi\circ\varphi\circ\varphi_v^{-1})'(0)|> 0.01$, we further use that $\big(\xi^{-1}\circ (\xi\circ\varphi)\big)(P_v)\subseteq \bbD$.
\end{proof}

For a square $S$, recall the disk triangulation $\cM(S)\subseteq \cM$ defined in Lemma~\ref{lem:M(S)}, and the Riemann surface $M(S)$ associated with $\cM(S)$. 
For each $k>0$, let $\varphi_k$ be the uniformization map sending $M(S_k)$ to $|S_k|\bbD$ such that the vertex on $M(S_k)$ for the cell  containing the center of $S_k$ is sent to 0. Recall the definition of the semiflower in Lemma~\ref{lem:Koebe-distortion}. For each $H\in\cH$, we write $P_H$ for the semiflower for the vertex $H$ on the surface $M(\cH)$.

\begin{lemma}\label{lem:area/length-1}
    There exists a deterministic constant $c$ such that the following true. Almost surely, for each $\delta\in(0,0.01)$, for large enough $k$, for each square $S\subseteq S_k$ with side length $\delta|S_k|$, 
    \begin{equation}\label{eq:lem:area/length-1}
        \diam\Big(\bigcup_{H\in\cH(S)} \varphi_k(P_H) \Big)\leq c(-\log\delta)^{-1/2}|S_k|,
    \end{equation}
    where we use the convention $\varphi_k(P_H)=\emptyset$ if $P_H\cap M(S_k) = \emptyset$. 
     {Furthermore, for any $\delta>0$,}
    almost surely for large enough $k$, for each $H\in\cH(S_k)$, 
     {$\diam(\varphi_k(P_H))<\delta|S_k|$}.
\end{lemma}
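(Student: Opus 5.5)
We plan to use the He--Schramm length-area argument, with the semi-flowers $P_H$ playing the role of disks and Lemma~\ref{lem:Koebe-distortion} supplying the geometric control. Let $S$ have center $z_S$. For $j=1,\dots,N$ with $N\asymp \log_2(1/\delta)$, let $A_j$ be the square annulus between the concentric squares of side lengths $2^{j}|S|$ and $2^{j+1}|S|$, intersected with $S_k$, or a suitable truncation when $S$ is near $\partial S_k$. Along each of these macroscopic level lines, Definition~\ref{def:connectedness} and Lemma~\ref{lem:M(S)} give, for large $k$, a closed chain of adjacent cells $\gamma_t$ for each concentric square boundary of side $t\in[2^j|S|,2^{j+1}|S|]$. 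This chain separates $\cH(S)$ from the vertex mapped to $0$, or from $\partial M(S_k)$. The union of the semi-flowers along such a chain is connected and contains a loop in $M(S_k)$ separating $\bigcup_{H\in\cH(S)}P_H$ from the boundary circle $|S_k|\partial\bbD$, or from a macroscopic part of the disk.

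The key estimate runs as follows. For each $H$, Lemma~\ref{lem:Koebe-distortion} gives $\diam(\varphi_k(P_H))^2\le c\deg(H)^4\area(\varphi_k(P_H))$, since the image contains a ball of radius $c_0\deg(H)^{-2}\diam$. The boundary-vertex version handles cells on $\partial M(S_k)$. Summing over cells and integrating over $t$, we get
\[
\int_{2^j|S|}^{2^{j+1}|S|}\sum_{H\in\gamma_t}\diam(\varphi_k(P_H))\,dt\le \sum_{H\in\cH(A_j)}\diam(H)\,\diam(\varphi_k(P_H)).
\]
Here each cell meets $\gamma_t$ only for $t$ in a set of measure $\lesssim \diam(H)$. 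Cauchy--Schwarz bounds this by
\[
\Big(\sum_{H\in\cH(A_j)}\diam(H)^2\deg(H)^4\Big)^{1/2}\Big(\sum_{H}\area(\varphi_k(P_H))\Big)^{1/2}.
\]
Lemma~\ref{lem:average-diamdeg4} with $p=4$ bounds the first factor by $C^{1/2}2^{j+1}|S|$. The semi-flowers have disjoint interiors, so the second factor is $a_j^{1/2}$, where $a_j$ is the area of $\varphi_k(\bigcup_{H\in\cH(A_j)}P_H)$. Hence some $t$ yields a separating chain $\gamma^{(j)}$ whose image has diameter $\ell_j\le c\,a_j^{1/2}$. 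The annuli overlap boundedly, so $\sum_j a_j\le c\pi|S_k|^2$ and therefore $\min_j \ell_j^2\le c|S_k|^2/N\asymp |S_k|^2/\log(1/\delta)$.

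To conclude, take $j^*$ minimizing $\ell_j$. The image of $\bigcup_{H\in\cH(S)}P_H$ lies inside the region enclosed by the loop $\varphi_k(\gamma^{(j^*)})$, which has diameter $\le\ell_{j^*}$. If the chain is near $\partial S_k$, it may instead cut off a region touching $|S_k|\partial\bbD$. In that case we use that $0$ lies outside this region and is at distance $\gtrsim|S_k|$ from it, together with the boundary version of Lemma~\ref{lem:Koebe-distortion}, to see that the region is still small. This gives \eqref{eq:lem:area/length-1}. The second claim then follows by applying the first with $\delta$ so small that $c(-\log\delta)^{-1/2}<\delta_0$, covering $S_k$ by squares of side $\delta|S_k|$. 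The main obstacle is the topological step: verifying that $\varphi_k(\gamma)$ really separates the image of $\cH(S)$ from a macroscopic part of $|S_k|\bbD$, including squares touching $\partial S_k$, where the chains are only arcs ending on $\partial M(S_k)$. For that step we would first try Lemmas~\ref{lem:phi_0-bbC-1} and~\ref{lem:M(S)} together with connectivity of the outside cells.
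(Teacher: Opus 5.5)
Your proposal is correct and follows essentially the paper's argument. It is the He--Schramm length--area estimate: Lemma~\ref{lem:Koebe-distortion} turns $\diam(\varphi_k(P_H))^2\deg(H)^{-4}$ into areas of disjoint subsets of $|S_k|\bbD$, and Lemma~\ref{lem:average-diamdeg4} with $p=4$ bounds $\sum\diam(H)^2\deg(H)^4$ on dyadic annuli. The only difference is bookkeeping: you apply Cauchy--Schwarz annulus by annulus and pigeonhole on the image areas, while the paper integrates $L(\rho)\,d\rho/\rho$ over $[r_0,r_1]$ with one global Cauchy--Schwarz. Your crosscut case corresponds to the paper's case $\mathring{S}^\rho\not\subseteq\mathring{S}_k^{7|S_k|/8}$.

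For the separation step you flag, the paper does not use a $\phi_0$-winding argument. It takes a chain of cells meeting $\partial\mathring{S}^\rho$ whose edges in the planar embedding $\{\gamma_e^{k+3}\}$ of Proposition~\ref{prop:almost-planarity} surround $\mathring{S}^{1.5\delta|S_k|}$, and hence enclose all $z_H$ with $H\in\cH(S)$. Your suggested route would require re-proving a version of Lemma~\ref{lem:phi_0-bbC-1} uniformly over the translated squares $\mathring{S}^\rho$, rather than citing it.
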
 

\begin{figure}
    \centering
    \includegraphics[scale=0.6]{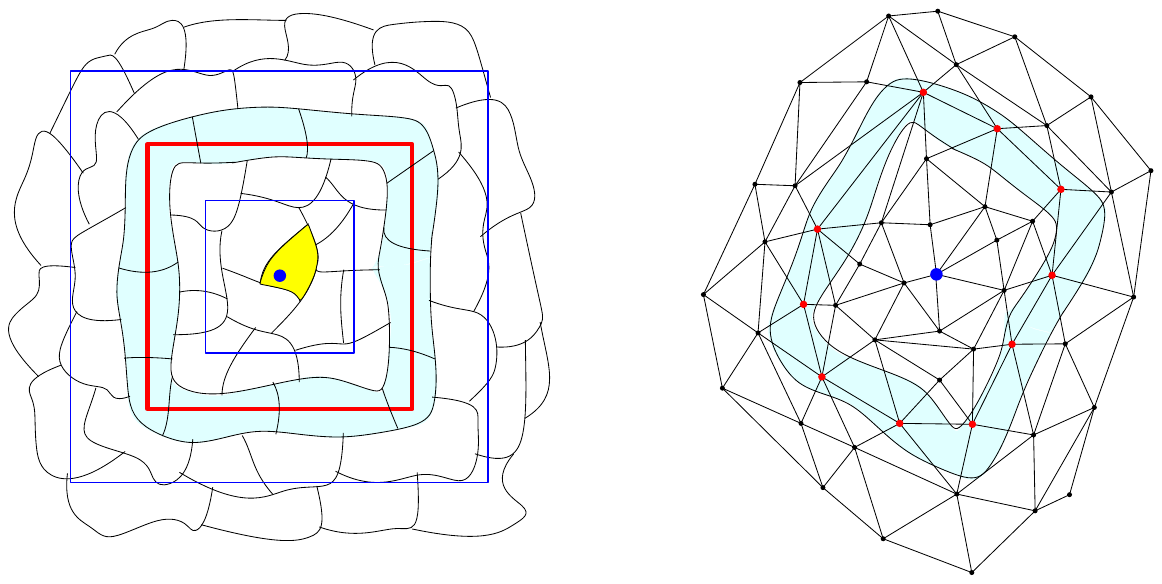}
    \caption{An illustration of the proof of Lemma~\ref{lem:area/length-1}, where the left panel is a part of our cell configuration and the right panel is the corresponding image of the part of the surface $M(\cH)$ under a conformal map. The squares concentric to $S$  of side lengths $r_0$ and $r_1$ are colored in blue on the left panel. In~\eqref{eq:pf-area/length-1-6}, we prove that one can find some $\rho\in[ r_0,r_1]$, such that under a conformal map  the union of the semi-flowers for the cells intersecting the square   concentric to $S$ of side length $\rho$ (shaded in light blue on the right panel) has diameter less than $c(-\log\delta)^{-1/2}|S_k|$. Further using   the line connectivity property and almost planarity condition from Definition~\ref{def:connectedness} and Proposition~\ref{prop:almost-planarity} gives the claim.}
    \label{fig:ringpf}
\end{figure}

\begin{proof}
   We adapt the argument from~\cite[Equation (2.8)]{HeSchramm},  {which studies maps between two circle packings}. We sum over the diameter of $\varphi(P_H)$ for $H$ intersecting $\partial \mathring{S}^\rho$, integrate over $\rho$, apply a Cauchy-Schwarz inequality, and the terms in this Cauchy-Schwarz inequality can be described in terms of area. One key difference is that the image  $\varphi(P_H)$ is no longer exactly a round disk, and there is no direct way to bound the sum over $\diam(\varphi(P_H))^2$. To circumvent this issue, we use the information from Lemma~\ref{lem:Koebe-distortion} that $\varphi(P_H)$ contains a disk with radius $c\deg(H)^{-2}\diam(\varphi(P_H))$, and add an additional $\deg(H)^{-2}$ term in the Cauchy-Schwarz inequality~\eqref{eq:pf-area/length-1-2} below. Then the sum over  $\diam(\varphi(P_H))^2\deg(H)^{-4}$ can be bounded by the total area of a collection of disjoint disks, and the sum over $\deg(H)^4\diam(H)^4$ can be controlled by the ergodic average from Lemma~\ref{lem:average-diamdeg4}.  
   
   Let $r_0 = 2\delta|S_k|$ and $r_1 = 2^\ell r_0$ such that $\ell\in\bbN$ and $ r_1\in [|S_k|/8,|S_k|/4)$. For $\rho\in [r_0,r_1]$, set 
   \begin{equation}
       L(\rho) = \sum_{H\in\cH(\partial \mathring{S}^\rho)} \diam(\varphi_k(P_H)).
   \end{equation}
   See also Figure~\ref{fig:ringpf} for an illustration. Then
   \begin{equation}\label{eq:pf-area/length-1-1}
       \inf\{L(\rho):r_0\leq \rho\leq r_1\} \log(r_1/r_0)\leq \int_{r_0}^{r_1} \frac{L(\rho)}{\rho}\,d\rho = \sum_{H\in\cH(\mathring{S}^{r_1}\backslash \mathring{S}^{r_0})}\diam(\varphi_k(P_H)) \int_{r_0}^{r_1} \frac{1}{\rho}\mathds{1}_{H\in \cH(\partial \mathring{S}^\rho)}\,d\rho.
   \end{equation}
   For $H\in \cH(\mathring{S}^{r_1}\backslash \mathring{S}^{r_0})$, let $a_H = \inf\{\rho:H\in \cH(\partial \mathring{S}^\rho)\}$. Then
   $$ \int_{r_0}^{r_1} \frac{1}{\rho}\mathds{1}_{H\in \cH(\partial \mathring{S}^\rho)}\,d\rho\leq \frac{\diam(H)}{a_H}.$$
   Therefore by the Cauchy-Schwarz inequality, the right side of~\eqref{eq:pf-area/length-1-1} is less  {than} or equal to
   \begin{equation}\label{eq:pf-area/length-1-2}
       \Big( \sum_{H\in\cH(\mathring{S}^{r_1}\backslash \mathring{S}^{r_0})}\diam(\varphi_k(P_H))^2\deg(H)^{-4}\Big) {^{1/2}}\Big(\sum_{H\in\cH(\mathring{S}^{r_1}\backslash \mathring{S}^{r_0})}\frac{\diam(H)^2}{a_H^2}\deg(H)^4\Big)^{1/2}.
   \end{equation}
   By Lemma~\ref{lem:Koebe-distortion}, for some  constant $c_0$, each set $\varphi_k(P_H)$ contains a disk of radius $c_0\diam(\varphi_k(P_H))\deg(H)^{-2}$, and since the sets $\{\varphi_k(H):H\in\cH\}$ have disjoint interiors and are contained within $|S_k|\bbD$, 
   \begin{equation}\label{eq:pf-area/length-1-3}
        \sum_{H\in\cH(\mathring{S}^{r_1}\backslash \mathring{S}^{r_0})}\diam(\varphi_k(P_H))^2\deg(H)^{-4}\leq c_0^{-2} \area(|S_k|\bbD) = c_0^{-2}\pi |S_k|^2.
   \end{equation}
   On the other hand, for each $j=0,...,\ell-1$ and $H\in\cH(\mathring{S}^{2^{j+1}r_0}\backslash \mathring{S}^{2^jr_0})$, by Lemma~\ref{lem:no-macroscopic-cell}, for large enough $k$, $a_H\geq 2^{j-2}r_0$, and by Lemma~\ref{lem:average-diamdeg4}, 
   \begin{equation}\label{eq:pf-area/length-1-4}
       \sum_{H\in\cH(\mathring{S}^{2^{j+1}r_0}\backslash \mathring{S}^{2^jr_0})}\frac{\diam(H)^2}{a_H^2}\deg(H)^4\leq \sum_{H\in\cH(\mathring{S}^{2^{j+1}r_0}\backslash \mathring{S}^{2^jr_0})} (2^{j-2}r_0)^{-2} \diam(H)^2\deg(H)^4\leq 64C,
   \end{equation}
   where $C$ is the deterministic constant in~\eqref{eq:average-diamdeg4}. Summing over $j=0,...,\ell-1$, 
   \begin{equation}\label{eq:pf-area/length-1-5}
       \sum_{H\in\cH(\mathring{S}^{r_1}\backslash \mathring{S}^{r_0})}\frac{\diam(H)^2}{a_H^2}\deg(H)^4 \leq 64C\ell.
   \end{equation}
   Combining~\eqref{eq:pf-area/length-1-3} and~\eqref{eq:pf-area/length-1-5}, we conclude from~\eqref{eq:pf-area/length-1-2} that for some deterministic constant $c_2>0$, almost surely, for all large enough $k$, 
   \begin{equation}\label{eq:pf-area/length-1-6}
        \inf\{L(\rho):r_0\leq \rho\leq r_1\} \leq c_2\ell^{-1/2}|S_k|.
   \end{equation}
   In particular, there exists $\rho\in [r_0,r_1]$ such that $L(\rho)\leq 4c_2(-\log\delta)^{-1/2}|S_k|$. 
   
Now consider the embedding $\{\gamma_e^{k+3}:e\in \cE\cM([-2^{k+3},2^{k+3}]^2)\}$ in Proposition~\ref{prop:almost-planarity}.  If $\mathring{S}^{\rho}\subseteq \mathring{S}_k^{7|S_k|/8}$, using the conditions from Definition~\ref{def:connectedness} and Proposition~\ref{prop:almost-planarity}, for large enough $k$, there exists cells $H_1\sim H_2\sim...\sim H_n\sim H_1$ intersecting $\partial \mathring{S}^{\rho}$, such that there are edges $e_1,...,e_n$ forming a loop with these cells, and in the planar embedding $\{\gamma_e^{k+3}:e\in\cE\cM([-2^{k+3},2^{k+3}]^2)\}$ the edges $\gamma_{e_1}^{k+3},...,\gamma_{e_n}^{k+3}$  form a loop around $\mathring{S}^{1.5\delta|S_k|}$, and thus encloses all the points $z_H$ for $H\in {S}$. This implies that $\{\varphi_k(P_H):H\in \cH({S})\}$ is separated from infinity by $\cup_{j=1}^n\varphi(P_{H_j})$, which together with~\eqref{eq:pf-area/length-1-6} implies~\eqref{eq:lem:area/length-1}. 
   
   If $\mathring{S}^{\rho}\nsubseteq \mathring{S}_k^{7{|S_k|}/8}$, since $\rho<r_1\leq |S_k|/4$,  $\mathring{S}^{\rho}$ is disjoint from the cell with closest distance to the center of $S_k$. Using  Definition~\ref{def:connectedness} and the embedding above, for large enough $k$, there exists cells $H_1\sim H_2\sim...\sim H_n\sim H_1$ either intersecting $\partial \mathring{S}^{\rho}$ or with their vertex on $M(\cH)$ on the boundary of the surface $M(S_k)$, such that there are edges $e_1,...,e_n$ forming a loop with these cells and the edges $\gamma_{e_1}^{k+3},...,\gamma_{e_n}^{k+3}$ 
   enclose a loop around all the points $z_H$ where $H\in\cH(S)$ and the vertex for $H$ is on $M(S_k)$. This together with~\eqref{eq:pf-area/length-1-6} implies that $\{\varphi_k(P_H):H\in \cH({S})\}$ can be separated from 0 by $\partial(|S_k|\bbD)$ and some curves of total length $4c_2(\log\delta)^{-1/2}|S_k|$. 
   This finishes the proof of~\eqref{eq:lem:area/length-1}. 
   
   In order to get the second assertion of the lemma, let $\wt\delta\in(0,0.01)$ be such that $c(-\log\wt\delta)^{-1/2}<\delta$ and let $S\subseteq S_k$ be a square of side length $\wt\delta|S_k|$ such that $H\cap S\neq\emptyset$. Then apply~\eqref{eq:lem:area/length-1} with $\wt\delta$ instead of $\delta$. 
\end{proof}

The following is the analog of Lemma~\ref{lem:area/length-1} in the reverse direction.

\begin{lemma}\label{lem:area/length-2}
      There exists a deterministic constant $c>0$ such that the following is true. Almost surely, for each $\delta\in(0,0.01)$, $\lambda\in(0,1-16\delta)$ and $\e>0$, for large enough $k$, for $z\in B(0;\lambda|S_k|)$ and $r = \delta|S_k|$, 
      \begin{equation}\label{eq:lem:area/length-2}
         \diam\Big(\bigcup_{H:\varphi_k(P_H)\cap B(z;r)\neq\emptyset} H \Big)\leq c(\log(1-\lambda)-\log16\delta)^{-1/2}|S_k|.
     \end{equation}
     \end{lemma}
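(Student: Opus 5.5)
The plan is to run the length--area argument of Lemma~\ref{lem:area/length-1} in the reverse direction. The annuli now live in the conformal image $|S_k|\bbD$, and the ``area'' side is controlled by the cell configuration. Fix $z\in B(0;\lambda|S_k|)$ and set $r_0=2\delta|S_k|$. Let $r_1=2^{\ell}r_0$ be the largest dyadic multiple with $r_1\le (1-\lambda)|S_k|/8$, so that $B(z;r_1)\subseteq |S_k|\bbD$ stays away from the boundary circle. Then $\ell\asymp \log(1-\lambda)-\log 16\delta$. For $\rho\in[r_0,r_1]$ define
\[
L(\rho)=\sum_{H:\,\varphi_k(P_H)\cap\partial B(z;\rho)\neq\emptyset}\diam(H).
\]
Exactly as in \eqref{eq:pf-area/length-1-1},
\[
\inf_\rho L(\rho)\,\log(r_1/r_0)\le \sum_H \diam(H)\int_{r_0}^{r_1}\rho^{-1}\mathds 1_{\varphi_k(P_H)\cap\partial B(z;\rho)\neq\emptyset}\,d\rho\le \sum_H \diam(H)\,\frac{\diam(\varphi_k(P_H))}{a_H},
\]
where $a_H$ is the smallest such $\rho$. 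Applying Cauchy--Schwarz with weights $\deg(H)^{\pm2}$ splits this into two factors. The first is $\sum_H \diam(\varphi_k(P_H))^2\deg(H)^{-4}/a_H^2$. By Lemma~\ref{lem:Koebe-distortion}, each $\varphi_k(P_H)$ contains a disk of radius $c_0\deg(H)^{-2}\diam(\varphi_k(P_H))$. These disks are disjoint, and for large $k$ the second assertion of Lemma~\ref{lem:area/length-1} keeps those in the dyadic shell $\{2^jr_0\le|w-z|\le 2^{j+1}r_0\}$ inside a slightly enlarged shell. Hence each shell contributes $O(1)$ to the first factor, and the total is $O(\ell)$. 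The second factor is $\sum_H\diam(H)^2\deg(H)^4$ over the relevant cells, which by Lemma~\ref{lem:average-diamdeg4} is at most $C|S_k|^2$, provided those cells lie in a fixed multiple of $S_k$. Combining, $\inf_\rho L(\rho)\le c\,\ell^{-1/2}|S_k|$.

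Next I choose $\rho$ attaining this bound. The cells $H$ with $\varphi_k(P_H)\cap\partial B(z;\rho)\neq\emptyset$ form a chain of adjacent cells, since semi-flowers of adjacent vertices touch and the semi-flowers cover the circle. Because $H\sim H'$ implies $H\cap H'\ne\emptyset$, their union is a connected set of diameter at most $L(\rho)$. The circle $\partial B(z;\rho)$ separates $\varphi_k^{-1}(B(z;r))$ from the boundary of $M(S_k)$, so on the surface this chain of vertices forms a loop enclosing every $v_H$ with $\varphi_k(P_H)\cap B(z;r)\neq\emptyset$.

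The main obstacle is transferring this enclosure back to the plane. The cells are not planar-embedded, so I cannot directly say that the cells inside the loop lie in the Euclidean hull of the loop cells. For this I would use the almost-planar embedding of Proposition~\ref{prop:almost-planarity} (applied to $\cM([-2^{k+3},2^{k+3}]^2)$), together with the argument of Lemma~\ref{lem:phi_0-bbC-1}. In the planar embedding, the loop's image encloses the points $z_H$ of all enclosed vertices, and each $z_H$ is $o(|S_k|)$-close to $H$. Lemma~\ref{lem:no-macroscopic-cell} then shows that every enclosed cell lies within distance $L(\rho)+o(|S_k|)$ of the loop. This gives a diameter bound of $2L(\rho)+o(|S_k|)$, which is at most $c(\log(1-\lambda)-\log16\delta)^{-1/2}|S_k|$ for large $k$.

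Uniformity over $z$ comes from the fact that every almost-sure input (Lemma~\ref{lem:average-diamdeg4}, Lemma~\ref{lem:no-macroscopic-cell}, Proposition~\ref{prop:almost-planarity}) is uniform over the fixed window. One more point needs care: the enclosed region must not contain the cell at the center of $S_k$ mapped to $0$, or else the loop could be nontrivial in the wrong direction. If $0\in B(z;\rho)$, this is harmless, because the enclosed cells still lie inside the loop's hull. The constraint $\lambda<1-16\delta$ is only needed to ensure $r_1>r_0$.
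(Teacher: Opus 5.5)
Your proposal is correct and follows essentially the same route as the paper. The paper also runs the length--area argument in reverse with $L^*(\rho)$ summing $\diam(H)$ over semi-flower images meeting $\partial B(z;\rho)$. It applies Cauchy--Schwarz with $\deg(H)^{\pm 2}$ weights, bounds one factor shell by shell via Lemma~\ref{lem:Koebe-distortion} and the second assertion of Lemma~\ref{lem:area/length-1}, and bounds the other factor via Lemma~\ref{lem:average-diamdeg4}. It then transfers the enclosing loop back to the plane through the almost-planar embedding of Proposition~\ref{prop:almost-planarity}.

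The differences are cosmetic. Your $r_1$ is slightly smaller, so the case $\ell=0$ has to be absorbed by the trivial bound $\diam\le\sqrt2|S_k|$ with $c$ large. Your explicit $2L(\rho)+o(|S_k|)$ bookkeeping at the end is, if anything, cleaner than the paper's final constant.
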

\begin{proof}
    Fix $z\in\bbC$. Let  $r_0=2\delta|S_k|$ and $r_1 = 2^\ell r_0$ such that $\ell\in\bbN$ and $ r_1\in [(1-\lambda)|S_k|/8,(1-\lambda)|S_k|/4)$. For $\rho\in[r_0,r_1]$, we write 
    $$L^*(\rho) = \sum_{H:\varphi_k(P_H)\cap \partial B(z;\rho)\neq\emptyset}\diam(H).$$
     \begin{equation}\label{eq:pf-area/length-2-1}
     \begin{split}
       \inf\{L^*(\rho):&r_0\leq \rho\leq r_1\} \log(r_1/r_0)\leq \int_{r_0}^{r_1} \frac{L^*(\rho)}{\rho}\,d\rho \\&= \sum_{H:\varphi_k(P_H)\cap (B(z;r_1)\backslash B(z;r_0))\neq\emptyset}\diam(H) \int \frac{1}{\rho}\mathds{1}_{\varphi_k(P_H)\cap \partial B(z;\rho)\neq\emptyset}\,d\rho.
       \end{split}
   \end{equation}
   Let $a_H^* = \inf\{\rho>0:\varphi_k(P_H)\cap \partial B(z;\rho)\neq\emptyset\}$, then 
   $$\int \frac{1}{\rho}\mathds{1}_{\varphi_k(P_H)\cap \partial B(z;\rho)\neq\emptyset}\,d\rho\leq \frac{\diam(\varphi_k(H))}{a_H^*}.$$
   Therefore by the Cauchy-Schwarz inequality, the right side of~\eqref{eq:pf-area/length-2-1} is less  {than} or equal to
   \begin{equation}\label{eq:pf-area/length-2-2}
      \Big( \sum_{H:\varphi_k(P_H)\cap (B(z;r_1)\backslash B(z;r_0))\neq\emptyset}\diam(H)^2\deg(H)^4\Big)^{1/2}\Big( \sum_{H:\varphi_k(P_H)\cap (B(z;r_1)\backslash B(z;r_0))\neq\emptyset} \frac{\diam(\varphi_k(H))^2}{(a_H^*)^2\deg(H)^4}\Big)^{1/2}.
   \end{equation}
   By Lemma~\ref{lem:average-diamdeg4}, for some deterministic constant $C>0$ and large enough $k$, 
   \begin{equation}\label{eq:pf-area/length-2-3}
       \sum_{H:\varphi_k(P_H)\cap (B(z;r_1)\backslash B(z;r_0))\neq\emptyset}\diam(H)^2\deg(H)^4 \leq \sum_{H\in\cH(S_k)}\diam(H)^2\deg(H)^4\leq C|S_k|^2.
   \end{equation}
   By the last claim of Lemma~\ref{lem:area/length-1}, a.s.\ for large enough $k$,  for $j=0,1,...,\ell-1$ and $H$ such that $\varphi_k(P_H)\cap (B(z;2^{j+1}r_0)\backslash B(z;2^{j}r_0))\neq\emptyset$,  $a^*_H\geq 2^{j-1}r_0$ and $\varphi_k(P_H)\subseteq B(z;2^{j+2}r_0)$. Thus by an application of Lemma~\ref{lem:Koebe-distortion}, for some constant $c_0$, 
   \begin{equation}\label{eq:pf-area/length-2-4}
       \sum_{H:\varphi_k(P_H)\cap (B(z;2^{j+1}r_0)\backslash B(z;2^jr_0))\neq\emptyset} \frac{\diam(\varphi_k(H))^2}{(a_H^*)^2\deg(H)^4} \leq c_0^{-2}(2^{j-1}r_0)^{-2}\area(B(z;2^{j+2}r_0)) = 64{\pi}c_0^{-2}.
   \end{equation}
   Summing over $j=0,...,\ell-1$, 
   \begin{equation}\label{eq:pf-area/length-2-5}
        \sum_{H:\varphi_k(P_H)\cap (B(z;r_1)\backslash B(z;r_0))\neq\emptyset} \frac{\diam(\varphi_k(H))^2}{(a_H^*)^2\deg(H)^4}\leq 64c_0^{-2}\ell.
   \end{equation}
   Combining~\eqref{eq:pf-area/length-2-3} and~\eqref{eq:pf-area/length-2-5}, we conclude from~\eqref{eq:pf-area/length-2-2} that for some constant $c_2$, for large enough $k$,
   \begin{equation}\label{eq:pf-area/length-2-6}
       \inf\{L^*(\rho):r_0\leq \rho\leq r_1\} \leq c_2(\log(1-\lambda)-\log 16\delta)^{-1/2}|S_k|.
   \end{equation}
    In particular, there exists $\rho\in [r_0,r_1]$ such that $L^*(\rho)\leq 4c_2(\log(1-\lambda)-\log 16\delta)^{-1/2}|S_k|$. Recall the points $\{z_H^{k+3}:H\in\cV\cM([-2^{k+3},2^{k+3}]^2)\}$ and the curves $\{\gamma_e^{k+3}:e\in \cE\cM([-2^{k+3},2^{k+3}]^2)\}$ in the planar embedding from Proposition~\ref{prop:almost-planarity}. Note that by our choice of $r_1$, $B(z;\rho)\subseteq|S_k|\bbD$. One can find edges $e_1,...,e_n$ forming a loop in $\cH$  such that $\gamma_{e_1}^{k+3},...,\gamma_{e_n}^{k+3}$ make a loop surrounding all the vertices $z_H^{k+3}$ with $\varphi_k(P_H)\cap B(z;r)\neq \emptyset$, and the union of theses edges has diameter less than $20 c_2(-\log\delta)^{-1/2}|S_k|$. Applying ~\eqref{eq:almost-planarity} once more gives~\eqref{eq:lem:area/length-2}.  
\end{proof}

We will also need the version of Lemmas~\ref{lem:area/length-1} and~\ref{lem:area/length-2} for $[-2^k,2^k]^2$ instead of $S_k$. With slight abuse of notation, we write 0 for the vertex on $M(\cH)$ representing the cell $H_0$. For each $k>0$, let $\wh\varphi_k$ be the uniformization map sending $M([-2^k,2^k]^2)\subseteq M(\cH)$ to $2^k\bbD$ and sending $0\in M(\cH)$ to 0. 
\begin{lemma}\label{lem:area/length-3}
     There exists a deterministic constant $c$ such that the following true. Almost surely, for each $\delta\in(0,0.01)$, for large enough $k$, for each square $S\subseteq [-2^k,2^k]^2$ with side length $2^k\delta$, 
    \begin{equation}\label{eq:lem:area/length-3}
        \diam\Big(\bigcup_{H\in\cH(S)} \wh\varphi_k(P_H) \Big)\leq c(-\log\delta)^{-1/2}2^k.
    \end{equation} 
     {Conversely,} for each $\lambda\in(0,1-\delta)$ and $z\in B(0;\lambda 2^k)$, almost surely for large enough $k$,
     \begin{equation}\label{eq:lem:area/length-3-a}
         \diam\Big(\bigcup_{H:\wh\varphi_k(P_H)\cap B(z;r)\neq\emptyset} H \Big)\leq c(\log(1-\lambda)-\log\delta)^{-1/2}2^k.
    \end{equation}
\end{lemma}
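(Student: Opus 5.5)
The plan is to repeat the length--area arguments of Lemmas~\ref{lem:area/length-1} and~\ref{lem:area/length-2}, with the dyadic square $S_k$ replaced by the deterministic square $[-2^k,2^k]^2$ and $\varphi_k$ replaced by $\wh\varphi_k$. Only two inputs were specific to $S_k$: the ergodic averages of $\diam(H)^2\deg(H)^4$, and the no-macroscopic-cell and connectivity inputs. Both are available for $[-2^k,2^k]^2$. The second part of Lemma~\ref{lem:average-diamdeg4} gives, almost surely for all large $k$, the bound~\eqref{eq:average-diamdeg4} for every square $S\subseteq[-2^k,2^k]^2$ of side length at least $2^{k-\ell}$. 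Lemma~\ref{lem:no-macroscopic-cell} covers $[-2^k,2^k]^2$ because this square is contained in a bounded union of squares of $\cS_{k',\ell}$ for the appropriate $k'$. Finally, Lemma~\ref{lem:M(S)} and Proposition~\ref{prop:almost-planarity} are already stated for $[-2^k,2^k]^2$.

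For~\eqref{eq:lem:area/length-3}, fix $S$ of side $2^k\delta$ and set $r_0=2\delta 2^k$ and $r_1=2^\ell r_0\in[2^k/8,2^k/4)$. Define $L(\rho)=\sum_{H\in\cH(\partial\mathring S^\rho)}\diam(\wh\varphi_k(P_H))$ and integrate $L(\rho)/\rho$ over $[r_0,r_1]$. Cauchy--Schwarz splits the result into two sums, each weighted by $\deg(H)^{\mp4}$:
\begin{itemize}
\item The first sum is $\sum\diam(\wh\varphi_k(P_H))^2\deg(H)^{-4}$. By Lemma~\ref{lem:Koebe-distortion}, using~\eqref{eq:lem:Koebe-distortion-1} or~\eqref{eq:lem:Koebe-distortion-2} for boundary vertices of the disk triangulation, each $\wh\varphi_k(P_H)$ contains a disk of radius comparable to $\deg(H)^{-2}\diam$. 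Since these images have disjoint interiors inside $2^k\bbD$, this sum is at most $c_0^{-2}\pi 4^k$.
\item The second sum is over dyadic annuli $\mathring S^{2^{j+1}r_0}\backslash\mathring S^{2^jr_0}$. Each annulus contributes at most a constant by the averaging bound above, together with $a_H\ge 2^{j-2}r_0$ from Lemma~\ref{lem:no-macroscopic-cell}. Hence the second sum is at most $c\ell$.
\end{itemize}
Combining the two, some $\rho\in[r_0,r_1]$ has $L(\rho)\lesssim(-\log\delta)^{-1/2}2^k$.

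Next I use line connectivity (Definition~\ref{def:connectedness}) and the planar embedding of Proposition~\ref{prop:almost-planarity} at scale $k+3$. These produce a cycle of cells meeting $\partial\mathring S^\rho$ whose embedded edges surround all $z_H$ with $H\in\cH(S)$. Its $\wh\varphi_k$-image of semi-flowers has small total diameter and separates $\{\wh\varphi_k(P_H):H\in\cH(S)\}$ from infinity. If $\mathring S^\rho$ reaches near $\partial[-2^k,2^k]^2$, the cycle is completed using boundary vertices of $M([-2^k,2^k]^2)$, as in the second case of Lemma~\ref{lem:area/length-1}. In that case the enclosed set is separated from $0$ by $\partial(2^k\bbD)$ plus short curves, and its diameter is again small because $\wh\varphi_k(0)=0$ lies at macroscopic distance.

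For~\eqref{eq:lem:area/length-3-a}, I copy Lemma~\ref{lem:area/length-2} with circles $\partial B(z;\rho)$, $\rho\in[2\delta2^k,(1-\lambda)2^k/4)$, and $L^*(\rho)=\sum_{H:\wh\varphi_k(P_H)\cap\partial B(z;\rho)\ne\emptyset}\diam(H)$. The $\diam(H)^2\deg(H)^4$ sum is bounded by $C4^k$ via Lemma~\ref{lem:average-diamdeg4} on $[-2^k,2^k]^2$. The image sum over each annulus is bounded by Koebe (Lemma~\ref{lem:Koebe-distortion}) together with the second assertion of the first part, which makes every $\wh\varphi_k(P_H)$ microscopic and so gives $a_H^*\ge 2^{j-1}r_0$. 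The rest, producing a separating loop through Proposition~\ref{prop:almost-planarity}, goes as before.

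The main obstacle is the separation step near the boundary of $M([-2^k,2^k]^2)$. There I must check that the loops provided by connectivity and almost planarity really enclose the relevant vertices on the surface. I will try to handle this with Lemma~\ref{lem:phi_0-bbC-1} and Lemma~\ref{lem:M(S)}, applied at scales $k\pm1$ to keep a buffer.
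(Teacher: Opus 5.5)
Your proposal is correct and follows the paper's route. The paper's proof simply states that the arguments of Lemmas~\ref{lem:area/length-1} and~\ref{lem:area/length-2} go through verbatim with $[-2^k,2^k]^2$ and $\wh\varphi_k$ in place of $S_k$ and $\varphi_k$, using Lemma~\ref{lem:no-macroscopic-cell} and the second part of Lemma~\ref{lem:average-diamdeg4}; your write-up fills in those details, and the one case you leave implicit (when $(1-\lambda)/\delta$ is bounded, so your range of $\rho$ may be empty) is trivial after enlarging $c$, since every relevant cell lies in $[-2^k,2^k]^2$.
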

\begin{proof}
    The proof is identical to that of Lemma~\ref{lem:area/length-1} and Lemma~\ref{lem:area/length-2} using Lemma~\ref{lem:no-macroscopic-cell} and Lemma~\ref{lem:average-diamdeg4}.
\end{proof}

\begin{lemma}\label{lem:area/length-4}
    There exist deterministic constants $\alpha,c_1\in(0,1/2)$ such that the following is true. Almost surely, for large enough $k$, 
    \begin{equation}\label{eq:lem:area/length-4}
      B(0;c_12^k) \subseteq   \wh\varphi_k(M([-\alpha 2^k,\alpha2^k]^2)) \subseteq B(0;2^{k-1}).
    \end{equation}
\end{lemma}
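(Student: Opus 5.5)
Both inclusions will come from Lemma~\ref{lem:area/length-3} (the analogue of Lemmas~\ref{lem:area/length-1} and~\ref{lem:area/length-2} for $\wh\varphi_k$ on $M([-2^k,2^k]^2)$), combined with the topological facts from Section~\ref{subsec:ergodic-cell-system}: line connectivity and the almost planarity of Proposition~\ref{prop:almost-planarity}. The constants $\alpha$ and $c_1$ are fixed in that order.

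\textbf{Upper inclusion.} Choose $\delta\in(0,0.01)$ so small that $c(-\log\delta)^{-1/2}<1/8$, where $c$ is the constant in~\eqref{eq:lem:area/length-3}. Then set $\alpha=\delta/4$, so that $[-\alpha2^k,\alpha2^k]^2$ lies in a single square $S$ of side length $\delta2^k$ containing the origin. For large $k$, Lemma~\ref{lem:M(S)} and Lemma~\ref{lem:no-macroscopic-cell} show that every vertex of $M([-\alpha2^k,\alpha2^k]^2)$ corresponds to a cell in $\cH(S)$. The semi-flowers of these cells cover the surface. By~\eqref{eq:lem:area/length-3}, their images have union of diameter at most $2^{k}/8$, and this union contains $\wh\varphi_k(0)=0$. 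Hence $\wh\varphi_k(M([-\alpha2^k,\alpha2^k]^2))\subseteq B(0;2^{k-3})\subseteq B(0;2^{k-1})$.

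\textbf{Lower inclusion.} This is the main step. I argue by the reverse estimate~\eqref{eq:lem:area/length-3-a}. Fix $\lambda=1/2$ and choose $\delta'$ so small that $c(\log(1/2)-\log\delta')^{-1/2}<\alpha/8$. Set $c_1=\delta'$ and $r=c_12^k$. By~\eqref{eq:lem:area/length-3-a} with $z=0$, the cells $H$ with $\wh\varphi_k(P_H)\cap B(0;r)\neq\emptyset$ have union of diameter at most $\alpha2^k/8$. Their union contains $H_0$, so all of these cells lie in $[-\alpha2^k/4,\alpha2^k/4]^2$ once $k$ is large; here Lemma~\ref{lem:no-macroscopic-cell} gives $\diam(H_0)=o(2^k)$.

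It remains to show that the corresponding vertices lie in $M([-\alpha2^k,\alpha2^k]^2)$, and here the definition of $M(S)$ via $\cM(S;a)$ must be handled. By Lemma~\ref{lem:M(S)} applied with $S=[-\alpha2^k,\alpha2^k]^2$, the map $\cM(S;(1-\e)|S|)$ is contained in $\cM(S)$. That map contains every vertex whose cell meets $\mathring S^{(1-\e)|S|}$, and in particular every cell inside $[-\alpha2^k/4,\alpha2^k/4]^2$. Therefore $B(0;r)$ is covered by $\bigcup\wh\varphi_k(P_H)$ over vertices of $M([-\alpha2^k,\alpha2^k]^2)$. Since $\wh\varphi_k$ maps onto $2^k\bbD\supseteq B(0;r)$ and the semi-flowers cover $M([-2^k,2^k]^2)$, each point of $B(0;r)$ lies in some $\wh\varphi_k(P_H)$. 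The $P_H$ involved belong to vertices of the subsurface, and are contained in it up to boundary pieces.

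The main obstacle is this last point: semi-flowers of boundary vertices of $M([-\alpha2^k,\alpha2^k]^2)$ stick out of the subsurface. I would avoid the problem by working with the slightly smaller box $[-\alpha2^k/2,\alpha2^k/2]^2$. Any $P_H$ for such $H$ is contained in $M([-\alpha2^k,\alpha2^k]^2)$, because all neighbours of $H$ also lie in the smaller region, again by Lemma~\ref{lem:no-macroscopic-cell}. Hence $B(0;c_12^k)\subseteq\wh\varphi_k(M([-\alpha2^k,\alpha2^k]^2))$.
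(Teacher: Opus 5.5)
Your proposal is correct and follows the paper's proof: the upper inclusion comes from \eqref{eq:lem:area/length-3} applied to a small square around the origin, and the lower inclusion comes from \eqref{eq:lem:area/length-3-a} with $z=0$ together with Lemma~\ref{lem:M(S)}. You add one detail the paper leaves implicit, namely the check that the relevant semi-flowers lie inside $M([-\alpha 2^k,\alpha 2^k]^2)$: the cells involved meet $\mathring{S}^{(1-\e)|S|}$, so their whole flowers lie in $\cM(S;(1-\e)|S|)\subseteq\cM(S)$.
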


\begin{proof}
    Let $c>0$ be the constant in Lemma~\ref{lem:area/length-3}. We pick $\alpha\in(0,0.01)$ such that $c(-\log \alpha)^{-1/2}<1/4$. Then by~\eqref{eq:lem:area/length-3} for $S=[-\alpha 2^k,\alpha2^k]^2$, $$\bigcup_{H\in\cH([-\alpha 2^k,\alpha2^k]^2)} \wh\varphi_k(P_H) \subseteq B(0;2^{k-1}).$$
    On the other hand, if we pick $c_1\in(0,0.01)$ such that $c(-\log c_1)^{1/2}<\alpha/16$, then by~\eqref{eq:lem:area/length-3-a} for $B(z;r)= B(0;c_12^k)$, 
    $$\bigcup_{H:\wh\varphi_k(P_H)\cap B(0,c_12^k)\neq\emptyset} H \subseteq [-\alpha2^{k-2},\alpha 2^{k-2}]^2.$$
    This along with Lemma~\ref{lem:M(S)} verifies~\eqref{eq:lem:area/length-4}.
\end{proof}

 With slight abuse of language, we write 0 for the vertex on $M(\cH)$ representing the cell $H_0$. Consider a uniformization map $\varphi:M(\cH)\to\bbC$ sending 0 to 0.   Let $\alpha$ be the constant as in Lemma~\ref{lem:area/length-4}. We define
\begin{equation}\label{eq:def-d_k-alpha}
    d_k:=\diam\big( \varphi(M([-\alpha 2^k,\alpha2^k]^2) ) \big).
\end{equation}

\begin{lemma}\label{lem:area/length-6}
  Let $a\in(0,1)$.  There exists a deterministic $\ell>0$ such that almost surely, for large enough $k$, $d_{k-\ell}<ad_k$. 
\end{lemma}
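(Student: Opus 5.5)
The plan is to compare $\varphi$ with the normalized maps $\wh\varphi_k$ of Lemma~\ref{lem:area/length-4}, at each scale $k$. Since $M([-2^k,2^k]^2)\subseteq M(\cH)$ is simply connected and contains $0$, the map $g_k:=\varphi\circ\wh\varphi_k^{-1}$ is conformal on $2^k\bbD$ with $g_k(0)=0$.

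First, I would apply Lemma~\ref{lem:area/length-4} at scale $k$. It gives
\[
B(0;c_12^k)\subseteq \wh\varphi_k(M([-\alpha2^k,\alpha2^k]^2))\subseteq B(0;2^{k-1}).
\]
By the Koebe distortion theorem applied to $g_k$ on $2^k\bbD$, the image $\varphi(M([-\alpha2^k,\alpha2^k]^2))=g_k(\wh\varphi_k(M([-\alpha2^k,\alpha2^k]^2)))$ is then comparable to a ball. More precisely, with $\lambda_k:=|g_k'(0)|2^k$ and universal constants $0<b_1<b_2$ depending only on $c_1$ and on the radius $1/2$,
\[
B(0;b_1\lambda_k)\subseteq \varphi(M([-\alpha2^k,\alpha2^k]^2))\subseteq B(0;b_2\lambda_k).
\]
Hence $d_k\asymp\lambda_k$, and $\varphi(M([-\alpha2^k,\alpha2^k]^2))$ contains the ball $B(0;b_1\lambda_k)\supseteq B(0;(b_1/2b_2)d_k)$.

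Second, I would locate the smaller square $M([-\alpha2^{k-\ell},\alpha2^{k-\ell}]^2)$ inside the picture at scale $k$. I would apply the first half of Lemma~\ref{lem:area/length-3}, i.e.~\eqref{eq:lem:area/length-3}, with the square $S=[-\alpha2^{k-\ell},\alpha2^{k-\ell}]^2\subseteq[-2^k,2^k]^2$ of side length $\delta 2^k$, where $\delta=2\alpha2^{-\ell}$. This shows that $\bigcup_{H\in\cH(S)}\wh\varphi_k(P_H)$ has diameter at most $c(-\log\delta)^{-1/2}2^k$. This union contains $0=\wh\varphi_k(v_{H_0})$, and by Lemma~\ref{lem:M(S)} it contains $\wh\varphi_k(M(S))$ for $k$ large. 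Therefore
\[
\wh\varphi_k(M([-\alpha2^{k-\ell},\alpha2^{k-\ell}]^2))\subseteq B(0;\eta 2^k),\qquad \eta:=c(\log(2^\ell/2\alpha))^{-1/2}.
\]
By choosing $\ell$ deterministic and large, $\eta$ can be made as small as desired.

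Third, I would use Koebe distortion once more. For $|w|\le\eta2^k$ with $\eta<1/2$, we have $|g_k(w)|\le |g_k'(0)|\,|w|/(1-\eta)^2\le 4\eta\lambda_k$. Therefore
\[
d_{k-\ell}\le 8\eta\lambda_k\le (8\eta/b_1)\, d_k,
\]
and it suffices to choose $\ell$ so that $8\eta/b_1<a$. All constants $c,c_1,\alpha,b_1$ are deterministic, so $\ell$ is deterministic. The almost sure statement for large $k$ follows because Lemmas~\ref{lem:area/length-3}, \ref{lem:area/length-4} and~\ref{lem:M(S)} each hold almost surely for all large $k$ with this fixed $\delta$.

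I expect the main subtlety to be the first step, namely extracting the lower bound $d_k\gtrsim|g_k'(0)|2^k$ from the inner ball $B(0;c_12^k)$. This is exactly what Koebe's quarter theorem together with the distortion bound provides, since $g_k$ is univalent on the disk of radius $2^k$. A secondary point is checking that $\wh\varphi_k$ and $\varphi$ really restrict compatibly, which holds because $M([-2^k,2^k]^2)$ is a subsurface of $M(\cH)$.
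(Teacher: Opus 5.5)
Your proof is correct, but it takes a different route from the paper's.

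\textbf{The paper's route.} The paper does not pass through $\wh\varphi_k$ here. It reruns the He--Schramm length--area argument directly for $\varphi$, with $r_0=\alpha 2^{k-\ell+1}$, $r_1=\alpha2^{k-2}$ and $L(\rho)=\sum_{H\in\cH(\partial[-\rho,\rho]^2)}\diam(\varphi(P_H))$. The normalization comes from the definition of $d_k$ itself. All semi-flowers of cells in $[-r_1,r_1]^2$ lie in $B(0;d_k)$, so Lemma~\ref{lem:Koebe-distortion} gives $\sum\diam(\varphi(P_H))^2\deg(H)^{-4}\le\pi c_0^{-2}d_k^2$. Together with the ergodic bound, this yields a level $\rho$ with $L(\rho)\lesssim\ell^{-1/2}d_k$. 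The separating-loop argument from the end of the proof of Lemma~\ref{lem:area/length-1} then encloses $\varphi(M([-\alpha2^{k-\ell},\alpha2^{k-\ell}]^2))$.

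\textbf{Your route.} You transfer the already normalized estimate \eqref{eq:lem:area/length-3} for $\wh\varphi_k$ to $\varphi$, using Koebe distortion for $g_k=\varphi\circ\wh\varphi_k^{-1}$. This is the same device the paper uses for the second inclusion of Lemma~\ref{lem:area/length-5}.

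\textbf{What each buys.} Your approach is more modular: the length--area computation and the topological loop argument are absorbed into Lemma~\ref{lem:area/length-3}, and you get the same $\ell^{-1/2}$-type decay. The price is that you need the lower bound $d_k\gtrsim|g_k'(0)|2^k$. That comes from the inner ball $B(0;c_12^k)$ of Lemma~\ref{lem:area/length-4}, which in turn rests on the reverse estimate \eqref{eq:lem:area/length-3-a}. The paper's argument avoids this, because $d_k$ is itself the scale in its area bound.

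\textbf{Two minor points.} The inclusion $\wh\varphi_k(M(S))\subseteq\bigcup_{H\in\cH(S)}\wh\varphi_k(P_H)$ follows from the definitions, not from Lemma~\ref{lem:M(S)}: semi-flowers cover the surface, $P_v\subseteq F_v$, and every vertex of $\cM(S)$ has its cell in $S$. Also, the compatibility of $\varphi$ and $\wh\varphi_k$ uses $M([-\alpha2^k,\alpha2^k]^2)\subseteq M([-2^k,2^k]^2)$. This holds because the two squares are concentric and $\cM(S)$ is defined as a maximal $\cM(S;a)$.
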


\begin{proof}
    The proof is almost identical to that of Lemma~\ref{lem:area/length-1}. Fix $\ell\in\bbN$, $r_0=\alpha 2^{k-\ell+1}$, $r_1=\alpha 2^{k-2}$, and for $\rho\in[r_0,r_1]$, set 
     \begin{equation}
       L(\rho) = \sum_{H\in\cH(\partial [-\rho,\rho]^2)} \diam(\varphi(P_H)).
   \end{equation}
    Then
   \begin{equation}\label{eq:pf-area/length-6-1}
   \begin{split}
       &\inf\{L(\rho):r_0\leq \rho\leq r_1\} \log(r_1/r_0)\leq \int_{r_0}^{r_1} \frac{L(\rho)}{\rho}\,d\rho = \sum_{H\in\cH([-r_1,r_1]^2\backslash [-r_0,r_0]^2)}\diam(\varphi(P_H))\frac{\diam(H)}{a_H} \\ &\leq \Big(\sum_{H\in\cH([-r_1,r_1]^2\backslash [-r_0,r_0]^2)}\diam(\varphi(P_H))^2\deg(H)^{-4}\Big)^{1/2}\Big(\sum_{H\in\cH([-r_1,r_1]^2\backslash [-r_0,r_0]^2)}\frac{\diam(H)^2}{a_H^2}\deg(H)^4\Big)^{1/2}
       \end{split}
   \end{equation}
   where $a_H = \inf\{\rho>0:H\in\cH(\partial[-\rho,\rho]^2)\}$.
   By Lemma~\ref{lem:M(S)} and the definition of $d_k$, $\bigcup_{H\in\cH([-r_1,r_1]^2)}\varphi(P_H)\subseteq B(0;d_k)$.  By Lemma~\ref{lem:Koebe-distortion},    for some constant $c_0$, 
   \eqb\label{eq:pf-area/length-6-2}\sum_{H\in\cH([-r_1,r_1]^2\backslash [-r_0,r_0]^2)}\diam(\varphi(P_H))^2\deg(H)^{-4}\leq \pi c_0^{-2}d_k^2.\eqe
   By a similar argument as in~\eqref{eq:pf-area/length-1-5}, for some deterministic constant $C>0$,
   \begin{equation}\label{eq:pf-area/length-6-3}
       \sum_{H\in\cH([-r_1,r_1]^2\backslash [-r_0,r_0]^2)}\frac{\diam(H)^2}{a_H^2}\deg(H)^4\leq 64C\ell.
   \end{equation}
   Thus by~\eqref{eq:pf-area/length-6-1}, ~\eqref{eq:pf-area/length-6-2} and~\eqref{eq:pf-area/length-6-3}, for some deterministic constant $c_2>0$, almost surely for large enough $k$,
   \begin{equation}
       \inf\{L(\rho):r_0\leq \rho\leq r_1\} \leq c_2\ell^{-1/2}d_k.
   \end{equation}
   We pick $\ell$ such that $c_2\ell^{-1/2}<a/4$. The lemma then follows using the same argument as in the last paragraph of the proof of Lemma~\ref{lem:area/length-1}.
\end{proof}

\begin{lemma}\label{lem:area/length-5}
    There exists  deterministic constants $\alpha,c_2\in(0,1)$ and $\ell>0$ such that the following is true. Almost surely, for large enough $k$,
    \begin{equation}\label{eq:lem:area/length-5}
    \varphi(M([-\alpha 2^{k-\ell},\alpha2^{k-\ell}]^2)) \subseteq B\big(0; c_2 d_k  \big)\subseteq  \varphi(M([-\alpha 2^k,\alpha2^k]^2)) . 
    \end{equation}
\end{lemma}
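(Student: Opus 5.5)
The plan is to transfer the two-sided containment of Lemma~\ref{lem:area/length-4}, which holds for the disk-normalized maps $\wh\varphi_k$, to the whole-plane map $\varphi$. The tool for the transfer is the Koebe distortion theorem applied to $\varphi\circ\wh\varphi_k^{-1}$. Lemma~\ref{lem:area/length-6} then supplies the extra scale separation $\ell$.

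\textbf{Step 1: the right-hand inclusion.} Take $\alpha,c_1$ as in Lemma~\ref{lem:area/length-4}, and write
\[
g_k:=\varphi\circ\wh\varphi_k^{-1},
\]
which is a conformal map from $2^k\bbD$ into $\bbC$ with $g_k(0)=0$. Lemma~\ref{lem:area/length-4} gives
\[
B(0;c_12^k)\subseteq\wh\varphi_k\big(M([-\alpha2^k,\alpha2^k]^2)\big)\subseteq B(0;2^{k-1}).
\]
By the Koebe distortion theorem, on $B(0;2^{k-1})$ the map $g_k$ has bounded distortion relative to $|g_k'(0)|2^k$. More precisely, there are absolute constants with
\[
g_k(B(0;2^{k-1}))\subseteq B(0;C|g_k'(0)|2^k),
\qquad
g_k(B(0;c_12^k))\supseteq B\big(0;c'c_1|g_k'(0)|2^k\big),
\]
where the second inclusion uses the Koebe $1/4$ theorem together with distortion. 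Since $\varphi(M([-\alpha2^k,\alpha2^k]^2))=g_k(\wh\varphi_k(M([-\alpha2^k,\alpha2^k]^2)))$, we get
\[
d_k\asymp|g_k'(0)|2^k .
\]
It follows that $B(0;c_2d_k)\subseteq\varphi(M([-\alpha2^k,\alpha2^k]^2))$ for a small deterministic $c_2$, namely $c_2\approx c'c_1/(2C)$.

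\textbf{Step 2: the left-hand inclusion.} Apply Lemma~\ref{lem:area/length-6} with $a=c_2/2$. This gives a deterministic $\ell$ such that a.s., for large $k$, $d_{k-\ell}<c_2d_k/2$. The set $\varphi(M([-\alpha2^{k-\ell},\alpha2^{k-\ell}]^2))$ contains $\varphi(0)=0$ and has diameter $d_{k-\ell}$. Hence it lies in
\[
B(0;d_{k-\ell})\subseteq B(0;c_2d_k).
\]
This proves the first inclusion.

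\textbf{Main obstacle.} The delicate point is the Koebe comparison in Step 1. We must make sure that $g_k$ is defined on all of $2^k\bbD$, which requires checking that $M([-2^k,2^k]^2)$ embeds in $M(\cH)$ so that $\varphi$ restricts to it. We also need the inner radius $c_12^k$ and the outer radius $2^{k-1}$ to both sit at a fixed fraction of the domain radius. Only then are the distortion constants deterministic, and $c_2$ depends only on $c_1$.

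Two further points need checking. The constant $\alpha$ here is the same as in Lemma~\ref{lem:area/length-4}, so that $d_k$ matches the definition \eqref{eq:def-d_k-alpha}. The almost-sure events from Lemmas~\ref{lem:area/length-4} and \ref{lem:area/length-6} hold simultaneously for all large $k$, which is immediate by intersecting the two a.s.\ events.
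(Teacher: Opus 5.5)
Your proposal is correct and matches the paper's argument: Koebe distortion for $\varphi\circ\wh\varphi_k^{-1}$ together with Lemma~\ref{lem:area/length-4} gives the outer inclusion, and Lemma~\ref{lem:area/length-6} gives the inner one (the paper takes $a=c_2$; your $a=c_2/2$ works equally well). Your explicit use of the upper containment $\wh\varphi_k(M([-\alpha2^k,\alpha2^k]^2))\subseteq B(0;2^{k-1})$ to get $d_k\lesssim|g_k'(0)|2^k$ is genuinely needed, even though the paper's one-line proof only cites the first half of \eqref{eq:lem:area/length-4}.
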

\begin{proof}
    The second half of~\eqref{eq:lem:area/length-5} is immediate by applying the Koebe distortion theorem to the conformal map $\varphi\circ\wh \varphi_k^{-1}$ along with  {the first half of}~\eqref{eq:lem:area/length-4}. The first half of~\eqref{eq:lem:area/length-5} then follows from Lemma \ref{lem:area/length-6} with $a=c_2$.
\end{proof}

\begin{lemma}\label{lem:parabolic}
    Almost surely, $\lim_{k\to\infty} d_k = \infty$. Furthermore, $\varphi(M(\cH))=\bbC$ and thus $M(\cH)$ is parabolic.
\end{lemma}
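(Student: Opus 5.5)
We prove that $d_k\to\infty$ by iterating Lemma~\ref{lem:area/length-6}, and that $\varphi(M(\cH))=\bbC$ by combining this growth with the round disks supplied by Lemma~\ref{lem:area/length-5}.

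\textbf{Growth of $d_k$.} Here $\varphi:M(\cH)\to\bbC$ is a uniformization map with $\varphi(0)=0$, whose image is a priori either $\bbC$ or a disk. Fix $a=1/2$ in Lemma~\ref{lem:area/length-6}. This gives a deterministic $\ell$ such that almost surely $d_{k-\ell}<d_k/2$ for all $k\geq k_0$. Iterating, for $j\geq 1$ we get
\[
d_{k_0+j\ell}>2^{j}d_{k_0}.
\]
Since $M([-\alpha 2^{k},\alpha 2^{k}]^2)$ is increasing in $k$ (by Lemma~\ref{lem:M(S)}), the sequence $d_k$ is nondecreasing in $k$. We also have $d_{k_0}>0$: the set $M([-\alpha2^{k_0},\alpha2^{k_0}]^2)$ contains an open set around the vertex $0$, and $\varphi$ is a nonconstant conformal map. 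Together these give $d_k\to\infty$ along the full sequence.

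\textbf{Image is the whole plane.} By the second inclusion of Lemma~\ref{lem:area/length-5}, for all large $k$,
\[
B(0;c_2d_k)\subseteq \varphi\big(M([-\alpha2^k,\alpha2^k]^2)\big)\subseteq \varphi(M(\cH)).
\]
Since $c_2$ is a deterministic positive constant and $d_k\to\infty$, the union of these disks is $\bbC$. Hence $\varphi(M(\cH))=\bbC$, so $M(\cH)$ is conformally equivalent to $\bbC$, i.e.\ parabolic.

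\textbf{Where care is needed.} If $M(\cH)$ were hyperbolic, we could take $\varphi$ onto $\bbD$. Then $d_k\le 2$ for all $k$, which contradicts the geometric growth above. So it is consistent to run the lemmas with whatever uniformization $\varphi$ is available. The main point to verify is that Lemmas~\ref{lem:area/length-6} and~\ref{lem:area/length-5} did not implicitly assume $\varphi(M(\cH))=\bbC$. Their proofs use only three ingredients:
\begin{itemize}
\item[(i)] the Koebe-type inradius bound of Lemma~\ref{lem:Koebe-distortion}, valid for any conformal $\varphi$ into a domain;
\item[(ii)] the area bound $\bigcup_{H}\varphi(P_H)\subseteq B(0;d_k)$, which follows from the definition of $d_k$ and $\varphi(0)=0$;
\item[(iii)] Koebe distortion for $\varphi\circ\wh\varphi_k^{-1}$ on $2^k\bbD$.
\end{itemize}
All three hold for a map onto a disk as well, so the argument does not beg the question.
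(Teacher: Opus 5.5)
Your proof is correct and follows the paper's argument: the paper also gets $d_k\to\infty$ from Lemma~\ref{lem:area/length-6}, and gets $\varphi(M(\cH))=\bbC$ from the inclusion $B(0;c_2d_k)\subseteq\varphi(M(\cH))$ supplied by Lemma~\ref{lem:area/length-5}. Your additional details — positivity of $d_{k_0}$, monotonicity of $d_k$, and the check that those lemmas hold for any uniformization map rather than presupposing parabolicity — spell out steps the paper treats as immediate.
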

\begin{proof}
    The first claim is immediate from Lemma~\ref{lem:area/length-6}. The second claim follows since, by Lemma~\ref{lem:area/length-5}, for some constant $c_2>0$, for all large enough $k$, $B(0;c_2d_k)\subseteq \varphi(M(\cH))$.
\end{proof}

\begin{lemma}\label{lem:cell-polynomial-growth}
    There is a deterministic constant $\beta\geq1$ 
    such that the following is true. Almost surely, for some random constant $\wt C$ depending on $\cH$ and $\varphi$, for every large enough $r$, \eqb\label{eq:lem:cell-polynomial-growth}\{H:\varphi(P_H)\cap B(0;r)\neq\emptyset \}\subseteq B(0;\wt Cr^\beta).\eqe
\end{lemma}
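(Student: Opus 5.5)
The plan is to combine the nested-ball structure of Lemma~\ref{lem:area/length-5} with the scale decay of Lemma~\ref{lem:area/length-6}. Together they show that the conformal radius $d_k$ of the image of the $\alpha 2^k$-box grows at least geometrically in $k$, at a deterministic rate. Polynomial growth of the inverse map then follows, and the cells are contained in the corresponding Euclidean box.

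\textbf{Step 1.} Fix $a=1/2$ in Lemma~\ref{lem:area/length-6}. This gives a deterministic $\ell_0$ such that a.s.\ for all $k\ge k_0$ we have $d_{k-\ell_0}<d_k/2$. Iterating from $k_0$ yields $d_{k_0+j\ell_0}\ge 2^{j}d_{k_0}$ for all $j\ge0$. By monotonicity of $d_k$ in $k$ (the surfaces $M([-\alpha2^k,\alpha2^k]^2)$ are increasing), we get for all $k\ge k_0$
\[
d_k \ge \wt c\, 2^{k/\ell_0},
\]
with $\wt c>0$ random, depending on $k_0$ and $d_{k_0}$.

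\textbf{Step 2.} Given large $r$, let $k$ be the smallest integer $\ge k_0$ with $c_2 d_k\ge r$, where $c_2$ is the constant of Lemma~\ref{lem:area/length-5}. The second inclusion of Lemma~\ref{lem:area/length-5} gives
\[
B(0;r)\subseteq B(0;c_2d_k)\subseteq \varphi(M([-\alpha2^k,\alpha2^k]^2)).
\]
So any $H$ with $\varphi(P_H)\cap B(0;r)\neq\emptyset$ has $P_H$ meeting $M([-\alpha2^k,\alpha2^k]^2)$. Hence either $v_H$ lies in $\cM([-\alpha2^k,\alpha2^k]^2)$, or $v_H$ is adjacent to such a vertex, since $P_H$ lies in the flower of $v_H$. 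By construction of $\cM(S)$, cells of its vertices lie in $S$. A neighbor of such a cell intersects $S$, so by Lemma~\ref{lem:no-macroscopic-cell} it lies in $[-2^{k},2^k]^2$ for $k$ large. Thus all such $H$ lie in $B(0;2^{k+1})$.

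\textbf{Step 3.} By minimality of $k$, either $k=k_0$, which is absorbed into $\wt C$, or $c_2d_{k-1}<r$. In the latter case Step 1 gives $2^{(k-1)/\ell_0}\le r/(c_2\wt c)$, i.e.
\[
2^{k}\le 2\,(r/(c_2\wt c))^{\ell_0}.
\]
Hence $H\subseteq B(0;\wt C r^{\beta})$ with $\beta=\ell_0$, which is deterministic since $\ell_0$ comes from the deterministic $\ell$ of Lemma~\ref{lem:area/length-6}, and with $\wt C$ random.

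\textbf{Main obstacle.} The delicate point is Step 2: relating $\varphi(P_H)\cap B(0;r)\ne\emptyset$ to $H$ being inside the box. It requires that $P_H$ meeting $M(S)$ forces $v_H$ to be in or adjacent to $\cM(S)$, together with the a.s.\ eventual validity of Lemma~\ref{lem:no-macroscopic-cell} for boxes centered at $0$ rather than at $S_k$. For the latter I would use the second part of Lemma~\ref{lem:average-diamdeg4} or the $[-2^k,2^k]^2$ version of the no-macroscopic-cell statement, as was already done in Lemma~\ref{lem:phi_0-bbC-3}.
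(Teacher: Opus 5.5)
Your proposal is correct and follows essentially the same route as the paper. Lemma~\ref{lem:area/length-6} with $a=1/2$ gives $d_k\ge \wt C_0 2^{k/\ell}$, Lemma~\ref{lem:area/length-5} puts $B(0;c_2d_k)$ inside $\varphi(M([-\alpha2^k,\alpha2^k]^2))$, and Lemma~\ref{lem:no-macroscopic-cell} then places the relevant cells in $[-2^k,2^k]^2$, so inverting the geometric growth yields $\beta=\ell$. The differences are cosmetic: you choose $k$ as the first index with $c_2d_k\ge r$ rather than via the lower bound $c_2\wt C_02^{k/\ell}<r$, and you spell out the iteration/monotonicity argument behind $d_k\ge \wt C_02^{k/\ell}$, which the paper only asserts.
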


\begin{proof}
    Let $\ell$ be the constant in Lemma~\ref{lem:area/length-6} for $a=1/2$. Then there exists some random constant $\wt C_0$, such that $d_k\geq \wt C_02^{k/\ell}$. By Lemma~\ref{lem:area/length-5},  for some constant $c_2>0$ and $k$ large enough, if $\varphi(P_H)\cap B(0;c_2\wt C_02^{k/\ell})\neq\emptyset$, then $H\in\cH([-\alpha 2^k,\alpha2^k]^2)$ or $H$ is adjacent to some $H'\in \cH([-\alpha 2^k,\alpha2^k]^2)$. In particular, by Lemma~\ref{lem:no-macroscopic-cell}, $H\subseteq [-2^k,2^k]^2$. Now for each $r>0$ large enough, choose $k$ such that $c_2\wt C_02^{k/\ell}<r<c_2\wt C_02^{(k+1)/\ell}$. If $\varphi(P_H)\cap B(0;r)\neq\emptyset$, then $H\subseteq [-2^{k+1},2^{k+1}]^2\subseteq B(0;4(c_2\wt C_0)^{-\ell}r^\ell)$. This verifies the claim for $\beta=\ell$.
\end{proof}

\begin{remark}\label{rmk-max-diam}
    We note that the proofs of Lemmas~\ref{lem:area/length-1}-\ref{lem:area/length-6} can be adapted to the setting of circle packings. Therefore, by repeating the arguments in this subsection, it might be possible to give  
    an alternative proof of Proposition~\ref{prop:max-diam-0-cone} without using the vertex extremal length in~\cite{GDN19VEL}. 
\end{remark}

\subsection{Existence of the harmonic corrector}\label{subsec:GMS2.3}
Let $\cH$ be a random cell configuration satisfying all the conditions in Theorem~\ref{thm:invariance-uniformization} and let $M(\cH)$ be the Riemann surface constructed from the map associated with $\cH$. Recall the function $\phi_0$ defined in Section~\ref{subsec:ergodic-cell-system}.  {In this section we will define a harmonic function $\phi_\infty:M(\cH)\to\bbC$ as the limit of functions $\phi_m:M(\cH)\to\bbC$ for $m\in\bbN$, which are defined as certain harmonic extensions of the function $\phi_0:M(\cH)\to\bbC$. This idea is similar to \cite[Section 2.3]{GMSinvariance} and the key difference is that we work with (continuum) harmonic functions on $M(\cH)$ instead of discrete harmonic functions on the graph $\cH$.}

Recall the definition $\wh S_m^z$ in~\eqref{eq:def-S_m^z} and define the collection of squares $${\mathfrak{S}}_m :=\{\wh S_m^z:z\in\bbC \text{ and $z$ is not on the boundary of any square in $\cD$} \}.$$
If $z'$ is in the interior of $\wh S_m^z$, then $\wh S_m^z = \wh S_m^{z'}$. It follows that the squares in $\mathfrak{S}_m$ intersect only along their boundaries, and they  together cover all of $\bbC$. In particular, the preimage $\{\phi_0^{-1}(S):S\in \mathfrak{S}_m\}$ cover $M(\cH)$. Note that $\phi_0^{-1}(S)$ might not necessarily be connected, and their boundaries consist of line segments.

For $m>0$, we define the function $\phi_m {: M(\cH)\to\bbC}$ as 
 {follows}. For each $S\in \mathfrak{S}_m$ and $\phi_0^{-1}(S)$, we define $\phi_m$ to be equal to $\phi_0$ on $\partial \phi_0^{-1}(S)$, and define $\phi_m$ to be the harmonic extension of $\phi_0$ in the interior of $\phi_0^{-1}(S)$. Equivalently, one can take {a} uniformization map $\varphi:M(\cH)\to\bbC$ and work on the ordinary harmonic extension of $\phi_0\circ\varphi|_{\partial (\varphi\circ\phi_0^{-1})(S)}$ in $(\varphi\circ\phi_0^{-1})(S)$.

The main goal of this subsection is to prove the following. {We do not know at this point whether $\phi_\infty(x)$ is a measurable function of $\cH$ and $\cD$ due to the randomness of the points $\wt c(H)$. In the following proposition we work with $\bbE\big[\phi_\infty(x)|\cH,\cD \big]$, where the randomness that we average over in the expectation is coming from the randomness of the points $\wt c(H)$.} Recall that we write $0$ for the vertex on $M(\cH)$ representing the cell $H_0$.
\begin{proposition}\label{prop:harmonic-exist}
 There exists a harmonic function $\phi_\infty:M(\cH)\to\bbC$  such that $\bbE\big[\phi_\infty(x)|\cH,\cD \big]<\infty$ a.s.\   for each $x\in M(\cH)$ and
    \begin{equation}\label{eq:prop:harmonic-exist}
        \phi_m - \phi_m(0) \to \phi_\infty \  \ \text{in probability as $m\to\infty$},
    \end{equation}
    with respect to the topology of uniform convergence on compact subsets of $M(\cH)$, i.e., for each compact set $\cA\subseteq M(\cH)$ (chosen in some measurable manner) and each $\e>0$, 
    $$\lim_{m\to\infty} \bbP\big[\sup_{z\in \cA}|\phi_m(z)-\phi_m(0)-\phi_\infty(z)|>\e\big] = 0.$$
\end{proposition}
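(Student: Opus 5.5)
Following \cite[Section 2.3]{GMSinvariance}, the plan is to show that $(\phi_m-\phi_m(0))_m$ is Cauchy in probability in Dirichlet energy on large compact pieces of $M(\cH)$. Harmonicity of $\phi_m$ away from the preimages of square boundaries will then upgrade this to local uniform convergence. Dirichlet energy is conformally invariant, so we may compute it either on $M(\cH)$ or after composing with a uniformization map $\varphi$, which exists by Lemma~\ref{lem:parabolic}.

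\textbf{Step 1 (energy of $\phi_0$).} On an equilateral triangle $\Delta$ of $M(\cH)$ with vertices for $H_1,H_2,H_3$, the map $\phi_0$ is linear. Its energy is therefore bounded by a constant times $\sum_{i<j}|\wt c(H_i)-\wt c(H_j)|^2$. This is at most a constant times $(\diam(H_1)+\diam(H_2)+\diam(H_3))^2$, because adjacent cells intersect. Summing over triangles touching a square, and charging each triangle to its vertices (each vertex lies in at most $\deg(H)$ triangles), the energy of $\phi_0$ over $\phi_0^{-1}(S)$ is at most $c\sum_{H\in\cH(S)}\diam(H)^2\deg(H)$. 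By Lemma~\ref{lem:average-diamdeg4} this is at most $C|S|^2$ for large dyadic squares. Together with the ergodic Lemma~\ref{lem:ergodic-cell-system}, this gives a deterministic finite expectation for the energy density, normalized per unit Lebesgue area.

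\textbf{Step 2 (energy of $\phi_m-\phi_{m'}$).} Since $\phi_m$ is the Dirichlet-energy minimizer on each $\phi_0^{-1}(S)$, $S\in\mathfrak S_m$, with boundary data $\phi_0$, its energy is at most that of $\phi_0$ there. Also, $\phi_m-\phi_0$ vanishes on $\partial\phi_0^{-1}(S)$. For $m'>m$ the squares of $\mathfrak S_{m'}$ are unions of squares of $\mathfrak S_m$. Hence $\phi_m-\phi_0$ is an admissible variation on each larger piece, and orthogonality (Green's identity on $\phi_0^{-1}(S')$ for $S'\in\mathfrak S_{m'}$) gives
\[
E(\phi_m-\phi_{m'};\phi_0^{-1}(S'))=E(\phi_m;\phi_0^{-1}(S'))-E(\phi_{m'};\phi_0^{-1}(S')).
\]
Define the scale-invariant quantity $F_m(\cH,\cD)$ as the energy of $\phi_m$ in the square of $\mathfrak S_m$ containing $0$, divided by its area. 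Step 1 together with minimality shows that the averages $\bbE[F_m]$ are nonincreasing in $m$ and bounded. Applying Lemma~\ref{lem:ergodic-cell-system} and Lemma~\ref{lem:dyadic-resample}, exactly as in \cite[Lemma 2.14]{GMSinvariance}, the normalized energy of $\phi_m-\phi_{m'}$ over a large square tends to $\lim_m\bbE[F_m]$ minus itself, i.e.\ to $0$, as $m,m'\to\infty$. We take the expectation over $\wt c(H)$ given $(\cH,\cD)$ to make this measurable.

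\textbf{Step 3 (from energy to uniform convergence).} Fix a compact $\cA\subseteq M(\cH)$. Push forward by $\varphi$ and pick $k$ with $\varphi(\cA)\subseteq B(0;c_2d_k)$, using Lemma~\ref{lem:area/length-5}. As $m\to\infty$, the squares of $\mathfrak S_m$ containing $\phi_0(\cA)$ become huge, since cells are a.s.\ of bounded count near $0$. Thus $\phi_m-\phi_{m'}$ is harmonic on $\varphi^{-1}(B(0;c_2 d_{k+1}))$ with probability tending to one; here Lemmas~\ref{lem:phi_0-bbC-3} and~\ref{lem:area/length-4} give the containment. A harmonic function on a disk with small Dirichlet energy has small oscillation on the half disk, by the mean-value property and Cauchy--Schwarz on $\nabla$. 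After subtracting the value at $0$, this gives $\sup_{\cA}|(\phi_m-\phi_m(0))-(\phi_{m'}-\phi_{m'}(0))|\to0$ in probability. Completeness then yields a limit $\phi_\infty$, which is harmonic as a local uniform limit of harmonic functions. Finiteness of $\bbE[\phi_\infty(x)\mid\cH,\cD]$ follows from the energy bound of Step 1 and the same mean-value estimate.

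\textbf{Main obstacle.} The delicate point is Step 2's normalization. The energy bound involves $\diam(H)^2\deg(H)$ rather than the per-cell quantities of \cite{GMSinvariance}. In addition, $\phi_0^{-1}(S)$ may be disconnected and may overlap because $\phi_0$ is non-injective, so one must check that Green's identity and minimality still hold piecewise. I would handle this by working cell-wise, assigning each triangle to a unique square via $\wt c$ of a designated vertex, and relying on the moment bound~\eqref{eq:thm-inv-circle} to make $F_m$ integrable.
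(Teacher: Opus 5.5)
Your Steps 1 and 2 are sound and correspond to the paper's Lemmas~\ref{lem:energy-phi0}--\ref{lem:dirichlet-B}. You use monotonicity of $\bbE[F_m]$ where the paper telescopes the increments $\bbE[\mathrm{SE}_{j-1,j}]$; by the Pythagorean identity these amount to the same thing.

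The gap is in passing from Step 2 to Step 3. Step 2 only gives a statement about spatial averages: for fixed $m<m'$,
\[
\lim_{n\to\infty}|S_n|^{-2}\,\bbE\big[\mathrm{Energy}(\phi_m-\phi_{m'};\phi_0^{-1}(S_n))\,\big|\,\cH,\cD\big]=\bbE[F_m]-\bbE[F_{m'}],
\]
and the identity you use there only holds once $S_n$ is a union of $\mathfrak S_{m'}$-squares, so $S_n$ is huge. Step 3 then assumes that $\phi_m-\phi_{m'}$ has small Dirichlet energy on the fixed region $\varphi^{-1}(B(0;c_2d_{k+1}))$. Nothing you have proved gives this. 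A small energy density over squares of side tending to infinity says nothing about a fixed neighbourhood of $0$.

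Invoking Lemma~\ref{lem:dyadic-resample} does not rescue this with your choice of $F_m$, because $F_m$ is normalized by $|\wh S_m|^2$. For $z\in\wh S_\ell$ and $m'\ge\ell$ one has $\wh S^z_{m'}=\wh S_{m'}$, so resampling just returns $\bbE[F]$. Moreover, a bound $\mathrm{Energy}(\phi_m-\phi_{m'};\phi_0^{-1}(\wh S_{m'}))\le\e|\wh S_{m'}|^2$ is useless near $0$, since $|\wh S_{m'}|\to\infty$.

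The missing idea is to use an energy density localized at a scale that does not depend on $m$. The paper uses the per-cell quantity $\mathrm{SE}^z_{m_1,m_2}$: the energy of $\phi_{m_2}-\phi_{m_1}$ on $\phi_0^{-1}(H_z\cap\wh S^z_{m_2})$ divided by $\area(H_z\cap\wh S^z_{m_2})$.
\begin{itemize}
\item Its integral over large squares reproduces the total energy, so Lemma~\ref{lem:ergodic-cell-system} identifies $\bbE[\mathrm{SE}_{m_1,m_2}]$ with the large-scale density (Lemma~\ref{lem:dirichlet-A}). This is small by your Step 2.
\item For a fixed $\ell$, Lemma~\ref{lem:dyadic-resample} then gives $\bbE\big[|\wh S_\ell|^{-2}\int_{\wh S_\ell}\mathrm{SE}^z_{m_\e,m}\,dz\big]=\bbE[\mathrm{SE}_{m_\e,m}]<\e$.
\item This integral dominates the energy of $\phi_m-\phi_{m_\e}$ on $\phi_0^{-1}(\cH(\wh S_\ell)\setminus\cH(\partial\wh S_\ell))$. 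That region does not depend on $m$ and increases to $M(\cH)$ as $\ell\to\infty$.
\end{itemize}
Markov's inequality then gives the local energy bound in probability, and your mean-value/Cauchy--Schwarz estimate in Step 3 goes through. Any density normalized over $\wh S^z_j$ for a fixed $j$ would serve equally well; what matters is that the normalizing scale does not grow with $m$.

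Separately, you do not need to reassign triangles to squares. Lemma~\ref{lem:topology-cell-square} already shows that the pieces $\phi_0^{-1}(S)$ are finite unions of polygons with pairwise overlaps of measure zero, so minimality and Green's identity hold piecewise. Reassigning triangles would change the functions $\phi_m$ in the statement.
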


To prove Proposition~\ref{prop:harmonic-exist}, we use the similar outline as in~\cite[Section 2.3]{GMSinvariance}. However, instead of working on discrete Dirichlet energy and discrete harmonic functions $\cH\to\bbC$ as in~\cite{GMSinvariance}, we work with continuous harmonic functions and their Dirichlet energy. Our particular choice of $\phi_0$ above will play an important role in order for the argument to work; it will be important that the images of vertices on $M(\cH)$ under $\phi_0$ are close to the corresponding cells in $\cH$ and that we can bound the Dirichlet energy per area for $\phi_0$ in terms of $\diam(H)^2\deg(H)$ via Lemma~\ref{lem:average-diamdeg4} (see Lemma~\ref{lem:energy-phi0} below). Different from the setup of \cite{GMSinvariance}, where the function $\phi_0$ is defined on $\cH$, our function $\phi_0$ and their harmonic extensions $\phi_m$ are defined on the surface $M(\cH)$. 
First we gather some topological properties of the map $\phi_0$.  Then we show that  $\phi_0$ (and hence $\phi_m$) has uniformly bounded Dirichlet energy on $\phi_0^{-1}(S_n)$ when rescaled by $\frac{1}{|S_n|^2}$. Then we use the orthogonality of the increments $\phi_m-\phi_{m-1}$ with respect to the Dirichlet inner product  {(see the proof of Lemma \ref{lem:dirichlet-B})} to show that for large enough $m$, the expected ``Dirichlet energy per unit area"  is small
uniformly over all $m'\geq m$. Together with the standard properties of ordinary harmonic functions, this will allow us to show that the functions $\phi_m-\phi_m(0)$ are Cauchy in probability, and thereby establish Proposition~\ref{prop:harmonic-exist}.

For a function $f:M(\cH)\to\bbC$ and some domain $D\subseteq M(\cH)$, using the conformal invariance of Dirichlet energy, we set
$$\mathrm{Energy}(f;D) = \int_D |\nabla f(x)|^2dx = \int_{\varphi(D)}|\nabla(f\circ\varphi^{-1})(z)|^2dz,$$
where $\varphi:M(\cH)\to\bbC$ is some conformal map.

We start with the following simple lemma.
\begin{lemma}\label{lem:Dirichlet-linear}
    For any linear map $f$ from an equilateral triangle $\Delta$ with unit side length to another triangle $\Delta'$ with side length $\ell_1,\ell_2,\ell_3$, 
    $$\int_\Delta |\nabla f(z)|^2dz\leq 3(\ell_1^2+\ell_2^2+\ell_3^2).$$
\end{lemma}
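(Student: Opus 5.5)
Since $f$ is linear (affine) on $\Delta$, its gradient is constant, so $\int_\Delta |\nabla f|^2\,dz = \area(\Delta)\,|\nabla f|^2 = \frac{\sqrt3}{4}\|Df\|_F^2$. Here $Df$ is the real $2\times 2$ Jacobian and $\|\cdot\|_F$ is the Frobenius norm; for complex-valued $f=u+iv$ we take $|\nabla f|^2=|\nabla u|^2+|\nabla v|^2$. So the whole task is to bound $\|Df\|_F^2$ by the squared side lengths of $\Delta'$.

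To do this, I place $\Delta$ with vertices $0,1,e^{\pi i/3}$. The edge vectors $w_1=1$ and $w_2=e^{\pi i/3}$ are mapped by $Df$ to two edge vectors $u_1,u_2$ of $\Delta'$, and these satisfy $|u_1|,|u_2|\le \max_j\ell_j$. Writing $W$ for the matrix with columns $w_1,w_2$ and $U$ for the one with columns $u_1,u_2$, we have $Df = U W^{-1}$. Submultiplicativity of the Frobenius norm, $\|AB\|_F\le \|A\|_F\|B\|_{op}$, then gives
\[
\|Df\|_F^2 \le \|U\|_F^2\,\|W^{-1}\|_{op}^2 = (|u_1|^2+|u_2|^2)\,\sigma_{\min}(W)^{-2}.
\]

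The only remaining computation is $\sigma_{\min}(W)$. We have $W^\rmT W=\begin{pmatrix}1&1/2\\1/2&1\end{pmatrix}$, whose eigenvalues are $3/2$ and $1/2$. Hence $\sigma_{\min}(W)^{-2}=2$ and $\|Df\|_F^2\le 2(\ell_1^2+\ell_2^2)$. It follows that
\[
\int_\Delta|\nabla f|^2\,dz \le \tfrac{\sqrt3}{2}(\ell_1^2+\ell_2^2)\le 3(\ell_1^2+\ell_2^2+\ell_3^2),
\]
with room to spare. There is no real obstacle here. The only points to be careful about are that $|\nabla f|^2$ means the Frobenius norm of the real Jacobian, and that the argument uses no nondegeneracy of $\Delta'$, which is needed because $\phi_0$-images may be degenerate. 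A symmetric alternative, averaging over all three edge pairs, would give the cleaner bound $\frac{1}{\sqrt3}(\ell_1^2+\ell_2^2+\ell_3^2)$, but that is not needed.
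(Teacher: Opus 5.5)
Your proof is correct and is essentially the paper's argument. The paper moves $\Delta'$ by a rigid motion so that its vertices are $(0,0),(x_1,0),(x_2,y_2)$, writes down the constant Jacobian explicitly, and bounds its entries by $3(x_1^2+x_2^2+y_2^2)$; this is your $Df=UW^{-1}$ computation done in coordinates. The only real difference is that you keep the area factor $\sqrt3/4$, which the paper silently drops because it is less than $1$, so you get a sharper constant than needed.
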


\begin{proof}
    Without loss of generality, by translation and rotation, assume the vertices of $\Delta'$ are positioned at $(0,0), (x_1,0),(x_2,y_2)$. One can check that the matrix for the linear map $f$ is given by 
    $$\begin{pmatrix}
        x_1 & -\frac{1}{\sqrt{3}}x_1+\frac{2}{\sqrt{3}}x_2 \\ 0 & \frac{2}{\sqrt{3}}y_2
    \end{pmatrix}.$$
    Then $|\nabla f|^2\leq 3(x_1^2+x_2^2+y_2^2) \leq 3\ell_1^2+3\ell_2^2+3\ell_3^2$.
\end{proof}

\begin{lemma}\label{lem:phi_0-bound}
    Almost surely, for large enough $k$, the following holds. If $H_1,H_2,H_3\in\cH$ are adjacent to each other, such that the triangle formed by $\wt c(H_1),\wt c(H_2),\wt c(H_3)$ 
    intersects $S_k$, then $H_1,H_2,H_3$ are contained within the square $S_k'$ concentric to $S_k$ with side length $1.5|S_k|$. 
\end{lemma}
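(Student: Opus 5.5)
The plan is to combine the fact that the points $\wt c(H)$ lie in their own cells with the absence of macroscopic cells from Lemma~\ref{lem:no-macroscopic-cell}, applied to a slightly enlarged square around $S_k$. Since $\wt c(H_j)\in H_j$, the triangle $\Delta'$ with vertices $\wt c(H_1),\wt c(H_2),\wt c(H_3)$ lies in the convex hull of $H_1\cup H_2\cup H_3$. Adjacent cells intersect, so $H_1\cup H_2\cup H_3$ is connected. Hence
\[
\diam(\Delta')\leq \diam(H_1)+\diam(H_2)+\diam(H_3),
\]
and every point of $H_1\cup H_2\cup H_3$ lies within distance $\diam(H_1)+\diam(H_2)+\diam(H_3)$ of any point of $\Delta'$.

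The delicate point is that we do not know a priori that the $H_j$ meet a fixed bounded region, so Lemma~\ref{lem:no-macroscopic-cell} cannot be applied to them directly. I would handle this with a continuity or chaining argument.

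\begin{enumerate}
\item Fix $\e=1/100$ and $\ell=2$. By Lemma~\ref{lem:no-macroscopic-cell}, a.s.\ for all large $k$, every cell meeting a square of $\cS_{k,2}$ has diameter at most $\e|S|$. In particular this applies to the dyadic ancestor $S^{(2)}$ of side length $4|S_k|$ containing $S_k$, where the bound is $\diam(H)\leq 4\e|S_k|$.
\item Take a point $w\in\Delta'\cap S_k$. It is a convex combination of $\wt c(H_1),\wt c(H_2),\wt c(H_3)$. Suppose some $H_j$ does not meet $S^{(2)}$. Then the path $\wt c(H_1)\to \wt c(H_2)\to \wt c(H_3)$, run through the connected set $H_1\cup H_2\cup H_3$, must leave $S^{(2)}$.
\item Instead of arguing through that path, I would use the cells meeting the interior of $\Delta'$ directly. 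Every point of $\Delta'$ is within $\max_j\diam(\text{edge})$ of a vertex. Order the $H_j$ so that $H_1$ has its point $\wt c(H_1)$ nearest $w$. Then $|\wt c(H_1)-w|\leq \diam(\Delta')$, which again requires bounding the $H_j$ first.
\end{enumerate}

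To break this circularity I would argue by minimality. Among the three cells, at least one meets a neighbourhood of $w$. Indeed, $w\in\Delta'$ lies within $\diam(\Delta')$ of each vertex, but that bound is not yet controlled. The fix is the following: $\Delta'$ is contained in the union of the three segments joining $w$ to the vertices, and the segment from $w$ to $\wt c(H_j)$ is covered by cells.

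A cleaner route is the second statement of Lemma~\ref{lem:average-diamdeg4}'s companion, namely Lemma~\ref{lem:no-macroscopic-cell} applied to the whole square $[-2^{k'},2^{k'}]^2$ with $k'$ large. Every cell meeting $B(0;2^{k'})$ has diameter $\leq \e 2^{k'}$. Cells not meeting this ball are far from $S_k$, so a triangle formed from three mutually intersecting cells one of which is far cannot reach $S_k$ unless all three are huge.

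I expect the main obstacle to be exactly this: ruling out three mutually adjacent huge cells far away whose chosen points nonetheless span a triangle crossing $S_k$. I would resolve it with Lemma~\ref{lem:phi_0-bbC-3}'s argument. For large $k$, any three adjacent cells meeting the annulus $[-2^{K},2^K]^2\setminus[-2^{K-2},2^{K-2}]^2$ have small diameter, so their convex hull avoids $S_k'$. Cells lying entirely outside $[-2^K,2^K]^2$ but connected to cells reaching $S_k$ must cross that annulus.

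Combining the two scales gives the conclusion. If some $H_j$ meets $\bbC\setminus S^{(2)}$ while $\Delta'$ meets $S_k$, then connectedness of $H_1\cup H_2\cup H_3$ forces some $H_j$ to meet $S^{(2)}\setminus S_k'$. That cell is small, and since all three pairwise intersect, each of them must meet $S^{(2)}$ and hence is small as well. Then $\Delta'$ lies within $12\e|S_k|$ of that cell, which contradicts $\Delta'\cap S_k\neq\emptyset$ unless everything lies in $S_k'$, because $\dist(S_k,\partial S_k')=|S_k|/4>12\e|S_k|$.
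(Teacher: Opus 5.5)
You have the right ingredients ($\wt c(H_j)\in H_j$, pairwise intersection of adjacent cells, Lemma~\ref{lem:no-macroscopic-cell}). You also correctly isolate the only real difficulty: three mutually adjacent cells, none of them near $S_k$, whose points $\wt c(H_j)$ span a triangle $\Delta'$ crossing $S_k$. However, the proposal never actually disposes of this case.

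The decisive step of your last paragraph is ``connectedness of $H_1\cup H_2\cup H_3$ forces some $H_j$ to meet $S^{(2)}\setminus S_k'$''. This presupposes that $H_1\cup H_2\cup H_3$ already meets $S_k'$, or at least $S^{(2)}$. A connected set meeting $\bbC\setminus S^{(2)}$ need not meet $S^{(2)}$ at all, and $\Delta'\cap S_k\neq\emptyset$ does not give you a point of the union there. The annulus remark does not close this either, since it is phrased for cells ``connected to cells reaching $S_k$'', which is exactly what is unknown in the bad case. So the argument is circular at the point you yourself flagged.

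There is also a smaller slip. A cell meeting a cell of diameter $\le 4\e|S_k|$ that meets $S^{(2)}$ only meets the $4\e|S_k|$-neighbourhood of $S^{(2)}$, not $S^{(2)}$ itself. Moreover $S_k$ may sit on $\partial S^{(2)}$, so you would need a square with some margin around $S_k$.

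The missing idea is to apply the no-macroscopic-cell bound at the scale set by the cells themselves, not at scale $|S_k|$. This is not circular, because the bound holds at all large scales simultaneously.

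1. Let $D=\max_j\diam(H_j)$ and $w\in\Delta'\cap S_k$. Since $w$ is a convex combination of the $\wt c(H_i)$, and $|x-\wt c(H_i)|\le \diam(H_j)+\diam(H_i)$ for $x\in H_j$ (the cells pairwise intersect), you get $H_1\cup H_2\cup H_3\subseteq \ol{B(w;2D)}$.
2. Since $0\in S_k$, this ball lies in $[-2^K,2^K]^2$ whenever $2^K\ge \sqrt2|S_k|+2D$.
3. Use the origin-centred form of Lemma~\ref{lem:no-macroscopic-cell}, as in the proof of Lemma~\ref{lem:phi_0-bbC-3}: a.s.\ there is $K_0$ such that for all $K\ge K_0$, every cell meeting $[-2^K,2^K]^2$ has diameter at most $\e 2^K$.
4. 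Take $K$ minimal with $2^K\ge\max\{2^{K_0},\sqrt2|S_k|+2D\}$. This gives $D\le 2\e\max\{2^{K_0},\sqrt2|S_k|+2D\}$, hence $D\le 4\e|S_k|$ once $\e\le 1/100$ and $k$ is large.
5. Then $H_1\cup H_2\cup H_3\subseteq\ol{B(w;8\e|S_k|)}\subseteq S_k'$, because $\dist(S_k,\partial S_k')=|S_k|/4$.

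This one-step argument replaces both your $S^{(2)}$ case analysis and the annulus discussion. It is presumably what the paper means by calling the lemma a direct consequence of Lemma~\ref{lem:no-macroscopic-cell}.
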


\begin{proof}
    This is a direct consequence of Lemma~\ref{lem:no-macroscopic-cell}.  
\end{proof}

We call a closed set $D\subseteq M(\cH)$ a polygon if $\partial D$ is a simple closed curve and can be written as a union of a finite number of line segments on faces of $M(\cH)$.
\begin{lemma}\label{lem:topology-cell-square}
    Almost surely, we have
    \begin{equation}\label{eq:topology-cell-square}
        \{\wt c(H):H\in\cH\}\cap \bigg(\bigcup_{S\in\cD}\partial S\bigg) = \emptyset, \qquad  \{\wt c(H):H\in\cH\}\cap \bigg(\bigcup_{H_1,H_2\in\cH,H_1\neq H_2}(H_1\cap H_2)\bigg) = \emptyset.
    \end{equation}
    Furthermore, for each $S\in\cD$, $\phi_0^{-1}(S)$ can be written as a union of a finite number of polygons on $M(\cH)$. For $S_1,S_2\in\cD$ with disjoint interiors, $\phi_0^{-1}(S_1)\cap\phi_0^{-1}(S_2)$ has zero area on $M(\cH)$. Similarly, for {distinct} $H_1,H_2\in\cH$, $\phi_0^{-1}(H_1)\cap\phi_0^{-1}(H_2)$ has zero area on $M(\cH)$.
\end{lemma}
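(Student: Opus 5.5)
The first claim in \eqref{eq:topology-cell-square} is a measure-zero statement about the independent uniform points $\wt c(H)$. Condition on $\cH$ and $\cD$. The set $\bigcup_{S\in\cD}\partial S$ is a countable union of line segments, so it has zero Lebesgue measure. Each $\wt c(H)$ has law proportional to Lebesgue measure on $H$, which is absolutely continuous, so $\bbP[\wt c(H)\in\bigcup_S\partial S\mid\cH,\cD]=0$. The family $\cH$ is countable, since it is locally finite, so a union bound over $H$ gives the first identity. For the second identity, Definition~\ref{def:cell-config} says each pairwise intersection $H_1\cap H_2$ with $H_1\neq H_2$ has zero Lebesgue measure, and there are countably many pairs. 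The same argument therefore applies.

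Next I would show that $\phi_0^{-1}(S)$ is a finite union of polygons, working triangle by triangle. Fix a face $\Delta$ of $M(\cH)$ and its image triangle $\Delta'$, with vertices $\wt c(H_1),\wt c(H_2),\wt c(H_3)$. Almost surely these three points are not collinear: conditionally on the first two, the third lies on a fixed line, which has measure zero. So $\phi_0|_\Delta$ is an affine bijection onto $\Delta'$, and $(\phi_0|_\Delta)^{-1}(S)$ is the affine preimage of the convex polygon $S\cap\Delta'$. This preimage is a closed convex polygon in $\Delta$, possibly empty or degenerate.

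To get finiteness, I would use Lemma~\ref{lem:phi_0-bbC-3}. Since $S$ is compact, $\phi_0^{-1}(S)$ has compact closure in $M(\cH)$, so it meets only finitely many faces. Hence $\phi_0^{-1}(S)$ is a finite union of convex polygonal pieces, each of whose boundary is a finite union of segments lying in faces.

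The main technical nuisance is the case of degenerate pieces, meaning segments or points with empty interior. The definition requires $\partial D$ to be a simple closed curve. I would discard the degenerate pieces and keep only the finitely many convex pieces with nonempty interior. If one wants exact equality of sets, the degenerate pieces have zero area and are harmless for later use. Alternatively, one can observe that a degenerate piece arises only when a vertex of $\Delta'$, namely some $\wt c(H)$, lies on $\partial S$, or when an edge of $\Delta'$ lies along a side of $S$. The first is excluded by \eqref{eq:topology-cell-square}. The second is excluded because the probability that two independent uniform points lie on a common horizontal or vertical line is zero. I expect this bookkeeping to be the only delicate step.

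For the area statements, let $S_1,S_2\in\cD$ have disjoint interiors. On each face $\Delta$, $\phi_0|_\Delta$ is an affine bijection, so it maps area-zero sets to area-zero sets in both directions. Then $\phi_0^{-1}(S_1)\cap\phi_0^{-1}(S_2)\cap\Delta$ is the preimage of $S_1\cap S_2\cap\Delta'\subseteq\partial S_1$, which has Lebesgue measure zero, so its preimage has zero area. Summing over the countably many faces gives zero area on $M(\cH)$. The same argument, using $\area(H_1\cap H_2)=0$ from Definition~\ref{def:cell-config}, handles $\phi_0^{-1}(H_1)\cap\phi_0^{-1}(H_2)$.
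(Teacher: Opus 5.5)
Your proof is correct and follows the same route as the paper. The first claim is a null-set and countability argument for the independent uniform points $\wt c(H)$, and the remaining claims follow from the almost sure nondegeneracy of the image triangles, which makes $\phi_0$ an affine bijection on each face. You also fill in details the paper leaves implicit: finiteness via Lemma~\ref{lem:phi_0-bbC-3}, and the treatment of degenerate pieces.

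One configuration is missing from your list of degenerate cases: a corner of $S$ lying on an edge of $\Delta'$, with the square and triangle touching only there. This is also a null event, since a fixed point lies on the line through two independent absolutely continuous points with probability zero. So the decomposition of $\phi_0^{-1}(S)$ into nondegenerate convex pieces is still exact almost surely.
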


\begin{proof}
   The first claim is straightforward since the sets  {$\partial S$ (for $S\in\cD$) and $H_1\cap H_2$ (for $H_1,H_2\in\cH$ distinct)} in~\eqref{eq:topology-cell-square}  have zero Lebesgue measure  {and since the points $\wt c(H)$ are sampled uniformly from sets of positive Lebesgue measure conditioned on the former sets}. The other claim{s} follow  from~\eqref{eq:topology-cell-square}, together with the fact that almost surely, for each $H_1,H_2,H_3$, the triangle formed by $\wt c(H_1),\wt c(H_2)$ and $\wt c(H_3)$ is nondeneragate.
\end{proof}

\begin{lemma}\label{lem:energy-phi0}
    For each $n\geq m\geq 1$, we have
    \begin{equation}\label{eq:energy-phi0-1}
        \mathrm{Energy}(\phi_m;\phi_0^{-1}(\wh S_n))\leq \mathrm{Energy}(\phi_0;\phi_0^{-1}(\wh S_n)).
    \end{equation}
    Furthermore, there is a deterministic constant $C>0$ such that a.s.
    \begin{equation}\label{eq:energy-phi0-2}
        \limsup_{k\to\infty} \frac{1}{|\wh S_k|^2}\mathrm{Energy}(\phi_0;\phi_0^{-1}(\wh S_k))<C.
    \end{equation}
\end{lemma}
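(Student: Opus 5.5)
For \eqref{eq:energy-phi0-1} I will use the Dirichlet principle on each piece of $\phi_0^{-1}(\wh S_n)$. Since $n\geq m$, the squares of $\mathfrak{S}_m$ refine $\wh S_n$: every $S\in\mathfrak S_m$ with interior meeting $\wh S_n$ lies inside $\wh S_n$. Indeed, the count $\sum_{H}\area(H\cap S)/\area(H)$ is monotone in $S$, and $\wh S_n$ is the largest dyadic square containing the point with count at most $n$. Hence $\phi_0^{-1}(\wh S_n)$ is, up to a set of zero area (Lemma~\ref{lem:topology-cell-square}), the union of the sets $\phi_0^{-1}(S)$ over $S\in\mathfrak S_m$ with $S\subseteq\wh S_n$. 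On each such piece, $\phi_m$ is the harmonic extension of the boundary values of $\phi_0$. By Lemma~\ref{lem:topology-cell-square} the piece is a finite union of polygons, and $\phi_0$ is Lipschitz on it. So $\phi_0$ is an admissible $H^1$ competitor with the same boundary data, and the Dirichlet principle gives $\mathrm{Energy}(\phi_m;\phi_0^{-1}(S))\le \mathrm{Energy}(\phi_0;\phi_0^{-1}(S))$. This comparison can be done in a conformal chart $\varphi$, since energy is conformally invariant. Summing over the pieces gives \eqref{eq:energy-phi0-1}.

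For \eqref{eq:energy-phi0-2} I will bound the energy triangle by triangle. Let $\Delta$ be a face of $M(\cH)$ with vertices $H_1,H_2,H_3$. By Lemma~\ref{lem:Dirichlet-linear}, $\mathrm{Energy}(\phi_0;\Delta)\le 3\sum_{i<j}|\wt c(H_i)-\wt c(H_j)|^2$. Adjacent cells intersect and $\wt c(H)\in H$, so $|\wt c(H_i)-\wt c(H_j)|\le \diam(H_i)+\diam(H_j)$. This yields $\mathrm{Energy}(\phi_0;\Delta)\le 12\sum_{i}\diam(H_i)^2$.

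The faces that contribute to the energy over $\phi_0^{-1}(\wh S_k)$ are those whose image triangle meets $\wh S_k$. By Lemma~\ref{lem:phi_0-bound}, applied with $\wh S_k=S_{m(k)}$, for large $k$ all three vertex cells of such a face lie in the concentric square $S'$ of side $1.5|\wh S_k|$. A cell $H$ is a vertex of at most $\deg(H)$ faces. Therefore
\[
\mathrm{Energy}(\phi_0;\phi_0^{-1}(\wh S_k))\le 12\sum_{H\in\cH(S')}\diam(H)^2\deg(H).
\]
The right side is at most $12\sum_{H\in\cH(S')}\diam(H)^2\deg(H)^4$. Now $S'$ is covered by a bounded number of squares of $\cS_{k,2}$ of side $|S_k|$ (the 8 neighbours of $S_k$ together with $S_k$ itself). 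Lemma~\ref{lem:average-diamdeg4} with $p=4$ therefore bounds this sum by a constant times $|\wh S_k|^2$ for all large $k$, almost surely. This gives \eqref{eq:energy-phi0-2} with a deterministic $C$.

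The main obstacle I expect is the first step's claim that $\phi_0^{-1}(\wh S_n)$ decomposes exactly into the $\phi_0^{-1}(S)$, $S\in\mathfrak S_m$, with correct boundary matching. This matters because the overlapping images of $\phi_0$ mean the pieces can be disconnected or non-simply connected. Lemma~\ref{lem:topology-cell-square} should handle the measure-zero overlaps. The Dirichlet principle should be applied on each connected polygonal component separately.
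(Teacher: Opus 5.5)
Your proposal is correct and follows the paper's proof essentially step for step. For \eqref{eq:energy-phi0-1} you use the Dirichlet principle on the pieces $\phi_0^{-1}(S)$, $S\in\mathfrak S_m$, that tile $\phi_0^{-1}(\wh S_n)$; for \eqref{eq:energy-phi0-2} you use the face-by-face bound from Lemma~\ref{lem:Dirichlet-linear}, the localization of Lemma~\ref{lem:phi_0-bound}, the count of at most $\deg(H)$ faces per cell, and Lemma~\ref{lem:average-diamdeg4}, with more detail than the paper gives (notably the refinement of $\wh S_n$ by $\mathfrak S_m$ and applying the Dirichlet principle on each component).
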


\begin{proof}
    The relation~\eqref{eq:energy-phi0-1} is an immediate consequence of the fact that  harmonic functions minimize Dirichlet energy subject to specified boundary data. Suppose $H_1,H_2,H_3\in\cH$ are adjacent such that the triangle $\Delta_{H_1,H_2,H_3}'$ formed by $\wt c(H_1),\wt c(H_2),\wt c(H_3)$ 
    intersects $\wh S_k$. Let $\wh S_k'$ be the square concentric to $\wh S_k$ with side length $1.5|\wh S_k|$. Then by Lemma~\ref{lem:phi_0-bound}, $H_1,H_2,H_3\subseteq \wh S_k'$  {for sufficiently large $k$}. Furthermore, if we write $\Delta_{H_1,H_2,H_3}$ for the triangle in $M(\cH)$,  by Lemma~\ref{lem:Dirichlet-linear}, 
\begin{equation}\label{eq:pf-energy-phi0-1}
    \begin{split}
    \mathrm{Energy}(\phi_0;\Delta_{H_1,H_2,H_3})&\leq 3\big((\diam(H_1)+\diam(H_2))^2+(\diam(H_1)+\diam(H_3))^2+(\diam(H_2)+\diam(H_3))^2\big)\\&\leq 12(\diam(H_1)^2+\diam(H_2)^2+\diam(H_3)^2).
    \end{split}
    \end{equation}
    Now we sum  ~\eqref{eq:pf-energy-phi0-1} over all possible choices of $H_1,H_2,H_3\subseteq \wh S_k'$ and note that each term $\diam(H)^2$  {appears} 
    at most ${12}\deg(H)$ times in the summation,  {which gives}
    \begin{equation*}
        \mathrm{Energy}(\phi_0;\phi_0^{-1}(\wh S_k))\leq 12\sum_{H\in\cH(\wh S_k')} \diam(H)^2\deg(H).
    \end{equation*}
    The relation~\eqref{eq:energy-phi0-2} then follows from Lemma~\ref{lem:average-diamdeg4}.
\end{proof}

Following the notations of~\cite{GMSinvariance}, for $z\in\bbC$, we write $H_z$ for the a.s.\ unique cell in $\cH$ containing $z$, and for $m_1<m_2$, we set 
\eqb\label{eq:def-SEm1m2}\mathrm{SE}_{m_1,m_2}^z = \frac{\int_{\phi_0^{-1}(H_z\cap \wh S_{m_2}^z)}|\nabla\phi_{m_2}(w)-\nabla\phi_{m_1}(w)|^2\,dw}{\area(H_z\cap \wh S_{m_2}^z)},\eqe
and we write $\mathrm{SE}_{m_1,m_2}$ for $\mathrm{SE}_{m_1,m_2}^0$.

\begin{lemma}\label{lem:dirichlet-A}
    For each fixed $m_1<m_2$, almost surely
    \begin{equation}
        \lim_{n\to\infty}\frac{1}{|\wh S_n|^2}\bbE\Big[\mathrm{Energy}(\phi_{m_2}-\phi_{m_1};\phi_0^{-1}(\wh S_n))\big|\cH,\cD\Big] = \bbE[\mathrm{SE}_{m_1,m_2}],
    \end{equation}
    and this expectation is bounded above by a finite constant which does not depend on $m_1,m_2$.
\end{lemma}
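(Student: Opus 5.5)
This is the continuum analogue of \cite[Lemma 2.12]{GMSinvariance}, and I would follow its structure: write the energy over $\phi_0^{-1}(\wh S_n)$ as an integral over $z\in\wh S_n$ of a local density, then apply the ergodic theorem.

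\textbf{Decomposition.} By Lemma~\ref{lem:topology-cell-square}, the sets $\phi_0^{-1}(H)$ for $H\in\cH$ overlap only in zero area. Hence
\[
\mathrm{Energy}(\phi_{m_2}-\phi_{m_1};\phi_0^{-1}(\wh S_n))=\sum_{H}\int_{\phi_0^{-1}(H\cap\wh S_n)}|\nabla\phi_{m_2}-\nabla\phi_{m_1}|^2 .
\]
For $m_2\le n$ each $\wh S^z_{m_2}$ with $z\in \wh S_n$ is contained in $\wh S_n$, since dyadic squares are nested. So each cell contribution is split over the squares of $\mathfrak S_{m_2}$ inside $\wh S_n$. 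Dividing by $\area(H\cap\wh S^z_{m_2})$ and integrating over $z\in H\cap \wh S^z_{m_2}$ gives
\[
\mathrm{Energy}(\phi_{m_2}-\phi_{m_1};\phi_0^{-1}(\wh S_n))=\int_{\wh S_n}\mathrm{SE}^z_{m_1,m_2}\,dz .
\]
This holds up to cells crossing $\partial\wh S_n$, which are handled as in \cite{GMSinvariance} via Lemma~\ref{lem:no-macroscopic-cell}. Taking conditional expectation over the points $\wt c(H)$, we get $\bbE[\mathrm{SE}^z_{m_1,m_2}\mid\cH,\cD]=F(\cH-z,\cD-z)$, where $F$ is a scale-invariant, nonnegative measurable function of the pair. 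Scale invariance holds because energy is conformally invariant and the area normalization scales correctly: rescaling multiplies $\phi_m$ by $C$ and the area by $C^2$.

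\textbf{Ergodic theorem.} Apply Lemma~\ref{lem:ergodic-cell-system} along $S_k$, recalling that each $\wh S_n$ equals some $S_{m(n)}$. This gives
\[
|\wh S_n|^{-2}\int_{\wh S_n}F(\cH-z,\cD-z)\,dz\to\bbE[F]=\bbE[\mathrm{SE}_{m_1,m_2}],
\]
which is valid since $F\ge0$.

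\textbf{Finiteness and uniform bound.} I would use $|\nabla\phi_{m_2}-\nabla\phi_{m_1}|^2\le 2|\nabla\phi_{m_2}|^2+2|\nabla\phi_{m_1}|^2$. Each $\phi_m$ minimizes energy on each $\phi_0^{-1}(S)$, $S\in\mathfrak S_m$, so by \eqref{eq:energy-phi0-1} the total energy over $\phi_0^{-1}(\wh S_n)$ is at most $4\,\mathrm{Energy}(\phi_0;\phi_0^{-1}(\wh S_n))$. By the same ergodic identity applied to these quantities, together with \eqref{eq:energy-phi0-2}, this is at most $4C|\wh S_n|^2$ in the limit. So $\bbE[\mathrm{SE}_{m_1,m_2}]\le 4C$, independently of $m_1,m_2$.

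\textbf{Main obstacle.} The hard step is the measurability and invariance bookkeeping. One must check that after averaging out $\wt c(H)$ the density is a genuine function of $(\cH-z,\cD-z)$ and is covariant under translation, even though the $\wt c(H)$ are extra randomness. One also must control the boundary cells so that the exact integral identity holds up to a $o(|\wh S_n|^2)$ error.
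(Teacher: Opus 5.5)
Your proposal is correct and follows the paper's proof. The paper uses the same four ingredients you do: the identity $\int_{\wh S_n}\mathrm{SE}^z_{m_1,m_2}\,dz=\mathrm{Energy}(\phi_{m_2}-\phi_{m_1};\phi_0^{-1}(\wh S_n))$ for $n\ge m_2$ via Lemma~\ref{lem:topology-cell-square}; scale invariance of $F=\bbE[\mathrm{SE}_{m_1,m_2}\mid\cH,\cD]$ together with Lemma~\ref{lem:ergodic-cell-system}; and the uniform bound from Lemma~\ref{lem:energy-phi0} combined with $|a-b|^2\le 2|a|^2+2|b|^2$.

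One small remark: no boundary-cell error term is needed. Since $\mathrm{SE}^z$ is normalized by $\area(H_z\cap\wh S^z_{m_2})$ and $\wh S_n$ is a union of squares of $\mathfrak S_{m_2}$, that identity holds exactly.
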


\begin{proof}
Observe that for $n\geq m_2$, 
Lemma~\ref{lem:topology-cell-square}  {gives that} almost surely, 
\eqb\label{eq:pf-energy-phi0-3}\frac{1}{|\wh S_n|^2}\int_{\wh S_n}\mathrm{SE}_{m_1,m_2}^z \,dz = \frac{1}{|\wh S_n|^2}\int_{\phi_0^{-1}(\wh S_n)}|\nabla\phi_{m_2}(w)-\nabla\phi_{m_1}(w)|^2\,dw. \eqe
If we dilate the cell configuration $\cH$ and the dyadic system $\cD$ by some constant $C$, then $\phi_0^{-1}(H_0\cap \wh S_{m_2})$ is invariant, $\phi_m$ is multiplied by $C$, and $\area(H_0\cap \wh S_{m_2})$ is multiplied by $C^2$, and thus $\bbE[\mathrm{SE}_{m_1,m_2}|\cH,\cD]$ is scale invariant. By Lemma~\ref{lem:energy-phi0} and a triangle inequality, there is a deterministic constant $C_0$ such that ~\eqref{eq:pf-energy-phi0-3} is less than $C_0$ for all $n$ large enough. The lemma then follows by taking conditional expectation of~\eqref{eq:pf-energy-phi0-3} over $(\cH,\cD)$ and setting $F(\cH,\cD)=\bbE[\mathrm{SE}_{m_1,m_2}|\cH,\cD]$ in Lemma~\ref{lem:ergodic-cell-system} (observe that $F(\cH-z,\cD-z) = \bbE[\mathrm{SE}_{m_1,m_2}^z|\cH,\cD]$ for each $z\in\bbC$ since given $\cH,\cD$, up to translation, the $\phi_m$-functions for $(\cH,\cD)$ and $(\cH-z,\cD-z)$ have the same law).
\end{proof}

\begin{lemma}\label{lem:dirichlet-B}
    For each $\e>0$, there exists $m_\e\in\bbN$ such that \begin{equation}
        \bbE[\mathrm{SE}_{m_\e,m}]<\e, \  \text{ for all }m\geq m_\e.
    \end{equation}
\end{lemma}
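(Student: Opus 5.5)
We follow the orthogonality argument of \cite[Lemma 2.13]{GMSinvariance}, now for continuum Dirichlet energy on $M(\cH)$. The key identity is a Pythagorean decomposition. For $m_1<m_2<m_3$, each square of $\mathfrak{S}_{m_2}$ is a union of squares of $\mathfrak{S}_{m_1}$, since $\mathfrak S_m$ coarsens as $m$ grows. Moreover $\phi_{m_2}-\phi_{m_1}$ vanishes on $\partial\phi_0^{-1}(S)$ for every $S\in\mathfrak{S}_{m_2}$, because both functions equal $\phi_0$ there. Hence, by Green's formula on $\phi_0^{-1}(S)$ (a finite union of polygons by Lemma~\ref{lem:topology-cell-square}),
$$\int_{\phi_0^{-1}(S)}\langle\nabla(\phi_{m_3}-\phi_{m_2}),\nabla(\phi_{m_2}-\phi_{m_1})\rangle\,dw=0 \quad\text{for each } S\in \mathfrak S_{m_3}.$$
Here $\phi_{m_3}$ is harmonic inside $\phi_0^{-1}(S)$, and $\phi_{m_2}-\phi_{m_1}$ vanishes on the boundaries of the finer squares composing $S$; more precisely, $\phi_{m_3}-\phi_{m_2}$ vanishes on $\partial\phi_0^{-1}(S')$ for $S'\in\mathfrak S_{m_2}$, while $\phi_{m_2}-\phi_{m_1}$ is harmonic inside each such piece up to boundary terms that cancel. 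Summing over $S\in\mathfrak S_{m_3}$ contained in $\wh S_n$ with $n\ge m_3$ gives
$$\mathrm{Energy}(\phi_{m_3}-\phi_{m_1};\phi_0^{-1}(\wh S_n))=\mathrm{Energy}(\phi_{m_3}-\phi_{m_2};\cdot)+\mathrm{Energy}(\phi_{m_2}-\phi_{m_1};\cdot).$$

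Next we pass to ergodic averages. Divide by $|\wh S_n|^2$, take conditional expectation given $(\cH,\cD)$, and let $n\to\infty$ using Lemma~\ref{lem:dirichlet-A}. This yields, for $m_1<m_2<m_3$,
$$\bbE[\mathrm{SE}_{m_1,m_3}]=\bbE[\mathrm{SE}_{m_1,m_2}]+\bbE[\mathrm{SE}_{m_2,m_3}].$$
Lemma~\ref{lem:dirichlet-A} also gives $\bbE[\mathrm{SE}_{m_1,m_2}]\le C_0$ uniformly, with $C_0$ coming from Lemma~\ref{lem:energy-phi0} and the triangle inequality. Now take any increasing sequence $m_1<m_2<\cdots$. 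Iterating the additivity gives $\sum_j \bbE[\mathrm{SE}_{m_j,m_{j+1}}]=\lim_N\bbE[\mathrm{SE}_{m_1,m_N}]\le C_0$.

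Finally we conclude by contradiction, as in \cite{GMSinvariance}. Suppose the claim fails for some $\e$. Then we can inductively choose $m_1<m_2<\cdots$ with $\bbE[\mathrm{SE}_{m_j,m_{j+1}}]\ge\e$ for every $j$. The sum of these terms is then infinite, contradicting the bound $C_0$. Hence some $m_\e$ works.

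The main obstacle is justifying the orthogonality rigorously in our continuum setting. One issue is that $\phi_0^{-1}(S)$ may be disconnected or non-simply-connected. Another is that the harmonic extension is taken on preimages under a non-injective $\phi_0$. A third is ensuring the boundary terms vanish so that Green's formula applies: $\phi_m$ is continuous and piecewise smooth, and has finite energy by Lemma~\ref{lem:energy-phi0}. There is also an edge effect to control. Squares of $\mathfrak S_{m_3}$ meeting $\partial\wh S_n$ must be handled using the nesting property of $\mathfrak S_m$ inside the dyadic system: $\wh S_n$ is itself a union of squares of $\mathfrak S_m$ for $m\le n$. For this reason no truncation error arises.
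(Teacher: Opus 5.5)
Your proof is essentially the paper's. Both use orthogonality of the increments in the Dirichlet inner product, pass to $\bbE[\mathrm{SE}]$ via Lemma~\ref{lem:dirichlet-A}, and use that lemma's uniform bound to force small tails. Your contradiction argument with an extracted subsequence $m_1<m_2<\cdots$ is just a rephrasing of the paper's observation that $\sum_j\bbE[\mathrm{SE}_{j-1,j}]$ has bounded partial sums, so its tail is eventually below $\e$.

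One correction: the ``more precisely'' sentence in your orthogonality step is false as written.
\begin{itemize}
\item $\phi_{m_3}-\phi_{m_2}$ does not vanish on $\partial\phi_0^{-1}(S')$ for $S'\in\mathfrak S_{m_2}$. It vanishes only on the boundaries of the coarser $\mathfrak S_{m_3}$-pieces, since $\phi_{m_3}\neq\phi_0$ on the internal boundaries of the finer squares.
\item $\phi_{m_2}-\phi_{m_1}$ is not harmonic inside $\phi_0^{-1}(S')$, because $\phi_{m_1}$ has kinks across the boundaries of the $\mathfrak S_{m_1}$-pieces.
\end{itemize}

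The correct justification goes as follows. Set $g=\phi_{m_2}-\phi_{m_1}$. It vanishes on $\partial\phi_0^{-1}(S')$ for every $S'\in\mathfrak S_{m_2}$, and hence on $\partial\phi_0^{-1}(S)$ for every $S\in\mathfrak S_{m_3}$. Now integrate by parts twice:
\begin{itemize}
\item $\int_{\phi_0^{-1}(S)}\nabla\phi_{m_3}\cdot\nabla g\,dw=0$, by harmonicity of $\phi_{m_3}$ in $\phi_0^{-1}(S)$;
\item $\int_{\phi_0^{-1}(S')}\nabla\phi_{m_2}\cdot\nabla g\,dw=0$ for each $S'\in\mathfrak S_{m_2}$ with $S'\subseteq S$, by harmonicity of $\phi_{m_2}$ in $\phi_0^{-1}(S')$.
\end{itemize}
Subtracting gives your orthogonality relation. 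This is precisely the paper's identity $\int_{\phi_0^{-1}(S)}\nabla\phi_m\cdot(\nabla\phi_{m'}-\nabla\phi_{m'-1})\,dw=0$ for $S\in\mathfrak S_m$ and $m'\le m$, applied twice.
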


\begin{proof}
    For each square $S\in\mathfrak{S}_m$, and $m'\leq m$, $\phi_{m'}-\phi_{m'-1}$ vanishes on $\partial\phi_0^{-1}(S)$ following the definition of $\phi_m$, and $\phi_m$ is harmonic in the interior of $ \phi_0^{-1}(S)$. Thus using integration by parts, almost surely, for each $S\in \mathfrak{S}_m$,
    $$\int_{\phi_0^{-1}(S)} \nabla \phi_m(w)\cdot (\nabla\phi_{m'}(w)-\nabla\phi_{m'-1}(w))\,dw = 0.$$
    
    By a summation over the squares, one can check that for $n\geq m\geq m'$, almost surely
    $$\frac{1}{|\wh S_n|^2}\mathrm{Energy}(\phi_{m}-\phi_{m'}; \phi_0^{-1}(\wh S_n)) = \sum_{j=m'+1}^m \frac{1}{|\wh S_n|^2}\mathrm{Energy}(\phi_j-\phi_{j-1}; \phi_0^{-1}(\wh S_n)). $$
     Now taking $n\to\infty$, by Lemma~\ref{lem:dirichlet-A}, we have 
     $$\bbE[\mathrm{SE}_{m',m}] = \sum_{j=m'+1}^m \bbE[\mathrm{SE}_{j-1,j}],$$
     and the sum is uniformly bounded from above by the last part of Lemma~\ref{lem:dirichlet-A}. In particular, there exists $m_\e$ such that $\sum_{j=m_\e}^\infty \bbE[\mathrm{SE}_{j-1,j}]<\e$, and thus $\bbE[\mathrm{SE}_{m_\e,m}]<\e$ for all $m\geq m_\e$.
\end{proof}

\begin{proof}[Proof of Proposition~\ref{prop:harmonic-exist}]
  Fix $\ell\in\bbN$.   Let $\e>0$ and $m_\e$ be as in Lemma~\ref{lem:dirichlet-B}. Without loss of generality assume $\ell<m_\e$.  Then almost surely, using Lemma~\ref{lem:topology-cell-square}, for $m\geq m_\e$,
    \eqb\begin{split}\int_{\wh S_\ell}& \mathrm{SE}_{m_\e,m}^z\,dz = \sum_{H\in\cH(\wh S_\ell)} \frac{\area(H\cap \wh S_\ell)}{\area(H\cap \wh S_m)}\int_{\phi_0^{-1}(H\cap\wh S_m)} |\nabla\phi_m(w)-\nabla\phi_{m_\e}(w)|^2\,dw 
    \\&\geq \sum_{H\in\cH(\wh S_\ell)\backslash\cH(\partial \wh S_\ell)} \int_{\phi_0^{-1}(H)}|\nabla\phi_m(w)-\nabla\phi_{m_\e}(w)|^2\,dw = \int_{\phi_0^{-1}(\cH(\wh S_\ell)\backslash\cH(\partial \wh S_\ell))}|\nabla\phi_m(w)-\nabla\phi_{m_\e}(w)|^2\,dw.
    \end{split}\eqe 
    On the other hand, by Lemma~\ref{lem:dyadic-resample} and the definition of $m_\e$,
    \begin{equation}
       \bbE\Big[ \frac{1}{|\wh S_\ell|^2}\int_{\wh S_\ell} \mathrm{SE}_{m_\e,m}^z\,dz \Big] = \bbE[\mathrm{SE}_{m_\e,m}]<\e.
    \end{equation}
    Let $\varphi:M(\cH)\to\bbC$ be a conformal map sending the point 0 on $M(\cH)$  {(which corresponds to the a.s.\ unique origin-containing cell of $\cH$)} to 0. Using the conformal invariance of Dirichlet energy, for each $m \geq m_\e$,
    \begin{equation}\label{eq:pf-prop:harmonic-exist-4}
        \bbE\Big[ \int_{{\wt D_\ell}}|\nabla(\phi_m\circ\varphi^{-1})(z)-\nabla(\phi_{m_\e}\circ\varphi^{-1})(z)|^2 \,dz\Big]<\e,
    \end{equation}
  {where} $\wt D_\ell = \varphi(\phi_0^{-1}(\cH(\wh S_\ell)\backslash\cH(\partial \wh S_\ell)))$    Observe that $\phi_m\circ\varphi^{-1}$ and $\phi_{m_\e}\circ\varphi^{-1}$ are ordinary harmonic functions on $\wt D_\ell$ since we assumed that $m_\e>\ell$. Let $\delta\in(0,1)$, $\mathring{D}_\ell^\delta$ be the set of points $z_0$ such that for each point $z$ on the line segment $L$ containing 0 and $z_0$, $B(z;\delta)\subseteq \wt D_\ell$. For any harmonic function $f$ on  $\wt D_\ell$, $\nabla f$ is also harmonic, and thus using the mean value property of harmonic functions, for each $z_0=r_0e^{i\theta_0}\in \mathring{D}_\ell^\delta$,
 \begin{equation}
 \begin{split}
     \big|f(z{_0})-f(0)\big| &= \Big|\int_0^{r_0} \bigg(\frac{\partial f}{\partial x}(re^{i\theta_0})\cos \theta_0+\frac{\partial f}{\partial y}(re^{i\theta_0})\sin \theta_0 \bigg)\,
      {dr}\Big|\\
     &= \frac{1}{\pi\delta^2}\Big|\int_0^{r_0} \int_{\delta\bbD} \big(\frac{\partial f}{\partial x}(re^{i\theta_0}+w)\cos\theta_0 + \frac{\partial f}{\partial y}(re^{i\theta_0}+w)\sin\theta_0\big)\,dw {\,dr}\Big| \\
     &\leq \frac{10}{\delta^2}\int_{ {\wt D}_\ell} |\nabla f(w)|\,dw \leq \frac{10}{\delta^2}\, \area( {\wt D}_\ell)^{1/2} \big(\int_{ {\wt D}_\ell} |\nabla f(w)|^2\,dw \big)^{1/2}.
     \end{split}
 \end{equation}
 Combined with~\eqref{eq:pf-prop:harmonic-exist-4} we see that the sequence $\{\phi_m\circ\varphi^{-1}-\phi_m(0)\}$ is Cauchy in probability with respect to the topology of uniform convergence on $\mathring{D}_\ell^\delta$. Since $\cH$ is locally finite, by varying $\ell$ and $\delta$, one can find a function $\phi_\infty$ on $M(\cH)$ such that $\{\phi_m\circ\varphi^{-1}-\phi_m(0)\}$ converges to $\phi_\infty\circ \varphi^{-1}$ in probability with respect to the topology of uniform convergence  on compact subsets of $\varphi(M(\cH))$. This verifies~\eqref{eq:prop:harmonic-exist}. The function $\phi_\infty$ is a.s.\ harmonic since each $\phi_m$ is harmonic on $\phi_0^{-1}(\cH(\wh S_m)\backslash\cH(\partial S_m))$, and the uniform limit of harmonic functions is harmonic. The claim $\bbE\big[\phi_\infty(x)|\cH,\cD \big]<\infty$ for each $x\in M(\cH)$ also follows from~\eqref{eq:pf-prop:harmonic-exist-4} and the same argument as above.
\end{proof}

\subsection{Sublinearity of the corrector}\label{subsec:GMS2.4}

 {The aim of this subsection is to prove Proposition~\ref{prop:harmonic-sublinear} below.  Recall the notation $M(S)$ introduced at the beginning of Section~\ref{subsec:regularity}. We start with the fact that the Dirichlet energy per unit area of $\phi_m-\phi_\infty$ is small when $m$ is large (Lemma \ref{lem:dirichlet-D}), and for fixed $m$, the maximal value of $|\phi_m(z)-\phi_0(z)|$  {on $M(S_k)$} has order $o_k(|S_k|)$ as $k\to\infty$  {(Lemma \ref{lem:sublinear-A})}. Then we transfer the bound on the the Dirichlet energy per unit area of $\phi_m-\phi_\infty$ to a pointwise bound for $|\phi_0-\phi_\infty|$. The structure of the proof is similar as in \cite[Section 2.4]{GMSinvariance}, but we need to use a different setup when we conclude the proof of Proposition~\ref{prop:harmonic-sublinear} where we 
we work with the uniformization map $\varphi_k:M(S_k)\to |S_k|\bbD$ as in Lemmas~\ref{lem:area/length-1} and~\ref{lem:area/length-2}, and we also need to use regularity results from Section~\ref{subsec:regularity}. We refer to the paragraph above the latter proof for a proof outline.}

\begin{proposition}\label{prop:harmonic-sublinear}
    Almost surely, in the setting of Proposition~\ref{prop:harmonic-exist},
    \begin{equation}\label{eq:prop:harmonic-sublinear}
        \lim_{k\to\infty} \frac{1}{2^k}\sup_{x\in M([-2^k,2^k]^2)}|\phi_0(x) - \bbE[\phi_\infty(x)|\cH,\cD]|=0.
    \end{equation}
\end{proposition}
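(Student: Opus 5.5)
We follow the outline announced above, modelled on \cite[Section 2.4]{GMSinvariance}. Write $\bar\phi_\infty(x):=\bbE[\phi_\infty(x)|\cH,\cD]$ and $\bar\phi_m$ analogously. The plan has three steps.

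\textbf{Step 1: Dirichlet energy of $\phi_m-\phi_\infty$ per unit area.} Passing to the limit $m'\to\infty$ in Lemma~\ref{lem:dirichlet-B} via Proposition~\ref{prop:harmonic-exist} and Fatou's lemma gives $\bbE[\mathrm{SE}_{m,\infty}]<\e$ for $m\ge m_\e$, where $\mathrm{SE}_{m,\infty}$ is defined as in~\eqref{eq:def-SEm1m2} with $\phi_\infty$ in place of $\phi_{m_2}$. By Lemma~\ref{lem:ergodic-cell-system}, exactly as in Lemma~\ref{lem:dirichlet-A}, this yields a.s.
\[
\limsup_k |S_k|^{-2}\,\bbE\big[\mathrm{Energy}(\phi_m-\phi_\infty;\phi_0^{-1}(S_k))\,\big|\,\cH,\cD\big]\le\e.
\]
Lemma~\ref{lem:dyadic-shift} upgrades this to all squares in $\cS_{k,\ell}$.

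\textbf{Step 2: $\phi_m$ is close to $\phi_0$ for fixed $m$.} For fixed $m$, we show $\sup_{M(S_k)}|\phi_m-\phi_0|=o(|S_k|)$. On each $\phi_0^{-1}(S)$ with $S\in\mathfrak S_m$, $\phi_m$ is the harmonic extension of $\phi_0$. By the maximum principle, $\phi_m$ takes values in the convex hull of $\phi_0(\partial\phi_0^{-1}(S))\subseteq S$. Hence $|\phi_m-\phi_0|\le\diam(S)\sqrt2$ there. The squares $\wh S_m^z$ meeting $S_k$ have size $o(|S_k|)$: by Lemma~\ref{lem:dyadic-shift} and the ergodic theorem, the largest square containing at most $m$ cells inside $S_k$ is $o(|S_k|)$. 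This is the analog of the corresponding lemma in~\cite{GMSinvariance}.

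\textbf{Step 3: from energy bounds to pointwise bounds.} Work in the conformal coordinates $\varphi_k:M(S_k)\to|S_k|\bbD$. Since $|\nabla|^2$ is conformally invariant, Step 1 gives that $g:=(\bar\phi_m-\bar\phi_\infty)\circ\varphi_k^{-1}$ has $\int_{|S_k|\bbD}|\nabla g|^2\le\e|S_k|^2$, with Jensen used to pass the conditional expectation inside. Choose a grid of horizontal and vertical lines at spacing $\delta|S_k|$ in the disk, selecting each line by averaging over translates. Fubini and Cauchy--Schwarz then give oscillation of $g$ along each chosen segment of at most $C\sqrt{\e/\delta}\,|S_k|$. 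The connectivity of the grid gives $|g-g(0)|\lesssim\delta^{-1}\sqrt{\e/\delta}\,|S_k|$ on the grid within $B(0;(1-\lambda)|S_k|)$.

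Combined with Step 2, $\bar\phi_\infty\circ\varphi_k^{-1}$ is within $o(|S_k|)+C_\delta\sqrt\e|S_k|$ of $\phi_0\circ\varphi_k^{-1}$ on the grid, after normalizing $\phi_\infty(0)=0$ with $\phi_0(0)=O(\diam H_0)$. In each complementary cell $Q$ of the grid (a $\delta|S_k|$-box), $\bar\phi_\infty\circ\varphi_k^{-1}$ is harmonic. By the maximum principle its oscillation on $Q$ is at most its oscillation on $\partial Q$. On the other hand, Lemma~\ref{lem:area/length-2} shows that the cells $H$ with $\varphi_k(P_H)\cap Q\ne\emptyset$ have union of diameter $\le c(-\log\delta)^{-1/2}|S_k|$. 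Since $\phi_0$ maps each $P_H$ into the convex hull of $H$ and its neighbors, whose size is controlled by Lemma~\ref{lem:no-macroscopic-cell}, the oscillation of $\phi_0\circ\varphi_k^{-1}$ on $Q$ is small. It follows that $|\phi_0-\bar\phi_\infty|\le(c(-\log\delta)^{-1/2}+C_\delta\sqrt\e+o(1))|S_k|$ on $\varphi_k^{-1}(B(0;(1-\lambda)|S_k|))$.

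Finally, choose $\delta$ small, then $\e$ small, then $k$ large. Transfer from $S_k$ to $[-2^k,2^k]^2$ using Lemma~\ref{lem:dyadic-shift}, Lemma~\ref{lem:area/length-4} and Lemma~\ref{lem:area/length-1}, which ensure that $M([-2^k,2^k]^2)$ lies in $\varphi_k^{-1}(B(0;(1-\lambda)|S_{k'}|))$ for a suitable ancestor $S_{k'}$.

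The main obstacle is Step 3. Step 1 only controls a conditional expectation of the energy, and in Step 3 we must argue about $\bar\phi_\infty$, which is harmonic as an average of harmonic functions. We must also make sure that the grid lines, chosen in the image of $\varphi_k$, have $\phi_0$-control via Lemma~\ref{lem:area/length-2}, which is stated for $S_k$ rather than arbitrary regions. The first thing to try is to do everything in $\varphi_k$ coordinates and to use Markov's inequality on the grid-line choice to get a.s. statements along the sequence $k$.
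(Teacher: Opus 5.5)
Your proposal is correct and follows the paper's proof essentially step for step. The shared steps are: small Dirichlet energy per unit area of $\phi_\infty-\phi_m$ via Fatou and the ergodic theorem (Lemmas~\ref{lem:dirichlet-C} and~\ref{lem:dirichlet-D}); $\phi_m\approx\phi_0$ by the maximum principle on the shrinking squares of $\mathfrak S_m$ (Lemma~\ref{lem:sublinear-A}); a grid of good horizontal and vertical lines in $\varphi_k$-coordinates controlled by Cauchy--Schwarz; the maximum principle in the grid boxes combined with Lemma~\ref{lem:area/length-2} to control the oscillation of $\phi_0$; and a final transfer to $M([-2^k,2^k]^2)$ via Lemma~\ref{lem:area/length-1}, Lemma~\ref{lem:dyadic-shift} and $\phi_\infty(0)=0$. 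The remaining differences are only cosmetic: you decouple the grid spacing $\delta$ from the energy level $\e$ (choosing $\e$ after $\delta$), while the paper couples them as $e^{-1/\e^4}$ and $e^{-1/\e^8}$, and you pass to $\bbE[\phi_\infty|\cH,\cD]$ at the outset via Jensen, whereas the paper carries $\bbE[\,|\cdot|\,|\cH,\cD]$ through the line estimates.
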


Extending the definition~\eqref{eq:def-SEm1m2}, we set
\begin{equation}
    \mathrm{SE}_{m,\infty}^z = \frac{\int_{\phi_0^{-1}(H_z)}|\nabla\phi_m(x)-\nabla\phi_\infty(x)|^2\,dx}{\area(H_z)},
\end{equation}
and define $\mathrm{SE}_{m,\infty}=\mathrm{SE}_{m,\infty}^0$.

\begin{lemma}\label{lem:dirichlet-C}
    For each $m\in\bbN$, $\bbE[\mathrm{SE}_{m,\infty}]<\infty$, and 
    \eqb\label{eq:lem:dirichlet-C}\lim_{m\to\infty}\bbE[\mathrm{SE}_{m,\infty}] = 0.\eqe
\end{lemma}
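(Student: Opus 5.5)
The plan follows the proof of Lemma~\ref{lem:dirichlet-A}: first express $\mathrm{SE}_{m,\infty}$ as an ergodic average, then pass $m'\to\infty$ in the identity from Lemma~\ref{lem:dirichlet-B}. Fix $m$. For every $m'\geq m$ we have $\phi_{m'}-\phi_{m'}(0)\to\phi_\infty$ in probability, uniformly on compacts, by Proposition~\ref{prop:harmonic-exist}. These are harmonic functions, so on the interior of each triangle of $M(\cH)$ away from the (finitely many, locally) boundary segments $\partial\phi_0^{-1}(S)$, the gradients also converge locally uniformly in probability. The same then holds for $\nabla\phi_m-\nabla\phi_{m'}$, with limit $\nabla\phi_m-\nabla\phi_\infty$.

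Applying Fatou's lemma to $\phi_0^{-1}(H_0)$, along a subsequence $m'_j$ along which the convergence is a.s. on compacts, gives
\[
\mathrm{SE}_{m,\infty}\leq\liminf_{j\to\infty}\frac{\int_{\phi_0^{-1}(H_0)}|\nabla\phi_{m'_j}-\nabla\phi_m|^2}{\area(H_0)} .
\]
This is not quite $\mathrm{SE}_{m,m'_j}$, because the latter normalizes by $\area(H_0\cap\wh S_{m'_j})$ and integrates over $\phi_0^{-1}(H_0\cap\wh S_{m'_j})$. However, a.s.\ $\wh S_{m'}\to\bbC$ as $m'\to\infty$, so for large $m'$ we have $H_0\subseteq\wh S_{m'}$ and the two quantities coincide. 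Taking expectations and applying Fatou once more yields
\[
\bbE[\mathrm{SE}_{m,\infty}]\leq\liminf_{m'\to\infty}\bbE[\mathrm{SE}_{m,m'}] .
\]

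By Lemma~\ref{lem:dirichlet-B} and the additivity $\bbE[\mathrm{SE}_{m,m'}]=\sum_{j=m+1}^{m'}\bbE[\mathrm{SE}_{j-1,j}]$ from its proof, the right side is bounded by the tail sum $\sum_{j>m}\bbE[\mathrm{SE}_{j-1,j}]$. This tail sum is finite, since the full series is bounded by the uniform constant in Lemma~\ref{lem:dirichlet-A}, and it tends to $0$ as $m\to\infty$. Both claims of the lemma follow.

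The main obstacle is justifying the Fatou step, namely the a.s.\ pointwise convergence of gradients on $\phi_0^{-1}(H_0)$. Convergence of the $\phi_{m'}$ holds only in probability, so we pass to a subsequence. A further subtlety is that $\phi_{m'}$ is harmonic only off the boundaries $\partial\phi_0^{-1}(S)$ for $S\in\mathfrak{S}_{m'}$. Near $H_0$, however, these boundaries disappear once $\wh S_{m'}$ contains a neighborhood of $\phi_0(\phi_0^{-1}(H_0))$, which by Lemma~\ref{lem:phi_0-bound} happens for large $m'$. After that, interior estimates for harmonic functions (Cauchy estimates after composing with a uniformization $\varphi$) give local uniform convergence of gradients. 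Boundary effects at $\partial\phi_0^{-1}(H_0)$ are harmless: this set is a finite union of segments of zero area, and Fatou only needs a.e.\ convergence.
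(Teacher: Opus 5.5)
Your proposal is correct and follows essentially the same route as the paper. You pass from $\mathrm{SE}_{m,m'}$ to $\mathrm{SE}_{m,\infty}$ using the locally uniform convergence of the harmonic functions (hence of their gradients) from Proposition~\ref{prop:harmonic-exist}, apply Fatou's lemma, and bound $\bbE[\mathrm{SE}_{m,m'}]$ uniformly in $m'$ by the tail $\sum_{j>m}\bbE[\mathrm{SE}_{j-1,j}]$ supplied by Lemmas~\ref{lem:dirichlet-A} and~\ref{lem:dirichlet-B}. Your extra care (the a.s.\ convergent subsequence, $H_0\subseteq\wh S_{m'}$ for large $m'$, harmonicity near $\phi_0^{-1}(H_0)$) only fills in details the paper leaves implicit. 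One small correction: $H_0\subseteq\wh S_{m'}$ eventually follows from $\wh S_{m'}=S_{k(m')}$ with $k(m')\to\infty$, not from Lemma~\ref{lem:phi_0-bound}.
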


\begin{proof}
    For each fixed $m$, by Proposition~\ref{prop:harmonic-exist}, along with the fact that the convergence of harmonic functions $\{f_n\}$   to $f$ in local uniform topology implies the convergence of $\{\nabla f_n\}$ to $\nabla f$ in local uniform topology, $\mathrm{SE}_{m,m'}\to \mathrm{SE}_{m,\infty}$ in probability {as $m'\to\infty$}. By the last sentence of Lemma~\ref{lem:dirichlet-A} and Fatou's lemma, we have $\bbE[\mathrm{SE}_{m,\infty}]<\infty$. By Fatou's lemma and Lemma~\ref{lem:dirichlet-B}, we have $\bbE[\mathrm{SE}_{m,\infty}]\leq\e$ for $m\geq m_\e$, which further implies~\eqref{eq:lem:dirichlet-C}.
\end{proof}

\begin{lemma}\label{lem:dirichlet-D}
    Almost surely, 
    \begin{equation}
    \lim_{m\to\infty}\limsup_{n\to\infty}\frac{1}{|\wh S_n|^2}\bbE\big[\mathrm{Energy}(\phi_\infty-\phi_m;M(\wh S_n))|\cH,\cD\big] = 0.
    \end{equation}
\end{lemma}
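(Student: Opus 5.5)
The plan is to reproduce the ergodic-average argument of Lemma~\ref{lem:dirichlet-A}, now with $\phi_\infty$ in place of $\phi_{m_2}$, and then to pass from the $z$-integral of $\mathrm{SE}^z_{m,\infty}$ to the energy on $M(\wh S_n)$.

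\textbf{Step 1 (scale invariance and translation covariance).} Put $F_m(\cH,\cD):=\bbE[\mathrm{SE}_{m,\infty}\,|\,\cH,\cD]$. Rescaling $(\cH,\cD)$ by $C$ leaves $\phi_0^{-1}(H_0)$ unchanged, multiplies $\phi_m$ and $\phi_\infty$ by $C$, and multiplies $\area(H_0)$ by $C^2$. Hence $F_m$ is scale invariant. Translation covariance should also hold: given $(\cH,\cD)$, the $\phi_m$'s for $(\cH-z,\cD-z)$ agree in law with the translated ones. The limit $\phi_\infty$ is only determined up to an additive constant, but $\mathrm{SE}$ involves only gradients, so this ambiguity does not matter. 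Lemma~\ref{lem:dirichlet-C} gives $\bbE[F_m]<\infty$, so Lemma~\ref{lem:ergodic-cell-system} applies to $F_m$ along the squares $\wh S_n$. Since each $\wh S_n$ is some $S_{m(n)}$ with $m(n)\to\infty$, this gives a.s.
\[
\lim_{n\to\infty}\frac{1}{|\wh S_n|^2}\int_{\wh S_n}\bbE[\mathrm{SE}^z_{m,\infty}\,|\,\cH,\cD]\,dz=\bbE[\mathrm{SE}_{m,\infty}].
\]
I would record this a.s.\ statement simultaneously for all $m\in\bbN$, which is fine because there are countably many.

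\textbf{Step 2 (comparison with the energy on $M(\wh S_n)$).} The identity~\eqref{eq:pf-energy-phi0-3} relied on $\wh S_{m_2}^z$, which is absent here. Instead I would use Lemma~\ref{lem:topology-cell-square}: the sets $\phi_0^{-1}(H)$ cover $M(\cH)$ and overlap only in zero area. This yields
\[
\int_{\wh S_n}\mathrm{SE}^z_{m,\infty}\,dz=\sum_{H\in\cH(\wh S_n)}\frac{\area(H\cap\wh S_n)}{\area(H)}\,\mathrm{Energy}\big(\phi_\infty-\phi_m;\phi_0^{-1}(H)\big)\;\ge\;\mathrm{Energy}\big(\phi_\infty-\phi_m;\textstyle\bigcup_{H\subseteq \wh S_n}\phi_0^{-1}(H)\big).
\]
It then remains to show $M(\wh S_n)\subseteq\bigcup_{H\subseteq\wh S_n}\phi_0^{-1}(H)$ up to a negligible error. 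Take a vertex of $M(\wh S_n)$; its cell lies in the interior of $\wh S_n$. A face of $M(\wh S_n)$ has all three vertices in such cells, and $\phi_0$ sends it to the triangle with corners $\wt c(H_i)$. The points of $\phi_0(\text{face})$ can lie in cells not contained in $\wh S_n$, though. To handle this I would enlarge the square instead: by Lemma~\ref{lem:no-macroscopic-cell}, for large $n$ every such triangle lies inside the square $\wh S_n'$ concentric to $\wh S_n$ with side length $1.5|\wh S_n|$. Let $\wh S_n''$ be the concentric square of side length $2|\wh S_n|$. Then every cell meeting $\phi_0(M(\wh S_n))$ is contained in $\wh S_n''$, and $\wh S_n''$ is covered by a bounded number of squares of $\cS_{n,2}$ of comparable size. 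This gives
\[
\mathrm{Energy}(\phi_\infty-\phi_m;M(\wh S_n))\le \int_{\wh S_n''}\mathrm{SE}^z_{m,\infty}\,dz .
\]
Applying Step 1 to these neighbouring squares via Lemma~\ref{lem:dyadic-shift}, the normalized limsup is at most a constant $C_0$ times $\bbE[\mathrm{SE}_{m,\infty}]$.

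\textbf{Step 3 (conclusion).} The conditional expectation passes through the previous bound by monotonicity. Sending $m\to\infty$ and using~\eqref{eq:lem:dirichlet-C} then gives the claim.

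I expect the main obstacle to be the ergodic limit for squares other than $S_k$ itself, i.e.\ extending Lemma~\ref{lem:ergodic-cell-system} to the neighbours in $\cS_{n,2}$. My first attempt would be to apply Lemma~\ref{lem:dyadic-shift} to the event that $\int_S F_m(\cH-z,\cD-z)\,dz\le 2\bbE[F_m]|S|^2$. If that is awkward, the fallback is to control the limsup over the parent square $\wh S_{n+1}\supseteq$ a neighbourhood of $\wh S_n$, which contains $\wh S_n''$ up to finitely many dyadic squares.
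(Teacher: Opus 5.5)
Your outline matches the paper's. You apply Lemma~\ref{lem:ergodic-cell-system} to $F_m=\bbE[\mathrm{SE}_{m,\infty}|\cH,\cD]$, compare $\int_{\wh S_n}\mathrm{SE}^z_{m,\infty}\,dz$ with the energy on $M(\wh S_n)$ via Lemma~\ref{lem:topology-cell-square}, and finish with Lemma~\ref{lem:dirichlet-C}.

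The only real difference is Step 2. The paper passes directly from $\int_{\wh S_n}\mathrm{SE}^z_{m,\infty}\,dz$ to $\mathrm{Energy}(\phi_\infty-\phi_m;M(\wh S_n))$, justified only by the fact that the vertex cells of a face of $M(\wh S_n)$ lie in the interior of $\wh S_n$. You are right that this does not stop $\phi_0$ of such a face from meeting a cell $H$ that crosses $\partial\wh S_n$. The energy of such an $H$ enters $\int_{\wh S_n}\mathrm{SE}^z_{m,\infty}\,dz$ only with weight $\area(H\cap\wh S_n)/\area(H)<1$. Your enlargement to $\wh S_n''$ makes the inclusion correct. The cost is that you now need ergodic upper bounds on the squares adjacent to $\wh S_n$, which the paper's version never uses.

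That cost is the weak point, and as it stands Step 2 has a gap. Lemma~\ref{lem:dyadic-shift} only controls squares in $\cS_{k,\ell}$, and every such square lies inside the ancestor $S_{k+\ell}$. Now $S_k$ is flush with a given side of $S_{k+\ell}$ with probability $2^{-\ell}$, independently over disjoint blocks of $\ell$ generations. So almost surely, for infinitely many $k$, some neighbours of $S_k$ are not in $\cS_{k,\ell}$; for $\ell=1$ this holds for every $k$. The remark in the definition of $\cS_{k,\ell}$, that the eight neighbours of $S_k$ belong to $\cS_{k,\ell}$, therefore does not hold as written. Your Step 2 is complete only if you get the neighbour bound some other way.

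Your fallback does not supply it. $\wh S_n$ is always a corner child of its parent, so neither the parent nor $\wh S_{n+1}$ contains a neighbourhood of $\wh S_n$.

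A lesser point: the event you feed into Lemma~\ref{lem:dyadic-shift} depends on $\cD$ through $F_m$, while the lemma is stated for events $E(S,\cH)$. The paper makes the same extension in the proof of Proposition~\ref{prop:harmonic-sublinear}, and it is routine via the joint resampling in Lemma~\ref{lem:dyadic-resample}.
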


\begin{proof}
    By Lemma~\ref{lem:topology-cell-square}, 
    \begin{equation*}
    \begin{split}
        \frac{1}{|\wh S_n|^2}\int_{\wh S_n} \mathrm{SE}_{m,\infty}^z\,dz &= \frac{1}{|\wh S_n|^2} \sum_{H\in\cH(\wh S_n)} \frac{\area(H\cap \wh S_n)}{\area(H)}\int_{\phi_0^{-1}(H)} |\nabla\phi_m(w)-\nabla\phi_{\infty}(w)|^2\,dw \\\geq  \frac{1}{|\wh S_n|^2}& \int_{\phi_0^{-1}(\cH(\wh S_n)\backslash\cH(\partial \wh S_n))} |\nabla\phi_m(w)-\nabla\phi_{\infty}(w)|^2\,dw 
        \geq \frac{1}{|\wh S_n|^2} \mathrm{Energy}(\phi_\infty-\phi_m;M(\wh S_n)),
        \end{split}
    \end{equation*}
    {where we refer to the beginning of Section \ref{subsec:regularity} for the definition of $M(\wh S_n)$
    and where, in the last inequality, we used the fact that if $H_1,H_2,H_3$ form a triangular face on $M(\wh S_n)$, then the cells $H_1,H_2,H_3$ are within the interior of $\wh S_n$}. Now we set $F(\cH,\cD) = \bbE[\mathrm{SE}_{m,\infty}|\cH,\cD]$.   Then using the definition of $\phi_m$ and $\phi_\infty$, $F(\cH-z,\cD-z) = \bbE[\mathrm{SE}_{m,\infty}^z|\cH,\cD]$, and $F(C\cH,C\cD) = F(\cH,\cD)$. Therefore the conclusion follows from Lemma~\ref{lem:ergodic-cell-system} and Lemma~\ref{lem:dirichlet-C}.
\end{proof}

\begin{lemma}\label{lem:sublinear-A}
    For each fixed $m\in\bbN$, almost surely,
    \begin{equation}\label{eq:lem:sublinear-A}
        \lim_{k\to\infty}\frac{1}{|S_k|}\sup_{x\in M(S_k)}|\phi_m(x)-\phi_0(x)| = 0.
    \end{equation}
\end{lemma}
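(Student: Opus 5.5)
Fix $m$. The function $\phi_m$ agrees with $\phi_0$ on $\partial\phi_0^{-1}(S)$ for each $S\in\mathfrak{S}_m$, and is harmonic inside. Within a single piece $\phi_0^{-1}(S)$, the maximum principle applied to $\phi_m\circ\varphi^{-1}$ (working in a uniformizing chart) gives that $\phi_m(x)$ lies in the closed convex hull of $\phi_0(\partial\phi_0^{-1}(S))$. Since $\phi_0(\partial\phi_0^{-1}(S))\subseteq\partial S$, $\phi_m(x)$ lies in the convex hull of $S$, that is, in $S$ itself. Meanwhile $\phi_0(x)\in S$ as well. Hence
\[
|\phi_m(x)-\phi_0(x)|\le \sqrt2\,|S| \qquad \text{for } x\in\phi_0^{-1}(S),\ S\in\mathfrak{S}_m .
\]
One subtlety is that $\phi_0^{-1}(S)$ may be a union of finitely many polygons (Lemma~\ref{lem:topology-cell-square}), possibly with holes. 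I will apply the maximum principle on each connected component separately. On each component the harmonic extension is well defined, because $\phi_0^{-1}(S)$ has compact closure by Lemma~\ref{lem:phi_0-bbC-3} and $\phi_0$ is continuous on its boundary. The conclusion is therefore
\[
\sup_{x\in M(S_k)}|\phi_m(x)-\phi_0(x)|\le\sqrt2\max\bigl\{|S|: S\in\mathfrak{S}_m,\ \phi_0^{-1}(S)\cap M(S_k)\ne\emptyset\bigr\}.
\]

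It then remains to show that the squares of $\mathfrak{S}_m$ meeting $\phi_0(M(S_k))$ have side length $o(|S_k|)$. First, for $x\in M(S_k)$, $\phi_0(x)$ lies in the convex hull of three mutually adjacent cells contained in $S_k$. By Lemma~\ref{lem:no-macroscopic-cell} this point lies within $S_k$ for large $k$, up to a negligible enlargement, so it suffices to bound the sizes of the squares of $\mathfrak{S}_m$ meeting the square $S_k'$ concentric to $S_k$ with side length $1.5|S_k|$. Now take any $S\in\mathfrak{S}_m$ with $|S|\ge\e|S_k|$ meeting $S_k'$. By the definition \eqref{eq:def-S_m^z}, $S$ carries total fractional cell mass at most $m$, yet it has area at least $\e^2|S_k|^2$. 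Since the cells tile $\bbC$, we have $\sum_{H}\area(H\cap S)=|S|^2$, so some cell $H$ satisfies $\area(H\cap S)\ge |S|^2\cdot\frac{\area(H\cap S)/\area(H)}{m}\cdot\frac{\area(H)}{\area(H\cap S)}$. Concretely, the mass bound $\sum_H \area(H\cap S)/\area(H)\le m$ forces some $H$ meeting $S$ with $\area(H)\ge|S|^2/m$, and hence $\diam(H)\ge |S|/\sqrt{\pi m}\gtrsim \e|S_k|/\sqrt m$.

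This is where the main obstacle lies. The cell $H$ meets $S$, which meets $S_k'$, but $S$ could be large, so $H$ need not lie within a bounded multiple of $S_k$. To handle this, I choose a subsquare $S'\subseteq S$ of side length $\e|S_k|$ that meets $S_k'$; such a subsquare lies in $[-C|S_k|,C|S_k|]^2$. Every subsquare $S'$ of $S$ still has mass at most $m$, since the mass is monotone under inclusion. The same pigeonhole argument applied to $S'$ then produces a cell of diameter at least $\e|S_k|/\sqrt{\pi m}$ meeting $[-C|S_k|,C|S_k|]^2$. This contradicts Lemma~\ref{lem:no-macroscopic-cell}, applied with $\ell$ large (or the second part of Lemma~\ref{lem:average-diamdeg4}), for all large $k$. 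Since $\e$ was arbitrary, \eqref{eq:lem:sublinear-A} follows.
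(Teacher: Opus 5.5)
Your proof is correct and follows the same route as the paper. The maximum principle on each piece $\phi_0^{-1}(S)$, $S\in\mathfrak{S}_m$, gives $|\phi_m-\phi_0|\le\sqrt{2}\,|S|$ there, and it remains to show that the squares of $\mathfrak{S}_m$ relevant to $M(S_k)$ have side length $o(|S_k|)$; the paper takes this from \cite[Equation (2.42)]{GMSinvariance}, while you derive it directly from the mass bound together with Lemma~\ref{lem:no-macroscopic-cell}. One small slip: your first pigeonhole inequality simplifies to $\area(H\cap S)\ge |S|^2/m$, which can fail (e.g.\ if $S$ is covered by many thin slivers of huge cells), but the statement you then actually use, that some $H\in\cH(S)$ has $\area(H)\ge |S|^2/m$, is correct and suffices.
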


\begin{proof}
    By~\cite[Equation (2.42)]{GMSinvariance} (where we take the conductance $\fc(H,H')$ there to be the number of edges between $H$ and $H'$,  {which guarantees that their moment bound is satisfied \eqref{eq:GMSinvariance-moment}}), a.s.\ 
    \begin{equation}\label{eq:pf-sublinear-A-1}
        \lim_{k\to\infty} \frac{1}{|S_k|}\max_{\wh S\subseteq S_k,\wh S\in\mathfrak{S}_m}\diam(\wh S) = 0.
    \end{equation}
    By definition, $\phi_0$ agrees with $\phi_m$ on $\partial \phi_0^{-1}(\wh S)$ for each $\wh S\in \mathfrak{S}_m$. Therefore~\eqref{eq:lem:sublinear-A} follows from~\eqref{eq:pf-sublinear-A-1} along with the maxim{um} principle for the harmonic function $\phi_m|_{\phi_0^{-1}(\wh S)}$.
\end{proof}

We are now ready to prove Proposition~\ref{prop:harmonic-sublinear} by transferring the bound on the Dirichlet energy from Lemma \ref{lem:dirichlet-D} to a pointwise bound for $|\phi_0-\phi_\infty|$. The bound on the Dirichlet energy for $\phi_m-\phi_\infty$ will allow us to control $|\phi_0\circ\varphi_k^{-1}-\phi_\infty\circ\varphi_k^{-1}|$ on a dense collection $\cP_k\subseteq|S_k|\bbD$ of horizontal and vertical line segments. The oscillation of $\phi_\infty\circ\varphi_k^{-1}$ on $\frac{1}{2}|S_k|\bbD\backslash \cP_k$ is controlled by the maximum principle for harmonic functions, while we use the continuity result from Lemma~\ref{lem:area/length-2} to bound the oscillation of $\phi_0\circ\varphi_k^{-1}$ on $\frac{1}{2}|S_k|\bbD\backslash \cP_k$. Combining these with Lemma~\ref{lem:area/length-1}, for some deterministic constant $c_2>0$, the oscillation of $|\phi_0-\phi_\infty|$ on $M(\mathring{S}_k^{c_2|S_k|})$ has order $o_k(|S_k|)$, and the bound can be further extended to $M([-2^k,2^k]^2)$ via Lemma~\ref{lem:dyadic-shift}. The first half of the proof is similar as~\cite{GMSinvariance}, except that we work on continuum Dirichlet energy rather than discrete Dirichlet energy and that the lines $\ell_j,\wt\ell_j$ are contained in the image of certain conformal maps $\varphi_k$ instead of living in the same plane as the cell system. The second half is more different, where  the function $\phi_0$ in~\cite{GMSinvariance} is roughly identity, while   we need to use Lemma~\ref{lem:area/length-1} to control the oscillation of $\phi_0$ in our setting. 
\begin{proof}[Proof of Proposition~\ref{prop:harmonic-sublinear}]
Fix $\e\in(0,10^{-6})$.   By Lemma~\ref{lem:dirichlet-D}, we may pick $m_\e>0$ such that 
$$\limsup_{k\to\infty}\frac{1}{|  S_k|^2}\bbE\big[\mathrm{Energy}(\phi_\infty-\phi_{m_\e};M(  S_k))|\cH,\cD\big]<e^{-1/\e^8}.$$
Let $\varphi_k:M(S_k)\to |S_k|\bbD$ be the uniformization map defined as in Lemma~\ref{lem:area/length-1} and Lemma~\ref{lem:area/length-2}. Using the conformal invariance of Dirichlet energy, 
\begin{equation}\label{eq:pf-harmonic-sublinear-1}
    \limsup_{k\to\infty}\frac{1}{|S_k|^2} \int_{|S_k|\bbD} \bbE\Big[|\nabla (\phi_\infty\circ\varphi_k^{-1})(w) -  \nabla (\phi_{m_\e}\circ\varphi_k^{-1})(w)|^2\big|\cH,\cD \Big]\, dw <e^{-1/\e^8}.
\end{equation}
   From~\eqref{eq:pf-harmonic-sublinear-1}, for large enough $k$, there exists a collection of horizontal straight lines $\ell_1,...,\ell_n$ such that for each $j=1,...,n$, 
   \begin{equation}\label{eq:pf-harmonic-sublinear-2}
       \frac{1}{|S_k|} \int_{\ell_j\cap |S_k|\bbD }\bbE\Big[|\nabla (\phi_\infty\circ\varphi_k^{-1})(y) -  \nabla (\phi_{m_\e}\circ\varphi_k^{-1})(y)|^2\big|\cH,\cD \big]\, dy <\e^8,
   \end{equation}
   and for $j=0,1,...,n$, the distance between $\ell_j$ and $\ell_{j+1}$ is less than $e^{-1/\e^4}|S_k|$, with $\ell_0 = \{z:\mathrm{Im}\,z = |S_k|\}$ and $\ell_{n+1} = \{z:\mathrm{Im}\,z = -|S_k|\}$. Similarly, one can find a collection of vertical straight lines $\wt \ell_1,...,\wt \ell_{\wt n}$ such that for each $j=1,...,n$, ~\eqref{eq:pf-harmonic-sublinear-2} holds for $\wt\ell_j$ instead of $\ell_j$, and the distance $\wt\ell_j$ and $\wt\ell_{j+1}$ is less than $e^{-1/\e^4}|S_k|$, with $\wt\ell_0 = \{z:\mathrm{Re}\,z = |S_k|\}$ and $\wt\ell_{\wt n+1} = \{z:\mathrm{Re}\,z = -|S_k|\}$. We assume that these lines are selected in some way that is measurable with respect to $(\cH,\cD)$. By the Cauchy-Schwartz inequality, for each $\ell\in\{\ell_1,...,\ell_n,\wt\ell_1,...,\wt\ell_{\wt n}\}$,
    \begin{equation}\label{eq:pf-harmonic-sublinear-3}
       \frac{1}{|S_k|} \int_{\ell\cap |S_k|\bbD }\bbE\Big[|\nabla (\phi_\infty\circ\varphi_k^{-1})(y) -  \nabla (\phi_{m_\e}\circ\varphi_k^{-1})(y)|\big|\cH,\cD \big]\, dy <4\e^4.
   \end{equation}
   Integrating this, for each $w,w'\in\ell\cap |S_k|\bbD$, 
   \begin{equation*}
       \begin{split}
           \frac{1}{|S_k|}\bbE\Big[\big|\big(&\phi_\infty\circ\varphi_k^{-1}(w)-\phi_{m_\e}\circ\varphi_k^{-1}(w)\big)-\big(\phi_\infty\circ\varphi_k^{-1}(w') - \phi_{m_\e}\circ\varphi_k^{-1}(w')\big)\big|\Big|\cH,\cD\Big] 
           <4\e^4.
       \end{split}
   \end{equation*}
   It follows that if we set 
   $\cP_k = (\ell_1\cup ...\cup \ell_n\cup\wt\ell_1\cup...\cup\wt\ell_{\wt n})\cap |S_k|\bbD$, then for large enough $k$, 
   \begin{equation}\label{eq:pf-harmonic-sublinear-4}
      \frac{1}{|S_k|} \sup_{w,w'\in \cP_k}\bbE\Big[\big|\big(\phi_\infty\circ\varphi_k^{-1}(w)-\phi_{m_\e}\circ\varphi_k^{-1}(w)\big)-\big(\phi_\infty\circ\varphi_k^{-1}(w') - \phi_{m_\e}\circ\varphi_k^{-1}(w')\big)\big|\Big|\cH,\cD\Big]<8\e^4< \e^2,
   \end{equation}
   and by~\eqref{eq:lem:sublinear-A}, for large enough $k$,
   \begin{equation}\label{eq:pf-harmonic-sublinear-5}
     \frac{1}{|S_k|}    \sup_{w,w'\in \cP_k}\bbE\Big[\big|\big(\phi_\infty\circ\varphi_k^{-1}(w)-\phi_{0}\circ\varphi_k^{-1}(w)\big)-\big(\phi_\infty\circ\varphi_k^{-1}(w') - \phi_{0}\circ\varphi_k^{-1}(w')\big)\big|\Big|\cH,\cD\Big]<4\e^2.
   \end{equation}
   Consider a connected component $K$ of $\big(\frac{|S_k|}{2}\bbD\big)\backslash \cP_k$. Then $\diam(K)< {\sqrt{2}}e^{-1/\e^4}|S_k|$, and by Lemma~\ref{lem:area/length-2}, for some constant $c_1$, for $k$ large enough and all choices of $K$, we have
   \begin{equation}\label{eq:pf-harmonic-sublinear-6}
       \diam\bigg(\bigcup_{H:\varphi_k(P_H)\cap K\neq\emptyset} H\bigg)\leq c_1\e^2|S_k|.
   \end{equation}
    In particular, for each $w,w'\in K$,~\eqref{eq:pf-harmonic-sublinear-6} along with Lemma~\ref{lem:no-macroscopic-cell} implies that a.s.\  for $k$ large enough, 
    \eqb\label{eq:pf-harmonic-sublinear-7} \sup_K\sup_{w,w'\in K}|\phi_0\circ\varphi_k^{-1}(w)-\phi_0\circ\varphi_k^{-1}(w')|<4c_1\e^2|S_k|.\eqe
Using the maxim{um} principle for the harmonic function $ \phi_\infty\circ\varphi_k^{-1} $, a.s.\ for large enough $k$, for each $K$, by~\eqref{eq:pf-harmonic-sublinear-5} and~\eqref{eq:pf-harmonic-sublinear-7},  we have 
\begin{equation}\label{eq:pf-harmonic-sublinear-80} 
    \begin{split}
       & \sup_{w,w'\in K} \bbE\Big[| \phi_\infty\circ\varphi_k^{-1}(w)- \phi_\infty\circ\varphi_k^{-1}(w')|\Big|\cH,\cD \Big]= \sup_{w,w'\in \partial K} \bbE\Big[| \phi_\infty\circ\varphi_k^{-1}(w)- \phi_\infty\circ\varphi_k^{-1}(w')|\Big|\cH,\cD \Big]\\&\leq \sup_{w,w'\in\partial K}\bbE\Big[| (\phi_\infty\circ\varphi_k^{-1}(w)- \phi_\infty\circ\varphi_k^{-1}(w'))-(\phi_0\circ\varphi_k^{-1}(w)- \phi_0\circ\varphi_k^{-1}(w'))|\Big|\cH,\cD \Big] \\
       &\qquad + \sup_{w,w'\in\partial K}  \bbE\Big[| \phi_0\circ\varphi_k^{-1}(w)- \phi_0\circ\varphi_k^{-1}(w')|\Big|\cH,\cD \Big] \leq   {4(1+c_1)\e^2|S_k|}.
    \end{split}
\end{equation}
We conclude by~\eqref{eq:pf-harmonic-sublinear-5},~\eqref{eq:pf-harmonic-sublinear-7}, ~\eqref{eq:pf-harmonic-sublinear-80} along with a triangle inequality that a.s.\  for $k$ large enough, 
\begin{equation}\label{eq:pf-harmonic-sublinear-8} 
   \frac{1}{|S_k|} \sup_{w,w'\in\frac{|S_k|}{2}\bbD}\Bigg|\bbE\Big[\big(\phi_\infty\circ\varphi_k^{-1}(w)-\phi_{0}\circ\varphi_k^{-1}(w)\big)-\big(\phi_\infty\circ\varphi_k^{-1}(w') - \phi_{0}\circ\varphi_k^{-1}(w')\big)\Big|\cH,\cD\Big]   \Bigg|<32(1+c_1)\e^2.
\end{equation}
By Lemma~\ref{lem:area/length-1}, there is some deterministic constant $c_2>0$ such that a.s.\ for $k$ large enough, if $H\cap \mathring{S}_k^{c_2|S_k|}\neq\emptyset$, then $\varphi_k(P_H)\subseteq \frac{|S_k|}{2}\bbD$. Therefore it follows from~\eqref{eq:pf-harmonic-sublinear-8} that, a.s.\ for large enough $k$, if we let $\wt M(\mathring{S}_k^{c_2|S_k|})$ be the union of the faces of $M(\cH)$ formed by $H_1,H_2,H_3$ with $H_1,H_2,H_3\subseteq \mathring{S}_k^{c_2|S_k|}$, then 
\begin{equation}\label{eq:pf-harmonic-sublinear-9}
     \frac{1}{|S_k|} \sup_{x,x'\in \wt M(\mathring{S}_k^{c_2|S_k|})}\Bigg|\bbE\Big[\big(\phi_\infty (x)-\phi_{0}(x)\big)-\big(\phi_\infty(x') - \phi_{0}(x')\big)\Big|\cH,\cD\Big]   \Bigg|<32(1+c_1)\e^2.
\end{equation}
We pick $\ell\in\bbN$ such that $c_22^\ell>16$. Applying Lemma~\ref{lem:dyadic-shift} to the statement~\eqref{eq:pf-harmonic-sublinear-9},  a.s.\ for large enough $k$,    ~\eqref{eq:pf-harmonic-sublinear-9} holds with $S_k$ replaced by any of its dyadic ancestor of side length $2^\ell|S_k|$, and thus  
   \begin{equation}\label{eq:pf-harmonic-sublinear-10}
     \frac{1}{|S_k|} \sup_{x,x'\in M(\mathring{S}_k^{4|S_k|})}\Bigg|\bbE\Big[\big(\phi_\infty (x)-\phi_{0}(x)\big)-\big(\phi_\infty(x') - \phi_{0}(x')\big)\Big|\cH,\cD\Big]   \Bigg|<32(1+c_1)2^\ell\e^2.
\end{equation}
Now using Lemma~\ref{lem:M(S)} along with the fact that $[-2^k,2^k]^2\subset \mathring{S}_k^{2|S_k|(1-\eps)}$ 
a.s.\ for large $k$ and some random $\eps>0$, $M([-2^k,2^k]^2)\subseteq M(\mathring{S}_k^{4|S_k|})$ a.s.\ for large $k$. 
Furthermore,  {$0\in M(\mathring{S}_k^{4|S_k|})$ a.s.\ for large $k$}. Since $\phi_\infty(0)=0$, ~\eqref{eq:pf-harmonic-sublinear-10} implies a.s.\ for large $k$,
\begin{equation}\label{eq:pf-harmonic-sublinear-11}
    \frac{1}{2^k} \sup_{x\in M([-2^k,2^k]^2)}\Bigg|\bbE\Big[\big(\phi_\infty (x)-\phi_{0}(x)\big)\Big|\cH,\cD\Big]   \Bigg|<(1+c_1)2^{\ell+10}\e^2.
\end{equation}
Finally, Lemma~\ref{lem:no-macroscopic-cell} implies that almost surely,
$$\lim_{k\to\infty} \frac{1}{2^k}\sup_{x\in M([-2^k,2^k]^2)}\big|\phi_0(x)-\bbE[\phi_0(x)|\cH,\cD]\big| = 0.$$
Combining with~\eqref{eq:pf-harmonic-sublinear-11} gives ~\eqref{eq:prop:harmonic-sublinear}. 
\end{proof}

\subsection{Proof of Theorem~\ref{thm:invariance-uniformization}}\label{subsec:pf-thm:invariance-uniformization}

In this section we complete the proof of Theorem~\ref{thm:invariance-uniformization}.   Recall that by Lemma~\ref{lem:parabolic}, the surface $M(\cH)$ is a.s.\ parabolic, and by Proposition~\ref{prop:harmonic-sublinear}, $\bbE[\phi_\infty|\cH,\cD]$ is close to $\phi_0$ on a large scale. Let $\varphi$ be a uniformization map $\varphi:M(\cH)\to\bbC$ chosen in a way measurable with respect to $\cH$ such that $\varphi(0)=0$. We will prove that  the harmonic function $\wt\phi_\infty: \bbC\to\bbC$ defined by $\wt\phi_\infty(z):=\bbE[\phi_\infty\circ\varphi^{-1}(z)|\cH,\cD]$ 
is the composition of a deterministic linear transform  {$\mathbf{A}$}, a rotation and a scaling, 
which implies that the embedding induced by $\varphi(M(\cH))$ is close to $\cH$ up to a rotation.  
This finishes the proof of Theorem~\ref{thm:invariance-uniformization} when combined with Proposition~\ref{prop:harmonic-sublinear}.

We prove the decomposition of $\wt\phi_\infty$ as follows.  
First we prove that the harmonic function $\wt\phi_\infty$ must be a polynomial (Lemma~\ref{lem:harmonic-polynomial}). This follows from Proposition~\ref{prop:harmonic-sublinear} as well as the polynomial growth property in Lemma~\ref{lem:cell-polynomial-growth}. Then in Lemmas~\ref{lem:polynomial-A},~\ref{lem:polynomial-B} and~\ref{lem:polynomial-C}, we prove that this polynomial must have degree 1, by arguing that if this is not the case, then one can find cells $H_1,H_2$ which are close in $\cH$ and have macroscopic distance in $\varphi(M(\cH))$, which contradicts with the continuity as in Lemma~\ref{lem:area/length-1} and Lemma~\ref{lem:area/length-4}. Next we use the ergodic theorem for cell configurations (Lemma~\ref{lem:ergodic-cell-system}) to prove that for $a,b\in\bbR$, the average Dirichlet energy of $a\, \mathrm{Re}\, \phi_\infty+b\,\mathrm{Im}\, \phi_\infty$ is deterministic (Lemma~\ref{lem:polynomial-ergodic-A}). This implies that for any $\mathbf{u},\mathbf{v}\in\bbR^2$, if we view $\wt\phi_\infty$ as a $2\times2$ matrix, then $\inner{\wt\phi_\infty^\rmT\mathbf{u} }{\wt\phi_\infty^\rmT\mathbf{v}}$ is deterministic up to constant (Lemma \ref{lem:polynomial-ergodic-B}), and the desired decomposition is a consequence of standard linear algebra (Lemma~\ref{lem:polynomial-ergodic-C00}).
Finally in Lemma~\ref{lem:polynomial-ergodic-D}, we prove that if we further assume the rotation invariance in law of $\cH$ as in Definition~\ref{def:rotation-inv}, then $\mathbf{A}$ must be a rotation.

Before giving the details of this proof, let us remark that {our approach in this section is completely different from the counterpart of Theorem~\ref{thm:invariance-uniformization} for the Tutte embedding  {in \cite{GMSinvariance,GMS21tutte}} and circle packings  {in our Section \ref{sec:circle-packing}}. In the setting of Tutte embedding, there is no  well-defined  {canonical} Tutte embedding for infinite maps; instead an invariance principle is proven in \cite{GMSinvariance}, where the simple random walk on $\cH$ a.s.\ converges to the Brownian motion (Theorem~\ref{thm:GMSinvariance}). In the finite volume setting convergence of the conformal embedding is an immediate consequence of the invariance principle by the definition of the Tutte embedding \cite{GMS21tutte}. In the setting of circle packings, we proved in Section~\ref{subsec:Covergence-RW-cell-Dubejko} and Section~\ref{subsec:Covergence-RW-CP-Dubejko} that the random walk on $\cH$ and the circle packing for $\cH$ with Dubejko weights a.s.\ converges to the Brownian motion, and in Section~\ref{subsec:pf-Cell-system-Pack} we used this to conclude that the cell configuration $\cH$ is close to its circle packing. 

We remark that it might be possible to adapt the arguments in~\cite[Section 3]{GMSinvariance} to show that for a Brownian motion $(W_t)_{t\geq 0}$ on $M(\cH)$, $(\phi_0(W_t))_{t\geq 0}$ is approximately a Brownian motion on $\bbC$, and, combined with the same arguments from Section~\ref{subsec:pf-Cell-system-Pack}, we would conclude that the a priori embedding $\phi_0$ of $M(\cH)$ is close to a conformal map. On the other hand, our proof in this section is relatively elementary and has independent interest, as it gives another way of proving that harmonic functions are conformal under certain conditions.  {We emphasize that our argument works under rather mild assumptions; the key inputs are polynomial growth, almost injectivity, and that the average Dirichlet energy over large regions is approximately constant.}

\begin{lemma}\label{lem:harmonic-polynomial}
   There exists a deterministic constant $\beta>0$, such that almost surely,  for some  {random} constant $\wt C>0$ and every large enough $r$, $|\wt\phi_\infty(z)|\leq  4\wt Cr^\beta$  {for every $z\in  {B(0;r)}$.} 
    {Furthermore,} 
   the function $\wt\phi_\infty$ is a polynomial over $z$ and $\ol z$ of degree at most $\beta$.
\end{lemma}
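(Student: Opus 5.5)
The plan is to prove the growth bound first and then deduce the polynomial structure from a Liouville-type theorem for harmonic functions of polynomial growth.

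\textbf{Growth bound.} Fix $z\in B(0;r)$ and let $x=\varphi^{-1}(z)\in M(\cH)$. The point $x$ lies in some face of $M(\cH)$, hence in the semi-flower $P_H$ of one of that face's vertices. So $\varphi(P_H)\cap B(0;r)\neq\emptyset$, and Lemma~\ref{lem:cell-polynomial-growth} gives $H\subseteq B(0;\wt Cr^\beta)$. The same holds for the other two vertices $H_2,H_3$ of the face containing $x$, since their semi-flowers also meet $\varphi(\text{face})$, which intersects $B(0;r)$; if needed, enlarge $r$ slightly, or use the adjacency to $H$ and Lemma~\ref{lem:no-macroscopic-cell}. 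The value $\phi_0(x)$ lies in the convex hull of $\wt c(H),\wt c(H_2),\wt c(H_3)$, so $|\phi_0(x)|\le \wt Cr^\beta$ up to a harmless factor.

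Next choose $k$ minimal with all these cells inside $[-2^k,2^k]^2$, so that $2^k\le 2\wt Cr^\beta$. Then $x\in M([-2^{k+1},2^{k+1}]^2)$ by Lemma~\ref{lem:M(S)} and Lemma~\ref{lem:no-macroscopic-cell}. Proposition~\ref{prop:harmonic-sublinear} now gives
\[
|\wt\phi_\infty(z)-\bbE[\phi_0(x)\mid\cH,\cD]|\le 2^{k+1}
\]
for large $k$. The conditional mean of $\phi_0(x)$ is also bounded by $\wt Cr^\beta$, because the $\wt c(H_i)$ lie in the cells $H_i$. Combining these yields $|\wt\phi_\infty(z)|\le 4\wt Cr^\beta$ after adjusting the random constant. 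The main technical point here is that the uniformity in $k$ and the events in Proposition~\ref{prop:harmonic-sublinear} hold simultaneously for all large $k$ almost surely. They do, since that proposition is an almost sure limit statement.

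\textbf{Polynomial structure.} The function $\wt\phi_\infty$ is harmonic on $\bbC$: it is a conditional expectation of harmonic functions, and the local integrability needed to exchange the expectation with the mean-value property comes from Proposition~\ref{prop:harmonic-exist}. Its real and imaginary parts are therefore real harmonic functions on $\bbC$ satisfying $|u(z)|\le C(1+|z|)^\beta$.

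By the standard Cauchy estimates for harmonic functions, $|D^\alpha u(0)|\le C_{|\alpha|}R^{-|\alpha|}\sup_{B(0;R)}|u|$. Letting $R\to\infty$ shows that every derivative of order greater than $\beta$ vanishes identically. Equivalently, write $u=\mathrm{Re}\,F$ with $F$ entire; the Borel--Carath\'eodory inequality shows $F$ is a polynomial of degree at most $\beta$. Hence $u$ is a polynomial in $x,y$, that is, in $z$ and $\ol z$, of degree at most $\beta$.

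Finally, take $\beta$ to be the deterministic constant of Lemma~\ref{lem:cell-polynomial-growth}. The main obstacle is the careful bookkeeping in the growth step: justifying that faces near $\varphi^{-1}(B(0;r))$ involve only cells inside $B(0;\wt Cr^\beta)$, and applying the sublinearity at the right dyadic scale.
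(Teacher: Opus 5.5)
Your proposal is correct and follows essentially the same route as the paper. The growth bound comes from Lemma~\ref{lem:cell-polynomial-growth}, adjacency of the face's vertices together with Lemma~\ref{lem:no-macroscopic-cell}, Lemma~\ref{lem:M(S)} and Proposition~\ref{prop:harmonic-sublinear}; the polynomial structure then follows from derivative estimates for harmonic functions on $\bbC$.

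Two small points. It is the adjacency argument, not ``enlarging $r$'', that controls the other two vertices of the face. Also, $k$ should be fixed by $2^{k-1}<2\wt C r^\beta\le 2^k$ rather than chosen minimal, so that $k\to\infty$ with $r$ and Proposition~\ref{prop:harmonic-sublinear} applies.
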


\begin{proof}
    Let $\wt C$ and $\beta$ be the constants from Lemma~\ref{lem:cell-polynomial-growth}, and assume $r$ is large such that~\eqref{eq:lem:cell-polynomial-growth} holds. Let $z\in B(0;r)$, and suppose $\varphi^{-1}(z)\in M(\cH)$ is on the triangle formed by the cells $H_1,H_2,H_3$. Then for some $j=\{1,2,3\}$, $H_j\subseteq B(0;\wt Cr^\beta)$, and by Lemma~\ref{lem:no-macroscopic-cell}, $H_1,H_2,H_3\subseteq B(0;2\wt C r^\beta)$, and by Lemma~\ref{lem:M(S)}, ${\varphi^{-1}(z)}\in M([-3\wt Cr^\beta,3\wt Cr^\beta]^2)$. Therefore by~\eqref{eq:lem:cell-polynomial-growth}, $|\phi_0\circ\varphi^{-1}(z)|\leq 2\wt Cr^\beta$, and by~\eqref{eq:prop:harmonic-sublinear}, for large enough $r$, $|\phi_0\circ\varphi^{-1}(z)-\wt\phi_\infty(z)|\leq \wt Cr^\beta$. This verifies the first claim. For the second claim, note that $\wt\phi_\infty$ is a harmonic function defined on the whole plane. By standard properties of harmonic functions, for each $n$, there exists absolute constant $C_n$ such that for every harmonic function $f$ defined on $\bbC$, $$\sup_{z\in B(0;r)}|D^nf(z)|\leq C_nr^{-n} \sup_{z\in B(0;2r)}|f(z)|,$$
    where $D^nf$ is the $n$th derivative of $f$. Therefore the first claim implies that for all $n> \beta$, $D^n\wt\phi_\infty = 0$, which implies the second claim.
\end{proof}

 {In order to get the next lemma we argue that the lower-order terms of the polynomial $\wt\phi_\infty$ (i.e., those having exponent less than the degree of the polynomial) are of a smaller order than $2^k$ when evaluated on $M([-2^k,2^k])$.}
\begin{lemma}\label{lem:polynomial-A}
    There exists possibly random constants $\beta\in\bbN$, $a_\beta,b_\beta\in\bbC$ with $|a_\beta|^2+|b_\beta|^2\neq 0$, such that almost surely
    \begin{equation}\label{eq:lem:polynomial-A}
        \lim_{k\to\infty} \frac{1}{2^k}\sup_{x\in M([-2^k,2^k]^2)}|\phi_0(x) - a_\beta \mathrm{Re}(\varphi(x)^\beta) - b_\beta\mathrm{Im}(\varphi(x)^\beta)\mathrm|=0.
    \end{equation}
\end{lemma}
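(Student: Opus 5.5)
By Lemma~\ref{lem:harmonic-polynomial}, $\wt\phi_\infty$ is a harmonic polynomial in $z,\ol z$. Every such polynomial is $\mathrm{Re}$ of an entire polynomial plus $i$ times another, so we can write
\[
\wt\phi_\infty(z)=\sum_{j=0}^{\beta}\big(a_j\,\mathrm{Re}(z^j)+b_j\,\mathrm{Im}(z^j)\big),\qquad a_j,b_j\in\bbC .
\]
Here $\beta$ is the largest index with $|a_\beta|^2+|b_\beta|^2\neq0$. Since $\varphi(0)=0$ and $\phi_\infty(0)=0$, we have $a_0=0$. We also need $\wt\phi_\infty\not\equiv 0$, and hence $\beta\geq1$. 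This follows from Proposition~\ref{prop:harmonic-sublinear} together with the fact that $\phi_0$ is genuinely of order $2^k$ on $M([-2^k,2^k]^2)$. Indeed, by Lemma~\ref{lem:almost-planarity}, $\phi_0(\partial M_{k,\ell})$ is close to $\partial[-2^k,2^k]^2$, so $\sup|\phi_0|\geq 2^{k-1}$ there, whereas a zero $\wt\phi_\infty$ would force $\sup|\phi_0|=o(2^k)$.

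By Proposition~\ref{prop:harmonic-sublinear}, $\phi_0(x)=\wt\phi_\infty(\varphi(x))+o(2^k)$ uniformly on $M([-2^k,2^k]^2)$. So it suffices to show that the lower-order terms satisfy
\[
\sup_{x\in M([-2^k,2^k]^2)}\Big|\sum_{j<\beta}\big(a_j\mathrm{Re}(\varphi(x)^j)+b_j\mathrm{Im}(\varphi(x)^j)\big)\Big|=o(2^k).
\]
Set $R_k:=\sup\{|\varphi(x)|:x\in M([-2^k,2^k]^2)\}$, which is comparable to $d_{k'}$ for $k'$ close to $k$ by Lemmas~\ref{lem:area/length-5} and~\ref{lem:M(S)}. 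The lower-order terms are then $O(R_k^{\beta-1})$, so the goal reduces to a lower bound on the top-degree term of order $R_k^\beta$, at least comparable with $2^k$.

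The key step is to show $R_k^\beta\lesssim 2^k$ with a deterministic-order constant. By Lemma~\ref{lem:area/length-5}, $\varphi(M([-\alpha2^{k},\alpha2^{k}]^2))$ contains $B(0;c_2d_k)$. On the circle $|z|=c_2 d_k$, the top term $a_\beta\mathrm{Re}(z^\beta)+b_\beta\mathrm{Im}(z^\beta)$ is a nonzero trigonometric polynomial in the angle of degree $\beta$, scaled by $(c_2d_k)^\beta$. Its supremum over the circle is therefore at least $c\,(c_2d_k)^\beta$ for a constant $c>0$ depending on $a_\beta,b_\beta$. The lower-order terms are $O(d_k^{\beta-1})$ there, so $\sup|\wt\phi_\infty|\geq c' d_k^\beta$ on this disk once $d_k\to\infty$, which holds by Lemma~\ref{lem:parabolic}.

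On the other hand, $|\phi_0|\leq 2\cdot2^k$ on $M([-2^k,2^k]^2)$ because the cells lie in a slightly enlarged box (Lemma~\ref{lem:no-macroscopic-cell}). Combined with the sublinearity of Proposition~\ref{prop:harmonic-sublinear}, this gives $d_k^\beta\lesssim 2^k$, with a random but $k$-independent constant. The same comparison at level $k$ versus $k'$, using Lemma~\ref{lem:area/length-5} to sandwich $M([-2^k,2^k]^2)$ between disks of radii comparable to $d_{k}$ and $d_{k+O(1)}$, gives $R_k\lesssim d_{k+O(1)}$. We also have $d_{k+O(1)}\lesssim d_k$: by the Koebe argument of Lemma~\ref{lem:area/length-5} applied one level up, $B(0;c_2d_{k+1})\subseteq\varphi(M([-\alpha2^{k+1},\alpha2^{k+1}]^2))$, which should yield $d_{k+1}\leq C d_k$. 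Hence $R_k^{\beta-1}\lesssim (2^k)^{(\beta-1)/\beta}=o(2^k)$, and the lemma follows with $a_\beta,b_\beta$ as above.

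I expect the main obstacle to be the two-sided comparability of $R_k$ and $d_k$ across one dyadic scale, that is, bounding $d_{k+1}/d_k$ from above. If the Koebe route fails, I would fall back on a subsequence argument. Choose $k$ along which $2^{k}$ dominates the sup of $|\wt\phi_\infty\circ\varphi|$ over $M([-2^k,2^k]^2)$, and interpolate scales using the monotonicity of $k\mapsto M([-2^k,2^k]^2)$ together with Lemma~\ref{lem:area/length-6}.
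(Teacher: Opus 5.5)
Your proposal is correct and follows the paper's argument: since $B(0;c_2d_k)\subseteq\varphi(M([-\alpha2^k,\alpha2^k]^2))$ and $|\wt\phi_\infty|\lesssim 2^k$ on that ball by Proposition~\ref{prop:harmonic-sublinear}, the top-degree term forces $d_k^\beta\lesssim 2^k$, so the lower-order terms are $O(d_k^{\beta-1})=o(2^k)$. The step you flag as the main obstacle ($d_{k+1}\le Cd_k$) is unnecessary: $\varphi(M([-2^k,2^k]^2))\subseteq\overline{B(0;d_{k'})}$ by the definition of $d_{k'}$ whenever $\alpha2^{k'}\ge 2^k$, so you can apply $d_{k'}^\beta\lesssim 2^{k'}\asymp 2^k$ directly at the shifted index (the paper does exactly this, proving the estimate on $M([-\alpha2^k,\alpha2^k]^2)$ and then reindexing); also, for $\beta\ge1$ you should invoke Lemma~\ref{lem:M(S)}, since $\partial M_{k,\ell}$ is not contained in $M([-2^k,2^k]^2)$.
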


\begin{proof}
    By Lemma~\ref{lem:harmonic-polynomial}, we may assume that $\wt\phi_\infty$ is a polynomial of degree $\beta\in\bbN \cup\{0\}$. From the definition of the surface $M([-2^k,2^k]^2)$ and Lemma~\ref{lem:M(S)}, we infer from~\eqref{eq:prop:harmonic-sublinear} that $\beta\neq0$. Note that any harmonic polynomial  {in $z$ and $\ol z$} of degree $\beta$ can be written as a linear combination of $\{1,\mathrm{Re}\,z, ..., \mathrm{Re}\,z^\beta, \mathrm{Im}\,z, ..., \mathrm{Im}\,z^\beta\}$, and we may assume $\wt\phi_\infty(z) = a_\beta \mathrm{Re}\,z^\beta+b_\beta \mathrm{Im}\,z^\beta + p_\infty(z)$, where $|a_\beta|^2+|b_\beta|^2\neq 0$, and $p_\infty(z)$ is a polynomial of $z,\ol z$ of degree at most $\beta-1$. In particular, there exists some random $\wt C>0$ such that $|\wt\phi_\infty(z)-a_\beta \mathrm{Re}\,z^\beta-b_\beta \mathrm{Im}\,z^\beta|\leq \wt C|z|^{\beta-1}$ for $|z|$ large enough, and there exists some $\theta_0\in[0,2\pi]$ and $\wt c>0$ such that $|\wt \phi_\infty(re^{i\theta_0})|\geq \wt c r^\beta$ for $r$ large enough. Recall the constant $\alpha$ from Lemma~\ref{lem:area/length-4} and the definition of $d_k$ from~\eqref{eq:def-d_k-alpha}. By~\eqref{eq:lem:area/length-5}, for some constant $c_2$,  $B(0;c_2d_k)\subseteq \varphi(M([-\alpha 2^k,\alpha 2^k]^2))$ for all large enough $k$. Then   \eqb\label{eq:pf-lem:polynomial-A-0}  \phi_0\circ\varphi^{-1}(B(0;c_2d_k))\subseteq [-\alpha 2^k,\alpha 2^k]^2\eqe  from the definition of $M([-\alpha 2^k,\alpha 2^k]^2)$, and further by~\eqref{eq:prop:harmonic-sublinear}, $|\wt\phi_\infty(z)|\leq 2^{k+1}\alpha$ for $z\in B(0;c_2d_k)$. Therefore a.s.\ for large enough $k$, we have 
    \eqb\label{eq:pf-lem:polynomial-A-1} \wt c(c_2d_k)^\beta\leq 2^{k+1}\alpha.\eqe
    On the other hand, a.s.\ for all $k$ sufficiently large, 
   we have
    \begin{equation}\label{eq:pf-lem:polynomial-A-2}
        |\wt\phi_\infty(z)-a_\beta \mathrm{Re}\,z^\beta-b_\beta \mathrm{Im}\,z^\beta|\leq \wt Cd_k^{\beta-1}, \ \ \text{for all }z\in \ol{B(0;d_k)}.
    \end{equation}
    Combining~\eqref{eq:pf-lem:polynomial-A-1},~\eqref{eq:pf-lem:polynomial-A-2} with~\eqref{eq:prop:harmonic-sublinear}, and using $\varphi(M([-\alpha 2^k,\alpha 2^k]^2))\subseteq \ol{B(0;d_k)}$, we conclude that a.s.,
    \begin{equation*} 
        \lim_{k\to\infty} \frac{1}{2^k}\sup_{x\in M([-\alpha 2^k,\alpha 2^k]^2)}|\phi_0(x) - a_\beta \mathrm{Re}(\varphi(x)^\beta) - b_\beta\mathrm{Im}(\varphi(x)^\beta)\mathrm|=0,
    \end{equation*}
    which further implies~\eqref{eq:lem:polynomial-A}. 
\end{proof}

{As discussed above, we get that $\beta=1$ since regularity results for $\varphi$ imply that if two cells are close in $\cH$ then their images under $\varphi$ are also close.}
\begin{lemma}\label{lem:polynomial-B}
    In the setting of Lemma~\ref{lem:polynomial-A}, almost surely $\beta=1$.
\end{lemma}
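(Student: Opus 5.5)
We argue by contradiction: assume $\beta\geq2$, and write $P(z)=a_\beta\mathrm{Re}(z^\beta)+b_\beta\mathrm{Im}(z^\beta)$. Since $\beta\geq2$, $P$ is not injective on any large scale. By homogeneity, $P(e^{2\pi i/\beta}z)$ is obtained from $P(z)$ by the same rotation in the $(\mathrm{Re}\,z^\beta,\mathrm{Im}\,z^\beta)$ coordinates, which is trivial. Hence $P(z)=P(e^{2\pi i/\beta}z)$ for all $z$. So we will find two points $w_1=z_0$ and $w_2=e^{2\pi i/\beta}z_0$ in $\varphi(M(\cH))=\bbC$ at macroscopic distance $|w_1-w_2|\asymp d_k$ with $P(w_1)=P(w_2)$. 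We then show that the corresponding cells are close in $\cH$ (on the scale $2^k$) but cannot be close on the conformal side, contradicting the continuity estimates of Section~\ref{subsec:regularity}.

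\textbf{Step 1: scale matching.} Recall $d_k=\diam(\varphi(M([-\alpha2^k,\alpha2^k]^2)))$ and, from Lemma~\ref{lem:area/length-5}, $B(0;c_2d_k)\subseteq\varphi(M([-\alpha2^k,\alpha2^k]^2))$. By \eqref{eq:pf-lem:polynomial-A-1} we have $d_k^\beta\lesssim 2^k$. We also need the reverse bound $d_k^\beta\gtrsim 2^k$. For this, apply Lemma~\ref{lem:polynomial-A} on $M([-2^k,2^k]^2)$, whose $\varphi$-image lies in a ball of radius comparable to $d_{k+\log_2(1/\alpha)}$. By Lemma~\ref{lem:area/length-6}, read in reverse via Koebe with Lemma~\ref{lem:area/length-5}, this radius is $\asymp d_k$ up to random constants. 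Since $\phi_0$ takes values of size $\asymp 2^k$ on this set (by Lemma~\ref{lem:phi_0-bbC-1} and the definition of $\phi_0$), we get $2^k\lesssim |a_\beta|d_k^\beta+|b_\beta|d_k^\beta+o(2^k)$. Hence $d_k^\beta\asymp2^k$.

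\textbf{Step 2: construct the two points.} Pick $z_0$ with $|z_0|=c_2d_k/2$, and set $x_j=\varphi^{-1}(w_j)\in M([-\alpha2^k,\alpha2^k]^2)$. By Lemma~\ref{lem:polynomial-A}, $|\phi_0(x_1)-\phi_0(x_2)|=o(2^k)$. Let $H_j$ be the cell of a vertex of the face containing $x_j$. Then $\phi_0(x_j)$ lies within $o(2^k)$ of $H_j$ by Lemma~\ref{lem:no-macroscopic-cell}, and so $\dist(H_1,H_2)=o(2^k)$. Fix a small $\delta>0$. For large $k$, both $H_1$ and $H_2$ meet a common square $S\subseteq[-2^{k+1},2^{k+1}]^2$ of side $\delta2^{k+1}$.

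\textbf{Step 3: contradiction via continuity.} Apply Lemma~\ref{lem:area/length-3}, i.e.\ \eqref{eq:lem:area/length-3} for $\wh\varphi_{k+1}$, to $S$. It gives $|\wh\varphi_{k+1}(x_1)-\wh\varphi_{k+1}(x_2)|\le c(-\log\delta)^{-1/2}2^{k+1}$. We must transfer this to $\varphi$. The map $\varphi\circ\wh\varphi_{k+1}^{-1}$ is conformal on $2^{k+1}\bbD$. By Lemma~\ref{lem:area/length-4}, $M([-\alpha2^{k},\alpha2^{k}]^2)$ sits inside $B(0;2^k)$ under $\wh\varphi_{k+1}$, which is compactly inside the disk. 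Koebe distortion then makes $\varphi\circ\wh\varphi_{k+1}^{-1}$ bi-Lipschitz there, with constant comparable to $d_k/2^k$. Hence $|w_1-w_2|\lesssim(-\log\delta)^{-1/2}d_k$. But $|w_1-w_2|=|1-e^{2\pi i/\beta}|\,c_2d_k/2\geq c_2d_k\sin(\pi/\beta)$, which is contradictory for $\delta$ small. Therefore $\beta=1$.

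The main obstacle is the scale bookkeeping. We need $d_k^\beta\asymp2^k$ and uniform Koebe comparability between $\varphi$ and $\wh\varphi_{k+1}$ with deterministic constants, so that the random constants $a_\beta,b_\beta$ can be absorbed. Step 1's lower bound on $d_k$ is the delicate point. I would first try to avoid it by choosing $|z_0|$ adaptively, as the largest radius at which $\varphi^{-1}(\partial B(0;|z_0|))$ stays in $M([-\alpha2^k,\alpha2^k]^2)$; only Step 3's relative comparison then matters.
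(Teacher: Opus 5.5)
Your argument is correct and uses the same mechanism as the paper. Since $(e^{2\pi i/\beta}z)^\beta=z^\beta$, the two points $w_1,w_2$ at distance $\asymp d_k$ have identical values of $a_\beta\mathrm{Re}(z^\beta)+b_\beta\mathrm{Im}(z^\beta)$. Lemma~\ref{lem:polynomial-A} then makes their $\phi_0$-images, and hence the corresponding cells, $o(2^k)$-close, and a length--area continuity estimate forces $|w_1-w_2|\lesssim(-\log\delta)^{-1/2}d_k$.

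The difference is in the continuity input. The paper re-runs the length--area argument of Lemmas~\ref{lem:area/length-1} and~\ref{lem:area/length-6} directly for $\varphi$. This gives a diameter bound $c(-\log\delta)^{-1/2}d_k$ for squares centred at points of $B(0;\alpha2^{k-1})$, which is why the paper first uses homogeneity to shrink to $B(0;c_2d_k/4)$ so that $w_0$ lands there. You instead reuse Lemma~\ref{lem:area/length-3} for $\wh\varphi$ and transfer to $\varphi$ by Koebe distortion. That avoids proving a new estimate and the shrinking step, at the cost of comparing $\varphi$ with $\wh\varphi$.

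Some bookkeeping needs fixing.

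\begin{itemize}
\item \textbf{Step~1 is not needed.} $a_\beta,b_\beta$ never enter Steps~2--3, since the two polynomial values coincide exactly, and $d_k^\beta\asymp2^k$ is never used. Any $k$-independent random constants would be harmless anyway: Lemma~\ref{lem:area/length-3} holds on one a.s.\ event for all $\delta$ simultaneously, so $\delta$ may be chosen depending on the realization.
\item \textbf{Step~1's justification is also wrong.} Lemma~\ref{lem:area/length-6} only bounds the growth of $d_k$ from below and cannot be ``read in reverse'' to give $d_{k+m}\lesssim d_k$. That would need the reverse-direction estimate \eqref{eq:lem:area/length-3-a} together with Koebe.
\item \textbf{The Step~3 Lipschitz constant is not justified.} You claim $\varphi\circ\wh\varphi_{k+1}^{-1}$ has Lipschitz constant $\lesssim d_k/2^k$ on $B(0;2^k)$. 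This requires $B(0;c2^k)\subseteq\wh\varphi_{k+1}(M([-\alpha2^k,\alpha2^k]^2))$, which Lemma~\ref{lem:area/length-4} does not give. At scale $k+1$ that lemma concerns the $\alpha$-square at scale $k+1$, so it only yields $\lesssim d_{k+1}/2^k$.
\end{itemize}

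The simplest repair is to work with $\wh\varphi_k$ and squares $S\subseteq[-2^k,2^k]^2$ of side $\delta2^k$. Lemma~\ref{lem:area/length-4} then gives $B(0;c_12^k)\subseteq\wh\varphi_k(M([-\alpha2^k,\alpha2^k]^2))\subseteq B(0;2^{k-1})$. Set $f=\varphi\circ\wh\varphi_k^{-1}$.
\begin{itemize}
\item Koebe's $1/4$ theorem gives $|f'(0)|\le 2d_k/(c_12^k)$.
\item The distortion theorem gives $|f'|\le12|f'(0)|$ on $B(0;2^{k-1})$.
\item Hence $|w_1-w_2|\le (24c/c_1)(-\log\delta)^{-1/2}d_k$, with deterministic constants.
\end{itemize}

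Finally, take $H_j$ to be the cell of the vertex whose semi-flower contains $x_j$. Then $x_j\in P_{H_j}$ with $H_j\in\cH(S)$, which is exactly what Lemma~\ref{lem:area/length-3} needs. With these changes the proof goes through.
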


\begin{proof}
Let $\alpha$ and $d_k$ be given by Lemma~\ref{lem:area/length-4} and~\eqref{eq:def-d_k-alpha} as in the proof of Lemma~\ref{lem:polynomial-A}, and $c_2$ be the constant in Lemma~\ref{lem:area/length-5}.  Using identical arguments as in the proof of Lemma~\ref{lem:area/length-1} and Lemma~\ref{lem:area/length-6}, one can show that there exists a deterministic constant $c>0$, such that for each $\delta\in(0,0.01)$, a.s.\ for each $w\in B(0;\alpha2^{k-1})$ and large $k$,
\begin{equation}\label{eq:lem:polynomial-B}
    \diam\Big(\bigcup_{H\in\cH(w+[-\delta2^k,\delta2^k]^2)} \varphi(P_H) \Big)\leq c(-\log \delta)^{-1/2}d_k.
\end{equation}
Fix $\delta>0$ such that $ c(-\log \delta)^{1/2}<0.01c_2$. We assume that $\beta>1$ and want to derive a contradiction. Choose $\theta_\beta$ such that $\beta\theta_\beta\in 2\pi\bbZ$ and $|e^{i\theta_\beta}-1|\geq1$. By~\eqref{eq:pf-lem:polynomial-A-0}  and ~\eqref{eq:lem:polynomial-A}, a.s.\ for all large $k$, for every $z\in B(0;c_2d_k)$, $|\phi_0\circ\varphi^{-1}(z)|\leq \alpha 2^k$ and $|\phi_0\circ\varphi^{-1}(z)-a_\beta\mathrm{Re}\,z^\beta-b_\beta\mathrm{Im}\,z^\beta|<\alpha 2^{k-1}$ and therefore $|a_\beta \mathrm{Re}\,z^\beta+b_\beta \mathrm{Im}\,z^\beta|\leq \alpha 2^{k+1}$.  {This implies that}, a.s.\ for all large $k$, $|a_\beta \mathrm{Re}\,z^\beta+b_\beta \mathrm{Im}\,z^\beta|\leq \alpha 2^{k-3}$ for $z\in B(0;c_2d_k/4)$ since $\beta\geq2$. 
By another application of~\eqref{eq:lem:polynomial-A}, a.s.\ for large $k$,
\begin{equation}\label{eq:pf:lem:polynomial-B-1}
    |\phi_0\circ\varphi^{-1}(z)|\leq \alpha 2^{k-2}, \ \ \text{for } z\in B(0;c_2d_k/4).
\end{equation}
Now let $w_0 = a_\beta(c_2d_k/4)^\beta$. Then   by~\eqref{eq:lem:polynomial-A}, a.s.\ for large enough $k$, 
\begin{equation}
    |\phi_0\circ\varphi^{-1}(\frac{c_2d_k}{4} ) - w_0|\leq 0.01\delta2^k, \ \  \ |\phi_0\circ\varphi^{-1}(\frac{c_2d_ke^{i\theta_\beta}}4) - w_0|\leq 0.01\delta2^k.
\end{equation}
By Lemma~\ref{lem:no-macroscopic-cell}, a.s.\ for large enough $k$, 
\begin{equation}\label{eq:pf:lem:polynomial-B-2}
   \frac{c_2d_k}{4} , \frac{c_2d_ke^{i\theta_\beta}}4 \in  \bigcup_{H\in\cH(w_0+[-\delta2^{k-1},\delta2^{k-1}]^2)} \varphi(P_H).
\end{equation}
By the line connectivity property as in  Definition~\ref{def:connectedness}, a.s.\ for large $k$, the set in~\eqref{eq:pf:lem:polynomial-B-2} is connected, and thus by~\eqref{eq:lem:polynomial-B}, $$|\frac{c_2d_k}{4} -  \frac{c_2d_ke^{i\theta_\beta}}4|\leq c(-\log \delta)^{-1/2}d_k< 0.01c_2d_k.$$
This contradicts with our assumption that $|e^{i\theta_\beta}-1|\geq1$.
\end{proof}

\begin{lemma}\label{lem:polynomial-C}
    The function $\wt\phi_\infty$ is a.s.\ a linear transform of $\bbC$. Furthermore, there exists a deterministic constant $c\in(0,1)$ such that 
    \begin{equation}\label{eq:lem:polynomial-C}
       B(0;c\diam (\wt\phi_\infty(\bbD)))\subseteq \wt\phi_\infty(\bbD).
    \end{equation}
\end{lemma}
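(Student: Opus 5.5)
The first claim is a direct consequence of Lemma~\ref{lem:polynomial-A} and Lemma~\ref{lem:polynomial-B}. Lemma~\ref{lem:harmonic-polynomial} shows that $\wt\phi_\infty$ is a harmonic polynomial in $z,\ol z$, and Lemma~\ref{lem:polynomial-B} shows that its top degree is $\beta=1$. Hence $\wt\phi_\infty(z)=a_1\mathrm{Re}\,z+b_1\mathrm{Im}\,z+p_0$ for some constant $p_0$. Since $\phi_\infty(0)=0$ and $\varphi(0)=0$, the constant $p_0$ vanishes, so $\wt\phi_\infty$ is a real-linear map of $\bbC\cong\bbR^2$. The remaining work is to show that this linear map is uniformly non-degenerate, with a deterministic constant.

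For \eqref{eq:lem:polynomial-C}, I would view $\wt\phi_\infty$ as a real $2\times2$ matrix $\mathbf{B}$. The image $\mathbf{B}(\bbD)$ is an ellipse with semi-axes $\sigma_1\ge\sigma_2$, the singular values of $\mathbf{B}$. Then $\diam(\mathbf{B}\bbD)=2\sigma_1$ and the largest centered inscribed disk has radius $\sigma_2$. So it suffices to prove $\sigma_2\ge c'\sigma_1$ a.s.\ for a deterministic $c'>0$.

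To get this ratio bound, I would compare two scales of the same nested family. By \eqref{eq:lem:area/length-5}, for large $k$ we have $B(0;c_2d_k)\subseteq\varphi(M([-\alpha2^k,\alpha2^k]^2))\subseteq \ol{B(0;d_k)}$. By Proposition~\ref{prop:harmonic-sublinear}, $\phi_0\circ\varphi^{-1}$ equals $\mathbf{B}$ up to an error $o(2^k)$ on $\ol{B(0;d_k)}$. The plan has three steps.
\begin{enumerate}[(i)]
\item \textbf{Upper bound.} Points of $M([-\alpha2^k,\alpha2^k]^2)$ have $\phi_0$-image within $o(2^k)$ of $[-\alpha2^k,\alpha2^k]^2$, so $\mathbf{B}(B(0;c_2d_k))\subseteq[-2\alpha2^k,2\alpha2^k]^2$. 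This gives $\sigma_1 c_2d_k\lesssim \alpha2^k$.
\item \textbf{Lower bound.} Lemma~\ref{lem:area/length-4} and Lemma~\ref{lem:M(S)} show that $\varphi(M([-\alpha2^k,\alpha2^k]^2))$ reaches $\partial B(0;d_k/2)$ or so. More directly, the boundary cells of $M([-\alpha2^k,\alpha2^k]^2)$ lie near $\partial[-\alpha2^k,\alpha2^k]^2$, and as in Lemma~\ref{lem:phi_0-bbC-1} their $\phi_0$-images wind around $B(0;\alpha2^{k-1})$. Since $\phi_0\approx\mathbf{B}\circ\varphi$, the image $\mathbf{B}(\ol{B(0;d_k)})$ must contain $B(0;\alpha2^{k-2})$. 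This gives $\sigma_2 d_k\gtrsim\alpha2^k$.
\item \textbf{Combination.} Together, $\sigma_2/\sigma_1\ge c_2/16$, which is deterministic, and \eqref{eq:lem:polynomial-C} follows with $c=c_2/16$.
\end{enumerate}

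The main obstacle is the lower bound in (ii). It needs a topological degree argument: the near-boundary loop of cells of $M([-\alpha2^k,\alpha2^k]^2)$ is mapped by $\varphi$ into $\ol{B(0;d_k)}$, and by $\phi_0$ to a curve winding once around a large box. Because the two maps differ by $o(2^k)$ along the loop, the winding number of $\mathbf{B}\circ\varphi$ around the box center is also one. Hence $\mathbf{B}(\ol{B(0;d_k)})$ covers that box, which in particular forces $\mathbf{B}$ to be nondegenerate. I expect Lemma~\ref{lem:M(S)} and the connectivity of Definition~\ref{def:connectedness} to supply the required loop. The winding-number perturbation step is standard once the uniform $o(2^k)$ closeness is in hand.
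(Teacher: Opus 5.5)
Your proof is correct. The linearity part (degree one by Lemma~\ref{lem:polynomial-B}, no constant term since $\phi_\infty(0)=0$) and your upper bound (i) are the same as in the paper; the lower bound takes a different route.

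\textbf{The paper's route.} It keeps the single ball $B(0;c_2d_k)$ and also uses the \emph{first} half of \eqref{eq:lem:area/length-5}, namely $\varphi(M([-\alpha2^{k-\ell},\alpha2^{k-\ell}]^2))\subseteq B(0;c_2d_k)$. This traps $\wt\phi_\infty(B(0;c_2d_k))$ between two squares whose side ratio is about $2^{\ell}$. The inner inclusion is proved without topology. Any point $w_0$ of the inner square is close to the cells of some face of $M([-\alpha2^{k-\ell},\alpha2^{k-\ell}]^2)$, and that face's $\varphi$-image lies in the ball.

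\textbf{Your route.} You keep the single square $[-\alpha2^k,\alpha2^k]^2$ and use the two balls $B(0;c_2d_k)\subseteq\varphi(M([-\alpha2^k,\alpha2^k]^2))\subseteq\ol{B(0;d_k)}$. So you need only the Koebe half of \eqref{eq:lem:area/length-5}, not the scale gap from Lemma~\ref{lem:area/length-6}. Your ratio constant comes from $c_2$ instead of $2^\ell$.

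\textbf{The winding argument.} It works if the loop is the line-connectivity loop from the proof of Lemma~\ref{lem:phi_0-bbC}, placed inside $M([-\alpha2^k,\alpha2^k]^2)$ via Lemma~\ref{lem:M(S)}, combined with contractibility of $\ol{B(0;d_k)}$. Using $\partial M([-\alpha2^k,\alpha2^k]^2)$ itself would need an extra argument that its $\phi_0$-image winds once.

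The degree theory is not actually needed. $\mathbf{B}(\ol{B(0;d_k)})$ is a centered ellipse. With $S=[-\alpha2^k,\alpha2^k]^2$, $\phi_0(M(S))$ contains $\wt c(H)$ for every cell meeting $\mathring S^{(1-\e)|S|}$. This density already forces the ellipse's inradius to be of order $\alpha2^k$, which is the paper's argument transplanted to your two-ball setup.

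\textbf{Two minor points.} First, Proposition~\ref{prop:harmonic-sublinear} controls $\phi_0-\mathbf{B}\circ\varphi$ only on $M([-2^k,2^k]^2)$, not on all of $\ol{B(0;d_k)}$. You only use it on $B(0;c_2d_k)$ and on the loop, so this is harmless. Second, $\diam(\mathbf{B}\bbD)=2\sigma_1$, so the constant should be $c=c'/2$ rather than $c'$. Your bounds give $\sigma_2/\sigma_1\geq c_2/(8\sqrt2)$, so for example $c=c_2/32$ works, not $c_2/16$.
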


\begin{proof}
    The claim that $\wt\phi_\infty$ is a linear transform of $\bbC$ follows from Proposition~\ref{prop:harmonic-sublinear}, Lemma~\ref{lem:polynomial-B} and the fact that $\phi_\infty(0)=0$. For the claim~\eqref{eq:lem:polynomial-C}, let $c_2,\ell$ be the constant from Lemma~\ref{lem:area/length-5}. We first show that  
    \begin{equation}\label{eq:lem:polynomial-C-1}
        [-\alpha 2^{k-\ell-1},\alpha2^{k-\ell-1}]^2\subseteq \wt\phi_\infty(B(0;c_2d_k)) \ \ \text{a.s.\ for large }k.
    \end{equation}
    Assume~\eqref{eq:lem:polynomial-C-1} does not hold. Using linearity of $\wt\phi_\infty$ along with Proposition~\ref{prop:harmonic-sublinear}, there exists some point $w_0\in [-0.9 \alpha 2^{k-\ell},0.9\alpha 2^{k-1}]^2$ such that $w_0$ has distance at least $0.1\alpha 2^{k-\ell}$ from $\phi_0\circ \varphi^{-1}(B(0;c_2d_k))$. By Lemma~\ref{lem:no-macroscopic-cell} and Lemma~\ref{lem:M(S)}, there exists cells $H_1,H_2,H_3$ with distance at least $0.05\alpha 2^{k-\ell}$ from $\phi_0\circ \varphi^{-1}(B(0;c_2d_k))$ and form a triangular face on $M([-\alpha 2^{k-\ell},\alpha 2^{k-\ell}]^2)$. By~\eqref{eq:lem:area/length-5}, the $\varphi$-image of this face is contained within $B(0;c_2d_k)$, and this contradicts with the statement that $H_1,H_2,H_3$ with distance at least $0.05\alpha 2^{k-\ell}$ from $\phi_0\circ \varphi^{-1}(B(0;c_2d_k))$. This verifies~\eqref{eq:lem:polynomial-C-1}. By the second half of~\eqref{eq:lem:area/length-5} along with Lemma~\ref{lem:no-macroscopic-cell}, a.s.\ for large enough $k$, $\phi_0\circ\varphi^{-1}(B(0;c_2d_k)) \subseteq [-\alpha 2^{k+1},\alpha 2^{k+1}]^2$, and by Proposition~\ref{prop:harmonic-sublinear},  
    \begin{equation}\label{eq:lem:polynomial-C-2}
         \wt\phi_\infty(B(0;c_2d_k))  \subseteq  [-\alpha 2^{k+2},\alpha2^{k+2}]^2 \ \ \text{a.s.\ for large }k.
    \end{equation}
    Using linearity of $\wt\phi_\infty$, from~\eqref{eq:lem:polynomial-C-1} and~\eqref{eq:lem:polynomial-C-2} we see that  a.s.\  for some $b>0$,
    \begin{equation}
        B(0;b) \subseteq \wt\phi_\infty(\bbD) \subseteq B(0;\alpha 2^{\ell+4}b).
    \end{equation}  
    This verifies~\eqref{eq:lem:polynomial-C} for $c = (\alpha 2^{\ell+4})^{-1}$.
\end{proof}

Recall that for a square $S$, $\mathring{S}^a$ is the square concentric to $S$ with side length $a$. Below we write 
\begin{equation*}
    \nabla\wt\phi_\infty = \begin{pmatrix}
   \frac{\partial\mathrm{Re}\wt\phi_\infty}{\partial x} & \frac{\partial\mathrm{Im}\wt\phi_\infty}{\partial x} \\ \frac{\partial\mathrm{Re}\wt\phi_\infty}{\partial y} & \frac{\partial\mathrm{Im}\wt\phi_\infty}{\partial y}
\end{pmatrix},
\end{equation*}
 and write $|\nabla\wt\phi_\infty|$ for the $L^2$-matrix norm of $\nabla\wt\phi_\infty$. Note that the transpose $( \nabla\wt\phi_\infty)^{\rmT}$ is the  matrix for the linear transform $\wt\phi_\infty$. The reason we choose to work with the transpose is the 2nd equality of \eqref{eq:pf:lem:polynomial-ergodic-B-1} to hold. 

\begin{lemma}\label{lem:polynomial-D}
    Almost surely, for each $\e,a\in(0,1)$, for large enough $k$, for each square $S\subseteq [-2^k,2^k]^2$ of side length at least $a2^k$,
    \begin{equation}\label{eq:lem:polynomial-D-1}
        \wt\phi_\infty^{-1}(\mathring{S}^{(1-\e)|S|}) \subseteq \varphi(M(S))\subseteq \wt\phi_\infty^{-1}(\mathring{S}^{(1+\e)|S|}).
    \end{equation}
    In particular,
    \begin{equation}\label{eq:lem:polynomial-D-2}
        \lim_{k\to\infty} \frac{1}{|S_k|^2}|\det(\nabla\wt\phi_\infty)|\,\area(\varphi(M(S_k))) = 1.
    \end{equation}
\end{lemma}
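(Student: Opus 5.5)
The plan is to compare $\wt\phi_\infty$ with $\phi_0\circ\varphi^{-1}$ using Proposition~\ref{prop:harmonic-sublinear}, and to use the fact that $\varphi(M(S))$ is a topological disk whose boundary consists of faces built from cells near $\partial S$.

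\textbf{Closeness of the two maps.} First, Lemma~\ref{lem:polynomial-C} says $\wt\phi_\infty$ is an invertible linear map. By Proposition~\ref{prop:harmonic-sublinear}, a.s.\ for large $k$ and every $x\in M([-2^{k+1},2^{k+1}]^2)$ we have $|\phi_0(x)-\wt\phi_\infty(\varphi(x))|\le \eta 2^k$, with $\eta\to0$ as $k\to\infty$. By Lemma~\ref{lem:M(S)}, each $M(S)$ for $S\subseteq[-2^k,2^k]^2$ sits inside $M([-2^{k+1},2^{k+1}]^2)$, so this estimate holds on all of these surfaces.

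\textbf{Outer inclusion.} Take $x\in M(S)$. Then $x$ lies in a face whose three cells are contained in $S$, so $\phi_0(x)\in S$, since $\phi_0$ maps the face into the convex hull of points of these cells. Hence $\wt\phi_\infty(\varphi(x))$ lies within $\eta 2^k$ of $S$. Choosing $\eta<\e a/2$ gives $\wt\phi_\infty(\varphi(x))\in\mathring S^{(1+\e)|S|}$.

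\textbf{Inner inclusion, via a degree argument.} The boundary $\partial M(S)$ consists of edges between vertices whose cells intersect the complement of $\mathring S^{a'}$, where $a'$ is the parameter in the definition of $M(S)$. By Lemma~\ref{lem:M(S)} applied with $\e/4$, we may take $a'\ge(1-\e/4)|S|$. By Lemma~\ref{lem:no-macroscopic-cell}, the $\phi_0$-images of these boundary edges lie outside $\mathring S^{(1-\e/2)|S|}$. It follows that $\wt\phi_\infty\circ\varphi$ maps $\partial M(S)$ outside $\mathring S^{(1-\e/2)|S|-2\eta 2^k}$, which is outside $\mathring S^{(1-\e)|S|}$.

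Now fix $w$ with $\wt\phi_\infty^{-1}(w)\notin\varphi(M(S))$. Apply a winding-number argument in the style of Lemma~\ref{lem:phi_0-bbC-1} to the loop $\varphi(\partial M(S))$. Since $\wt\phi_\infty$ is a homeomorphism, the point $\wt\phi_\infty^{-1}(w)$ lies outside the Jordan domain $\varphi(M(S))$, so $\wt\phi_\infty(\varphi(\partial M(S)))$ has winding number $0$ around $w$.

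On the other hand, $\phi_0(\partial M(S))$ is a loop hugging $\partial S$ from inside within distance $\e|S|/2$, and $\wt\phi_\infty\circ\varphi$ is within $\eta 2^k$ of it. We need this loop to have winding number $1$ around every point of $\mathring S^{(1-\e)|S|}$. To get this, I would pick the boundary cycle as in the proof of Lemma~\ref{lem:almost-planarity}, using the line connectivity of Definition~\ref{def:connectedness} along $\partial\mathring S^{a'}$ so that the loop follows the square. The winding number is then $\pm1$. The sign is $+1$ rather than $-1$ because $\wt\phi_\infty$ preserves orientation, which I would check as in the end of the proof of Theorem~\ref{thm:CellSystemPack}. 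This contradicts winding number $0$, so every such $w$ lies outside $\mathring S^{(1-\e)|S|}$, giving the inner inclusion.

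\textbf{Main obstacle.} I expect the hardest step to be this winding-number control. The boundary of $M(S)$ is defined combinatorially, including enclosed cells, so its $\phi_0$-image might wiggle or wander. The argument needs it to stay in a thin annulus near $\partial S$, and that has to be justified with Lemma~\ref{lem:no-macroscopic-cell} together with the connectivity property.

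\textbf{Area limit.} Finally, \eqref{eq:lem:polynomial-D-2} follows by applying \eqref{eq:lem:polynomial-D-1} to $S=S_k$. The area of $\wt\phi_\infty^{-1}(\mathring S^{(1\pm\e)|S_k|})$ equals $(1\pm\e)^2|S_k|^2/|\det\nabla\wt\phi_\infty|$. The required conditions $S_k\subseteq[-2^{k'},2^{k'}]^2$ and $|S_k|\ge a2^{k'}$ hold for a suitable $k'$, and letting $\e\to0$ gives the limit $1$.
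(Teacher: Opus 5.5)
\textbf{Gap in the inner inclusion.} Your outer inclusion is fine. So is your observation that $\phi_0(\partial M(S))$ stays in a thin annulus just inside $\partial S$, and so is the area computation. The overall route is the paper's: $\phi_0$ sends $\partial M(S)$ close to $\partial S$ by Lemmas~\ref{lem:no-macroscopic-cell} and~\ref{lem:M(S)}, you transfer this to $\wt\phi_\infty\circ\varphi$ via Proposition~\ref{prop:harmonic-sublinear}, and you use that $\wt\phi_\infty\circ\varphi$ maps the closed disk $M(S)$ homeomorphically onto a closed Jordan domain.

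The gap is your claim that the loop $\wt\phi_\infty\circ\varphi(\partial M(S))$ has winding number $\pm1$ around every point of $\mathring S^{(1-\e)|S|}$. Knowing that a Jordan curve lies in a thin annulus around $\partial S$ does not determine its winding number, and neither does Hausdorff-closeness to $\partial S$. For example, the boundary of a thin C-shaped region running almost all the way around the annulus has winding number $0$ around the inner square.

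Your proposed remedy, picking the boundary cycle as in Lemma~\ref{lem:almost-planarity}, does not apply. The loop $\partial M(S)$ is fixed by the definition of $M(S)$, and a cycle built from line connectivity is a different loop. Transferring its winding number to $\partial M(S)$ would require controlling the $\phi_0$-image of the whole region of $M(S)$ between the two loops, and you have not done that.

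\textbf{The fix.} Set $Q:=\mathring S^{(1-\e)|S|}$. You have already shown that the loop $\wt\phi_\infty\circ\varphi(\partial M(S))$ avoids $Q$, and $Q$ is connected. So $Q$ lies either entirely inside the Jordan domain $\wt\phi_\infty(\varphi(M(S)))$ or entirely outside it, and it suffices to exhibit one point of $Q$ in that domain.

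Take a cell $H$ containing the center of $S$. Its vertex $v_H$ belongs to $\cM(S;(1-\e)|S|)\subseteq\cM(S)$ by Lemma~\ref{lem:M(S)}. The point $\phi_0(v_H)=\wt c(H)$ is within $o(2^k)$ of the center by Lemma~\ref{lem:no-macroscopic-cell}. Hence, by Proposition~\ref{prop:harmonic-sublinear}, $\wt\phi_\infty(\varphi(v_H))$ lies in $Q$, and this gives the inner inclusion.

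With this step, no orientation argument is needed. Any nonzero winding number already contradicts the value $0$ you obtained for $w$ outside the domain, so the discussion of the sign being $+1$ can be dropped. The paper's own two-line proof also leaves this topological step implicit; the interior-point argument above is what makes it rigorous.
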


\begin{proof}
    By Lemma~\ref{lem:no-macroscopic-cell} and Lemma~\ref{lem:M(S)}, the Hausdorff distance between $\phi_0(\partial M(S))$ and $\partial S$ divided by $2^k$ a.s.\ tends to 0 as $n\to\infty$. Therefore by Proposition~\ref{prop:harmonic-sublinear},
    \begin{equation}
    \mathring{S}^{(1-\e)|S|}   \subseteq \wt\phi_\infty\circ\varphi(M(S))\subseteq \mathring{S}^{(1+\e)|S|},
    \end{equation}
    which further implies~\eqref{eq:lem:polynomial-D-1}. The relation~\eqref{eq:lem:polynomial-D-2} is immediate from~\eqref{eq:lem:polynomial-D-1} since $\wt\phi_\infty$ is linear.
\end{proof}
 {Our goal is to argue that $\wt\phi_\infty$ is the composition of a deterministic linear transform, a rotation and a scaling; once this has been proven, Theorem~\ref{thm:invariance-uniformization} follows via a short argument. Our next step towards this goal} is to use the ergodic theorem for cell configurations (Lemma~\ref{lem:ergodic-cell-system}) to prove that for vectors $\mathbf{u},\mathbf{v}\in\bbR^2$, up to a multiplicative constant, the inner product $\inner{\nabla\wt\phi_\infty\mathbf{u}}{\nabla\wt\phi_\infty\mathbf{v}}$ is deterministic. 

\begin{lemma}\label{lem:polynomial-ergodic-A}
    Let $a,b\in\bbR$. Almost surely, the limit
    \begin{equation}\label{eq:lem:polynomial-ergodic-A-1}
        \lim_{k\to\infty}\frac{1}{|S_k|^2}\int_{M(S_k)} \Big|\nabla \bbE[a\mathrm{Re}\,\phi_\infty(x)+b\mathrm{Im}\,\phi_\infty(x)\big|\cH,\cD]\Big|^2\,dx 
    \end{equation}
    exists and equals a deterministic  {finite} number. 
\end{lemma}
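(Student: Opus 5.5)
The plan is to apply the ergodic theorem (Lemma~\ref{lem:ergodic-cell-system}) to a scale-invariant per-cell energy density, exactly as in the proof of Lemma~\ref{lem:dirichlet-D}. Write $\psi = a\,\mathrm{Re}\,\phi_\infty + b\,\mathrm{Im}\,\phi_\infty$ and $\bar\psi = \bbE[\psi\,|\,\cH,\cD]$. For $z\in\bbC$ define
\[
F^z(\cH,\cD)=\frac{1}{\area(H_z)}\int_{\phi_0^{-1}(H_z)}|\nabla \bar\psi(x)|^2\,dx .
\]
Because $\phi_\infty$ is built from the $\phi_m$, which are translation covariant and $1$-homogeneous under dilation of $(\cH,\cD)$, the function $\bar\psi$ is multiplied by $C$ under $(\cH,\cD)\mapsto(C\cH,C\cD)$. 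Since $\area(H_0)$ scales by $C^2$ and the preimage $\phi_0^{-1}(H_0)$ is unchanged, $F=F^0$ is scale invariant, and $F(\cH-z,\cD-z)=F^z(\cH,\cD)$.

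One subtlety is that $\phi_\infty$ is only defined up to the normalisation $\phi_\infty(0)=0$. Gradients do not see this additive constant, so $F$ is well defined. Since $F\ge0$, Lemma~\ref{lem:ergodic-cell-system} applies without any integrability hypothesis. It gives that a.s.\ $|S_k|^{-2}\int_{S_k}F^z\,dz\to\bbE[F]$, which is a deterministic number in $[0,\infty]$.

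Next I will show that $\bbE[F]<\infty$. By Jensen's inequality, $|\nabla\bar\psi|^2\le \bbE[|\nabla\psi|^2\,|\,\cH,\cD]\le (a^2+b^2)\,\bbE[|\nabla\phi_\infty|^2\,|\,\cH,\cD]$. I then split $\nabla\phi_\infty=(\nabla\phi_\infty-\nabla\phi_m)+\nabla\phi_m$. The first term is controlled by Lemma~\ref{lem:dirichlet-C}, which gives $\bbE[\mathrm{SE}_{m,\infty}]<\infty$. For the second term, I will use the per-cell version of the energy bound from Lemmas~\ref{lem:energy-phi0} and~\ref{lem:dirichlet-A}, namely $\sup_{m}\bbE[\mathrm{SE}_{0,m}]<\infty$, together with the moment bound~\eqref{eq:thm-inv-circle}.

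It then remains to compare $\int_{S_k}F^z\,dz$ with $\int_{M(S_k)}|\nabla\bar\psi|^2\,dx$. By Lemma~\ref{lem:topology-cell-square}, the sets $\phi_0^{-1}(H)$ overlap only in measure zero, so
\[
\int_{S_k}F^z\,dz=\sum_{H\in\cH(S_k)}\frac{\area(H\cap S_k)}{\area(H)}\int_{\phi_0^{-1}(H)}|\nabla\bar\psi|^2 .
\]
For the lower inequality, I will use the observation already made in the proof of Lemma~\ref{lem:dirichlet-D}: every face of $M(S_k)$ has its three cells inside $S_k$. This gives $\int_{M(S_k)}|\nabla\bar\psi|^2\le\int_{S_k}F^z\,dz$. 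For the matching upper bound, I will apply the same inequality to $M(\mathring S_k^{(1-\e)|S_k|})$, which is contained in $M(S_k)$ by Lemma~\ref{lem:M(S)}. The resulting error is the energy carried by cells intersecting the annulus $S_k\setminus\mathring S_k^{(1-\e)|S_k|}$. To bound it, I will apply the ergodic theorem to the dyadic subsquares covering this thin annulus, via Lemma~\ref{lem:dyadic-shift} and $\cS_{k,\ell}$ in the same way as in Lemma~\ref{lem:average-diamdeg4}. This bounds the error by $O(\e)\,\bbE[F]\,|S_k|^2$, and letting $\e\to0$ gives the limit.

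I expect the main obstacle to be this boundary comparison, because $\phi_0^{-1}(H)$ for cells near $\partial S_k$ need not lie in $M(S_k)$, and the annulus bound requires applying ergodic averages simultaneously to many small squares rather than to $S_k$ alone. A second, milder point is checking the scaling and translation covariance of $\bar\psi$ rigorously, since $\phi_\infty$ was only obtained as a limit in probability.
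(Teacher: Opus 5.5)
Your skeleton is the same as the paper's. You feed a nonnegative, scale-invariant per-cell energy density into Lemma~\ref{lem:ergodic-cell-system}, then sandwich $\int_{M(S_k)}$ between the average over $S_k$ and the average over an inner square, with the annulus as the error. The difference is in how finiteness and the annulus error are controlled.

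The paper does not return to the corrector energy estimates here. At this stage $\wt\phi_\infty$ is already known to be linear (Lemma~\ref{lem:polynomial-C}). Hence, with $\mathbf{u}=(a,b)^\rmT$, the quantity $|\nabla(\bar\psi\circ\varphi^{-1})|=|\nabla\wt\phi_\infty\mathbf{u}|$ is constant, and by conformal invariance the energy of $\bar\psi$ on $M(S)$ is exactly $|\nabla\wt\phi_\infty\mathbf{u}|^2\,\area(\varphi(M(S)))$. Lemma~\ref{lem:polynomial-D} gives $\area(\varphi(M(S)))\lesssim |S|^2/|\det\nabla\wt\phi_\infty|$. Also, \eqref{eq:lem:polynomial-C} bounds $|\nabla\wt\phi_\infty|^2/|\det\nabla\wt\phi_\infty|$ by a deterministic constant.

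The outcome is \eqref{eq:pf:lem:polynomial-ergodic-A-2}: a deterministic bound on the normalized energy over \emph{every} square of side at least $\delta|S_k|$. That single estimate gives both finiteness of the limit and the $O(\delta)$ annulus error. No second ergodic argument over small squares is needed. Your route never uses linearity, so it would apply to a general harmonic corrector. The price is an independent finiteness proof, plus a dyadic-shift argument for an event involving $\phi_\infty$ and $\cD$.

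Your finiteness step, as written, does not work cell by cell. The domain $\phi_0^{-1}(H_0)$ depends on the random points $\wt c(H)$, which also determine $\phi_\infty$. So pointwise Jensen only gives $\bbE[F]\le(a^2+b^2)\,\bbE\big[\area(H_0)^{-1}\int_{\phi_0^{-1}(H_0)}\bbE[|\nabla\phi_\infty|^2\mid\cH,\cD]\big]$. This is not comparable to $\bbE[\mathrm{SE}_{m,\infty}]$, because of the covariance between $\mathds{1}_{\phi_0(x)\in H_0}$ and $|\nabla\phi_\infty(x)|^2$. There are two further mismatches:
\begin{itemize}
\item $\mathrm{SE}_{0,m}$ is normalized by $H_0\cap\wh S_m$, not by $H_0$.
\item The energy of $\phi_0$ on $\phi_0^{-1}(H_0)$ comes from triangles spanned by other cells, so \eqref{eq:thm-inv-circle} does not bound it per cell. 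Lemma~\ref{lem:energy-phi0} controls it only after spatial summation.
\end{itemize}

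The repair is to bound spatial averages instead. The quantity $|S_k|^{-2}\int_{S_k}F^z\,dz$ is at most the normalized energy of $\bar\psi$ on $M$ of a slightly larger square. That region is $(\cH,\cD)$-measurable, so conditional Jensen is legitimate there, and Lemmas~\ref{lem:dirichlet-D} and~\ref{lem:energy-phi0} apply. Since $F\ge0$, Lemma~\ref{lem:ergodic-cell-system} already guarantees that the limit exists in $[0,\infty]$ and equals $\bbE[F]$. A bound along a subsequence of $k$ therefore suffices. Alternatively, just use linearity as the paper does.

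One smaller slip is in your upper bound. Combining $M(\mathring S_k^{(1-\e)|S_k|})\subseteq M(S_k)$ with $\int_{M(S)}\le\int_S F^z$ yields inequalities in the wrong direction. What you need is $\phi_0^{-1}(H)\subseteq M(S_k)$ for every cell $H$ meeting $\mathring S_k^{(1-\e)|S_k|}$. This follows from Lemmas~\ref{lem:no-macroscopic-cell} and~\ref{lem:M(S)}.
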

To prove Lemma~\ref{lem:polynomial-ergodic-A}, we first apply Lemma~\ref{lem:ergodic-cell-system} to the Dirichlet energy per area $\mathrm{SE}_\infty^{z,a,b}$ for $\phi_\infty$ as in~\eqref{eq:lem:polynomial-ergodic-A-0} below, and then argue that its average over $S_k$ is close to the left hand side of~\eqref{eq:lem:polynomial-ergodic-A-1} above. To see this, we prove in~\eqref{eq:pf:lem:polynomial-ergodic-A-3} that the integral of $\mathrm{SE}_\infty^{z,a,b}$ near the boundary of $S_k$ (i.e., the integral over $S_k\backslash \mathring{S}_k^{(1-\delta)|S_k|}$) is small. Then we conclude the proof  by Lemma~\ref{lem:M(S)} as the integral in~\eqref{eq:lem:polynomial-ergodic-A-1} is bounded from above by the integral of $\mathrm{SE}_\infty^{z,a,b}$ over $S_k$ and from below by the integral of  $\mathrm{SE}_\infty^{z,a,b}$ over $\mathring{S}_k^{(1-\delta)|S_k|}$. 
\begin{proof}
    For $z\in\bbC$, let \eqb\label{eq:lem:polynomial-ergodic-A-0} \mathrm{SE}_\infty^{z,a,b} = \frac{\int_{\phi_0^{-1}(H_z)}  \Big|\nabla \bbE[a\mathrm{Re}\,\phi_\infty(x)+b\mathrm{Im}\,\phi_\infty(x)\big|\cH,\cD]\Big|^2\,dx }{\area(H_z)},\eqe
    and define $F(\cH,\cD) = \bbE[\mathrm{SE}_\infty^{0,a,b}|\cH,\cD]$. Then from the construction of $\phi_\infty$, one can check that $F(\cH-z,\cD-z) = \bbE[\mathrm{SE}_\infty^{z,a,b}|\cH,\cD]$ and $F(C\cH,C\cD)=F(\cH,\cD)$, thus by Lemma~\ref{lem:ergodic-cell-system}, 
    \begin{equation}\label{eq:pf:lem:polynomial-ergodic-A-1}
        \lim_{k\to\infty} \frac{1}{|S_k|^2}\int_{S_k} \bbE[\mathrm{SE}_\infty^{z,a,b}|\cH,\cD]\,dz = \bbE[\mathrm{SE}_\infty^{0,a,b}].
    \end{equation}
    
     {In order to conclude the proof we need to argue that the right side of the last indented equation is finite and that the left side does not change if the domain of integration is changed to be consistent with \eqref{eq:lem:polynomial-ergodic-A-1}.} Fix $\delta\in(0,1)$, and consider a square $S\subseteq S_k$ of side length at least $\delta|S_k|$.
    For each $z$ contained in some $H\in\cH(S)$, if $z$ is inside the triangle formed by $\wt c(H_1),\wt c(H_2),\wt c(H_3)$ as in the definition of $\phi_0$, then for some $j$, $H_j\cap S\neq\emptyset$, and using Lemmas~\ref{lem:no-macroscopic-cell} and~\ref{lem:M(S)}, it follows that a.s.\ for large enough $k$, $H_1,H_2,H_3\subseteq \mathring{S}^{1.5|S|}$ and thus $\phi_0^{-1}(z)\subseteq M(\mathring{S}^{2|S|})$. Therefore,  
    a.s.\ for large $n$, for each square $S\subseteq S_k$ of side length at least $\delta|S|$, 
    \begin{equation}\label{eq:pf:lem:polynomial-ergodic-A-2}
    \begin{split}
        \frac{1}{|S|^2}\int_{S} &  \mathrm{SE}_\infty^{z,a,b} dz\leq \frac{1}{|S|^2}\sum_{H\in\cH(S)} \int_{\phi_0^{-1}(H)} \Big|\nabla \bbE[a\mathrm{Re}\,\phi_\infty(x)+b\mathrm{Im}\,\phi_\infty(x)\big|\cH,\cD]\Big|^2\,dx \\& \leq \frac{1}{|S|^2}\int_{M(\mathring{S}^{2|S|})} \Big|\nabla \bbE[a\mathrm{Re}\,\phi_\infty(x)+b\mathrm{Im}\,\phi_\infty(x)\big|\cH,\cD]\Big|^2\,dx\\& =  \frac{1}{|S|^2}\int_{\varphi(M(\mathring{S}^{2|S|}))} \Big|\nabla  \big(a\mathrm{Re}\,\wt\phi_\infty(z)+b\mathrm{Im}\,\wt\phi_\infty(z)\big) \Big|^2\,dz \\&= \frac{1}{|S|^2}\Big| \big(a\nabla\mathrm{Re}\,\wt\phi_\infty+b\nabla\mathrm{Im}\,\wt\phi_\infty\big)\Big|^2\area(\varphi(M(\mathring{S}^{2|S|})))\\
        &\leq8(a^2+b^2)|\nabla\wt\phi_\infty|^2|\det(\nabla\wt\phi_\infty)|^{-1},
        \end{split}
    \end{equation}
    where in the third line we used the conformal invariance of Dirichlet energy, and in the last line we applied Lemma~\ref{lem:polynomial-D}. By relation~\eqref{eq:lem:polynomial-C}, $|\nabla\wt\phi_\infty|^2\det(\nabla\wt\phi_\infty)^{-1}$ is bounded from above by a deterministic constant $c_0$, and by~\eqref{eq:pf:lem:polynomial-ergodic-A-2} for $S=S_k$, the right side of~\eqref{eq:pf:lem:polynomial-ergodic-A-1} is finite. Furthermore, assuming $\delta = 1/m$ where $m\in\bbN$, we can partition $S_k\backslash \mathring{S}_k^{(1-\delta)|S_k|} $ into   a collection of squares of side length $\delta|S_k|$ of disjoint interiors and apply \eqref{eq:pf:lem:polynomial-ergodic-A-2} for each of those squares, which gives that a.s.\ large enough $k$, 
    \begin{equation}\label{eq:pf:lem:polynomial-ergodic-A-3}
        \frac{1}{|S_k|^2}\int_{S_k\backslash \mathring{S}_k^{(1-\delta)|S_k|}}   \mathrm{SE}_\infty^{z,a,b} dz\leq 32c_0(a^2+b^2)\delta.
    \end{equation}
    Finally, 
    \begin{equation}\label{eq:pf:lem:polynomial-ergodic-A-4}
    \begin{split}
        \frac{1}{|S_k|^2}\int_{S_k}   \mathrm{SE}_\infty^{z,a,b} dz&\geq \frac{1}{|S_k|^2}\sum_{H\in\cH(S_k)\backslash\cH(\partial S_k)} \int_{\phi_0^{-1}(H)} \Big|\nabla \bbE[a\mathrm{Re}\,\phi_\infty(x)+b\mathrm{Im}\,\phi_\infty(x)\big|\cH,\cD]\Big|^2\,dx \\&\geq   \frac{1}{|S_k|^2}\int_{M(S_k)} \Big|\nabla \bbE[a\mathrm{Re}\,\phi_\infty(x)+b\mathrm{Im}\,\phi_\infty(x)\big|\cH,\cD]\Big|^2\,dx, 
        \end{split}
    \end{equation}
   and using the identical argument in the paragraph after~\eqref{eq:pf:lem:polynomial-ergodic-A-1}, a.s.\ for large enough $k$, 
   \begin{equation}\label{eq:pf:lem:polynomial-ergodic-A-5}
   \begin{split}
        \frac{1}{|S_k|^2}\int_{\mathring{S}_k^{(1-\delta)|S_k|}}   \mathrm{SE}_\infty^{z,a,b} dz &\leq  \frac{1}{|S_k|^2}\sum_{H\in\cH(\mathring{S}_k^{(1-\delta)|S_k|})} \int_{\phi_0^{-1}(H)} \Big|\nabla \bbE[a\mathrm{Re}\,\phi_\infty(x)+b\mathrm{Im}\,\phi_\infty(x)\big|\cH,\cD]\Big|^2\,dx \\&\leq \frac{1}{|S_k|^2}\int_{M(S_k)} \Big|\nabla \bbE[a\mathrm{Re}\,\phi_\infty(x)+b\mathrm{Im}\,\phi_\infty(x)\big|\cH,\cD]\Big|^2\,dx.
        \end{split}
   \end{equation}
   Therefore the conclusion follows by combining~\eqref{eq:pf:lem:polynomial-ergodic-A-1}, ~\eqref{eq:pf:lem:polynomial-ergodic-A-3}, ~\eqref{eq:pf:lem:polynomial-ergodic-A-4} and~\eqref{eq:pf:lem:polynomial-ergodic-A-5} and further sending $\delta\to0$.
\end{proof}

\begin{lemma}\label{lem:polynomial-ergodic-B}
    There exists a deterministic symmetric bilinear function $g(\mathbf{u},\mathbf{v})$ for vectors $\mathbf{u},\mathbf{v}\in\bbR^2$, such that almost surely,
    \begin{equation}\label{eq:lem:polynomial-ergodic-B-1}
       \frac{1}{|\det(\nabla\wt\phi_\infty)|} \inner{\nabla\wt\phi_\infty\mathbf{u}}{\nabla\wt\phi_\infty\mathbf{v}} = g(\mathbf{u},\mathbf{v}).
    \end{equation}
    Furthermore, for some constant $c>0$, $g(\mathbf{u},\mathbf{u})\geq c|\mathbf{u}|^2$ for each $\mathbf{u}\in\bbR^2$.
\end{lemma}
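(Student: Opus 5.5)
The plan is to combine Lemma~\ref{lem:polynomial-ergodic-A} with the area asymptotics \eqref{eq:lem:polynomial-D-2} of Lemma~\ref{lem:polynomial-D}. Since $\wt\phi_\infty$ is linear (Lemma~\ref{lem:polynomial-C}), the gradient of $a\,\mathrm{Re}\,\wt\phi_\infty+b\,\mathrm{Im}\,\wt\phi_\infty$ is the constant vector $a\nabla\mathrm{Re}\,\wt\phi_\infty+b\nabla\mathrm{Im}\,\wt\phi_\infty=\nabla\wt\phi_\infty\mathbf{u}$ with $\mathbf{u}=(a,b)^\rmT$; this is exactly why the transposed convention for $\nabla\wt\phi_\infty$ was adopted. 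By conformal invariance of Dirichlet energy and the definition $\wt\phi_\infty=\bbE[\phi_\infty\circ\varphi^{-1}|\cH,\cD]$, the integral in \eqref{eq:lem:polynomial-ergodic-A-1} equals
\begin{equation*}
\frac{1}{|S_k|^2}\int_{\varphi(M(S_k))}|\nabla\wt\phi_\infty\mathbf{u}|^2\,dz=|\nabla\wt\phi_\infty\mathbf{u}|^2\,\frac{\area(\varphi(M(S_k)))}{|S_k|^2}.
\end{equation*}
By \eqref{eq:lem:polynomial-D-2}, the last factor converges to $|\det(\nabla\wt\phi_\infty)|^{-1}$; the determinant is nonzero by \eqref{eq:lem:polynomial-C}. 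Hence $Q(\mathbf{u}):=|\det(\nabla\wt\phi_\infty)|^{-1}|\nabla\wt\phi_\infty\mathbf{u}|^2$ equals the a.s.\ limit in Lemma~\ref{lem:polynomial-ergodic-A}, which is a deterministic number for each fixed $\mathbf{u}$.

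Next I upgrade from fixed $\mathbf{u}$ to all $\mathbf{u}$ simultaneously. Lemma~\ref{lem:polynomial-ergodic-A} gives a deterministic value off a null set depending on $(a,b)$. I apply it to three vectors, say $e_1,e_2,e_1+e_2$, and take the intersection of the corresponding almost sure events. On that event the random matrix $G:=|\det\nabla\wt\phi_\infty|^{-1}(\nabla\wt\phi_\infty)^\rmT\nabla\wt\phi_\infty$ has deterministic entries, by polarization:
\begin{equation*}
G_{12}=\tfrac12\big(Q(e_1+e_2)-Q(e_1)-Q(e_2)\big).
\end{equation*}
So I define $g(\mathbf{u},\mathbf{v})=\mathbf{u}^\rmT G\mathbf{v}$, which is a deterministic symmetric bilinear form, and \eqref{eq:lem:polynomial-ergodic-B-1} holds for all $\mathbf{u},\mathbf{v}$ at once.

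For positivity, I use \eqref{eq:lem:polynomial-C}. A linear map whose image of the unit disk contains a ball of radius $c\,\diam(\wt\phi_\infty(\bbD))$ has singular values $\sigma_1\ge\sigma_2$ with $\sigma_2\ge c\sigma_1$. Therefore
\begin{equation*}
g(\mathbf{u},\mathbf{u})=\frac{|\nabla\wt\phi_\infty\mathbf{u}|^2}{\sigma_1\sigma_2}\ge\frac{\sigma_2^2|\mathbf{u}|^2}{\sigma_1\sigma_2}\ge c|\mathbf{u}|^2.
\end{equation*}
Because $g$ is deterministic, the constant obtained this way is deterministic as well.

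The main obstacle I expect is justifying the first identification: the integrand in Lemma~\ref{lem:polynomial-ergodic-A} involves $\nabla\bbE[\phi_\infty|\cH,\cD]$ on $M(S_k)$, and this has to be matched with $\nabla\wt\phi_\infty$ pulled back by $\varphi$. This uses that $\varphi$ is $(\cH)$-measurable, so the conditional expectation commutes with composition by $\varphi^{-1}$ and with differentiation (the local uniform convergence from Proposition~\ref{prop:harmonic-exist} permits exchanging the limits). I also need to check that the $\varphi$ in \eqref{eq:lem:polynomial-D-2} is the same normalized map used here.
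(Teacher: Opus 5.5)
Your proposal is correct and follows essentially the same route as the paper: conformal invariance plus linearity of $\wt\phi_\infty$ turns the quantity in Lemma~\ref{lem:polynomial-ergodic-A} into $|\nabla\wt\phi_\infty\mathbf{u}|^2\,\area(\varphi(M(S_k)))/|S_k|^2$, then \eqref{eq:lem:polynomial-D-2} and polarization give the deterministic form $g$, and \eqref{eq:lem:polynomial-C} gives the singular-value bound for positivity. Your explicit intersection of the three a.s.\ events for $e_1,e_2,e_1+e_2$ is a welcome clarification that the paper leaves implicit. Your worry about commuting conditional expectation with differentiation is unnecessary, since the integrand in \eqref{eq:lem:polynomial-ergodic-A-1} is already the gradient of $\bbE[\phi_\infty|\cH,\cD]=\wt\phi_\infty\circ\varphi$.
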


\begin{proof}
    Using conformal invariance of Dirichlet energy, for $\mathbf{u} = (a,b)^\mathrm{T}$, we have
     \begin{equation}\label{eq:pf:lem:polynomial-ergodic-B-1}
     \begin{split}
       \frac{1}{|S_k|^2} \int_{M(S_k)} &\Big|\nabla \bbE[a\mathrm{Re}\,\phi_\infty(x)+b\mathrm{Im}\,\phi_\infty(x)\big|\cH,\cD]\Big|^2\,dx  =\frac{1}{|S_k|^2}  \int_{\varphi(M(S_k))} \Big|\nabla  \big(a\mathrm{Re}\,\wt\phi_\infty(z)+b\mathrm{Im}\,\wt\phi_\infty(z)\big) \Big|^2\,dz\\
        &= \frac{1}{|S_k|^2} \area({\varphi(M(S_k))})\,|\nabla \wt\phi_\infty\mathbf{u}|^2 = \frac{1}{|S_k|^2}\area({\varphi(M(S_k))})\det(\nabla\wt\phi_\infty) \frac{|\nabla \wt\phi_\infty\mathbf{u}|^2}{|\det(\nabla\wt\phi_\infty)|}.
        \end{split}
    \end{equation}
    By~\eqref{eq:pf:lem:polynomial-ergodic-B-1},~\eqref{eq:lem:polynomial-D-2} and Lemma~\ref{lem:polynomial-ergodic-A}, $ \frac{|\nabla \wt\phi_\infty\mathbf{u}|^2}{|\det(\nabla\wt\phi_\infty)|}$ is a.s.\ deterministic, and using $$\frac{\inner{\nabla \wt\phi_\infty\mathbf{u}}{\nabla\wt\phi_\infty\mathbf{v}}}{\det(\nabla\wt\phi_\infty)} = \frac{1}{4}\Big( \frac{\inner{\nabla \wt\phi_\infty\mathbf{(u+v)}}{\nabla\wt\phi_\infty\mathbf{(u+v)}}}{\det(\nabla\wt\phi_\infty)}-\frac{\inner{ \nabla\wt\phi_\infty\mathbf{(u-v)}}{\nabla\wt\phi_\infty\mathbf{(u-v)}}}{\det(\nabla\wt\phi_\infty)}\Big), $$
    we see that~\eqref{eq:lem:polynomial-ergodic-B-1} holds for some deterministic symmetric bilinear function $g(\mathbf{u},\mathbf{v})$. 
    
    The last assertion of the lemma is a direct consequence of~\eqref{eq:lem:polynomial-C}. Indeed, the matrix for the linear transform $\wt\phi_\infty$ is $(\nabla\wt\phi_\infty)^\rmT$ and~\eqref{eq:lem:polynomial-C} implies that the ratio between the   larger and smaller eigenvalues of $(\nabla\wt\phi_\infty)(\nabla\wt\phi_\infty)^\rmT$ is bounded by $c^2$. Since  $(\nabla\wt\phi_\infty)(\nabla\wt\phi_\infty)^\rmT$ and  $(\nabla\wt\phi_\infty)^\rmT(\nabla\wt\phi_\infty)$ have the same eigenvalues, we further infer that $|(\nabla\wt\phi_\infty)\mathbf{u}|^2\geq c^2 |\det(\nabla\wt\phi_\infty)||\mathbf{u}|^2$.
     
\end{proof}

\begin{lemma}\label{lem:polynomial-ergodic-C00}
    Let $\mathbf{P}\in\bbR^{2\times2}$ be a $2\times2$ random matrix with $\det(\mathbf{P})=1$,  such that for every $\mathbf{u},\mathbf{v}\in\bbR^2$, the inner product $\inner{\mathbf{P}\mathbf{u}}{\mathbf{Pv}} = g(\mathbf{u},\mathbf{v})$ is deterministic. For $\theta\in [0,2\pi]$, we write $\mathbf{R}_\theta$ for the matrix for the linear transform $z\mapsto e^{i\theta}z$. Then there exists a deterministic   matrix $\mathbf{A}$ with $\det(\mathbf{A})=1$ and some random $\theta\in[0,2\pi]$, such that \eqb\label{eq:lem:polynomial-ergodic-C-000}\mathbf{P}^\rmT =    \mathbf{A}\mathbf{R}_\theta.\eqe
\end{lemma}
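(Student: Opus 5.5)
The approach is linear algebra: $\langle \mathbf{Pu},\mathbf{Pv}\rangle=\mathbf{u}^\rmT \mathbf{P}^\rmT\mathbf{P}\mathbf{v}$. The hypothesis therefore says that the Gram matrix $\mathbf{G}:=\mathbf{P}^\rmT\mathbf{P}$ is a deterministic matrix, namely the matrix of the bilinear form $g$. The plan is to build $\mathbf{A}$ directly from $\mathbf{G}$ and then check that $\mathbf{A}^{-1}\mathbf{P}^\rmT$ is a rotation.

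First, $\mathbf{G}$ is symmetric and positive definite, since $\mathbf{P}$ is invertible. It also satisfies $\det\mathbf{G}=\det(\mathbf{P})^2=1$. Let $\mathbf{G}^{1/2}$ be its unique symmetric positive definite square root. This matrix is deterministic because $\mathbf{G}$ is, and $\det \mathbf{G}^{1/2}=1$. Set $\mathbf{A}:=\mathbf{G}^{1/2}$ (which equals $\mathbf{A}^\rmT$) and $\mathbf{Q}:=\mathbf{A}^{-1}\mathbf{P}^\rmT$, so that $\mathbf{P}^\rmT=\mathbf{A}\mathbf{Q}$.

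Next, check that $\mathbf{Q}$ is orthogonal. Since $\mathbf{A}$ is symmetric,
\[
\mathbf{Q}\mathbf{Q}^\rmT=\mathbf{A}^{-1}\mathbf{P}^\rmT\mathbf{P}\mathbf{A}^{-1}=\mathbf{A}^{-1}\mathbf{G}\mathbf{A}^{-1}=\mathbf{I}.
\]
So $\mathbf{Q}\in O(2)$. Moreover $\det \mathbf{Q}=\det(\mathbf{P})/\det(\mathbf{A})=1$, so $\mathbf{Q}\in SO(2)$ and hence $\mathbf{Q}=\mathbf{R}_\theta$ for some random $\theta\in[0,2\pi]$. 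This gives \eqref{eq:lem:polynomial-ergodic-C-000}.

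The main obstacle is the possibility that $\det\mathbf{P}=-1$ in the intended application, where $\det(\nabla\wt\phi_\infty)$ could be negative. The lemma assumes $\det\mathbf{P}=1$, so this case does not arise here; if it were allowed, $\mathbf{Q}$ would be a reflection and one would instead fix a deterministic reflection into $\mathbf{A}$. Measurability of $\theta$ is not an issue: $\mathbf{Q}$ is a measurable function of $\mathbf{P}$, so $\theta$ can be taken to be its argument.
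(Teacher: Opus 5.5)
Your proof is correct and essentially the same as the paper's: the paper also identifies the deterministic Gram matrix $\mathbf{B}$ of $g$ and takes a deterministic square root $\mathbf{A}$ with $\mathbf{A}\mathbf{A}^\rmT=\mathbf{B}$, then shows $\mathbf{P}(\mathbf{A}^\rmT)^{-1}$ is orthogonal with positive determinant, hence a rotation. The only difference is the choice of root: the paper diagonalizes and uses $\mathbf{A}=\mathbf{R}_{-\vartheta}\mathbf{D}^{1/2}$, where you use the symmetric root $\mathbf{G}^{1/2}$. That choice is immaterial here, and your root also works in the later proof of Lemma~\ref{lem:polynomial-ergodic-D}, since rotations commute.
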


\begin{proof}
    By our condition,  there is a deterministic symmetric matrix $\mathbf B$ such that $g(\mathbf{u},\mathbf{v}) = \mathbf{u}^\rmT \mathbf{B}\mathbf{v}$, and since $\det(\mathbf{P})=1$,  $\mathbf B$ is positive definite. In particular, the eigenvectors of $\mathbf B$ are orthogonal, and we can pick some $\vartheta\in[0,2\pi]$, such that the eigenvectors of $\mathbf{B}$ are given by $(\cos\vartheta,\sin\vartheta)^\rmT$ and $(-\sin\vartheta,\cos\vartheta)^\rmT$. If we write $$\mathbf{Q}:=\mathbf{R}_\vartheta = \begin{pmatrix}
    \cos\vartheta & -\sin \vartheta \\ \sin\vartheta & \cos\vartheta
\end{pmatrix}, $$  then there is a  diagonal matrix $\mathbf{D}$ such that $\mathbf{Q}^\rmT\mathbf{D}\mathbf{Q} = \mathbf{B}$. We set $\mathbf{A} = \mathbf{Q}^\rmT\mathbf{D}^{1/2}$, and pick the positive square root of $\mathbf{D}$  such that $\det(\mathbf A)>0$. Then $\inner{\mathbf{P}\mathbf{u}}{\mathbf{P}\mathbf{v}} = \inner{\mathbf{A}^\rmT\mathbf{u}}{\mathbf{A}^\rmT\mathbf{v}}$ for every $\mathbf{u},\mathbf{v}\in\bbR^2$ and therefore $\inner{\mathbf{P}(\mathbf{A}^\rmT)^{-1}\mathbf{u}}{\mathbf{P}(\mathbf{A}^\rmT)^{-1}\mathbf{v}} = \inner{\mathbf{u}}{\mathbf{v}}$ for every $\mathbf{u},\mathbf{v}\in\bbR^2$. This implies that $\mathbf{P}(\mathbf{A}^\rmT)^{-1}$ is orthogonal. Since $\det(\mathbf{P}(\mathbf{A}^\rmT)^{-1})>0$, we have $\mathbf{P}(\mathbf{A}^\rmT)^{-1} = \mathbf{R}_{-\theta}$ for some possibly random $\theta$; this implies~\eqref{eq:lem:polynomial-ergodic-C-000} and $\det(\mathbf{A})=1$.
\end{proof}

\begin{lemma}\label{lem:polynomial-ergodic-C}
    There exists a deterministic   matrix $\mathbf{A}$ with $\det(\mathbf{A})=1$ and some random $\theta\in[0,2\pi]$, such that if we write $\mathbf{R}_\theta$ for the matrix for the linear transform $z\mapsto e^{i\theta}z$,  then a.s.\ \eqb\label{eq:lem:polynomial-ergodic-C-00}(\nabla\wt\phi_\infty)^\rmT =  |\det(\nabla\wt\phi_\infty)| ^{1/2} \mathbf{A}\mathbf{R}_\theta.\eqe In other words, up to a scaling, the linear transform $\wt\phi_\infty$ is given by first rotating for some random  angle $\theta$, then applying the linear transform $\mathbf{A}$.
\end{lemma}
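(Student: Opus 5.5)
The plan is to normalize $\nabla\wt\phi_\infty$ to determinant one in absolute value, and then apply Lemma~\ref{lem:polynomial-ergodic-C00} directly, using Lemma~\ref{lem:polynomial-ergodic-B} as the input. Set
\[
\mathbf{P}:=|\det(\nabla\wt\phi_\infty)|^{-1/2}\,\nabla\wt\phi_\infty .
\]
This is well defined: by Lemma~\ref{lem:polynomial-C}, the map $\wt\phi_\infty$ is a.s.\ a linear transform satisfying \eqref{eq:lem:polynomial-C}, so it is nondegenerate and $\det(\nabla\wt\phi_\infty)\neq0$. Then $|\det\mathbf{P}|=1$, and by \eqref{eq:lem:polynomial-ergodic-B-1},
\[
\inner{\mathbf{P}\mathbf{u}}{\mathbf{P}\mathbf{v}}=g(\mathbf{u},\mathbf{v})
\]
is deterministic for all $\mathbf{u},\mathbf{v}\in\bbR^2$.

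The one point needing care is the sign of the determinant, since Lemma~\ref{lem:polynomial-ergodic-C00} requires $\det\mathbf{P}=1$. I would show that $\wt\phi_\infty$ is orientation preserving. The argument runs as in the last paragraph of the proof of Theorem~\ref{thm:CellSystemPack}. By Lemma~\ref{lem:phi_0-bbC-1} and Lemma~\ref{lem:almost-planarity}, for large $k$ there is a loop of cells near $\partial[-2^k,2^k]^2$ whose $\phi_0$-image winds once counterclockwise around $0$ and which encloses $M([-2^k,2^k]^2)$. Its $\varphi$-image is then a loop in $\bbC$ enclosing $\varphi(0)=0$ with winding number $+1$, because $\varphi$ is a conformal (orientation-preserving) homeomorphism of the surface onto $\bbC$ by Lemma~\ref{lem:parabolic}. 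Proposition~\ref{prop:harmonic-sublinear} says $\wt\phi_\infty\circ\varphi$ is uniformly within $o(2^k)$ of $\phi_0$ on this loop, while the loop's $\phi_0$-image stays at distance of order $2^k$ from $0$. So the winding of $\wt\phi_\infty$ applied to the $\varphi$-loop around $0$ is also $+1$. For a linear map this forces $\det(\nabla\wt\phi_\infty)>0$; note that the determinant of the transpose agrees. If the sign conventions turn out to produce $-1$ instead, I would replace $\mathbf{P}$ by $\mathbf{P}$ composed with a fixed reflection, at the cost of a deterministic modification of $\mathbf{A}$; I expect this to be unnecessary.

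With $\det\mathbf{P}=1$ in hand, Lemma~\ref{lem:polynomial-ergodic-C00} gives a deterministic $\mathbf{A}$ with $\det\mathbf{A}=1$ and a random $\theta$ such that $\mathbf{P}^\rmT=\mathbf{A}\mathbf{R}_\theta$. Multiplying by $|\det(\nabla\wt\phi_\infty)|^{1/2}$ yields \eqref{eq:lem:polynomial-ergodic-C-00}. Since $(\nabla\wt\phi_\infty)^\rmT$ is the matrix of the linear map $\wt\phi_\infty$, this is exactly the stated decomposition: rotate by $\theta$, apply $\mathbf{A}$, then scale. The determinant sign step is the only nontrivial obstacle, and I expect it to be handled by the winding argument above.
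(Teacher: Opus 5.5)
Your proposal is correct and follows essentially the same route as the paper. The paper also first proves $\det(\nabla\wt\phi_\infty)>0$ by comparing the orientation of a large loop near the boundary of a big square under $\phi_0$, $\varphi$ and $\wt\phi_\infty\circ\varphi$ via Proposition~\ref{prop:harmonic-sublinear}, and then applies Lemma~\ref{lem:polynomial-ergodic-C00} to $\mathbf{P}=\det(\nabla\wt\phi_\infty)^{-1/2}\nabla\wt\phi_\infty$. Drop your reflection fallback: it would give $\det\mathbf{A}=-1$, contradicting the statement, and the winding argument already makes it unnecessary.
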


\begin{proof}

 We first prove $\det(\nabla\wt\phi_\infty)>0$.   For large $n$, we choose an orientation of $M(S_n)$ such that for the  clockwise loop $\gamma_n$ on the boundary of $M(S_n)$, $\phi_0\circ\gamma_n$ is clockwise. Then $\phi_0\circ\gamma_n$ has Hausdorff distance at most $0.01|S_n|$ from $\partial S_n$ by Lemma~\ref{lem:M(S)}, and $\varphi\circ\gamma_n$ is also a clockwise loop. Therefore by Proposition~\ref{prop:harmonic-sublinear}, a.s.\ for large enough $n$, the loop $\wt\phi_\infty\circ \varphi\circ\gamma_n$ is clockwise as both $\wt\phi_\infty\circ \varphi\circ\gamma_n$ and $\phi_0\circ\gamma_n$ have distance at least $0.1|S_n|$ from the center of $S_n$ and the two curves can be parameterized such that $|\wt\phi_\infty\circ \varphi\circ\gamma_n(t)-\phi_0\circ\gamma_n(t)|\leq0.02|S_n|$, which implies that $\det(\nabla\wt\phi_\infty)>0$. 
 Now Lemma~\ref{lem:polynomial-ergodic-C} follows by applying Lemma~\ref{lem:polynomial-ergodic-C00}  to $\mathbf{P} = \det(\nabla\wt\phi_\infty)^{-1/2}\nabla\wt\phi_\infty$.  
\end{proof}

\begin{lemma}\label{lem:polynomial-ergodic-D}
Suppose $\cH$ is rotation invariant in law for some $\theta_0\in(0,2\pi)\backslash\{\pi\}$ as in Definition~\ref{def:rotation-inv}.  Then matrix $\mathbf{A}$ in Lemma~\ref{lem:polynomial-ergodic-C} is   a   rotation matrix.
\end{lemma}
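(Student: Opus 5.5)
The natural route is to feed the rotated configuration $e^{i\theta_0}\cH$ through the whole construction of Section~\ref{sec:uniformization}, and then use that the matrix $\mathbf{A}$ of Lemma~\ref{lem:polynomial-ergodic-C} is deterministic, hence determined by the law of $\cH$ alone. By Lemma~\ref{lem:polynomial-ergodic-B}, what is deterministic is the positive definite form $\mathbf{B}$ with $g(\mathbf{u},\mathbf{v})=\mathbf{u}^\rmT\mathbf{B}\mathbf{v}$. By the proof of Lemma~\ref{lem:polynomial-ergodic-C00}, $\mathbf{B}=\mathbf{A}\mathbf{A}^\rmT$. So it suffices to show that $\mathbf{B}$ is a scalar multiple of the identity; since $\det\mathbf{A}=1$, we then get $\mathbf{B}=I$ and $\mathbf{A}$ is orthogonal with positive determinant, i.e.\ a rotation.

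Concretely, write $\cH'=e^{i\theta_0}\cH$. The surface $M(\cH')$ is canonically identical to $M(\cH)$, since the associated map is unchanged, so we may keep the same uniformization $\varphi$. The a priori map transforms as $\phi_0'=e^{i\theta_0}\phi_0$, after coupling the uniform points $\wt c(H)$ by rotation. The dyadic system does not rotate, but Proposition~\ref{prop:harmonic-sublinear} characterizes the limiting linear map without reference to $\cD$: it is the unique linear map within $o(r)$ of $\phi_0\circ\varphi^{-1}$ on large balls. Hence $\wt\phi_\infty'=\mathbf{R}_{\theta_0}\circ\wt\phi_\infty$, and so $\nabla\wt\phi_\infty'=\nabla\wt\phi_\infty\mathbf{R}_{\theta_0}^\rmT$, while the determinant is unchanged. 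Evaluating \eqref{eq:lem:polynomial-ergodic-B-1} for $\cH'$ then gives
\[
g'(\mathbf{u},\mathbf{v})=g(\mathbf{R}_{\theta_0}^\rmT\mathbf{u},\mathbf{R}_{\theta_0}^\rmT\mathbf{v}),\qquad\text{that is,}\qquad \mathbf{B}'=\mathbf{R}_{\theta_0}\mathbf{B}\mathbf{R}_{\theta_0}^\rmT .
\]

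Next, $\cH'\overset{d}{=}\cH$ by assumption. The deterministic form $g$ is an a.s.\ constant value of a quantity defined from the configuration, namely the limit in Lemma~\ref{lem:polynomial-ergodic-A} normalized by \eqref{eq:lem:polynomial-D-2}. So $g$ depends only on the law, provided this quantity can be expressed measurably in terms of $\cH$ (with an independent $\cD$ and the auxiliary points). Therefore $\mathbf{B}'=\mathbf{B}$, i.e.\ $\mathbf{B}$ commutes with $\mathbf{R}_{\theta_0}$. For $\theta_0\notin\{0,\pi\}$, $\mathbf{R}_{\theta_0}$ has no real eigenvectors, and a symmetric matrix commuting with it must be scalar; this is the same computation as in Corollary~\ref{cor:GMS-rotation-inv}.

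The main obstacle is the identification $g'=g$, for two reasons. First, $\phi_\infty$ was built using $\cD$, which is not rotation invariant, and the definition of $\phi_\infty$ involves expectations over the points $\wt c(H)$. I would handle this by observing that $\mathbf{B}$ can be read off directly from $\cH$: by Lemma~\ref{lem:polynomial-ergodic-C} and Proposition~\ref{prop:harmonic-sublinear}, $|\det|^{-1/2}(\nabla\wt\phi_\infty)^\rmT$ is, up to rotation, the a.s.\ limit of the best linear approximations to the normalized $\phi_0\circ\varphi^{-1}$ on balls $B(0;d_k)$. This is a measurable functional of $\cH$ together with the independent points $\wt c(H)$, and it is invariant under the choice of $\varphi$ up to rotation and scaling, both of which are invisible to $\mathbf{B}$. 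Applying this functional to $\cH$ and to $\cH'$, equality in law plus determinism yields $\mathbf{B}'=\mathbf{B}$.
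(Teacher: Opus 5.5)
Your proposal is correct and follows essentially the same route as the paper: pass to $e^{i\theta_0}\cH$, use Proposition~\ref{prop:harmonic-sublinear} to get $\wt\phi_\infty'=\mathbf{R}_{\theta_0}\circ\wt\phi_\infty$, use that the deterministic object depends only on the law of $\cH$, and conclude by linear algebra (a symmetric matrix commuting with $\mathbf{R}_{\theta_0}$, $\theta_0\notin\{0,\pi\}$, is scalar). The differences are minor. You phrase the invariance for the Gram matrix $\mathbf{B}=\mathbf{A}\mathbf{A}^\rmT$, i.e.\ $\mathbf{R}_{\theta_0}\mathbf{B}\mathbf{R}_{\theta_0}^\rmT=\mathbf{B}$, rather than the paper's relation $\mathbf{R}_{\theta_0}\mathbf{A}\mathbf{R}_{\vartheta_1}=\mathbf{A}\mathbf{R}_{\vartheta_2}$; this neatly sidesteps the non-uniqueness of $\mathbf{A}$ up to right rotations. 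You also explicitly justify, via the $\cD$-free characterization of the limiting linear map, the step the paper takes for granted: that the rotated configuration produces the same deterministic matrix.
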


\begin{proof}
      By 
       {assumption}, $\cH$ and $e^{i\theta_0}\cH$ have the same law, and the corresponding matrices $\mathbf{A}$ in Lemma~\ref{lem:polynomial-ergodic-C} are the same. Therefore by Proposition~\ref{prop:harmonic-sublinear} and Lemma~\ref{lem:polynomial-ergodic-C},  there exists $\vartheta_1,\vartheta_2$ such that 
    \begin{equation}\label{eq:pf:lem:polynomial-ergodic-D}
       \mathbf{R}_{\theta_0} \mathbf{A} \mathbf{R}_{\vartheta_1} = \mathbf{A}\mathbf{R}_{\vartheta_2},
    \end{equation}
    which is equivalent to $\mathbf{A}^{-1}\mathbf{R}_{-\theta_0} = \mathbf{R}_{\vartheta_1-\vartheta_2} \mathbf{A}^{-1}$. In particular, for each $\mathbf{u}\in\bbR^2$, $|\mathbf{A}^{-1}\mathbf{R}_{-\theta_0} \mathbf{u}| = |\mathbf A^{-1} \mathbf u|$. Recall that in the proof of Lemma~\ref{lem:polynomial-ergodic-C}, we have set $\mathbf{A} = \mathbf{Q}^\rmT\mathbf{D}^{1/2} = \mathbf{R}_{-\vartheta}\mathbf{D}^{1/2}$, and $\mathbf D$ is a diagonal matrix with nonnegative entries. Therefore  $ |\mathbf D^{-1/2}\mathbf{R}_{\vartheta-\theta_0} \mathbf u| =  |\mathbf D^{-1/2}\mathbf{R}_{\vartheta} \mathbf u|$ and  {further} $ |\mathbf D^{-1/2}\mathbf{R}_{-\theta_0} \mathbf u| =  |\mathbf D^{-1/2} \mathbf u|$ for every $\mathbf u\in\bbR^2$. By setting $\mathbf{u} = (1,0)^\rmT$ and $\mathbf{u} = (0,1)^\rmT$ and using $\theta_0\neq\pi$, one can check that $\mathbf D$ must be a scalar times the identity matrix. 
   Since $\det(\mathbf{A})=1$, $\mathbf{D}$ must be the identity matrix and $\mathbf{A}$   is equal to the rotation matrix $\mathbf{R}_{-\vartheta}$.
\end{proof}

\begin{proof}[Proof of Theorem~\ref{thm:invariance-uniformization}]
   By Lemma~\ref{lem:parabolic}, $M(\cH)$ is a.s.\ parabolic.  Recall that $\varphi$ is a uniformization map $\varphi:M(\cH)\to\bbC$ chosen in a way measurable with respect to $\cH$ such that $\varphi(0)=0$. By Proposition~\ref{prop:harmonic-sublinear}, Lemma~\ref{lem:polynomial-C} and Lemma~\ref{lem:polynomial-ergodic-C}, there exists a deterministic $2\times2$ matrix $\mathbf{A}$ with $\det(\mathbf{A})=1$ and some possibly random $\theta$ and $\wt a>0$, such that 
    \begin{equation}\label{eq:pf:thm:invariance-uniformization-1}
        \lim_{k\to\infty} 2^{-k}\sup_{x\in M([-2^k,2^k]^2)} |\phi_0(x) - \wt a\mathbf{A}^{-1}e^{i\theta}\varphi(x)| = 0.
    \end{equation}
    By Lemma~\ref{lem:polynomial-ergodic-D}, if $\cH$ satisfies the condition in Definition~\ref{def:rotation-inv}, then $\mathbf{A}$ can be taken to be identity matrix. 
  Almost surely for large enough $k$,  for each $H\in \cH([-2^k,2^k]^2)$, by Lemma~\ref{lem:no-macroscopic-cell} and Lemma~\ref{lem:M(S)}, the vertex $v_H$ for $H$ on $M(\cH)$ is on $M([-2^{k+1},2^{k+1}]^2)$, and thus by~\eqref{eq:pf:thm:invariance-uniformization-1},
  \begin{equation}\label{eq:pf:thm:invariance-uniformization-2}
        \lim_{k\to\infty} 2^{-k}\max_{H\in\cH([-2^k,2^k]^2) } |\phi_0(v_H) - \wt a\mathbf{A}^{-1}e^{i\theta}\varphi(v_H)| = 0.
    \end{equation}
  By Lemma~\ref{lem:no-macroscopic-cell}, 
  \begin{equation}\label{eq:pf:thm:invariance-uniformization-3}
    \lim_{k\to\infty}  \max_{H\in\cH([-2^k,2^k]^2) } 2^{-k}|c(H)-\phi_0(v_H)| = 0.
  \end{equation}
 Thus if we set $\varphi_0 =\wt ae^{i\theta}\varphi$, then by~\eqref{eq:pf:thm:invariance-uniformization-2} and~\eqref{eq:pf:thm:invariance-uniformization-3},
 \begin{equation}\label{eq:pf:thm:invariance-uniformization-4}
     \lim_{k\to\infty}  \max_{H\in\cH([-2^k,2^k]^2) } 2^{-k}|c(H)-\mathbf{A}^{-1}\varphi_0(v_H)| = 0,
 \end{equation}
 which further implies~\eqref{eq:thm:invariance-uniform}. Now a.s.\ for large enough $k$,  for each edge $e\in\cE\cH(B(0;2^k))$, by Lemma~\ref{lem:no-macroscopic-cell} and Lemma~\ref{lem:M(S)}, the edge $\mathfrak{e}_{e}\subseteq M([-2^{k+1},2^{k+1}]^2)$. By Lemma~\ref{lem:no-macroscopic-cell}, 
 \begin{equation}\label{eq:pf:thm:invariance-uniformization-5}
     \lim_{k\to\infty} 2^{-k}\max_{e\in\cE\cH(B(0;2^k))} \diam(\phi_0(\mathfrak{e}_{e})) = 0.
 \end{equation}
 The relation~\eqref{eq:thm:invariance-uniform-1} then follows from~\eqref{eq:pf:thm:invariance-uniformization-1} and~\eqref{eq:pf:thm:invariance-uniformization-5}.
\end{proof}

\section{Extensions to general planar maps}\label{subsec:extension}
In this section, we prove Theorem~\ref{thm:CellSystemPack-extension} and Theorem~\ref{thm:invariance-uniformization-extension}, extending Theorem~\ref{thm:CellSystemPack} and Theorem~\ref{thm:invariance-uniformization} to the setting of general planar maps. 
In Section~\ref{subsec:large}, we prove Theorem~\ref{thm:large}, which is a version of Theorems~\ref{thm:CellSystemPack} and~\ref{thm:invariance-uniformization}  with a weaker notion of the line connectivity property in Definition~\ref{def:connectedness}. 

\subsection{Proof of Theorem~\ref{thm:CellSystemPack-extension}}\label{subsec:extension-circ}

Let $\cM$  be a   2-connected simple planar map with whole-plane topology and face set $\cF\cM$. Recall from Section~\ref{subsec:intro-4} that one can add a vertex $v_f$ to every face $f\in\cF\cM$, and connect the vertices $v_f$ to the vertices incident to $f$, to get a triangulation $\cT_\cM$. {The triangulation $\cT_\cM$ will be simple since we required $\cM$ to be 2-connected.} We define the circle packing of $\cM$ to be the circle packing for $\cT_\cM$ minus the disks for the vertices $\{v_f:f\in\cF\cM\}$. In this section we prove a variant of Theorem~\ref{thm:CellSystemPack} in this setting.

 Recall the simple infinite plane triangulation $\cT_\cM$ constructed from $\cM$.  To prove Theorem~\ref{thm:CellSystemPack-extension}, we construct a cell configuration $\wt\cH$ whose associated map is $\cT_\cM$. For each cell $H\in\cH$, we partition $H$ into  {cells} $\wt H_{H,0}$ and $\{\wt H_{H,f}:v_H\sim f\}$ as follows. We independently pick $\theta_H\sim\rm{Unif}((0,2\pi))$ and consider the straight line $L_0$ passing through the Euclidean center  $c(H)$ of $H$ with angle $\theta$. By continuously varying the angle, one can find $0<\theta_{H,1}<...<\theta_{H,\deg(H)}<\pi$ such that if one rotates $L_0$ clockwise around $c(H)$ over  angles $\theta_{H,1},...,\theta_{H,\deg(H)}$ to get the lines $L_1,...,L_{\deg(H)}$, then for $k=0,...,\deg(H)$, the portion $\wh H_{H,k}$ of $H$ between $L_k$ and $L_{k+1}$ (with $1+\deg(H)$ identified with 0) has area $\area(H)/(\deg(H_0)+1)$. Let $\wh D_H$ be the unbounded connected component of $\bbC\backslash H$, and let $\wt  H_{H,k} = \wh H_{H,k}\cup \partial H\cup ((L_0\cup...\cup L_{\deg(H)})\backslash\wh D_H)$, such that each $\wt  H_{H,k}$ is connected and compact, and contains the boundary of $H$.  For each face $f$ with $v_H\sim f$,  we further uniformly randomly assign $\wt H_{H,f}$ to be equal to one of $\wt  H_{H,k} $ for $k=1,...,\deg(H)$. For each vertex $v_f$ added to generate $\cT_\cM$, let $\wt H_f = \cup_{H:v_H\sim f} \wt H_{H,f}$, and let $\wt H_H = \wt H_{H,0}$. This defines a cell configuration $\wt\cH$  {with cells $\wt H_f$ and $\wt H_H$} whose associated map is $\cT_\cM$. Now we check that $\wt\cH$ satisfies all the conditions in Theorem~\ref{thm:CellSystemPack}.

 \begin{lemma}\label{lem-extension-1.2}
     The cell configuration $\wt\cH$ is translation invariant modulo scaling as in Definition~\ref{def:translation-modulo-scaling}.
 \end{lemma}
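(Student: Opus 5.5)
We will verify the condition through its equivalent form, Lemma~\ref{lem:dyadic-resample}, rather than Definition~\ref{def:translation-modulo-scaling} directly. The key observation is that $\wt\cH$ is obtained from $\cH$ by a local randomized construction. It uses independent auxiliary randomness: the angles $\theta_H$ and the uniform assignment of the pieces $\wt H_{H,k}$ to faces $f$. The construction is also equivariant under translations and dilations. More precisely, there is a measurable map $\Psi$ such that $\wt\cH=\Psi(\cH,\omega)$, where $\omega=(\omega_H)_{H\in\cH}$ is a family of i.i.d.\ marks attached to the cells. We will check that
\[
C(\Psi(\cH,\omega)-z)=\Psi(C(\cH-z),\omega)
\]
for all $C>0$ and $z\in\bbC$, with the marks transported along with the cells. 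This holds because $c(CH-Cz)=C(c(H)-z)$. Dilations and translations preserve the area proportions used to define the lines $L_k$ and the angles $\theta_{H,k}$. The associated map $\cT_\cM$ depends only on $\cM$.

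The first step is to set up the marked configuration $(\cH,\omega)$. We regard it as a cell configuration decorated with i.i.d.\ marks, and note that the proof of \cite[Lemma 2.3]{GMSinvariance} (Lemma~\ref{lem:dyadic-resample}) goes through for marked configurations. Indeed, conditional on $\cH$ the marks are i.i.d.\ and independent of $\cD$ and of the resampled point $z$. So after rescaling, $(C(\cH-z),\omega,C(\cD-z))\overset{d}{=}(\cH,\omega,\cD)$, because the marks of the recentred configuration are still i.i.d.\ given the configuration. Applying the equivariant map $\Psi$ then yields translation invariance modulo scaling for $\wt\cH$, for the same sequence of sets $U_j$ and the same scalings $C_j$ as for $\cH$. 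The convergence in law $C_j(\cH-z_j)\to\cH$ from Definition~\ref{def:translation-modulo-scaling} transfers through $\Psi$.

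The second step is to check that this transfer is legitimate in the metric $\mathbb{d}^{\rm CC}$. The cells of $\wt\cH$ meeting $B(0;r)$ are determined by the cells of $\cH$ meeting $B(0;r)$ together with their neighbours, since each $\wt H_f$ is a union over the vertices of a single face, and these marks. Hence $\Psi$ is continuous in the sense that closeness of $(\cH,\omega)$ to $(\cH',\omega')$ on $B(0;r')$ implies closeness of $\wt\cH$ to $\wt\cH'$ on $B(0;r)$. The homeomorphisms $f_r$ carry the lines $L_k$ approximately, because area proportions and centres vary continuously.

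We expect the main obstacle to be this continuity and measurability issue. The pieces $\wt H_{H,k}$ involve $\partial H$ and the unbounded component $\wh D_H$, and these do not vary continuously under arbitrary homeomorphisms. To handle this, we would first attempt to avoid continuity altogether. Definition~\ref{def:translation-modulo-scaling} through Lemma~\ref{lem:dyadic-resample} only requires equality in law of the resampled pair. That equality follows from exact equivariance together with the joint equality in law for the marked configuration, so only measurability of $\Psi$ is needed, and we would verify measurability by the local definition.
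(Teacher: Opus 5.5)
Your argument is correct, but it is not the paper's route.

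\textbf{The paper's route.} The paper takes the sets $U_j$, points $z_j$ and scalings $C_j$ from Definition~\ref{def:translation-modulo-scaling} for $\cH$. It then asserts that, because $\wt\cH$ is built cell by cell, $C_j(\wt\cH-z_j)\to\wt\cH$ in law. This is a continuous-mapping argument in the metric \eqref{eq:metric-cc} that the paper does not verify. As you point out, it is not automatic: a homeomorphism $f_r$ uniformly close to the identity does not control areas, hence controls neither the Euclidean centres $c(H)$ nor the area-splitting lines $L_k$.

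\textbf{Your route.} You combine Lemma~\ref{lem:dyadic-resample} for $\cH$ with the conditional i.i.d.\ structure of the auxiliary randomness, choosing $C$ conditionally independent of the marks given $(\cH,\cD,z)$. This gives the exact identity $(C(\cH-z),\omega,C(\cD-z))\overset{d}{=}(\cH,\omega,\cD)$. Exact equivariance of the construction then yields $C(\wt\cH-z)\overset{d}{=}\wt\cH$, with only measurability of $\Psi$ required. This is also precisely the identity \eqref{eq:pf-lem-extension-1.2}, which the paper uses without further justification in the proof of Lemma~\ref{lem-extension-1.3}. So your route costs a little more setup but buys rigor and supplies an input the paper needs later.

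\textbf{Making the write-up consistent.} Commit fully to this route.
\begin{itemize}
\item The sets witnessing Definition~\ref{def:translation-modulo-scaling} for $\wt\cH$ are then $U_m=\mathrm{int}(\wh S_m)$, computed from $\cH$ and $\cD$, and the scalings are those of Lemma~\ref{lem:dyadic-resample}.
\item These $U_m$ are a.s.\ increasing squares exhausting $\bbC$, by local finiteness and $\bigcup_k S_k=\bbC$. Exact equality in law then gives the required convergence trivially.
\item Drop the phrase ``the same $U_j$ and $C_j$ as for $\cH$'' and ``the convergence transfers through $\Psi$'' from your first step; these belong to the paper's route.
\item Drop the continuity claim in your second step, which is false as stated.
\end{itemize}
A minor point: the random assignment of pieces to faces is more naturally encoded by i.i.d.\ marks on corners $(H,f)$ than on cells, but this changes nothing.
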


 \begin{proof}
 Let $U_j,z_j$ and $C_j$ be as in Definition~\ref{def:translation-modulo-scaling} for the cell configuration $\cH$, such that $C_j(\cH-z_j)\to \cH$ in law with respect to the metric~\eqref{eq:metric-cc}. Then by our way of construction of $\wt\cH$ using $\cH$ where we  work with each single cell $H$,  we have  $C_j(\wt\cH-z_j)\to \wt\cH$ in law with respect to the metric~\eqref{eq:metric-cc}.  This verifies the claim.
 \end{proof}

 \begin{lemma}\label{lem-extension-1.3}
     The cell configuration $\wt\cH$ is ergodic modulo scaling as in Definition~\ref{def:ergodic-modulo-scaling}.
 \end{lemma}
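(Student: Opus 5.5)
The plan is to exploit that $\wt\cH$ is obtained from $\cH$ by a local, scale-covariant randomization. Concretely, $\wt\cH$ is a measurable function $\Psi(\cH,\omega)$ of $\cH$ and independent auxiliary randomness. This randomness is the angles $\theta_H$ together with the uniform assignments of the sectors $\wt H_{H,k}$ to the faces $f\ni v_H$. The map $\Psi$ commutes with translations and dilations. That is, $\Psi(C(\cH-z),\omega)=C(\Psi(\cH,\omega)-z)$, where $\omega$ is indexed by cells and so transported along with them. This holds because the Euclidean center $c(H)$, the equal-area partition by rays, and the face adjacency are all equivariant under $w\mapsto C(w-z)$.

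Let $\wt F$ be a real measurable function of $\wt\cH$ that is invariant under translation and scaling. Define $G(\cH):=\bbE[\wt F(\wt\cH)\mid\cH]$, taking $\wt F$ bounded first (e.g.\ replace $\wt F$ by $\arctan\wt F$). By the equivariance above, and since the conditional law of $\omega$ given $\cH$ is the product of i.i.d.\ uniform laws indexed by cells, $G$ is translation and scale invariant. Lemma~\ref{lem-extension-1.2} together with ergodicity of $\cH$ then shows that $G$ is a.s.\ equal to a constant $c$.

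It remains to show that $\wt F$ itself is a.s.\ constant, i.e.\ $\mathrm{Var}(\wt F)=0$. For this I would use the standard ``ergodic $\times$ independent product is ergodic'' argument. This step is the main obstacle, since the auxiliary randomness is not stationary on its own; it is attached to cells.

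The first approach I would try is tail triviality combined with invariance. Approximate $\wt F$ in $L^2$ by a bounded function $\wt F_n$ that depends on $\cH$ and on the auxiliary variables of only finitely many cells, namely those in $\cH(B(0;R))$. By Lemma~\ref{lem:no-macroscopic-cell}, cells near the origin are microscopic relative to large squares. Next, use scale invariance: apply the dilation $w\mapsto 2^{-k}(w-z)$ with $z$ uniform in $\wh S_m$, as permitted by Lemma~\ref{lem:dyadic-resample}, so that $\wt F=\wt F(2^{-k}(\wt\cH-z))$. In the rescaled configuration, $\wt F_n$ sees only the auxiliary variables of cells far from the origin. Hence $\wt F$ is approximately measurable with respect to $\cH$ and the auxiliary variables of cells avoiding a neighborhood of the origin.

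Repeating this with two well-separated choices of recentering point shows that $\wt F$ is approximately measurable with respect to $\cH$ plus auxiliary variables of disjoint sets of cells. Conditionally on $\cH$ these two collections are independent, so $\bbE[\wt F^2\mid\cH]\approx \bbE[\wt F\mid\cH]^2=c^2$. Letting the approximation error tend to zero gives $\mathrm{Var}(\wt F)=0$, so $\wt F$ is a.s.\ constant.

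The delicate point is making the recentering rigorous. It may be cleaner to phrase everything through Lemma~\ref{lem:ergodic-cell-system}. In that formulation one shows that the empirical averages $|S_k|^{-2}\int_{S_k}\wt F_n(\wt\cH-z)\,dz$ concentrate around a $\cH$-measurable quantity, using conditional independence of the auxiliary variables across distant cells together with a variance bound via Lemma~\ref{lem:average-diamdeg4}.
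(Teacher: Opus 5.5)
Your proposal is correct and follows essentially the paper's route: show $\bbE[\wt F\mid\cH]$ is constant by ergodicity of $\cH$, approximate $\wt F$ by a function local in the auxiliary randomness, recenter at a uniform point of $\wh S_m$ via Lemma~\ref{lem:dyadic-resample} and the invariance of $\wt F$, and use conditional independence of the auxiliary variables of distant cells. The only difference is packaging: the paper uses a single recentering, with windows $\wh S_k^z$ and the random scaling from Lemma~\ref{lem:dyadic-resample} rather than a fixed $2^{-k}$, and shows $\bbE[F(\wt\cH)\mid\wt\cH(\wh S_k)]-\bbE[F(\wt\cH)\mid\cH(\wh S_k)]\to0$ in probability, whereas you conclude via a two-window variance computation.
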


  \begin{proof}
By Lemma~\ref{lem-extension-1.2}, we need to prove that, if $0\leq F\leq 1$ is a measurable function such that $F(C(\wt \cH-z))=F(\wt \cH)$ for every $C>0$ and $z\in\bbC$, then $F$ is a.s.\ a constant. Since our construction of $\wt\cH$ in terms of $\cH$ commutes with translation and scaling, by Definition~\ref{def:ergodic-modulo-scaling} for $\cH$, the function $F_1(\cH):=\bbE\big[F(\wt\cH)|\cH\big]$ is a.s.\ a constant over $\cH$. Let $\cD$ be a uniform dyadic system independent from $\wt\cH$.
 Let $\wh {S}_m^z$  be as in ~\eqref{eq:def-S_m^z} for the cell configuration $\cH$, $\wh S_m=\wh S_m^0$ and $z_m$ be uniformly randomly chosen from the Lebesgue measure on $\wh S_m$. Let $$F_k(\wt\cH,\cD):=\bbE\Big[F(\wt\cH)\big| \wt \cH(\wh S_k)
 \Big].$$
Since the definition~\eqref{eq:def-S_m^z} for $\wh S_k$ is invariant under scaling, for any  {(possibly random)} constant $\wt C>0$, $F_k(\wt C\wt\cH,\wt C\cD) = F_k(\wt\cH,\cD)$. By martingale convergence theorem, $F_k(\wt\cH,\cD)\to F(\wt\cH)$ a.s.\ as $k\to\infty$. By Lemma~\ref{lem:dyadic-resample} applied to $\cH$, for any $m>0$, \eqb\label{eq:pf-lem-extension-1.2} 
(\wt C(\cH-  z_m),\wt C(\wt\cH-  z_m),\wt C(\cD-  z_m))\overset{d}{=} (\cH,\wt\cH,\cD) \text{ for some random constant $\wt C>0$}. 
\eqe 
Therefore for any $\e>0$, as $k\to\infty$, 
\begin{equation}\label{eq:pf-lem-extension-1.3-1}
\begin{split}
    \bbP\big[|F_k(\wt \cH-z_m,\cD-z_m)-F(\wt  \cH)|>\e  \big] &=  \bbP\big[|F_k(\wt C(\wt \cH-z_m),\wt C(\cD-z_m))-F(\wt C(\wt\cH-z_m))| >\e \big] \\&= \bbP\big[|F_k(\wt\cH,\cD)-F(\wt\cH)|>\e\big]\to 0,
    \end{split}
\end{equation}
 and the convergence is uniform in $m>0$. In particular, since $0\leq F\leq 1$, by~\eqref{eq:pf-lem-extension-1.3-1}, 
 \begin{equation}\label{eq:pf-lem-extension-1.3-2}
 \begin{split}
   & \bbP\bigg[\bigg|\bbE\Big[F_k(\wt \cH-z_m,\cD-z_m)-F(\wt  \cH)\big|\wt \cH(\wh S_k)\Big] \bigg|\geq\e \bigg]\to 0,\\& \bbP\bigg[\bigg|\bbE\Big[F_k(\wt \cH-z_m,\cD-z_m)-F(\wt  \cH)\big| \cH(\wh S_k)\Big] \bigg|\geq\e \bigg]\to 0 \text{ uniformly in $m$ as $k\to\infty$.}
   \end{split}
\end{equation}
Now for fixed $k>0$,  $F_k(\wt\cH,\cD)$ is measurable with respect to $(\wh S_k,\wt\cH(\wh S_k))$, and for any fixed $z$, $F_k(\wt\cH-z,\cD-z)$ is measurable with respect to $(\wh S_k^z,\wt\cH(\wh S_k^z))$. For $H\in\cH$, let $\Theta_H$ denote the random variables we used to partition $H$ in the construction of $\wt\cH$, and for a square $S$, let $\ol \cH(S)$ be the collection of cells $H$ in $\cH$ such that the vertices for $H$ and some cell in $\cH(S)$ are on a common face of $\cM$. For $z\in\bbC$, let $A_{z,k}$ be the event such that  $\ol\cH(\wh S^{z}_k)$ has positive distance from $\ol\cH(\wh S_k)$. For deterministic $z$, conditioned on $A_{z,k}$ and $\cH(\wh S_k)$, $F_k(\wt\cH-z,\cD-z)$ is independent from $\{\Theta_H:H\in \cH(\wh S_k)\}$. This implies 
$$
\bbE\Big[\mathds{1}_{A_{z_m,k}}F_k(\wt \cH-z_m,\cD-z_m)\big|\wt \cH(\wh S_k),z_m\Big] = \bbE\Big[\mathds{1}_{A_{z_m,k}}F_k(\wt \cH-z_m,\cD-z_m)\big| \cH(\wh S_k),z_m\Big].
$$
Since $z_m$ is sampled from the Lebesgue measure on $\wh S_m$, we see that for fixed $k>0$, $\bbP[A_{z_m,k}]\to1$ for fixed $k$ as $m\to\infty$, and therefore for fixed $\e>0$, 
      \begin{equation}\label{eq:pf-lem-extension-1.3-3}
        \bbP\bigg[\Big|  \bbE\Big[F_k(\wt \cH-z_m,\cD-z_m)\big|\wt \cH(\wh S_k)\Big] -\bbE\Big[F_k(\wt \cH-z_m,\cD-z_m)\big| \cH(\wh S_k)\Big]\Big|>\e\bigg] \to 0 \text{ as } m\to\infty.
      \end{equation}
      By~\eqref{eq:pf-lem-extension-1.3-2} and~\eqref{eq:pf-lem-extension-1.3-3}, sending $m\to\infty$ sufficiently faster than $k\to\infty$, for any $\e>0$, we have 
      $$\bbP\Big[\Big|\bbE[F(\wt\cH)|\wt\cH(\wh S_k)] -\bbE[F(\wt\cH)|\cH(\wh S_k)]\Big|>\e\Big]\to 0 \ \text{as $k\to\infty$.} $$
      Finally, by martingale convergence theorem, $\bbE[F(\wt\cH)|\cH(\wh S_k)]\to F_1(\cH)$ as $k\to\infty$ and $F_1$ is a.s.\ a constant function, while $\bbE[F(\wt\cH)|\wt\cH(\wh S_k)] \to F(\wt\cH)$. Therefore we conclude that $F(\wt\cH)$ is a.s.\ a constant. 
  \end{proof}
 
\begin{lemma}\label{lem-extension-1.5}
    There are no macroscopic faces of $\cH$, in the sense that almost surely for any $\e>0$, for large enough $k$, for each cell $H_1,...,H_j\in\cH$ with $(H_1\cup...\cup H_j)\cap S_k\neq\emptyset$ and $v_{H_1},...,v_{H_k}$  {lie on the same} 
    face of $\cM$, $\diam(H_1\cup...\cup H_j)<\e|S_k|$.
\end{lemma}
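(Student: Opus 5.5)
The natural route is to reduce the statement to Lemma~\ref{lem:no-macroscopic-cell}, applied not to $\cH$ but to an auxiliary cell configuration built from the union sets $\ol H$ of \eqref{eq:def-olH-0}. Alternatively, one can run the ergodic-average argument behind Lemma~\ref{lem:no-macroscopic-cell} directly, using the moment bound \eqref{eq:thm:CellSystemPack-extension}. I plan the second route since it needs no new cell configuration. The key observation is that if $v_{H_1},\dots,v_{H_j}$ lie on a common face $f$ of $\cM$, then every $H_i$ is contained in $\ol H_i$. Here, for a cell $H$, $\ol H$ denotes the union of the cells sharing a face with $v_H$, which is the analogue of $\ol H_0$ centered at $H$. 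Hence $H_1\cup\dots\cup H_j\subseteq \ol H_1$, and it suffices to show that $\max\{\diam(\ol H): H\in \cH(S_k')\}=o(|S_k|)$, where $S_k'$ is a slightly enlarged square.

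To control $\diam(\ol H)$ I would follow the proof of \cite[Lemma 2.9]{GMSinvariance}, which yields Lemma~\ref{lem:no-macroscopic-cell}. That argument uses only the ergodic average of Lemma~\ref{lem:ergodic-cell-system} applied to the scale-invariant functional $F=\diam(H_0)^2/\area(H_0)$, together with the fact that cells of size comparable to $|S_k|$ carry too much mass. In our setting I take
\[
F(\cH,\cD)=\frac{\diam(\ol H_0)^2}{\area(H_0)},
\]
which is scale invariant and integrable by \eqref{eq:thm:CellSystemPack-extension}, since the degree factors are at least $1$. Lemma~\ref{lem:ergodic-cell-system} then gives
\[
|S_k|^{-2}\sum_{H\in\cH(S_k)}\frac{\area(H\cap S_k)}{\area(H)}\diam(\ol H)^2\to \bbE[F].
\]
Comparing this limit along $S_k$ with the one along its dyadic ancestor $S_{k+1}$ shows that the contribution of any single cell is $o(|S_k|^2)$. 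The ergodic limits on nested squares agree up to a factor $4$, so no single term can carry a positive fraction of the sum. This is the same mechanism as in \cite[Lemma 2.9]{GMSinvariance}.

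The main obstacle is the truncation by $\area(H\cap S_k)/\area(H)$. A cell $H$ meeting $S_k$ only marginally contributes little to the sum even if $\ol H$ is huge, so the sum alone does not rule out a large $\ol H$. I would handle this as \cite{GMSinvariance} does. First use Lemma~\ref{lem:no-macroscopic-cell} to ensure that each $H\in\cH(S_k)$ lies inside the concentric square of side $2|S_k|$. Then apply the ergodic statement on the ancestor squares of $\cS_{k,2}$ via Lemma~\ref{lem:dyadic-shift}; for these squares $H$ is entirely contained and the weight equals $1$. It follows that
\[
\diam(\ol H)^2\le \sum_{H'\in\cH(S)}\diam(\ol{H'})^2,
\]
and the difference of ergodic averages on consecutive scales forces the maximal term to be $o(|S|^2)$. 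Finally, Lemma~\ref{lem:dyadic-shift} transfers the bound to all squares in $\cS_{k,\ell}$, which gives the claim for every face touching $S_k$.
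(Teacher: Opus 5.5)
Your reduction and your choice of functional agree with the paper. The paper's proof just runs \cite[Lemma 2.9]{GMSinvariance} with $\ol H_0$ and the moment bound \eqref{eq:thm:CellSystemPack-extension}. Your $F=\diam(\ol H_0)^2/\area(H_0)$ is indeed scale invariant and integrable. Also, cells whose vertices lie on a common face are contained in $\ol H_i$ for any one of them, $H_i$. One small correction: center at a member $H_i\in\cH(S_k)$, not at $H_1$. Passing to an enlarged square $S_k'$ to catch $H_1$ is circular, because it presupposes that $\ol H_i$ is small.

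The genuine gap is in the step meant to produce $o(|S_k|)$. Convergence of $|S|^{-2}\int_S F(\cH-z,\cD-z)\,dz$ to $\bbE[F]$ on $S_k$, on $S_{k+1}$, or on every square of $\cS_{k,\ell}$ only gives an $O(|S_k|)$ bound. Indeed, if $H\in\cH(S_k)$ and $\diam(\ol H)\le |S_k|$, then $H$ lies in the $3\times 3$ block of level-$k$ squares around $S_k$, and $\diam(\ol H)^2=\int_H F\le 9(\bbE[F]+o(1))|S_k|^2$. A single cell carrying mass $c|S_k|^2$ with $c<\bbE[F]$ is compatible with all these limits; the increment $\int_{S_{k+1}}F-\int_{S_k}F\approx 3\bbE[F]|S_k|^2$ has room for it. So ``the ergodic limits on nested squares agree, hence no single term carries a positive fraction'' is a non sequitur. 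The same applies to your last sentence about differences of averages on consecutive scales.

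What is missing is a scale-invariant functional whose mean can be made arbitrarily small but which still charges the full mass $\diam(\ol H)^2$ of a macroscopic $\ol H$. For example, take $F_{m,\e}=\frac{\diam(\ol H_0)^2}{\area(H_0)}\mathds{1}\{\diam(\ol H_0)\ge \e|\wh S_m|\}$. It is scale invariant because $\wh S_m$ is defined through cell counts. Moreover $\bbE[F_{m,\e}]\to 0$ as $m\to\infty$ by dominated convergence, since $|\wh S_m|\to\infty$ by local finiteness.

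Now suppose $H\in\cH(S_k)$ with $\e|S_k|\le \diam(\ol H)\le |S_k|$. You can reduce to this case by passing to the level $k'\ge k$ with $|S_{k'}|/2<\diam(\ol H)\le |S_{k'}|$ and replacing $\e$ by $\min\{\e,1/2\}$. The fractional count $\sum_{H'\in\cH(S)}\area(H'\cap S)/\area(H')$ is monotone and tends to infinity along $S_k$. Hence, by Lemma~\ref{lem:dyadic-shift}, for large $k$ each of the nine level-$k$ squares around $S_k$ has count $>m$. It follows that $|\wh S_m^z|\le |S_k|/2$ for a.e.\ $z\in H$. Therefore $F_{m,\e}(\cH-z,\cD-z)=\diam(\ol H)^2/\area(H)$ on $H$, and the integral of $F_{m,\e}$ over the block is at least $\e^2|S_k|^2$. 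This contradicts Lemma~\ref{lem:ergodic-cell-system} combined with Lemma~\ref{lem:dyadic-shift} once $m$ is chosen with $9\,\bbE[F_{m,\e}]<\e^2/2$.
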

\begin{proof}
    The proof is identical to that of~\cite[Lemma 2.9]{GMSinvariance} where one consider the set $\ol H_0$ from~\eqref{eq:def-olH-0} and use the moment bound from~\eqref{eq:thm:CellSystemPack-extension}.
\end{proof}

 \begin{proof}[Proof of Theorem~\ref{thm:CellSystemPack-extension}]
     Let $\wt\cH$ be the cell configuration constructed as above. The ergodicity modulo scaling property for $\wt\cH$ holds by Lemma~\ref{lem-extension-1.3}.  Since each $\wt H_{H,k}$ contains the boundary of $H$, we see that the line connectivity property from Definition~\ref{def:connectedness} holds. 
     Furthermore, the moment bound~\eqref{eq:thm-inv-circle} for $\wt\cH$ follows from the moment bound~\eqref{eq:thm:CellSystemPack-extension}. Therefore the conditions in Theorem~\ref{thm:CellSystemPack} hold for $\wt\cH$ and the conclusion follows from Theorem~\ref{thm:CellSystemPack}.
 \end{proof}

\subsection{Proof of Theorem~\ref{thm:invariance-uniformization-extension}}\label{subsec:extension-unif}


Let $\cM$ be a planar map with whole-plane topology and let $\cH$ be a cell configuration whose associated map is $\cM$ as in Theorem~\ref{thm:invariance-uniformization-extension}.  We assume $\cM$ only has faces of degree at least 3; we may assume this upon removing loops and collapsing degree 2 faces as described above the theorem statement and {observing that if the original map satisfies the assumptions of the theorem then the modified map with only face degrees at least 3 also satisfies these conditions.} 
Let $\wt\cT_\cM$ be the triangulation generated by $\cM$ as follows{; see Figure \ref{fig:halfdisk} for an illustration}. We add a vertex $v_f$ to each face $f\in\cF\cM$ and a vertex $v_e$ to each edge $e\in\cE\cM$. {Adding the vertices $v_e$ cause each edge $e$ to be split into two edges, i.e., $e$  is replaced by two edges each connecting one end-point of $e$ and $v_e$.} For each face $f$, we  connect $v_f$ to all the vertices on $\cM$ incident to the face $f$ and all the vertices $v_e$ where $e$ is an edge of $f$. Observe that $\wt\cT_\cM$ is   an infinite plane triangulation without loops  {and multi-edges}.  Recall the construction of  the cell configuration $\wt\cH$ defined in terms of $\cH$ as in Section~\ref{subsec:extension-circ}. Let $d_H$ be the number of faces and edges of $\cM$ incident to the vertex $v_H$. Let $(\wt H_{H,k})_{k=0,...,d_H}$ be the partition of the cell $H\in\cH$ as defined in Section~\ref{subsec:extension-circ}, except that we partition each cell into $d_H+1$ parts instead of $\deg(H)+1$ parts. For each edge $e$ (resp.\ face $f$) of $\cM$ such that $v_H$ is on $e$ (resp.\ $f$), we uniformly randomly set $\wt H_{H,e}$ (resp.\  $\wt H_{H,f}$) to be equal to one of $\wt H_{H,k}$ for $k=1,...,d_H$. For each vertex $v_f$ (resp.\ $v_e$) added to generate $\cT_\cM$, let $\wt H_f = \cup_{H:v_H\sim f} \wt H_{H,f}$ (resp.\ $\wt H_e = \cup_{H:v_H\sim e} \wt H_{H,e}$), and let $\wt H_H = \wt H_{H,0}$. This defines a cell configuration $\wt\cH$  {with cells $\wt H_f$, $\wt H_e$ and $\wt H_H$} whose associated map is $\wt\cT_\cM$. We further define the conductance of edges $\wt e$ of $\wt \cT_\cM$ as follows. 
If $\wt e$ is connecting
  a vertex of $\cM$ and a vertex $v_e$ in $\cV\wt\cT_\cM$, we set $\fc(\wt e)=1$. 
  If $\wt e$ is connecting a vertex of $\cM$ and a vertex $v_f$ in $\cV\wt\cT_\cM$, we set $\fc(\wt e)=\deg(f)+2$. 
  If  $\wt e$ is connecting two vertices $v_e$ and $v_f$, we set $\fc(\wt e)=\deg(f)+1.5$. Note that by our choice of the conductance, the planar map $\cM$ is measurable with respect to $(\wt\cT_\cM,\fc)$. 


Recall the definition of semi-flowers in Section~\ref{subsec:regularity} in the setting where $\cM$ is a triangulation. For a vertex $v$ of $\wt\cT_\cM$, we define the semi-flower $P_v\subseteq M(\cM)$ as follows. First suppose $v$ is also a vertex of $\cM$, {let $f$ be a face incident to $v$, and let $p\geq 3$ denote the degree of $f$.} 
Let $Q_f$ be the unit side length regular $p$-gon associated with $f$, and we place $Q_f$ on the plane. Let $v_1, ..., v_{j_{v,f}}$ be vertices of $Q_f$ such that as we glue the polygon $Q_f$ according to the adjacency pattern of $\cM$ as in the construction of $M(\cM)$, these vertices are identified with $v$. For each such vertex $v_i$, let $e_1,e_2$ be the edges of $Q_f$ incident to $v_i$. We mark the points $x_1,x_2$ on $e_1,e_2$ such that they have distance $1/4$ from $v_i$.  Let $e_1^\perp,e_2^\perp$ be line segments started from  $x_1,x_2$ of length $\frac{1}{6\sqrt3}$ (the choice of this number is unimportant as long as it is less than $\frac{1}{4\sqrt3}$), perpendicular to $e_1,e_2$, and pointing inwards to $Q_f$. Let $e_{12}$ be the line segment connecting the  endpoints of  $e_1^\perp,e_2^\perp$ other than $x_1,x_2$, and let $P_{v,f,i}\subseteq M(\cM)$ be the  part of $Q_f$ enclosed by $e_1,e_2, e_1^\perp,e_2^\perp$ and $e_{12}$ and we view $Q_f$ as part of $M(\cM)$.   We let $P_v = \cup_{f\in\cF\cM:v\sim f}\cup_{i=1}^{j_{v,f}}P_{v,f,i}\subseteq M(\cM)$.  Next suppose $v=v_e$ where $e$ is an edge on the face $f$ of $\cM$. First assume $e$ is incident to $f$ and another face $\wt f$.   Let $Q_f$ be the regular $p$-gon placed on the plane as before. Let $x_3,x_4$ be points on $e$ of distance $1/4$ from the midpoint of $e$. Let  $e_3^\perp,e_4^\perp$ be line segments perpendicular to $e$ of length  $\frac{1}{6\sqrt3}$, started from $x_3,x_4$ and pointing inwards to $Q_f$. Let $e_{34}$ be the line segment connecting the  endpoints of  $e_3^\perp,e_4^\perp$ other than $x_3,x_4$. Let $P_{e,f}\subseteq M(\cM)$ be the corresponding  part of  enclosed by $e,e_3^\perp,e_4^\perp,e_{34}$ when we view $Q_f$ as part of $M(\cM)$.  Define $P_{e,f'}$ analogously, and let $P_{v_e}=P_{e,f}\cup P_{e,f'}$. If $e$ is incident to the face $f$ only, then there is also some other edge $e'$ of $Q_f$ identified with $e$ in our gluing of $Q_f$; in this case we define $P_{e,f}$ and $P_{e',f}$ by drawing line segments on $Q_f$ in the same way, and let $P_{v_e}=P_{e,f}\cup P_{e',f}$. Finally for a vertex $v_f$ where $f$ is a face of $\cM$ of degree at least 3, let  $P_{v_f} = Q_f\backslash \cup_{v\in\cV\cM:v\sim f}P_{v,f}$ be the remaining parts of $Q_f$. 
See Figure~\ref{fig:halfdisk} for an illustration.

\begin{figure}
    \centering
    \begin{tabular}{cc}
          \includegraphics[scale=0.45]{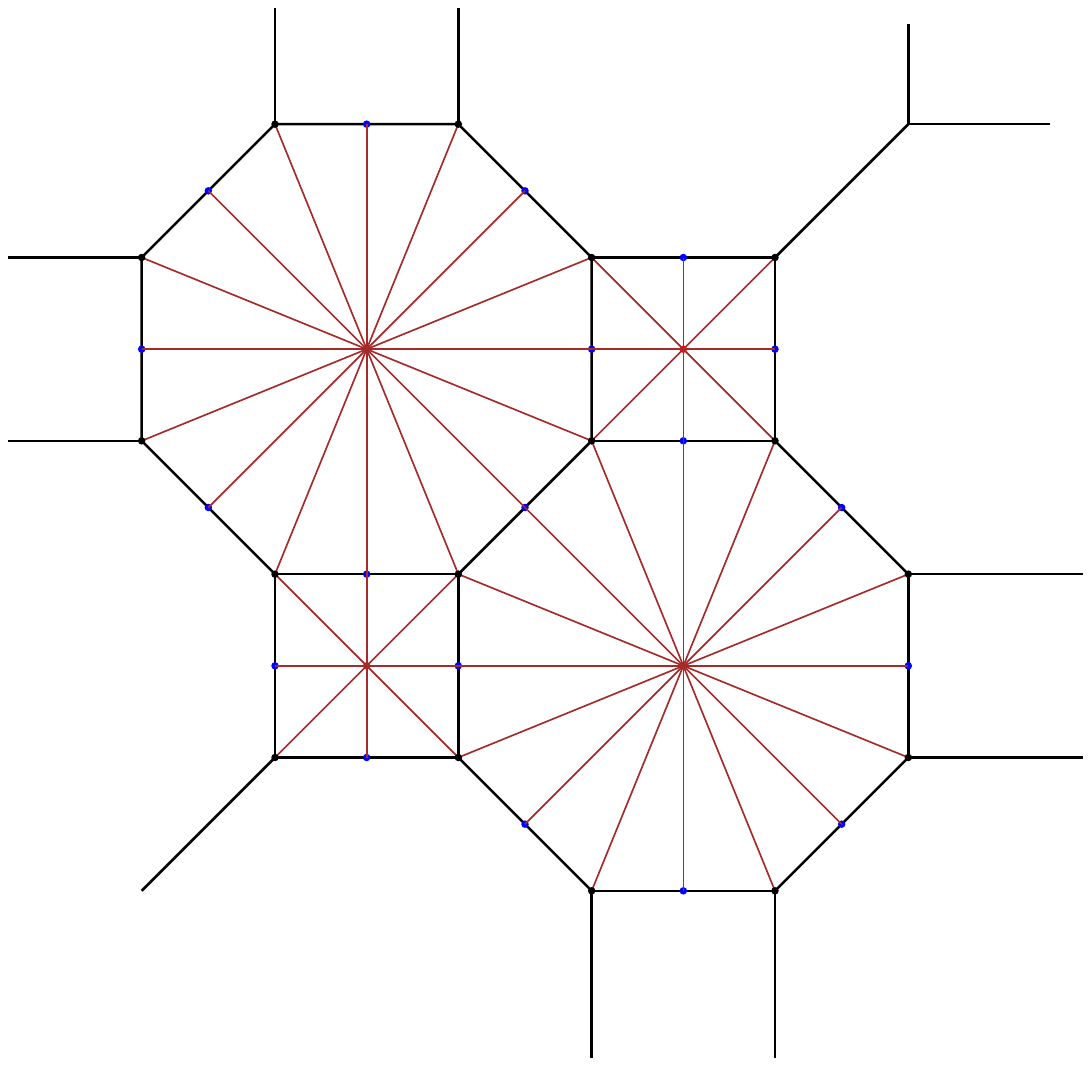}   &       \includegraphics[scale=0.45]{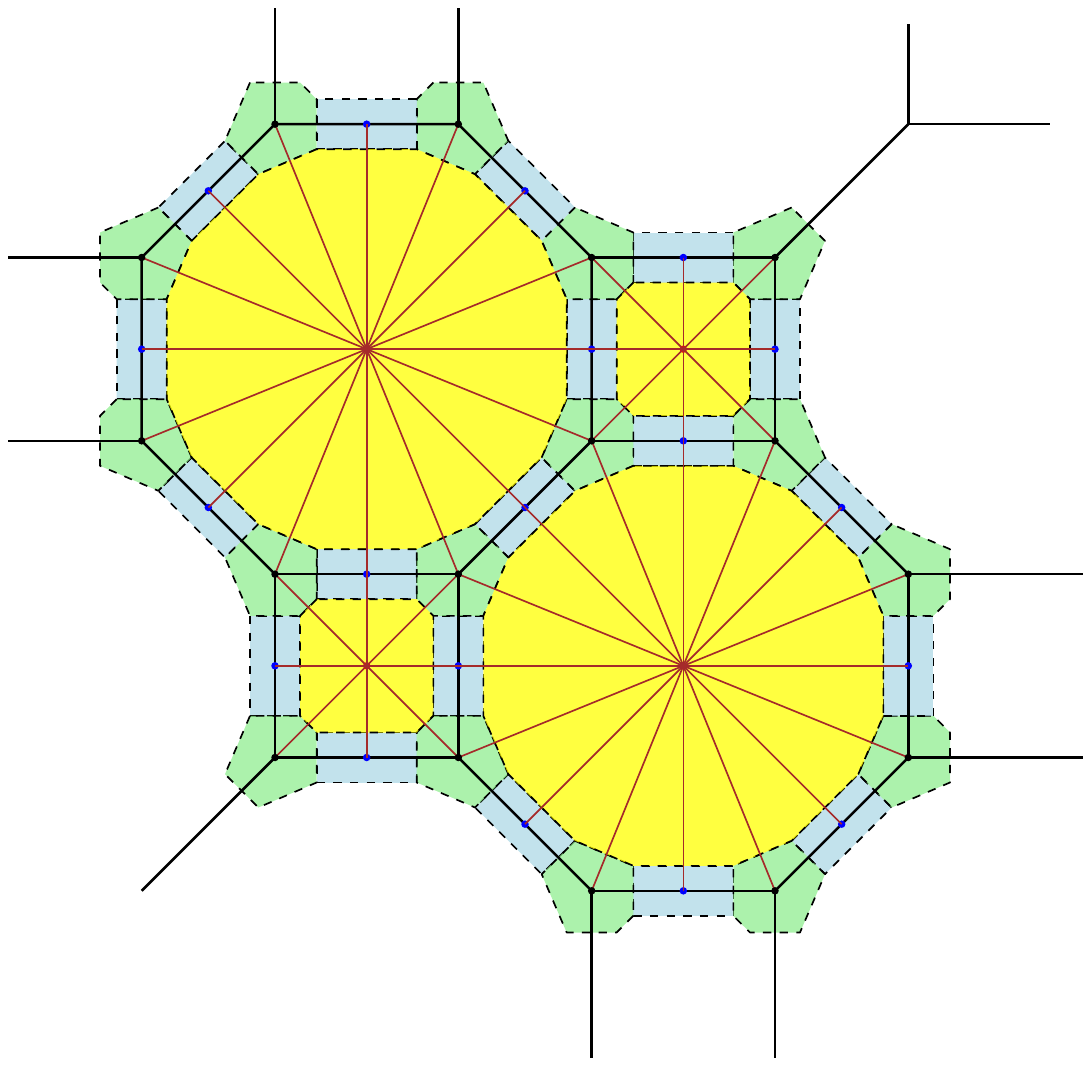}
    \end{tabular}
    \caption{\textbf{Left}: An illustration  of part of $\wt\cT_\cM$, where the black edges are edges of the original map $\cM$, and the brown edges are the edges added to construct $\wt\cT_\cM$. \textbf{Right}:  An illustration of the semi-flowers on the surface $M(\cM)$. 
    The green regions depict the semi-flowers $P_v$ where $v$ is a vertex of $\cM$, 
    the yellow regions depict the semi-flowers $P_{v_f}$ for $f\in\cF\cM$, and 
    the blue regions are for the semi-flowers $P_{v_e}$ with $e\in\cE\cM$.}
    \label{fig:halfdisk}
\end{figure}

\begin{lemma}\label{lem:Koebe-distortion-1}
Let $M_0\subseteq M(\cM)$ be either equal to $M(\cM)$ or equal to the  closure of a union of some faces of $\cT_\cM$ on $M(\cM)$ such that $M_0$ is homeomorphic to the closed unit disk.    The conclusions of Lemma~\ref{lem:Koebe-distortion} hold for {a} conformal map $\varphi:M_0\to\bbC$ or $\varphi:M_0\to\bbD$  and the semi-flowers defined as above.
\end{lemma}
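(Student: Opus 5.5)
We follow the proof of Lemma~\ref{lem:Koebe-distortion}: for each vertex $w$ of $\wt\cT_\cM$, find an open neighborhood $U_w\supseteq P_w$ of $w$ in $M(\cM)$ and a conformal map $\varphi_w:U_w\to\bbD$ with $\varphi_w(w)=0$ such that
\[
B(0;a_1^{c/n})\subseteq \varphi_w(P_w)\subseteq B(0;a_2^{c/n}),
\]
where $n=\deg_{\wt\cT_\cM}(w)$, $c$ is a universal constant and $a_1,a_2\in(0,1)$ are universal. Then we apply the Koebe distortion theorem to $\varphi\circ\varphi_w^{-1}$ on $\bbD$ exactly as in~\eqref{eq:pf-lem:Koebe-distortion-1}. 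The ratio $\frac{a_2^{c/n}}{(1-a_2^{c/n})^2}\big/\frac{a_1^{c/n}}{(1+a_1^{c/n})^2}$ is $O(n^2)$, which gives~\eqref{eq:lem:Koebe-distortion-1}. Boundary vertices of a disk $M_0$ are handled with the half-disk chart and Schwarz reflection, as in the second part of that proof. We treat the three types of vertices separately.

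\emph{Case 1: $w=v$ a vertex of $\cM$.} Take $U_v$ to be the union of the open corners at $v$: the points of the polygons $Q_f$ within distance $1/2$ of a copy of $v$. The Riemann surface chart at $v$ lays the corners out around the origin and applies $z\mapsto z^{2\pi/\theta(v)}$, where $\theta(v)=\sum(\text{interior angles})$. Each interior angle lies in $[\pi/3,\pi)$, and the number of corners is at most $\deg(v)\le n$, so $\theta(v)\asymp n$. Inside each corner of angle $\theta_i\in[\pi/3,\pi)$, the piece $P_{v,f,i}$ is a pentagon contained in the sector of radius $r_2<1/2$ and containing the sector of radius $r_1>0$, with $r_1,r_2$ universal. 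We use the Euclidean pentagon geometry here, with $\frac1{6\sqrt3}<\frac1{4\sqrt3}$ ensuring that $P_{v,f,i}$ stays strictly inside the corner. After the power map these radii become $r_j^{2\pi/\theta(v)}$. Rescaling $U_v$'s image to $\bbD$ then gives the displayed inclusion with $c/n$ comparable to $2\pi/\theta(v)$. This is the main case, and the only place where the constants need care: the corner angles are not all $\pi/3$, so the bound $\theta(v)\le \pi n$ must be used rather than exact symmetry.

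\emph{Case 2: $w=v_e$.} This vertex has degree $4$ in $\wt\cT_\cM$. Take $U_{v_e}$ to be the union of the two half-regions of the adjacent polygons near $e$, which is flat by the edge chart. $P_{v_e}$ is two rectangles of universal size, containing a ball of universal radius around the midpoint of $e$, and $U_{v_e}$ contains a fixed larger neighborhood. Koebe then gives a universal ratio, which is at most $c\deg^2$. If both sides of $e$ belong to the same face, the edge chart is still well defined, as noted in Section~\ref{pre:triangulation}.

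\emph{Case 3: $w=v_f$.} Here $\deg(v_f)=2p$. $P_{v_f}$ is $Q_f$ with small notches removed near the boundary, and $U_{v_f}$ is the open polygon $Q_f$, viewed in its own chart even if $f$ is glued to itself. Both $P_{v_f}$ and $Q_f$ are nearly round on scale $R_p\asymp p$ around the center. The notches have depth $\frac1{6\sqrt3}$ and $R_p-\text{inradius}\to0$, so $B(0;R_p-C)\subseteq P_{v_f}$ for a universal $C$. Map $Q_f$ conformally onto $\bbD$ with the center going to $0$. The comparability of $Q_f$ with the disk of radius $R_p$ near its boundary shows that $\varphi_w(P_{v_f})\supseteq B(0;1-C'/p)$, and that $\varphi_w(P_{v_f})$ is contained in a disk of radius $1-c''/p$ because the notches touch $\partial Q_f$. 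Koebe applied to $\varphi\circ\varphi_w^{-1}$ then gives a distortion ratio $O(p^2)=O(\deg(v_f)^2)$. Near $\partial\bbD$ we use that the Koebe growth bound $\frac{r}{(1-r)^2}$ at $r=1-1/p$ is of order $p^2$. Verifying this boundary-layer estimate for the regular polygon is the second delicate point. We would compare with the map $z\mapsto z/R_p$ and use a Schwarz–Christoffel estimate, or monotonicity of harmonic measure, to control the distortion near the sides.
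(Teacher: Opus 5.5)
Your argument is correct. Cases~1 and~2 are the paper's argument. The paper also sandwiches $P_v$ between the cones $F_v(\frac{1}{16\sqrt3})$ and $F_v(\frac{1}{2\sqrt3})$ of total angle $\alpha_v\in[\frac{\pi}{3}\deg(v),\pi\deg(v))$, applies $z\mapsto(3z)^{2\pi/\alpha_v}$ and then Koebe. It treats $v_e$ as a fixed geometry, namely a rectangle inside a rhombus of the edge chart.

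The difference is in Case~3. The paper never uses the Riemann map of $Q_f$. It applies Koebe to $\varphi$ directly on the inscribed disk $B(0;a_p)\subseteq Q_f$, with $a_p=\frac12\cot(\frac{\pi}{p})$. For this it uses two facts about $P_{v_f}$:
\begin{itemize}
\item $P_{v_f}$ is the convex $2p$-gon whose vertices (the inner endpoints of the perpendicular segments) all lie at distance $\sqrt{1/16+(a_p-\frac{1}{6\sqrt3})^2}=a_p(1-\Theta(1/p))$ from the center;
\item $P_{v_f}$ contains a concentric disk of radius comparable to $a_p$.
\end{itemize}
So the step you call delicate is elementary geometry. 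Even in your formulation, no Schwarz--Christoffel or harmonic-measure estimate is needed. Take $g:Q_f\to\bbD$ with $g(0)=0$ and $R_p=\frac{1}{2\sin(\pi/p)}$. The Schwarz lemma applied to $B(0;a_p)\subseteq Q_f\subseteq B(0;R_p)$ gives $|g(z)|\le |z|/a_p$ for $|z|<a_p$, and $|g^{-1}(w)|\le R_p|w|$. These yield $g(P_{v_f})\supseteq B(0;c')$ for all $p$, and $g(P_{v_f})\subseteq B(0;1-c/p)$ for $p\ge 4$. Your stronger inner bound $B(0;1-C'/p)$ follows the same way but is not needed; only a universal inner radius matters.

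What drives the outer bound is not that the ``notches touch $\partial Q_f$''. It is that the vertex pentagons and the edge rectangles together cover a band of width $\frac{1}{6\sqrt3}$ along all of $\partial Q_f$.

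Your route through $g$ does buy something for small $p$. When $p=3$, the vertices of $P_{v_f}$ lie at distance $\sqrt{1/16+1/27}\approx 0.316>\frac{1}{2\sqrt3}$ from the center. So $P_{v_f}$ is not contained in the inscribed disk, and the paper's chain of inclusions needs a separate argument for triangular faces. With the Riemann map of $Q_f$, any fixed $p$ is handled at once by compactness of $P_{v_f}$ in the open polygon.
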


\begin{proof}
Let $v$ be a vertex of $\cT_\cM$ on $M_0$. We only consider the setting where $v$ is in the interior of $M_0$; the boundary case is similar using Schwarz reflection. 
First suppose $v\in\cV\cM$.  Let $r\in (0,\frac{{1}}{2})$.  We define a cone $F_v(r)$ as follows. For each face $f$ incident to $v$ whose associated regular polygon is $Q_f$, let $F_{v,f}(r)$ be the intersection of $Q_f$ and the radius $r$ disks centered at the vertex $v_i$, where $v_1,...,v_j$ are the vertices identified with $v$ in the gluing of $Q_f$. Let $F_v(r) = \cup_{f\in\cF\cM:v\sim f}F_{v,f}(r)$ and observe that $F_v(r)$ has the same geometry of a Euclidean cone of radius $r$ and some angle $\alpha_v\in [\frac{\pi}{3}\deg(v),\pi\deg(v))$,  {where we use} that these disks are disjoint since $r<\frac{1}{2}$.  Furthermore, from our construction $F_v(\frac{1}{16\sqrt{3}})\subseteq P_v\subseteq F_v(\frac{1}{{2\sqrt 3}})\subseteq F_v(\frac{1}{3})\subseteq M(\cM)$. Therefore under the conformal map $\varphi_v:=z\mapsto (3{z})^{\frac{2\pi}{\alpha_v}}$,  there are some universal constants $c_0,c_1>0$ such that $B(0;c_0)\subseteq \varphi_v(P_v)\subseteq B(0;1-c_1\deg(v)^{-1})$, so the claim  {(i.e., \eqref{eq:lem:Koebe-distortion-1})} follows from the Koebe distortion theorem. Now suppose $v=v_f$ where $f\in\cF\cM$ has degree $p\geq 3$. We place $Q_f$ on the plane such that $Q_f$ is centered at 0. Then $$B\Big(0; \cos(\frac{\pi}{p})(\frac{1}{2}\cot (\frac{\pi}{p})-\frac{1}{2\sqrt{3}})\Big)\subseteq P_{v_f}\subseteq B\Big(0; \sqrt{(\frac{1}{4})^2+(\frac{1}{2}\cot(\frac{\pi}{p})-\frac{1}{6\sqrt 3})^2 }\Big)\subseteq B(0;\frac{1}{2}\cot (\frac{\pi}{p}))\subseteq Q_f,$$
and in particular there are universal constants $c_2,c_3,c_4>0$ such that for some $r>0$, $B(0;c_2r)\subseteq P_{v_f} \subseteq B(0;c_3r(1-c_4p^{-1}))\subseteq B(0;c_3r)\subseteq Q_f$. The claim follows again from the Koebe distortion theorem. Finally, let $v=v_e$ for some edge $e\in\cE\cM$. Then from our construction of $M(\cM)$, on the surface $M(\cM)$, $P_{v_e}$ is a rectangle of side lengths $\frac{1}{2}$ and $\frac{1}{3\sqrt 3}$ contained in the interior of a quadrangle of side length $\frac{1}{\sqrt3}$ and angles $\pi/3,2\pi/3,\pi/3,2\pi/3$, and the claim is again a consequence of the distortion theorem.
\end{proof}

\begin{proof}[Proof of Theorem~\ref{thm:invariance-uniformization-extension}]
    Let $\wt\cH$ be the cell configuration constructed as above. Then the moment bound~\eqref{eq:thm:CellSystemPack-extension} implies that in the cell configuration $\wt\cH$, 
    \begin{equation}\label{eq:pf:thm:invariance-uniformization-extension-0}
        \bbE\big[\frac{\diam(\wt H_0)^2}{\area(\wt H_0)}\deg(\wt H_0)^4  \big]<\infty; \ \  \bbE\big[\frac{\diam(\wt H_0)^2}{\area(\wt H_0)}(\sum_{e:H_0\sim e} \fc(e))  \big]<\infty.
    \end{equation}
    Furthermore, using identical arguments in Section~\ref{subsec:extension-circ}, $\wt \cH$ satisfies all the other conditions in Theorem~\ref{thm:invariance-uniformization}. In particular, by the same arguments as in~\cite[Lemma 2.10]{GMSinvariance}, for some deterministic constant $C>0$, for all large enough $k$, 
    \begin{equation}\label{eq:pf:thm:invariance-uniformization-extension-1}
        \sum_{H\in\wt\cH(S_k)} \diam(H)^2(\sum_{ e:H_0\sim e}\fc(e))<C.
    \end{equation}
    Now the identical arguments in Section~\ref{subsec:regularity} for $\wt\cH$ carry through once we use Lemma~\ref{lem:Koebe-distortion-1} instead of Lemma~\ref{lem:Koebe-distortion}, and we define the function $\phi_0$ in Section~\ref{subsec:GMS2.3}  for $\wt\cH$ in the identical way. It is elementary to prove that, for $H_1,H_2,H_3\in\wt\cH$ a triangle $\Delta_{H_1,H_2,H_3}$ on $M(\cH)$ where $H_3$ is the cell for some vertex $v_f$, $\mathrm{Energy}(\phi_0;\Delta_{H_1,H_2,H_3})\leq c_0\deg(H_3)(\diam(H_1)^2+\diam(H_2)^2+\diam(H_3)^2)$ for some absolute constant $c_0>0$, which combined with~\eqref{eq:pf:thm:invariance-uniformization-extension-1} further implies the bound~\eqref{eq:energy-phi0-2}, i.e.,  the bound for the Dirichlet energy of $\phi_0$ as in Lemma~\ref{lem:energy-phi0}. The rest of the proof is identical to that of Theorem~\ref{thm:invariance-uniformization}.
\end{proof}

\subsection{An extension with weaker notion of line connectivity}\label{subsec:large}

The following version of Theorems~\ref{thm:CellSystemPack}-\ref{thm:invariance-uniformization-extension}, which uses a weaker version of the line connectivity in Definition~\ref{def:connectedness}, will be used in our future work.
\begin{theorem}\label{thm:large}
    Suppose $\cH$ is a cell configuration that satisfies all the conditions in one of the settings of Theorems~\ref{thm:CellSystemPack}-\ref{thm:invariance-uniformization-extension} except for the line connectivity condition in Definition~\ref{def:connectedness}. Suppose  there exists a cell configuration $\cH^{\rm large}$ satisfying the condition in Definition~\ref{def:ergodic-modulo-scaling} and the moment bound~\eqref{eq:thm-inv-circle} along with the following properties:
    \begin{enumerate}[(i)]
        \item The cell configuration  $\cH^{\rm large}$ is isomorphic to $\cH$, in the sense that $\cH$ and $\cH^{\rm large}$ have isomorphic associated maps; below we let $\phi:\cH\to\cH^{\rm large}$ denote  {the} isomorphism.  
        \item The convex hull of $H$ is contained in the convex hull of $\phi(H)$  {for any $H\in\cH$}.
        \item Almost surely,  {for each given $\e>0$, there exists $R>0$ such that for $r>R$ and each horizontal or vertical line segment $\ell\subseteq B(0;r)$ with length at least $\e r$}, the following holds. For any two cells $H,H'\in\cH$ intersecting $\ell$, we can find a path $H\sim H_1\sim\cdots\sim H_k\sim H'$, such that $\phi(H_j)$ intersects $\ell$ for each $j$.
    \end{enumerate}
    Then the conclusions in Theorem~\ref{thm:CellSystemPack}-\ref{thm:invariance-uniformization-extension}  continue to hold.
\end{theorem}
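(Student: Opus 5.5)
The plan is to go back through the proofs of Theorems~\ref{thm:CellSystemPack}--\ref{thm:invariance-uniformization-extension}, mark every place where the line connectivity of Definition~\ref{def:connectedness} enters, and show that at each such place the weaker property (iii) suffices once we work with $\cH^{\rm large}$. The auxiliary configuration $\cH^{\rm large}$ has the same associated map as $\cH$, so the two are two embeddings of the same planar map. By (ii), every cell of $\phi(H)$ is at least as large as $H$ in convex hull. By Lemma~\ref{lem:no-macroscopic-cell} applied to $\cH^{\rm large}$ (it satisfies Definition~\ref{def:ergodic-modulo-scaling} and \eqref{eq:thm-inv-circle}), cells of $\cH^{\rm large}$ meeting a large square have diameter $o(|S_k|)$. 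By (ii) again this also holds for $\cH$, and the same is true for the unions of cells used in the semi-flower arguments.

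The main consumer of connectivity is Theorem~\ref{thm:GMSinvariance} from \cite{GMSinvariance}, through Proposition~\ref{prop:almost-planarity}, Proposition~\ref{prop:inv-circle} and Lemma~\ref{lem:sublinear-A}. In \cite{GMSinvariance}, line connectivity is used to bound oscillations of discrete harmonic functions along a segment $\ell$, by summing $|f(H)-f(H')|$ along a path in $\cH(\ell)$ and integrating over a family of parallel segments (as in \eqref{eq:pf-inv-circle-1}--\eqref{eq:inv-circle-Lk}). Under (iii) we instead use paths whose $\phi$-images meet $\ell$. In the analogue of \eqref{eq:pf-inv-circle-1}, the weight $L_k(H,H')$ then becomes the measure of $r$ for which $\phi(H),\phi(H')$ meet $P_k(r)$. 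This weight is bounded by $2\min\{\diam\phi(H),\diam\phi(H')\}/|S_k|$, and the sums $\sum\diam(\phi(H))^2\deg(H)^4$ are controlled by Lemma~\ref{lem:average-diamdeg4} applied to $\cH^{\rm large}$, since the degrees agree. Two further facts are needed. Ergodic averages over $\cH$-squares and $\cH^{\rm large}$-squares are comparable, because every cell $H$ meeting $S$ has $\phi(H)$ meeting $S$ by (ii). The topological step (a path crossing $\ell$ separates the two sides) needs only that the endpoints of the path lie on $\ell$, and the intermediate cells stay within $o(|S_k|)$ of $\ell$. With these, \eqref{eq:pf-inv-circle} and the oscillation bounds of \cite[Section 2.4]{GMSinvariance} go through with constants changed, and Theorem~\ref{thm:GMSinvariance} holds for $\cH$.

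The other uses of connectivity are Lemmas~\ref{lem:phi_0-bbC}, \ref{lem:M(S)} and~\ref{lem:almost-planarity}, the loop constructions at the end of Lemmas~\ref{lem:area/length-1} and~\ref{lem:area/length-2}, and Lemma~\ref{lem:polynomial-B}. In each of them we need a loop or path in the associated map that stays within $o(|S_k|)$ of a prescribed union of segments and has the right winding. We build it with the paths from (iii). Each cell $H_j$ on such a path has $\phi(H_j)$ meeting $\ell$, and $\diam\phi(H_j)=o(|S_k|)$, so $H_j\subseteq \mathrm{conv}\,\phi(H_j)$ lies within $o(|S_k|)$ of $\ell$. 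That is all these arguments use; for instance, the loop $\gamma_k$ in Lemma~\ref{lem:almost-planarity} is assembled exactly as before, and its curve distance to $\partial[-2^k,2^k]^2$ is still small. Likewise, the set in \eqref{eq:pf:lem:polynomial-B-2} is replaced by the connected set of semi-flowers along such a path. Its cells lie in a slightly larger square, so the diameter bound \eqref{eq:lem:polynomial-B} still applies. The length-area arguments of Section~\ref{subsec:regularity} and the Dirichlet energy bounds use only \eqref{eq:thm-inv-circle} for $\cH$ and are unchanged. For the extensions in Section~\ref{subsec:extension}, the refined configuration $\wt\cH$ inherits (iii) when paired with the analogous refinement of $\cH^{\rm large}$, since every refined cell contains $\partial H$.

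I expect the hardest step to be the adaptation of \cite[Lemma 2.17]{GMSinvariance} and the surrounding harmonic-function estimates in \cite{GMSinvariance}. There we must check that replacing "cells meeting $\ell$" by "cells whose $\phi$-image meets $\ell$" is compatible with the ergodic averaging. This amounts to verifying that the relevant scale-invariant functionals of the pair $(\cH,\cH^{\rm large})$ are ergodic. To handle this, I would first try to show that $(\cH,\cH^{\rm large})$ jointly satisfies Lemma~\ref{lem:ergodic-cell-system}. Failing that, I would dominate every $\cH$-indexed sum by the corresponding $\cH^{\rm large}$-indexed sum using (ii), so that only the ergodicity of $\cH^{\rm large}$ and of $\cH$ separately is needed.
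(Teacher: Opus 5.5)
Your treatment of Theorem~\ref{thm:GMSinvariance} is essentially the paper's: you sum over cells whose $\phi$-image meets the segment, use the weights $\diam(\phi(H))$, and control the moments by Lemma~\ref{lem:average-diamdeg4} for $\cH^{\rm large}$. As your fallback suggests, no joint ergodicity of the pair is needed; the paper uses the ergodic averages of $\cH$ and of $\cH^{\rm large}$ separately.

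The gap is in the Riemann-uniformization part. There you claim that the length--area arguments of Section~\ref{subsec:regularity} ``use only \eqref{eq:thm-inv-circle} for $\cH$ and are unchanged,'' and that only the final separating loop has to be rebuilt from the paths in (iii). That fails. In Lemma~\ref{lem:area/length-1}, the good radius $\rho$ only makes $L(\rho)=\sum_{H\in\cH(\partial\mathring S^\rho)}\diam(\varphi_k(P_H))$ small. So the separating loop must consist of cells that themselves meet $\partial\mathring S^\rho$, since only those are counted in $L(\rho)$. A path from (iii) consists of cells $H_j$ for which only $\phi(H_j)\cap\partial\mathring S^\rho\neq\emptyset$ is known. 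Knowing that $H_j$ lies within $o(|S_k|)$ of the contour gives no control on $\diam(\varphi_k(P_{H_j}))$. Note that the smallness of individual $\varphi_k(P_H)$ is itself derived from \eqref{eq:lem:area/length-1}, so it cannot be used here. Hence \eqref{eq:lem:area/length-1} does not follow. The same failure propagates to Lemmas~\ref{lem:area/length-3} and~\ref{lem:area/length-6}, to \eqref{eq:lem:polynomial-B}, and from there to parabolicity, polynomial growth and Lemma~\ref{lem:polynomial-B}.

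The missing idea is to run the length--area argument itself in the geometry of $\cH^{\rm large}$, as the paper does.
\begin{itemize}
\item Set $L^{\rm large}(\rho)=\sum_{H:\,\phi(H)\cap\partial\mathring S^\rho\neq\emptyset}\diam(\varphi_k(P_H))$. The weight of $H$ in \eqref{eq:pf-area/length-1-1} then becomes $\diam(\phi(H))/a_{\phi(H)}$.
\item After Cauchy--Schwarz, the factor $\sum\diam(\varphi_k(P_H))^2\deg(H)^{-4}$ is bounded exactly as in \eqref{eq:pf-area/length-1-3}. This works because both configurations share the surface $M(\cH)$, its semi-flowers and its degrees.
\item The other factor becomes $\sum\diam(\phi(H))^2\deg(H)^4/a_{\phi(H)}^2$. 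It is bounded via Lemmas~\ref{lem:no-macroscopic-cell} and~\ref{lem:average-diamdeg4} applied to $\cH^{\rm large}$. This is the place where \eqref{eq:thm-inv-circle} for $\cH^{\rm large}$ is really needed.
\item A loop assembled from (iii)-paths along $\partial\mathring S^\rho$ is then counted in $L^{\rm large}(\rho)$, up to some care with path endpoints near the corners of the contour.
\end{itemize}

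Two smaller points. First, the definition of $\cM(S;a)$ presupposes that $\cH(\mathring S^a)$ is connected, which may fail here. It must first be enlarged by the path cells from (iii) (the paper's $\cH^{\rm mid}$) before Lemma~\ref{lem:M(S)}, $M(S_k)$ and $\varphi_k$ make sense. Second, (ii) does not imply that $\phi(H)$ meets $S$ whenever $H$ does. It only gives that $\phi(H)$ comes within $\diam(\phi(H))$ of $S$, which suffices for your comparisons once the squares are slightly enlarged.
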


Theorem~\ref{thm:large} can be viewed as a generalization of Theorem~\ref{thm:CellSystemPack}-\ref{thm:invariance-uniformization-extension}  since one can set $\cH^{\rm large}=\cH$ in the latter setting. 

\begin{proof}
We only explain the modifications needed in the settings of Theorems~\ref{thm:CellSystemPack} and~\ref{thm:invariance-uniformization} since the modifications needed in the settings of Theorems~\ref{thm:CellSystemPack-extension} and~\ref{thm:invariance-uniformization-extension} are identical using the proof in Sections~\ref{subsec:extension-circ} and~\ref{subsec:extension-unif}. 
Most of the proof is identical to that of Theorem~\ref{thm:CellSystemPack} and Theorem~\ref{thm:invariance-uniformization}, and we will only explain the differences. We first modify the  definition of $\cM(S)$   in Section~\ref{subsec:ergodic-cell-system} as below.  For each horizontal or vertical line segment $\ell$, let $\cH^{\rm mid}(\ell)\subseteq\cH$ be the union of the set $\{H,H_1,...,H_k,H'\}$ over all possible choices of $H\sim H_1\sim\cdots\sim H_k\sim H'$ where $H,H'\in\cH(\ell)$ and $\phi(H_j)\in\cH^{\rm large}(\ell)$ for every $j=1,...,k$. We let $\cH^{\rm mid}(\mathring{S_a})$ be the union of $\cH^{\rm mid}(\ell)\subseteq\cH$ over all possible $\ell\subseteq \mathring{S_a}$ of length $|\mathring{S_a}|$. Conditions (ii) and (iii) guarantee that for every $\e>0$, for large enough $r>0$, if $\mathring{S_a}\subseteq B(0;r)$ and $|\mathring{S_a}|\geq\e r$, then $\cH^{\rm mid}(\mathring{S_a})$ is connected. Then we define $\cM_0(S;a)$, $\cM(S;a)$, $\cM(S)$ and $\varphi_k$ in the identical way 
using $\cH^{\rm mid}(\mathring{S_a})$ instead of $\cH(\mathring{S_a})$. We also define $\cH^{\rm mid}(\ol{\bbC\backslash\mathring{S_a}})$ to be the  union of $\cH^{\rm mid}(\ell)\subseteq\cH$ over all possible  {line segments} $\ell\subseteq\ol{\bbC\backslash} \mathring{S_a}$, which is also connected. The arguments in Section~\ref{subsec:ergodic-cell-system} other than the proof of Lemmas~\ref{lem:M(S)} and~\ref{lem:phi_0-bbC} do not require the line connectivity property in Definition~\ref{def:connectedness},  and Lemma~\ref{lem:M(S)} can be proven via identical arguments. For the proof of Lemma~\ref{lem:phi_0-bbC}, using conditions (ii) and (iii) along with Lemma~\ref{lem:no-macroscopic-cell}, one can find cells $H_1\sim\cdots H_n\sim H_1 $ with identical properties and the conclusion still holds. 
To adapt the proof of~{Theorem \ref{thm:GMSinvariance}}
{(which was proven in \cite{GMSinvariance})} to the new setting, we note that the line connectivity property is used only in the proof of~\cite[Lemma 2.19]{GMSinvariance}, in the paragraph right after~\cite[Eq.\ (2.46)]{GMSinvariance}. To substitute for that,   Conditions (ii) and (iii) guarantee that a.s.\ for large enough $k$ and horizontal or vertical line segments $\ell\subseteq S_k$ of length $S_k$, for any function $g:\cH^{\rm large}\to\bbR$ and $H_1,H_2\in \cH(\ell)$, 
$$|g(\phi(H_1))-g(\phi(H_2))|\leq \sum_{H,H'\in \cH^{\rm large}(\ell):H\sim H'}|g(H)-g(H')|.$$
The right side can be bounded exactly as~\cite{GMSinvariance} since the arguments there only rely on the inputs from Lemma~\ref{lem:dyadic-resample}-\ref{lem:average-diamdeg4}, which do not require the line connectivity property. Once this is done, the arguments in Section~\ref{subsec:pre-invprinciple} and Section~\ref{sec:circle-packing} work, except that in the construction of the map $\cM_{k,\ell}$ in Lemma~\ref{lem:almost-planarity}, we connect the cells $H_j$ and $H_{j+1}$ there using cells $H$ such that $\phi(H)$ intersects $L_{j,j+1}$. 


In the setting of the Riemann uniformization, Definition~\ref{def:connectedness} is used a few times. Let us explain the modifications.  To prove Lemma~\ref{lem:area/length-1} in this setting, we define 
$$L^{\rm large}(\rho) = \sum_{H^{\rm large}\in \cH^{\rm large}(\partial \mathring{S}^{\rho})}\diam(\varphi_k(P_{H^{\rm large}})).$$
Then ~\eqref{eq:pf-area/length-1-5} holds with $L^{\rm large}(\rho)$ instead of $L(\rho)$ using identical arguments applied to $\cH^{\rm large}$, and~\eqref{eq:lem:area/length-1} follows from identical arguments using   conditions (ii) and (iii).   The    arguments for Lemma~\ref{lem:polynomial-B} are identical, except that at the final step we use that by Lemma~\ref{lem:M(S)} the set on the right hand side of~\eqref{eq:pf:lem:polynomial-B-2} is contained within $\varphi(M(w+[-\delta 2^k,\delta2^k]^2))$, which is a connected subset of the set in~\eqref{eq:lem:polynomial-B}. The rest of the proofs in Sections~\ref{subsec:regularity}-\ref{subsec:pf-thm:invariance-uniformization} do not require the line connectivity property and can be copied to the setting of Theorem~\ref{thm:large}.
\end{proof}


\bibliographystyle{alpha}
\bibliography{circle}

\end{document}